\documentclass[11pt, a4paper]{amsart}
\usepackage[utf8]{inputenc}

\usepackage{amsmath, amsfonts, amssymb, amsthm}

\usepackage{graphicx}
\usepackage[inline]{enumitem}
\usepackage{hyperref}
\usepackage{stmaryrd} 
    \SetSymbolFont{stmry}{bold}{U}{stmry}{m}{n}
\usepackage{multicol}
\usepackage{mathtools}

\usepackage[cal=boondox]{mathalpha}

\setcounter{tocdepth}{2}
\setcounter{secnumdepth}{3}
\theoremstyle{definition}
\newtheorem{definition}{Definition}[subsection]
\newtheorem{remark}[definition]{Remark}
\newtheorem*{acknowledgments}{Acknowledgments}
\newtheorem*{notation}{Notation}
\theoremstyle{plain}
\newtheorem{proposition}[definition]{Proposition}
\newtheorem{propdef}[definition]{Proposition-Definition}
\newtheorem{lemma}[definition]{Lemma}
\newtheorem{theorem}[definition]{Theorem}
\newtheorem{corollary}[definition]{Corollary}
\newtheorem{prop}{Proposition}[section]

\newtheorem{lem}[prop]{Lemma}

\theoremstyle{definition}
\newtheorem{defn}[prop]{Definition}
\newtheorem{thm}{Theorem} 

\usepackage{tikz, float, tikz-cd}
	\usetikzlibrary{arrows}

\usepackage[
    a4paper,
    top=2.5cm,
    bottom=2.5cm,
    left=2.5cm,
    right=2.5cm,
    includeheadfoot,
    headheight=14pt,
    footskip=1cm,
    marginparwidth=3cm,
    marginparsep=0.5cm
]{geometry}

\numberwithin{equation}{section}
\numberwithin{figure}{section}

\hypersetup{pdfborder={0 0 0}}


\newcommand{\Z}{\mathbb{Z}}
\newcommand{\Q}{\mathbb{Q}}

\newcommand{\K}{\mathbf{k}} 
\newcommand{\A}{\mathbf{A}} 
\newcommand{\B}{\mathbf{B}} 
\newcommand{\Betti}{\mathrm{B}} 
\newcommand{\DeRham}{\mathrm{DR}} 
\newcommand{\V}{\mathcal{V}} 
\newcommand{\W}{\mathcal{W}} 
\newcommand{\M}{\mathcal{M}} 


\newcommand{\Aut}{\mathrm{Aut}}
\newcommand{\Mor}{\mathrm{Mor}}
\newcommand{\alg}{\mathsf{alg}}
\newcommand{\Mod}{\mathsf{mod}}

\newcommand{\Ad}{\mathrm{Ad}}
\newcommand{\pr}{\mathrm{pr}}
\newcommand{\gr}{\mathrm{gr}}

\newcommand{\F}{\mathcal{F}} 

\newcommand{\col}{\mathrm{col}}
\newcommand{\row}{\mathrm{row}}

\newcommand{\coker}{\mathrm{coker}}
\newcommand{\im}{\mathrm{im}}
\newcommand{\diag}{\mathrm{diag}}
\newcommand{\fil}{\mathrm{fil}}
\newcommand{\bM}{\mathbf{M}}

\newcommand{\rmod}{\mathsf{rmod}}
\newcommand{\mat}{\mathsf{mat}}

\newcommand{\BFS}{\mathsf{BFS}}
\newlist{steplist}{enumerate}{1}
\setlist[steplist]{label=\textbf{Step \arabic*}, align=left, wide=5pt, leftmargin=5pt}
\newlist{termlist}{enumerate}{1}
\setlist[termlist]{label=\textbf{Term \arabic*}, align=left, wide=1pt}

\author{Benjamin Enriquez}
\address{\scriptsize Institut de Recherche Mathématique Avancée (UMR 7501), Université de Strasbourg, 7 rue René Descartes, 67000 Strasbourg, France}
\email{b.enriquez@math.unistra.fr}
\author{Khalef Yaddaden}
\address{\scriptsize Graduate School of Mathematics, Nagoya University, Furo-cho, Chikusa-ku, Nagoya, 464-8602, Japan.}
\email{khalef.yaddaden.c8@math.nagoya-u.ac.jp}
\title[A categorical formulation of the double shuffle theory]{A categorical formulation of the Deligne-Terasoma approach to double shuffle theory}
\date{June 18, 2025.}

\subjclass[2020]{Primary 
11M32, 
16D20. 
Secondary 16W70, 
20F36. 
}
\keywords{bimodules with factorization sturctures, multiple zeta values, double shuffle relations, Betti and de Rham harmonic coproducts, braid groups and Lie algebras.}

\begin{document}
    \begin{abstract}
        In this paper, we introduce the notion of a bimodule with a factorization structure (BFS) and show that such a structure gives rise to an algebra morphism.
        We then prove that this framework offers an interpretation of the geometric construction underlying both the Betti and de Rham harmonic coproducts of the double shuffle theory developed in \cite{DeT, EF1, EF2, EF3}.
    \end{abstract}
    
    \maketitle
	
    {\footnotesize \tableofcontents}
    
    \section*{Introduction}

In \cite{EF1}, the first author and Furusho revisited the formalism of double shuffle relations among multiple zeta values set up by Racinet \cite{Rac}. There, they explained the ``de Rham'' nature of this formalism and constructed its ``Betti'' version, whose main objects are algebra ``harmonic'' coproducts $\Delta^{\W, \DeRham}$ and $\Delta^{\W, \Betti}$, respectively equipping the algebras $\W^\DeRham$ and $\W^\Betti$ with a Hopf algebra structure. The algebras $\W^\DeRham$ and $\W^\Betti$ can be seen as subalgebras of respectively $\V^\Betti$, the group algebra of the free group with two generators and $\V^\DeRham$, the enveloping algebra of the free Lie algebra with two generators.
The authors used this formalism to prove that the associator relations between the multiple zeta values imply the double shuffle relations. This is formulated as the inclusion of the torsor of associators in the double shuffle torsor \cite{EF2}. To this end, they constructed isomorphisms relating the Betti and de Rham sides and showed that any associator relates the algebra coproducts $\Delta^{\W, \DeRham}$ and $\Delta^{\W, \Betti}$ (\cite[Theorem 10.9]{EF1}). Rather than relying on Bar constructions as in \cite{Fur}, this alternative proof builds upon an interpretation of the harmonic coproducts in terms of infinitesimal braid Lie algebras for the de Rham side and braid groups for the Betti side, which is implicit in the unpublished work of Deligne and Terasoma \cite{DeT}.

The purpose of this paper is to formulate this construction of the harmonic coproducts in a categorical framework, which will be used in a later paper \cite{EY2} for the geometric interpretation of the cyclotomic version of the Betti and de Rham coproducts introduced by the second author in \cite{Yad}.
To this end, we define the category $\BFS$ of \emph{bimodules with factorization structures} whose objects are tuples $(\B, \bM, \A, \rho, e, \boldsymbol{r}, \boldsymbol{c})$ such that $(\B, \bM, \A, \rho)$ is a bimodule (i.e. $\bM$ is a right $\A$-module equipped with a compatible left $\B$-action $\rho$) and $(e, \boldsymbol{r}, \boldsymbol{c})$ satisfies the factorization identity (see Definition \ref{def:BFS})
\[
    \rho(e) = \boldsymbol{c} \circ \boldsymbol{r}.
\]
We then construct a functor $\BFS \to \mathsf{Mor}(\alg)$ which associates to any object $(\B, \bM, \A, \rho, e, \boldsymbol{r}, \boldsymbol{c})$ of $\BFS$, an algebra morphism given by (see Proposition-Definition \ref{Delta})
\begin{equation} \label{eq:def_of_Delta}
    \B \ni b \mapsto \boldsymbol{r} \circ \rho(b) \circ \boldsymbol{c}(1_\A) \in \A.
\end{equation}
On the other hand, we also define categories $\BFS_\fil$ and $\BFS_\gr$ of respectively filtered and graded objects of $\BFS$ as well as a functor $\BFS_\fil \to \BFS_\gr$ which is induced by associated graded objects. This enables the construction of functors $\BFS_\fil \to \mathsf{Mor}(\alg_\fil)$ and $\BFS_\gr \to \mathsf{Mor}(\alg_\gr)$ fitting in the following diagram
\begin{equation}\label{eq:func_diag}\begin{tikzcd}
    \BFS \ar[d, "\text{Sec. } \ref{BFS_Moralg}"'] & \BFS_\fil \ar[l, "\eqref{BFSfil_BFS}"'] \ar[r, "\text{Sec. } \ref{BFSfil_BFSgr}"] \ar[d, "\text{Sec. } \ref{BFSfil_Moralgfil}"'] & \BFS_\gr \ar[d, "\text{Sec. } \ref{BFSgr_Moralggr}"] \\
    \mathsf{Mor}(\alg) & \mathsf{Mor}(\alg_\fil) \ar[l, "\eqref{Moralgfil_Moralggr}"'] \ar[r, "\eqref{Moralgfil_Moralg}"] & \mathsf{Mor}(\alg_\gr) 
\end{tikzcd}\end{equation}
Each of the squares gives rise to two functors, and one checks that there are natural equivalences relating them.

The Betti and de Rham formalism of \cite{EF1} can be interpreted within this framework through the objects (see Proposition-Definitions \ref{propdef:facto_struct_MB} and \ref{propdef:facto_struct_MDR})
\[
    \mathcal{O}^\Betti := (\V^\Betti, \bM^\Betti, \V^\Betti \otimes \V^\Betti, \rho^\Betti, e^\Betti, \boldsymbol{r}^\Betti, \boldsymbol{c}^\Betti) \in \BFS,    
\]
and
\[
    \mathcal{O}^\DeRham := (\V^\DeRham, \bM^\DeRham, \V^\DeRham \otimes \V^\DeRham, \rho^\DeRham, e^\DeRham, \boldsymbol{r}^\DeRham, \boldsymbol{c}^\DeRham) \in \BFS_\gr.
\]
A suitable filtration on the object $\mathcal{O}^\Betti$ enables us to show that it gives rise to an object $\mathcal{O}^\Betti_\fil \in \BFS_\fil$ (see Corollary \ref{cor:OBfil_contruction}). Thanks to the functor $\BFS_\fil \to \BFS_\gr$ these objects are related by the following:

\begin{thm}[Theorem \ref{thm:grbimodB_iso_bimodDR}]
    We have $\gr(\mathcal{O}^\Betti_\fil) = \mathcal{O}^\DeRham$.
\end{thm}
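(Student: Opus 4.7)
The approach is a component-wise verification: since both $\gr(\mathcal{O}^\Betti_\fil)$ and $\mathcal{O}^\DeRham$ are tuples in $\BFS_\gr$, the equality reduces to checking it piece by piece. The plan rests on the classical identification of the associated graded of the free group algebra with the enveloping algebra of its associated free Lie algebra, together with careful tracking of how the geometric Betti structure maps degenerate to their infinitesimal counterparts.

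First I would establish the base identification $\gr(\V^\Betti) = \V^\DeRham$: the group algebra of the free group on two generators, filtered by powers of the augmentation ideal, has associated graded canonically isomorphic to the enveloping algebra of the free Lie algebra on two generators (Magnus--Malcev). The identification $\gr(\V^\Betti \otimes \V^\Betti) = \V^\DeRham \otimes \V^\DeRham$ for the tensor-product filtration is then automatic by freeness. Next I would identify $\gr(\bM^\Betti) = \bM^\DeRham$ together with $\gr(\rho^\Betti) = \rho^\DeRham$. Here the geometric origins of the two sides are the content: $\bM^\Betti$ encodes a configuration/pure braid structure on three strands, while $\bM^\DeRham$ encodes the infinitesimal analog through the Lie algebra $\tn$. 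The filtration on $\bM^\Betti$ must be precisely the one for which the action of a braid group element, truncated at each degree, reduces to the infinitesimal action of the corresponding element of $\tn$. The compatibility of $\rho^\Betti$ with the filtration is already granted by Corollary \ref{cor:OBfil_contruction}, so taking $\gr$ yields an algebra action on $\gr(\bM^\Betti)$; the substantive point is to identify this action with $\rho^\DeRham$.

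Finally I would verify the factorization data: $\gr(e^\Betti) = e^\DeRham$, $\gr(\boldsymbol{r}^\Betti) = \boldsymbol{r}^\DeRham$, and $\gr(\boldsymbol{c}^\Betti) = \boldsymbol{c}^\DeRham$. Each Betti object is given by an explicit group-theoretic formula; one expands it in the augmentation filtration and identifies the leading-order term with the corresponding Lie-algebraic de Rham formula. I expect this to be the main obstacle: the elements $e^\Betti$, $\boldsymbol{r}^\Betti$, $\boldsymbol{c}^\Betti$ involve group multiplication, conjugations, and monodromy-type data, and matching their associated graded images with the (a priori simpler) de Rham definitions requires careful bookkeeping of leading terms and of the exponential relating group multiplication to the graded Lie bracket. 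Once the seven componentwise identifications are in place, the factorization identity $\rho(e) = \boldsymbol{c} \circ \boldsymbol{r}$ passes to $\gr$ formally, so no additional check of the BFS axiom is needed.
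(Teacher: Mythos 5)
Your componentwise plan is reasonable in outline, but it glosses over the place where the real work lies, and it differs structurally from how the paper proceeds. The paper does not directly compare the geometric bimodules $\bM^\Betti$ and $\bM^\DeRham$; instead it routes everything through the explicit matrix objects $\mathcal{O}^\Betti_{\mat,\fil}$ and $\mathcal{O}^\DeRham_\mat$ built on $(\V^\Betti\otimes\V^\Betti)^{\oplus 3}$ and $(\V^\DeRham\otimes\V^\DeRham)^{\oplus 3}$: Proposition \ref{prop:iso_grOB_ODR} identifies $\gr(\mathcal{O}^\Betti_{\mat,\fil}) \simeq \mathcal{O}^\DeRham_\mat$ by short computations with $[g]_0$, $[gh-1]_1$ and $[g^{-1}-1]_1$ in $I_{F_2}/I_{F_2}^2$; Theorems \ref{thm:betti_bimod_iso_fil} and \ref{thm:derham_bimod_iso} establish filtered/graded bimodule isomorphisms $\bM^\Betti\simeq(\V^\Betti\otimes\V^\Betti)^{\oplus 3}$ and $\bM^\DeRham\simeq(\V^\DeRham\otimes\V^\DeRham)^{\oplus 3}$; and Theorem \ref{thm:grbimodB_iso_bimodDR} follows by checking that a square of three isomorphisms commutes on canonical generators. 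The factorization structures on $\mathcal{O}^\Betti_\fil$ and $\mathcal{O}^\DeRham$ are \emph{defined} by pullback from the matrix objects (Proposition-Definitions \ref{propdef:facto_struct_MB}, \ref{propdef:facto_struct_MDR}), so once the bimodule part is in place they match automatically. You instead propose a direct head-on comparison of the geometric structures; that is possible in principle, but it would entail essentially re-deriving the explicit matrix presentations before you could compute anything.

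The genuine gap is in your third step, where you assert that the identification $\gr(\bM^\Betti)\simeq\bM^\DeRham$ reduces to ``careful tracking'' of how group elements degenerate to Lie elements. But $\bM^\Betti = \ker(\K\underline{\pr}_5)\otimes_{\K P_5^\ast}(\V^\Betti\otimes\V^\Betti)$, and $\gr$ does not in general commute with kernels or with tensor products over a filtered algebra. Making this work requires (i) rewriting $\bM^\Betti$ as $\ker(\K\underline{\pr}_5)/\ker(\K\underline{\pr}_5)\cdot\ker(\K\underline{\pr}_{12})$ (Lemma \ref{lem:bimod_and_kerker}), (ii) the surjectivity of $\underline{\pr}_5$ and $\underline{\pr}_{12}$ to exchange $\gr$ and $\ker$ (Lemma \ref{lem:grker_kergr}), and above all (iii) showing that the filtration one puts on $\bM^\Betti$ agrees under the isomorphism $(\V^\Betti\otimes\V^\Betti)^{\oplus 3}\simeq\bM^\Betti$ with the product filtration shifted by one, which is Theorem \ref{thm:betti_bimod_iso_fil}. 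That theorem is the hard technical core of the argument: it uses the split exact sequence $F_3\to P_5^\ast\to F_2$ and the fact that the conjugation action $\Theta_h$ is trivial on $F_3^{\mathrm{ab}}$ to control $\F^a\ker(\K\underline{\pr}_5)$ via Proposition \ref{prop:semidirect_augmentation}. Your proposal never engages with this. Conversely, you expect the main obstacle to be in matching the factorization data $e,\boldsymbol{r},\boldsymbol{c}$, but in the paper that is the easy part (a handful of degree-$0$ and degree-$1$ computations in Proposition \ref{prop:iso_grOB_ODR}); you have mis-located where the effort is needed.
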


The bimodule parts of these objects arise from geometric constructions, but their factorization structures $(e^\Betti, \boldsymbol{r}^\Betti, \boldsymbol{c}^\Betti)$ and $(e^\DeRham, \boldsymbol{r}^\DeRham, \boldsymbol{c}^\DeRham)$ require the use of explicit objects of the BFS categories arising from representations of the algebras $\V^\Betti$ and $\V^\DeRham$, namely (see Corollaries \ref{cor:tuple_OBmat} and \ref{cor:tuple_ODRmat})
\[
    \mathcal{O}^\Betti_\mat := (\V^\Betti, (\V^\Betti \otimes \V^\Betti)^{\oplus 3}, \V^\Betti \otimes \V^\Betti, \mathsf{r}\underline{\rho}, e^\Betti, \mathsf{r}\underline{\row}, \mathsf{r}\underline{\col}) \in \BFS,
\]
and
\[
    \mathcal{O}^\DeRham_\mat := (\V^\DeRham, (\V^\DeRham \otimes \V^\DeRham)^{\oplus 3}, \V^\DeRham \otimes \V^\DeRham, \mathsf{r}\rho, e^\DeRham, \mathsf{r}\row, \mathsf{r}\col) \in \BFS_\gr.
\]
The prefix ``$\mathsf{r}$'' signifies that the constructed objects differ from the objects with the same notation from \cite{EF1} by the fact that we consider right actions instead of left actions.
Then, once again, a suitable filtration on the object $\mathcal{O}^\Betti_\mat$ enables us to show that it gives rise to an object $\mathcal{O}^\Betti_{\mat, \fil} \in \BFS_\fil$ (see Corollary \ref{cor:tuple_OBmatfil}).

We show that the geometric objects $\mathcal{O}^\Betti_\fil$ and $\mathcal{O}^\DeRham$ and the explicit objects $\mathcal{O}^\Betti_{\mat, \fil}$ and $\mathcal{O}^\DeRham_\mat$ are respectively related thanks to bimodule isomorphisms:
\begin{thm}[Theorems \ref{thm:betti_bimod_iso_fil} and \ref{thm:derham_bimod_iso}]
    We have
    \[
        \bM^\Betti \simeq (\V^\Betti \otimes \V^\Betti)^{\oplus 3} \text{ and } \bM^\DeRham \simeq (\V^\DeRham \otimes \V^\DeRham)^{\oplus 3}.
    \]
\end{thm}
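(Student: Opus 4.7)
The plan is to exhibit, for each side, three distinguished elements $m_1, m_2, m_3$ of the geometric bimodule which form a free basis over the ring of scalars, and then to verify that the left action by the generators of $\V^\Betti$ (resp.\ $\V^\DeRham$) is intertwined with the explicit matrix action defining $\mathsf{r}\underline{\rho}$ (resp.\ $\mathsf{r}\rho$). This reduces the isomorphism of bimodules to a freeness statement plus a finite computation on algebra generators.

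On the Betti side, I would first unpack the geometric construction of $\bM^\Betti$ from \cite{EF1}: it is built from the group algebra of a pure braid group on three strands, and the three summands in $(\V^\Betti \otimes \V^\Betti)^{\oplus 3}$ correspond to the three topological positions of a distinguished ``middle'' puncture relative to the other two. I would single out three elements $m_1, m_2, m_3 \in \bM^\Betti$ — one for each relative position, chosen consistently with the identifications used to define $\mathcal{O}^\Betti_\mat$ — and set
\[
    \varphi^\Betti \colon (\V^\Betti \otimes \V^\Betti)^{\oplus 3} \longrightarrow \bM^\Betti, \qquad (a_1, a_2, a_3) \longmapsto \sum_{i=1}^3 m_i \cdot a_i.
\]
This is a morphism of right $\V^\Betti \otimes \V^\Betti$-modules by construction. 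Freeness of the family $(m_i)$ then gives bijectivity, and the intertwining with $\rho^\Betti$ reduces, on the finite set of generators $b$ of $\V^\Betti$, to the identity $\rho^\Betti(b) \cdot m_i = \sum_j m_j \cdot (\mathsf{r}\underline{\rho}(b))_{ji}$, which is essentially the definition of $\mathsf{r}\underline{\rho}$ once conventions are aligned.

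For the de Rham side two routes are available. The first is to run the parallel argument directly with $\V^\DeRham$ and the infinitesimal braid Lie algebra in place of the pure braid group: identify three canonical generators of $\bM^\DeRham$, define $\varphi^\DeRham$ by the analogous formula, and check the intertwining on the two generators of $\V^\DeRham$ by a short Lie-algebraic computation. The second, more economical, is to place $\varphi^\Betti$ in the filtered setting supporting $\mathcal{O}^\Betti_\fil$ and $\mathcal{O}^\Betti_{\mat,\fil}$ and apply the functor $\BFS_\fil \to \BFS_\gr$ of \eqref{BFSfil_BFS}; combined with Theorem \ref{thm:grbimodB_iso_bimodDR} and the analogous identification $\gr(\mathcal{O}^\Betti_{\mat,\fil}) = \mathcal{O}^\DeRham_\mat$, this yields the de Rham isomorphism as $\gr(\varphi^\Betti)$, provided the filtration on source and target is compatible enough that passing to the associated graded preserves bijectivity.

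The main obstacle I expect is the freeness of $(m_1, m_2, m_3)$ in $\bM^\Betti$ over $\V^\Betti\otimes\V^\Betti$. The intertwining property with the left action is a finite symbolic verification on generators, but freeness is a global statement requiring either an explicit right-module section of $\varphi^\Betti$ or a dimension-counting argument on compatible graded pieces (in which case the comparison with the de Rham side becomes an essential input rather than a consequence). A secondary but genuine difficulty is bookkeeping: the objects from \cite{EF1} are defined with left actions, while $\mathcal{O}^\Betti_\mat$ and $\mathcal{O}^\DeRham_\mat$ use the right-action variants indicated by the prefix ``$\mathsf{r}$'', and these sign- or transposition-type conventions must be tracked carefully throughout so that the matrix coefficients $(\mathsf{r}\underline{\rho}(b))_{ji}$ end up on the correct side.
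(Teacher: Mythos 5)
Your outline matches the paper's overall strategy — three distinguished generators of the geometric bimodule, a right-module map $\sum m_i \cdot a_i$, freeness, and an intertwining check — but several specifics are off, and the hardest part of the Betti statement is not addressed.

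First, a factual correction: $\bM^\Betti$ is not built from the group algebra of a pure braid group on three strands. It is $\bM^\Betti = \ker(\K\underline{\pr}_5) \otimes_{\K P_5^\ast} (\V^\Betti \otimes \V^\Betti)$, where $P_5^\ast = K_4/Z(K_4)$ with $K_4$ the braid group with four strands; the free group $F_3$ appears only as $\ker(\underline{\pr}_5) \subset P_5^\ast$. The distinguished elements are $m_i = (x_{i5}-1)\otimes 1$ (and $t_{i5}\otimes 1$ on the de Rham side).

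Second, you flag freeness of $(m_1,m_2,m_3)$ over $\V^\Betti\otimes\V^\Betti$ as the main obstacle, requiring an explicit section or dimension count. In the paper this is not an obstacle at all: the key input (Lemma \ref{iso_P5_kerkpr5}, the right-action analogue of \cite[Lemma 7.10]{EF1}) is that $\ker(\K\underline{\pr}_5)$ is a \emph{free} right $\K P_5^\ast$-module on $x_{15}-1, x_{25}-1, x_{35}-1$. Applying the functor $-\otimes_{\K P_5^\ast}(\V^\Betti\otimes\V^\Betti)$ to the resulting isomorphism $(\K P_5^\ast)^{\oplus 3}\simeq\ker(\K\underline{\pr}_5)$ immediately yields the free right $\V^\Betti\otimes\V^\Betti$-module structure of $\bM^\Betti$. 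No dimension count is required, and the intertwining with $\mathsf{r}\underline\rho$ is not a generator-by-generator check either: it follows from the identity $\mathsf{r}\underline{\rho} = \M_3(\K\underline{\pr}_{12}) \circ \mathsf{r}\underline{\varpi} \circ \K\underline{\ell}$ (Lemma \ref{lem:rurho_ruvarpi}), where $\mathsf{r}\underline\varpi$ is precisely the matrix of the right $P_5^\ast$-action on $\ker(\K\underline{\pr}_5)$ in the free basis.

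Third, your "more economical" second route for the de Rham side (deducing it via $\gr$) would be circular: in the paper, Theorem \ref{thm:grbimodB_iso_bimodDR} is proved \emph{using} Theorem \ref{thm:derham_bimod_iso} (and the $\gr$ of Proposition \ref{prop:betti_bimod_iso}) as inputs. The de Rham theorem must be proved independently by the direct parallel argument you describe as the first route.

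Finally, the statement references Theorem \ref{thm:betti_bimod_iso_fil}, whose content is that the isomorphism $(\V^\Betti\otimes\V^\Betti)^{\oplus 3}\to\bM^\Betti$ is a \emph{filtered} isomorphism between $\F^{a-1}(\V^\Betti\otimes\V^\Betti)^{\oplus 3}$ and $\F^a\bM^\Betti$. This is the genuinely nontrivial part of the Betti side — it requires the semidirect-product decomposition $\K F_3\otimes\K F_2\simeq\K P_5^\ast$, the vanishing of the induced action on $F_3^{\mathrm{ab}}$ verified through relations in $K_5$, and an induction on the augmentation filtration (Proposition \ref{prop:semidirect_augmentation}) — and your proposal does not engage with it at all.
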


Finally, the explicit BFS objects enables the use of the functor $\BFS \to \mathsf{Mor}(\alg)$ to construct coproducts $\Delta_{\mathcal{O}^\Betti_\mat}$ and $\Delta_{\mathcal{O}^\DeRham_\mat}$, as described in \eqref{eq:def_of_Delta}; then we identify them with the coproducts $\Delta^{\W, \Betti}$ and $\Delta^{\W, \DeRham}$ thanks to the following result:
\begin{thm}[Theorems \ref{thm:DeltaWB_DeltaOB} and \ref{thm:DeltaWDR_DeltaODR}]
    For $b$ in $\V^\Betti$ (resp. $\V^\DeRham$), we have
    \[
        \Delta_{\mathcal{O}^\Betti_{\mat, \fil}}(b) = \Delta^{\W, \Betti}(b \ e^\Betti) \quad (\text{resp.} \quad \Delta_{\mathcal{O}^\DeRham_\mat}(b) = \Delta^{\W, \DeRham}(b \ e^\DeRham)).
    \]
\end{thm}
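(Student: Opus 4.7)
The plan is to unpack both sides of each equality using the general formula \eqref{eq:def_of_Delta} for $\Delta_{\mathcal{O}}$ and then identify the resulting expressions with the harmonic coproducts $\Delta^{\W, \Betti}$ and $\Delta^{\W, \DeRham}$ from \cite{EF1}. Since the explicit BFS objects $\mathcal{O}^\Betti_\mat$ and $\mathcal{O}^\DeRham_\mat$ were precisely designed so that their row/column ingredients coincide with the row/column maps used in the Deligne--Terasoma construction of the harmonic coproducts in \cite{EF1}, the statement is essentially a matter of transcribing definitions, the only subtlety being a careful bookkeeping of the switch between left and right actions (indicated by the prefix ``$\mathsf{r}$'') and of the unit contribution.

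I would first treat the de Rham case, which is purely graded and therefore avoids the filtered machinery. By definition,
\[\Delta_{\mathcal{O}^\DeRham_\mat}(b) \;=\; \mathsf{r}\row \circ \mathsf{r}\rho(b) \circ \mathsf{r}\col(1_{\V^\DeRham \otimes \V^\DeRham}).\]
I would first compute $\mathsf{r}\col(1_{\V^\DeRham \otimes \V^\DeRham})$ explicitly as an element of $(\V^\DeRham \otimes \V^\DeRham)^{\oplus 3}$, and check that it coincides, via the bimodule isomorphism $\bM^\DeRham \simeq (\V^\DeRham \otimes \V^\DeRham)^{\oplus 3}$ of Theorem \ref{thm:derham_bimod_iso}, with the image of $e^\DeRham \in \V^\DeRham \otimes \V^\DeRham$ inside $\bM^\DeRham$. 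Then the factorization identity $\mathsf{r}\rho(e^\DeRham) = \mathsf{r}\col \circ \mathsf{r}\row$ allows one to rewrite the composition as $\mathsf{r}\row \circ \mathsf{r}\rho(b \cdot e^\DeRham)$, and comparing with the explicit formula for $\Delta^{\W, \DeRham}$ in \cite{EF1}---whose construction is precisely a row-extraction out of an action on a triple direct sum---yields the claimed identity.

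The Betti case follows the same template but passes through the filtered framework. By Corollary \ref{cor:tuple_OBmatfil}, the object $\mathcal{O}^\Betti_{\mat, \fil}$ lies in $\BFS_\fil$, and the functor $\BFS_\fil \to \mathsf{Mor}(\alg_\fil)$ of Section \ref{BFSfil_Moralgfil} computes $\Delta_{\mathcal{O}^\Betti_{\mat, \fil}}(b)$ as $\mathsf{r}\underline{\row} \circ \mathsf{r}\underline{\rho}(b) \circ \mathsf{r}\underline{\col}(1_{\V^\Betti \otimes \V^\Betti})$. The identification with $\Delta^{\W, \Betti}(b \cdot e^\Betti)$ then reduces, as above, to matching the column map evaluated at $1$ with the element $e^\Betti$ viewed in $\bM^\Betti$ via Theorem \ref{thm:betti_bimod_iso_fil}, and then invoking the factorization identity together with the definition of $\Delta^{\W, \Betti}$ from \cite{EF1}.

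The main obstacle will be the conventions: the ``$\mathsf{r}$'' prefix signals that all actions in this paper are right actions, whereas the formulas in \cite{EF1} are written using left actions, so each identification requires transposing conventions consistently. A secondary delicate point is the appearance of the factor $e^\Betti$ (resp.\ $e^\DeRham$) on the right-hand side: it arises because \eqref{eq:def_of_Delta} evaluates $\boldsymbol{c}$ at $1_\A$ rather than at a general element, which translates, under the bimodule isomorphisms, into pre-multiplication by the factorization idempotent. Once the dictionary between the two conventions is fixed, the verification becomes a direct, if notationally dense, comparison of formulas.
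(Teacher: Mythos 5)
Your sketch correctly identifies the ambient setting (unwind $\Delta_{\mathcal{O}}$ from \eqref{eq:def_of_Delta}, track the left/right convention, compare with the harmonic coproducts of \cite{EF1}), but the intermediate steps you propose do not hold up, and the computation that carries the actual content of the theorem is left out.

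First, you never reduce to a generating set. Both sides of each identity are values of algebra morphisms, and the paper's proofs of Theorems \ref{thm:DeltaWB_DeltaOB} and \ref{thm:DeltaWDR_DeltaODR} begin by observing that it therefore suffices to check equality on the generators $e_0^n e_1$ of $\W^\DeRham$, resp.\ $X_1^{-1}$ and $X_0^n(X_1-1)$ of $\W^\Betti$. Without this reduction, there is no tractable route to comparing the two sides for an arbitrary $b$, and your plan never says how such a general comparison would be carried out.

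Second, the factorization-identity manipulation you propose is not available in the way you use it. The identity $\mathsf{r}\rho(e^\DeRham) = \mathsf{r}\col \circ \mathsf{r}\row$ gives $\mathsf{r}\row \circ \mathsf{r}\rho(b\,e^\DeRham) = \mathsf{r}\row \circ \mathsf{r}\rho(b) \circ \mathsf{r}\col \circ \mathsf{r}\row$, which is an endomorphism of $(\V^\DeRham \otimes \V^\DeRham)^{\oplus 3}$-type data, not an element of $\V^\DeRham \otimes \V^\DeRham$; it does not ``rewrite'' $\Delta_{\mathcal{O}^\DeRham_\mat}(b) = \mathsf{r}\row \circ \mathsf{r}\rho(b) \circ \mathsf{r}\col\,(1)$ into the desired form. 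In the paper the factorization identity is used once and for all in Proposition-Definition \ref{Delta} to show that $\Delta_{\mathcal{O}}$ is multiplicative; it plays no further direct role in matching $\Delta_{\mathcal{O}}$ with $\Delta^\W$. Likewise, the assertion that $\mathsf{r}\col(1)$ coincides ``with the image of $e^\DeRham$ inside $\bM^\DeRham$'' is not well posed: $e^\DeRham = e_1$ lives in $\B = \V^\DeRham$, not in $\A = \V^\DeRham\otimes\V^\DeRham$, and there is no distinguished embedding of it into $\bM^\DeRham$.

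Third, and most substantively, the matrix computations that constitute the proof are not a ``direct transcription.'' In the de Rham case one must compute $\mathsf{r}\row \cdot \mathsf{r}\rho(e_0^n) \cdot \mathsf{r}\col$ by passing through the transpose and invoking \cite[Lemma 5.7]{EF1}. In the Betti case one must compute $\mathsf{r}\underline{\rho}(X_1^{-1})$ via \cite[Lemma 7.11]{EF1} and then $\mathsf{r}\underline{\row}\cdot\mathsf{r}\underline{\rho}(X_0^n)\cdot\mathsf{r}\underline{\col}$ via the opposite/transpose identities of Appendix A together with \cite[Lemma 7.12]{EF1}, taking care of the sum-convention \eqref{convention_sum} to pass from $\sum_{k=1}^{-n-1}$ to $-\sum_{k=0}^{n}$. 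These steps are where the content lies, and the proposal does not engage with them.
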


    \begin{acknowledgments}
        This project was partially supported by first author's ANR grant Project HighAGT ANR20-CE40-0016 and second author's JSPS KAKENHI Grant 23KF0230.
    \end{acknowledgments}

    \begin{notation}
        Throughout this paper, let $\K$ be a commutative $\Q$-algebra.    
    \end{notation}
    
    \section{Basic categories of filtered and graded algebras and modules}
We introduce here the basic categories and functors relating them that we shall refer to throughout this paper.
\subsection{The categories \texorpdfstring{$\K{\text-}\Mod$}{k-mod}, \texorpdfstring{$\K{\text-}\Mod_\fil$}{k-modfil} and \texorpdfstring{$\K{\text-}\Mod_\gr$}{k-modgr}}
\begin{definition}
    \begin{enumerate}[label=(\alph*), leftmargin=*, itemsep=2mm]
        \item $\K\text{-}\Mod$ is the category of $\K$-modules;
        \item $\K{\text-}\Mod_{\fil}$ is the category of \emph{filtered} $\K$-modules, that is, $\K$-modules $\bM$ equipped with a decreasing sequence of $\K$-submodules $(\F^n\bM)_{n \in \Z}$ called filtration. Morphisms are filtered $\K$-module morphisms, that is, $\K$-module morphisms $\boldsymbol{\varphi} : \bM \to \bM^\prime$ which are compatible with the filtrations on both sides. We denote by $\F^n \boldsymbol{\varphi} : \F^n\bM \to \F^n\bM^\prime$ the induced $\K$-module morphism corresponding to $n \in \Z$;
        \item $\K{\text-}\Mod_{\gr}$ is the category of $\Z$-\emph{graded} $\K$-modules, that is, $\K$-module for which there exists a sequence of $\K$-submodules $(\bM_n)_{n \in \Z}$, called grading, such that $\bM = \bigoplus_{n \in \Z} \bM_n$. Morphisms are graded $\K$-module morphisms, that is, $\K$-module morphisms $\boldsymbol{\varphi} : \bM \to \bM^\prime$ such that $\varphi(\bM_n) \subset \bM_n^\prime$ for any $n \in \Z$. We denote by $\boldsymbol{\varphi}_n : \bM_n \to \bM^\prime_n$ the induced $\K$-module morphism corresponding to $n \in \Z$.
    \end{enumerate}
\end{definition}

Recall that there is an \emph{associated graded functor} $\gr : \K{\text-}\Mod_\fil \rightarrow \K{\text-}\Mod_\gr$ which takes a filtered $\K$-module $\left(\bM, (\F^n\bM)_{n \in \Z}\right)$ to the graded $\K$-module $\gr(\bM) := \bigoplus_{n \in \Z} \gr_n(\bM)$, where $\gr_n(\bM) := \F^n\bM / \F^{n+1}\bM$ for any $n \in \Z$. Denote by $x \mapsto [x]_n$ the canonical projection $\F^n\bM \twoheadrightarrow \gr_n\bM$ for any $n \in \Z$. At the level of morphisms, one assigns to a filtered $\K$-module morphism $\boldsymbol{\varphi} : \bM \to \bM^\prime$, the graded $\K$-module morphism $\gr(\boldsymbol{\varphi}) : \gr(\bM) \to \gr(\bM^\prime)$ induced by the commutative diagram
\[\begin{tikzcd}
    \F^n\bM \ar[rr, "\F^n\boldsymbol{\varphi}"] \ar[d, two heads] && \F^n\bM^\prime \ar[d, two heads] \\
    \gr_n(\bM) \ar[rr, ""] && \gr_n(\bM^\prime)
\end{tikzcd}\]
for any $n \in \Z$ (see for example \cite[Chap. III, Sec. 2, no. 3 and no. 4]{Bbk}).

\subsection{The categories \texorpdfstring{$\K{\text-}\alg$}{k-mod}, \texorpdfstring{$\K{\text-}\alg_\fil$}{k-modfil} and \texorpdfstring{$\K{\text-}\alg_\gr$}{k-modgr}}
\begin{definition}
\begin{enumerate}[label=(\alph*), leftmargin=*, itemsep=2mm]
    \item $\K\text{-}\alg$ is the category of $\K$-algebras (with unit);
    \item $\K{\text-}\alg_{\fil}$ is the category of filtered $\K$-algebras, that is, $\K$-algebras $(\A, \cdot)$ equipped with a $\K$-module filtration $(\F^n\A)_{n \in \Z}$ that satisfies $\F^k\A \cdot \F^n\A \subset \F^{k+n}\A$, for any $k, n \in \Z$ and $1 \in \F^0\A$. Morphisms are filtered $\K$-algebra morphisms, that is, $\K$-algebra morphisms which are also filtered $\K$-module morphisms;
    \item $\K{\text-}\alg_{\gr}$ is the category of $\Z$-graded $\K$-algebras, that is, $\K$-algebras $(\A, \cdot)$ equipped with a $\K$-module grading $(\A_n)_{n \in \Z}$ that satisfies $\A_k \cdot \A_n \subset \A_{k+n}$ for any $k, n \in \Z$ and $1 \in \A_0$. Morphisms are graded $\K$-algebra morphisms, that is, $\K$-algebra morphisms which are also graded $\K$-module morphisms.
\end{enumerate}
\end{definition}

One immediately checks that if $\A$ is an object of $\K{\text-}\alg_\fil$, then $\gr(\A)$ is an object of $\K{\text-}\alg_\gr$. Moreover, for a filtered $\K$-algebra morphism $\boldsymbol{\varphi} : \A \to \A^\prime$, the map $\gr(\boldsymbol{\varphi}) : \gr(\A) \to \gr(\A^\prime)$ is a graded $\K$-algebra morphism (see for example \cite[Chap. III, Sec. 2, no. 3 and no. 4]{Bbk}). One then defines a functor $\K{\text-}\alg_\fil \to \K{\text-}\alg_\gr$, which we also denote $\gr$, such that we have a natural equivalence that we summarize in the following diagram
\[\begin{tikzcd}
    \K{\text-}\alg_\fil \ar[rr] \ar[d, "\gr"'] && \K{\text-}\Mod_\fil \ar[d, "\gr"] \\
    \K{\text-}\alg_\gr \ar[rr] && \K{\text-}\Mod_\gr
\end{tikzcd}\]
where the horizontal arrows are forgetful functors.

\subsection{The categories \texorpdfstring{$\A{\text-}\rmod$}{k-mod}, \texorpdfstring{$\A{\text-}\rmod_\fil$}{k-modfil} and \texorpdfstring{$\A{\text-}\rmod_\gr$}{k-modgr}}
\begin{definition}
\begin{enumerate}[label=(\alph*), leftmargin=*, itemsep=2mm]
    \item For $\A \in \K{\text-}\alg$, denote by $\A\text{-}\rmod$ the category of right $\A$-modules. In particular, one has $\A \in \A\text{-}\rmod$.
    \item For $\A \in \K{\text-}\alg_\fil$, denote by $\A\text{-}\rmod_\fil$ the category of \emph{filtered right modules over the filtered algebra $\A$}, that is, right $\A$-modules $\bM$ which are equipped with a $\K$-module filtration $(\F^n\bM)_{n \in \Z}$ that satisfies $\F^k\bM \cdot \F^n\A \subset \F^{k+n}\bM$ for any $k, n \in \Z$. Morphisms are filtered right $\A$-module morphisms. In particular, one has $\A \in \A{\text-}\rmod_\fil$;
    \item For $\A \in \K{\text-}\alg_\gr$, denote by$\A\text{-}\rmod_\gr$ the category of \emph{graded right modules over the graded algebra $\A$}, that is, right $\A$-modules $\bM$ which are equipped with a $\K$-module grading $(\bM_n)_{n \in \Z}$ that satisfies $\bM_k \cdot \A_n \subset \bM_{k+n}$ for any $k, n \in \Z$. Morphisms are graded right $\A$-module morphisms. In particular, one has $\A \in \A{\text-}\rmod_\gr$.
\end{enumerate}
\end{definition}

Let $\A$ be an object of $\K{\text-}\alg_\fil$. One immediately checks that if $\bM$ is an object of $\A{\text-}\rmod_\fil$, then $\gr(\bM)$ is an object of $\gr(\A){\text-}\rmod_\gr$, the structure maps of which arise from the vertical cokernels of the collection of commutative diagrams 
\[\begin{tikzcd}
    \F^k\bM \otimes \F^{n+1}\A + \F^{k+1}\bM \otimes \F^n\A \ar[rr] \ar[d, hook] && \F^{k+n+1}\bM \ar[d, hook'] \\
    \F^k\bM \otimes \F^n\A \ar[rr] && \F^{k+n}\bM
\end{tikzcd}\]
indexed by $k,n \in \Z$. Moreover, if $\varphi : \bM \to \bM^\prime$ is a morphism of $\A{\text-}\rmod_\fil$, then the map $\gr(\varphi) : \gr(\bM) \to \gr(\bM^\prime)$ is a morphism of $\gr(\A){\text-}\rmod_\gr$. \newline
This defines a functor $\gr : \A{\text-}\rmod_\fil \to \gr(\A){\text-}\rmod_\gr$.
    \section{The bimodule with factorization structure categories} 

In this section, we introduce bimodules with factorization structures (BFS), a central concept for our framework (Definition \ref{def:BFS}). We further define their filtered and graded counterparts, extending the construction to settings where the underlying algebraic structures are equipped with filtrations or gradings (Definitions \ref{def:BFSfil} and \ref{def:BFSgr}). We show that any BFS gives rise to an algebra morphism (Proposition-Definition \ref{Delta}). From this, it will follow that a filtered (resp. graded) BFS yields a filtered (resp. graded) algebra morphism (Corollary \ref{cor:Deltafil}, resp. Corollary \ref{cor:Deltagr}).

\subsection{The categories \texorpdfstring{$\K{\text-}\BFS$}{k-BFS}, \texorpdfstring{$\K{\text-}\BFS_\fil$}{k-BFSfil} and \texorpdfstring{$\K{\text-}\BFS_\gr$}{k-BFSgr}}
\subsubsection{The category \texorpdfstring{$\K{\text-}\BFS$}{k-BFS}}
\begin{definition} \label{def:BFS}
    \begin{enumerate}[label=(\alph*), leftmargin=*]
        \item A \emph{$\K$-bimodule} is a tuple $(\B, \bM, \A, \rho)$ where $\A$ and $\B$ are objects of $\K{\text-}\alg$, $\bM$ is an object of $\A{\text-}\rmod$ and $\rho : \B \to \mathrm{End}_{\A{\text-}\rmod}(\bM)$ is a morphism of $\K{\text-}\alg$. The $\K$-module $\bM$ is said to have a $(\B, \A)$-bimodule structure.
        \item A \emph{factorization structure} on a $\K$-bimodule $(\B, \bM, \A, \rho)$ is a triple $(e, \boldsymbol{r}, \boldsymbol{c})$ where $e \in B$, $\boldsymbol{r} \in \Mor_{\A{\text-}\rmod}(\bM, \A)$ and $\boldsymbol{c} \in \Mor_{\A{\text-}\rmod}(\A, \bM)$ such that (equality in $\mathrm{End}_{\A{\text-}\rmod}(\bM)$)
        \[
            \rho(e) = \boldsymbol{c} \circ \boldsymbol{r}.
        \]
        \item A \emph{$\K$-bimodule with factorization structure} is a tuple $(\B, \bM, \A, \rho, e, \boldsymbol{r}, \boldsymbol{c})$ such that $(\B, \bM, \A, \rho)$ is a $\K$-bimodule and $(e, \boldsymbol{r}, \boldsymbol{c})$ is a factorization structure on it.  
    \end{enumerate}
\end{definition}

\begin{definition} \label{def:BFS_Mor}
    Let $(\B, \bM, \A, \rho)$ and $(\B^\prime, \bM^\prime, \A^\prime, \rho^\prime)$ be two $\K$-bimodules. 
    \begin{enumerate}[label=(\alph*), leftmargin=*]
        \item \label{Bimod_mor} A \emph{$\K$-bimodule morphism} between $(\B, \bM, \A, \rho)$ and $(\B^\prime, \bM^\prime, \A^\prime, \rho^\prime)$ is a triple $(\boldsymbol{g}, \boldsymbol{\varphi}, \boldsymbol{f})$ where $\boldsymbol{g} : \B \to \B^\prime$ and $\boldsymbol{f} : \A \to \A^\prime$ are morphisms of $\K{\text-}\alg$ and $\boldsymbol{\varphi} : \bM \to \bM^\prime$ a morphism of $\K{\text-}\Mod$ such that:
        \begin{enumerate}[label=(\roman*), leftmargin=*]
            \item For $a \in \A$ and $m \in \bM$,
            \[
                \boldsymbol{\varphi}(m \cdot a) = \boldsymbol{\varphi}(m) \cdot \boldsymbol{f}(a);
            \]
            \item For $b \in \B$ and $m \in \bM$
            \[
                \boldsymbol{\varphi}(\rho(b)(m)) = \rho^\prime(\boldsymbol{g}(b))(\boldsymbol{\varphi}(m)).
            \]
        \end{enumerate}
        \item If $(e, \boldsymbol{r}, \boldsymbol{c})$ and $(e^\prime, \boldsymbol{r}^\prime, \boldsymbol{c}^\prime)$ are factorization structures on $(\B, \bM, \A, \rho)$ and $(\B^\prime, \bM^\prime, \A^\prime, \rho^\prime)$ respectively, then a $\K$-bimodule morphism $(\boldsymbol{g}, \boldsymbol{\varphi}, \boldsymbol{f}) : (\B, \bM, \A, \rho) \to (\B^\prime, \bM^\prime, \A^\prime, \rho^\prime)$ is said to be \emph{compatible with the factorization structures} if:
        \begin{enumerate}[label=(\roman*), leftmargin=*]
            \item $\boldsymbol{g}(e) = e^\prime$;
            \item $\boldsymbol{f} \circ \boldsymbol{r} = \boldsymbol{r}^\prime \circ \boldsymbol{\varphi}$ (equality in $\Mor_{\A{\text-}\rmod}(\bM, \A^\prime)$);
            \item $\boldsymbol{c}^\prime \circ \boldsymbol{f} = \boldsymbol{\varphi} \circ \boldsymbol{c}$ (equality in $\Mor_{\A{\text-}\rmod}(\A, \bM^\prime)$).
        \end{enumerate}
    \end{enumerate}
\end{definition}

\begin{propdef}
    A category $\K{\text-}\BFS$ can be defined such that objects are $\K$-bimodules with factorization structures and morphisms are $\K$-bimodule morphisms compatible with factorization structures.
\end{propdef}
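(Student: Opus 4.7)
The plan is to verify the three usual category axioms: a well-defined composition law, existence of identities, and associativity together with the unital laws. Since an object of $\K\text{-}\BFS$ is just a tuple, and the candidate morphisms are triples $(\boldsymbol{g}, \boldsymbol{\varphi}, \boldsymbol{f})$ of maps, all the structure needed will come by unpacking Definition \ref{def:BFS_Mor} and composing componentwise.

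First I would define composition: given morphisms $(\boldsymbol{g}, \boldsymbol{\varphi}, \boldsymbol{f}) : \mathcal{O} \to \mathcal{O}^\prime$ and $(\boldsymbol{g}^\prime, \boldsymbol{\varphi}^\prime, \boldsymbol{f}^\prime) : \mathcal{O}^\prime \to \mathcal{O}^{\prime\prime}$, set
\[
    (\boldsymbol{g}^\prime, \boldsymbol{\varphi}^\prime, \boldsymbol{f}^\prime) \circ (\boldsymbol{g}, \boldsymbol{\varphi}, \boldsymbol{f}) := (\boldsymbol{g}^\prime \circ \boldsymbol{g}, \, \boldsymbol{\varphi}^\prime \circ \boldsymbol{\varphi}, \, \boldsymbol{f}^\prime \circ \boldsymbol{f}),
\]
where each factor is composed in its respective category ($\K\text{-}\alg$ for $\boldsymbol{g}, \boldsymbol{f}$ and $\K\text{-}\Mod$ for $\boldsymbol{\varphi}$). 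I must then check that this composite satisfies Definition \ref{def:BFS_Mor}\ref{Bimod_mor}(i) and (ii), which is a direct computation substituting one relation into the other.

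The key verification is that compatibility with the factorization structures is preserved by composition. Writing the three factorization data as $(e, \boldsymbol{r}, \boldsymbol{c})$, $(e^\prime, \boldsymbol{r}^\prime, \boldsymbol{c}^\prime)$, $(e^{\prime\prime}, \boldsymbol{r}^{\prime\prime}, \boldsymbol{c}^{\prime\prime})$, this amounts to three short chases:
\[
    (\boldsymbol{g}^\prime \circ \boldsymbol{g})(e) = \boldsymbol{g}^\prime(e^\prime) = e^{\prime\prime},
\]
and, using associativity of composition in $\K\text{-}\Mod$,
\[
    (\boldsymbol{f}^\prime \circ \boldsymbol{f}) \circ \boldsymbol{r} = \boldsymbol{f}^\prime \circ (\boldsymbol{r}^\prime \circ \boldsymbol{\varphi}) = (\boldsymbol{r}^{\prime\prime} \circ \boldsymbol{\varphi}^\prime) \circ \boldsymbol{\varphi} = \boldsymbol{r}^{\prime\prime} \circ (\boldsymbol{\varphi}^\prime \circ \boldsymbol{\varphi}),
\]
and symmetrically for $\boldsymbol{c}$. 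In each case the only inputs used are the compatibility conditions for the two given morphisms. I would then define the identity morphism on $(\B, \bM, \A, \rho, e, \boldsymbol{r}, \boldsymbol{c})$ to be $(\mathrm{id}_\B, \mathrm{id}_\bM, \mathrm{id}_\A)$; its status as a $\K$-bimodule morphism is trivial, and the three compatibility conditions reduce to the tautologies $\mathrm{id}_\B(e) = e$, $\boldsymbol{r} = \boldsymbol{r}$, $\boldsymbol{c} = \boldsymbol{c}$.

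Associativity and the left/right unit laws for the composition above are inherited componentwise from the corresponding axioms in $\K\text{-}\alg$ and $\K\text{-}\Mod$, so nothing new is required. I do not anticipate a real obstacle here: all verifications are routine componentwise diagram chases, and the only thing that genuinely has to be proved is that the two axiomatic layers (the bimodule layer and the factorization layer) are each stable under componentwise composition; the rest is bookkeeping.
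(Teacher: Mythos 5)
Your proof is correct and is precisely the routine componentwise verification that the paper dismisses with the single word ``Immediate.'' Defining composition and identities componentwise, then checking that both the bimodule-morphism conditions and the three factorization-compatibility conditions are stable under composition, is exactly what is needed, and your chases are accurate.
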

\begin{proof}
    Immediate.
\end{proof}

\noindent In the subsequent sections, we shall make use of the following general properties:

\begin{proposition} \label{prop:bimodtens}
    Let $(\B, \bM, \A, \rho)$ and $(\A, \bM^\prime, \B^\prime, \rho^\prime)$ be two $\K$-bimodules. Then the tuple
    \[
        (\B, \bM \otimes_\A \bM^\prime, \B^\prime, \rho_\otimes)
    \]
    is a $\K$-bimodule, where $\rho_\otimes : \B \to \mathrm{End}_{\B^\prime{\text-}\rmod}(\bM \otimes_\A \bM^\prime)$ is the $\K$-algebra morphism given by
    \[
        b \mapsto \rho(b) \otimes \mathrm{id}_{\bM^\prime}.
    \]
\end{proposition}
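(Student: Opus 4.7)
The plan is to verify one by one the data required by Definition \ref{def:BFS}(a) for the tuple $(\B, \bM \otimes_\A \bM^\prime, \B^\prime, \rho_\otimes)$. The first step is to endow $\bM \otimes_\A \bM^\prime$ with a right $\B^\prime$-module structure. The $\A$-balanced tensor product is well-defined since $\bM$ is a right $\A$-module and $\rho^\prime$ turns $\bM^\prime$ into a left $\A$-module. The formula $(m \otimes m^\prime) \cdot b^\prime := m \otimes (m^\prime \cdot b^\prime)$ then descends to a well-defined right $\B^\prime$-action on $\bM \otimes_\A \bM^\prime$, because the right $\B^\prime$-action on $\bM^\prime$ commutes with the left $\A$-action; this commutation is precisely the statement that $\rho^\prime(a) \in \mathrm{End}_{\B^\prime{\text-}\rmod}(\bM^\prime)$ for every $a \in \A$.

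Next, for each $b \in \B$, I would show that $\rho(b) \otimes \mathrm{id}_{\bM^\prime}$ descends to an endomorphism of $\bM \otimes_\A \bM^\prime$ in $\B^\prime{\text-}\rmod$. Descent across the $\A$-balanced tensor product reduces to the identity $\rho(b)(m \cdot a) \otimes m^\prime = \rho(b)(m) \otimes (a \cdot m^\prime)$ in $\bM \otimes_\A \bM^\prime$, which follows directly from $\rho(b) \in \mathrm{End}_{\A{\text-}\rmod}(\bM)$. Right $\B^\prime$-linearity of the resulting endomorphism is immediate, since the operator acts trivially on the second tensor factor.

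Finally, I would verify that $b \mapsto \rho(b) \otimes \mathrm{id}_{\bM^\prime}$ is a morphism of $\K$-algebras from $\B$ into $\mathrm{End}_{\B^\prime{\text-}\rmod}(\bM \otimes_\A \bM^\prime)$. Multiplicativity and unit preservation both follow from the corresponding properties of $\rho$ together with the composition rule $(f \otimes \mathrm{id}) \circ (g \otimes \mathrm{id}) = (f \circ g) \otimes \mathrm{id}$ on $\bM \otimes_\A \bM^\prime$.

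No step presents any significant obstacle: the proposition is simply the translation into the BFS formalism of the classical fact that the tensor product over a common middle algebra of two bimodules is again a bimodule, and all verifications are routine given the explicit axioms in Definition \ref{def:BFS}.
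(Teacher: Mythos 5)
Your proof is correct and is exactly the "immediate verification" that the paper declines to spell out: you check that $\bM \otimes_\A \bM^\prime$ carries a right $\B^\prime$-action, that each $\rho(b) \otimes \mathrm{id}_{\bM^\prime}$ descends and is right $\B^\prime$-linear, and that $\rho_\otimes$ is an algebra morphism, which is precisely what the definition demands. No discrepancy with the paper's approach.
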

\begin{proof}
    Immediate verification.
\end{proof}

\begin{proposition} \label{prop:bimods_as_ker}
    Let $\A, \B \in \K{\text-}\alg$ and $\boldsymbol{\varphi} : \B \to \A$ be a surjective morphism of $\K{\text-}\alg$. Let $\bM$ be a $(\B,\B)$-bimodule. We have
    \begin{enumerate}[label=(\alph*), leftmargin=*]
        \item \label{MMker_Armod} The $\K$-module $\bM \big/ \bM \cdot \ker(\boldsymbol{\varphi})$ is a $(\B, \A)$-bimodule;
        \item There is a $(\B, \A)$-bimodule isomorphism
        \[
            \bM \otimes_\B \A \simeq \bM \big/ \bM \cdot \ker(\boldsymbol{\varphi}).
        \]
    \end{enumerate}
\end{proposition}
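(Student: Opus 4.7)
The plan is to prove the two parts in order, using the surjection $\boldsymbol{\varphi}:\B\twoheadrightarrow\A$ to realize $\A$ as the quotient $\B/\ker(\boldsymbol{\varphi})$ and then applying a standard base-change identification. Throughout, $\ker(\boldsymbol{\varphi})$ is a two-sided ideal of $\B$, and the surjectivity of $\boldsymbol{\varphi}$ turns it into a $\K$-algebra isomorphism $\bar{\boldsymbol{\varphi}}:\B/\ker(\boldsymbol{\varphi})\xrightarrow{\sim}\A$ by the first isomorphism theorem.

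For part (a), I would first check that $\bM\cdot\ker(\boldsymbol{\varphi})$ is a sub-$(\B,\B)$-bimodule of $\bM$: it is stable under the left $\B$-action because the two actions on $\bM$ commute, and stable under the right $\B$-action because $\ker(\boldsymbol{\varphi})$ is a right ideal of $\B$. The quotient $\bM/\bM\cdot\ker(\boldsymbol{\varphi})$ therefore inherits a $(\B,\B)$-bimodule structure. Since the right $\B$-action of $\ker(\boldsymbol{\varphi})$ is zero on the quotient, this right $\B$-action factors uniquely through $\bar{\boldsymbol{\varphi}}$ to give a right $\A$-action; combined with the descended left $\B$-action (which still commutes with the right action), this produces the required $(\B,\A)$-bimodule structure in the sense of Definition \ref{def:BFS}(a).

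For part (b), I would use the standard identification of $\A$ as a $(\B,\A)$-bimodule with $\B/\ker(\boldsymbol{\varphi})$ via $\bar{\boldsymbol{\varphi}}$, and then the chain of $(\B,\A)$-bimodule isomorphisms
\[
    \bM\otimes_\B\A\;\simeq\;\bM\otimes_\B\bigl(\B/\ker(\boldsymbol{\varphi})\bigr)\;\simeq\;\bM\big/\bM\cdot\ker(\boldsymbol{\varphi}),
\]
where the second isomorphism is the usual one sending $m\otimes[b]\mapsto[m\cdot b]$, with inverse induced by $m\mapsto m\otimes 1$. One then checks that this composite is precisely the map $m\otimes a\mapsto[m\cdot\tilde{a}]$, where $\tilde{a}\in\B$ is any lift of $a$; well-definedness is exactly the statement that two lifts differ by $\ker(\boldsymbol{\varphi})$.

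There is no real obstacle here; the only things to be careful about are (i) verifying that the two actions on $\bM/\bM\cdot\ker(\boldsymbol{\varphi})$ are compatible in the precise sense required by Definition \ref{def:BFS}(a), namely that the image of the left $\B$-action lands in $\mathrm{End}_{\A{\text-}\rmod}$, and (ii) confirming that the isomorphism in part (b) intertwines both the left $\B$-action and the right $\A$-action, which reduces to tracking how $b\cdot(m\otimes a)=(b\cdot m)\otimes a$ and $(m\otimes a)\cdot a'=m\otimes(aa')$ translate under the identification. Both verifications are routine once the quotient bimodule structure from part (a) is in place.
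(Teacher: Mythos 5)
Your proof is correct and follows essentially the same route as the paper: both parts rest on realizing $\A$ as $\B/\ker(\boldsymbol{\varphi})$ and identifying $\bM\otimes_\B\A$ with the quotient $\bM/\bM\cdot\ker(\boldsymbol{\varphi})$. The only difference is that you invoke the standard identification $\bM\otimes_\B(\B/I)\simeq\bM/\bM I$ directly, whereas the paper rederives it in place by applying the right exact functor $\bM\otimes_\B-$ to the exact sequence $\ker(\boldsymbol{\varphi})\to\B\to\A\to 0$ and comparing the resulting cokernels.
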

\begin{proof}
    \begin{enumerate}[label=(\alph*), leftmargin=*]
        \item The $\K$-module morphism $\bM \otimes \B \to \bM$ given by
        \[
            m \otimes b \mapsto m \cdot b, \text{ for } m \in \bM \text{ and } b \in \B,
        \]
        takes the $\K$-submodule $\bM \otimes \ker(\boldsymbol{\varphi}) + \bM \cdot \ker(\boldsymbol{\varphi}) \otimes \B$ of its source to the $\K$-submodule $\bM \cdot \ker(\boldsymbol{\varphi})$ of its target. This results in a commutative diagram
        \[\begin{tikzcd}
            \bM \otimes \ker(\boldsymbol{\varphi}) + \bM \cdot \ker(\boldsymbol{\varphi}) \otimes \B \ar[rr]\ar[d, hook] && \bM \cdot \ker(\boldsymbol{\varphi}) \ar[d, hook'] \\
            \bM \otimes \B \ar[rr]&& \bM
        \end{tikzcd}\]
        whose vertical cokernel is a $\K$-module morphism
        \[
            \bM \big/ \bM \cdot \ker(\boldsymbol{\varphi}) \otimes \A \to \bM \big/ \bM \cdot \ker(\boldsymbol{\varphi}),
        \]
        thus giving the right $\A$-module structure of $\bM \big/ \bM \cdot \ker(\boldsymbol{\varphi})$. The left $\B$-module structure follows from that of $\bM$ and of $\bM \cdot \ker(\boldsymbol{\varphi})$.
        \item By assumption, the morphism $\boldsymbol{\varphi}$ gives rise to the following exact sequence of left $\B$-modules
        \[
            \ker(\boldsymbol{\varphi}) \to \B \xrightarrow{\boldsymbol{\varphi}} \A \to \{0\}.
        \]
        Applying the right exact functor $\bM \otimes_\B -$ we obtain the following exact sequence of left $\B$-modules
        \[
            \bM \otimes_\B \ker(\boldsymbol{\varphi}) \to \bM \otimes_\B \B \xrightarrow{\mathrm{id}_{\bM} \otimes \boldsymbol{\varphi}} \bM \otimes_\B \A \to \{0\}.
        \]
        One derives the following left $\B$-module isomorphism
        \begin{equation} \label{eq:iso_bimod_coker}
            \bM \otimes_\B \A \simeq \coker\big(\bM \otimes_\B \ker(\boldsymbol{\varphi}) \to \bM \otimes_\B \B\big). 
        \end{equation}
        On the other hand, consider the left $\B$-module morphism $\bM \otimes \ker(\boldsymbol{\varphi}) \to \bM$ given by $m \otimes k \mapsto m \cdot k$. One checks that
        \begin{equation} \label{eq:coker_quotient}
            \coker(\bM \otimes \ker(\boldsymbol{\varphi}) \to \bM) = \bM \big/ \bM \cdot \ker(\boldsymbol{\varphi}).
        \end{equation}
        Next, we have the following commutative diagram of left $\B$-module morphisms
        \[\begin{tikzcd}
            \bM \otimes \ker(\boldsymbol{\varphi)} \ar[rr] \ar[d, two heads] && \bM \ar[d, "\simeq"] \\
            \bM \otimes_\B \ker(\boldsymbol{\varphi)} \ar[rr] && \bM \otimes_\B \B
        \end{tikzcd}\]
        Taking the horizontal cokernels we have an isomorphism
        \begin{equation} \label{eq:coker_iso}
            \coker(\bM \otimes \ker(\boldsymbol{\varphi}) \to \bM) \simeq \coker(\bM \otimes \ker(\boldsymbol{\varphi}) \to \bM \otimes_\B \B). 
        \end{equation}
        We obtain the following chain of left $\B$-module isomorphisms
        \begin{equation} \label{eq:isobimod}
            \mbox{\footnotesize$\bM \otimes_\B \A \underset{\eqref{eq:iso_bimod_coker}}{\simeq} \coker\big(\bM \otimes_\B \ker(\boldsymbol{\varphi}) \to \bM \otimes_\B \B\big) \underset{\eqref{eq:coker_iso}}{\simeq} \coker(\bM \otimes \ker(\boldsymbol{\varphi}) \to \bM) \underset{\eqref{eq:coker_quotient}}{=} \bM \big/ \bM \cdot \ker(\boldsymbol{\varphi})$}.
        \end{equation}
        It remains to prove that it is in fact a right $\A$-module morphism. Indeed, for $m \in \bM$, $a \in \A$, and $b \in \B$ such that $\boldsymbol{\varphi}(b) = a$, recall the right $\A$-module structure on $\bM \otimes_\B \A$ given by
        \[
           (m \otimes 1) \cdot a = m b \otimes 1,
        \]
        and the right $\A$-module structure on $\bM \big/ \bM \cdot \ker(\boldsymbol{\varphi})$ given by
        \[
            \big(m + \bM \cdot \ker(\boldsymbol{\varphi})\big) \cdot a = m \ b + \bM \cdot \ker(\boldsymbol{\varphi}),
        \]
        thanks to \ref{MMker_Armod}. Finally, one checks that the image of $m b \otimes 1 \in \bM \otimes_\B \A$ by the morphism \eqref{eq:isobimod} is given by $m \ b + \bM \cdot \ker(\boldsymbol{\varphi}) \in \bM \big/ \bM \cdot \ker(\boldsymbol{\varphi})$. This completes the proof.
    \end{enumerate}
\end{proof}

\begin{proposition} \label{prop:pullback}
    Let $(\B, \bM, \A, \rho, e, \boldsymbol{r}, \boldsymbol{c})$ be an object of $\K{\text-}\BFS$ and $(\B^\prime, \bM^\prime, \A^\prime, \rho^\prime)$ be a $\K$-bimodule. Assume that there exists a $\K$-bimodule isomorphism
    \[
        (\boldsymbol{g}, \boldsymbol{\varphi}, \boldsymbol{f}) : (\B, \bM, \A, \rho) \to (\B^\prime, \bM^\prime, \A^\prime, \rho^\prime).
    \]
    Set $e^\prime := \boldsymbol{g}(e) \in \B^\prime$, $\boldsymbol{r}^\prime := \boldsymbol{f} \circ \boldsymbol{r} \circ \boldsymbol{\varphi}^{-1} \in \Mor_{\A^\prime{\text-}\rmod}(\bM^\prime, \A^\prime)$ and $\boldsymbol{c}^\prime := \boldsymbol{\varphi} \circ \boldsymbol{c} \circ \boldsymbol{f}^{-1} \in \Mor_{\A^\prime{\text-}\rmod}(\A^\prime, \bM^\prime)$. Then, the tuple
    \[
        (\B^\prime, \bM^\prime, \A^\prime, \rho^\prime, e^\prime, \boldsymbol{r}^\prime, \boldsymbol{c}^\prime)
    \]
    is an object of $\K{\text-}\BFS$. 
\end{proposition}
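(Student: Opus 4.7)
The plan is to verify, one by one, the three conditions of Definition \ref{def:BFS}: (i) that $\boldsymbol{r}'$ is a right $\A'$-module morphism $\bM' \to \A'$, (ii) that $\boldsymbol{c}'$ is a right $\A'$-module morphism $\A' \to \bM'$, and (iii) that the factorization identity $\rho'(e') = \boldsymbol{c}' \circ \boldsymbol{r}'$ holds. The membership $e' \in \B'$ is automatic. The conceptual input in each step is the observation that, since $(\boldsymbol{g}, \boldsymbol{\varphi}, \boldsymbol{f})$ is an isomorphism in $\K\text{-}\BFS$ in the sense of Definition \ref{def:BFS_Mor}\ref{Bimod_mor} (without compatibility with factorization structures, which is precisely what we are producing), the inverse triple $(\boldsymbol{g}^{-1}, \boldsymbol{\varphi}^{-1}, \boldsymbol{f}^{-1})$ is also a $\K$-bimodule morphism. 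This is routine: if $m' = \boldsymbol{\varphi}(m)$ and $a' = \boldsymbol{f}(a)$, then the identity $\boldsymbol{\varphi}(m \cdot a) = \boldsymbol{\varphi}(m) \cdot \boldsymbol{f}(a)$ rewrites as $\boldsymbol{\varphi}^{-1}(m' \cdot a') = \boldsymbol{\varphi}^{-1}(m') \cdot \boldsymbol{f}^{-1}(a')$, and similarly for the intertwining with $\rho$ and $\rho'$.

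For (i), I would compute $\boldsymbol{r}'(m' \cdot a') = \boldsymbol{f}(\boldsymbol{r}(\boldsymbol{\varphi}^{-1}(m') \cdot \boldsymbol{f}^{-1}(a')))$, apply the right $\A$-linearity of $\boldsymbol{r}$, and then use that $\boldsymbol{f}$ is a $\K$-algebra morphism to obtain $\boldsymbol{r}'(m') \cdot a'$. For (ii), the same pattern gives $\boldsymbol{c}'(a' \cdot b') = \boldsymbol{c}'(a') \cdot b'$ using right $\A$-linearity of $\boldsymbol{c}$ together with the fact that $\boldsymbol{\varphi}$ is a right-module morphism (in the sense of Definition \ref{def:BFS_Mor}\ref{Bimod_mor}(i)).

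For the factorization identity (iii), the key computation uses the intertwining $\boldsymbol{\varphi} \circ \rho(b) = \rho'(\boldsymbol{g}(b)) \circ \boldsymbol{\varphi}$ applied to $b = e$: it yields $\rho'(e') = \rho'(\boldsymbol{g}(e)) = \boldsymbol{\varphi} \circ \rho(e) \circ \boldsymbol{\varphi}^{-1}$. Substituting $\rho(e) = \boldsymbol{c} \circ \boldsymbol{r}$ (which holds since $(\B, \bM, \A, \rho, e, \boldsymbol{r}, \boldsymbol{c}) \in \K\text{-}\BFS$) and inserting $\boldsymbol{f}^{-1} \circ \boldsymbol{f} = \mathrm{id}_\A$ between $\boldsymbol{c}$ and $\boldsymbol{r}$ gives $\boldsymbol{\varphi} \circ \boldsymbol{c} \circ \boldsymbol{f}^{-1} \circ \boldsymbol{f} \circ \boldsymbol{r} \circ \boldsymbol{\varphi}^{-1} = \boldsymbol{c}' \circ \boldsymbol{r}'$, as required.

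There is no real obstacle: the statement is a transport-of-structure along an isomorphism, and the entire content lies in the algebraic bookkeeping outlined above. The only mild subtlety is remembering to insert $\boldsymbol{f}^{-1} \circ \boldsymbol{f}$ in the middle of the composition in step (iii) so that the resulting expression factors through $\A'$ in the correct way.
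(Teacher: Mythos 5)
Your proof is correct and takes essentially the same route as the paper: the paper's proof consists exactly of the chain $\rho'(e') = \rho'(\boldsymbol{g}(e)) = \boldsymbol{\varphi}\circ\rho(e)\circ\boldsymbol{\varphi}^{-1} = \boldsymbol{\varphi}\circ\boldsymbol{c}\circ\boldsymbol{r}\circ\boldsymbol{\varphi}^{-1} = \boldsymbol{\varphi}\circ\boldsymbol{c}\circ\boldsymbol{f}^{-1}\circ\boldsymbol{f}\circ\boldsymbol{r}\circ\boldsymbol{\varphi}^{-1} = \boldsymbol{c}'\circ\boldsymbol{r}'$, which is your step (iii) verbatim. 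You additionally spell out steps (i) and (ii) — that $\boldsymbol{r}'$ and $\boldsymbol{c}'$ are right $\A'$-module morphisms — whereas the paper treats those memberships as part of the statement's setup and leaves them implicit; this is fine and slightly more careful, but not a different argument.
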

\begin{proof}
    We have
    \[
        \rho^\prime(e^\prime) = \rho^\prime(\boldsymbol{g}(e)) =  \boldsymbol{\varphi} \circ \rho(e) \circ \boldsymbol{\varphi}^{-1} = \boldsymbol{\varphi} \circ \boldsymbol{c} \circ \boldsymbol{r} \circ \boldsymbol{\varphi}^{-1} = \boldsymbol{\varphi} \circ \boldsymbol{c} \circ \boldsymbol{f}^{-1} \circ \boldsymbol{f} \circ \boldsymbol{r} \circ \boldsymbol{\varphi}^{-1} = \boldsymbol{c}^\prime \circ \boldsymbol{r}^\prime,  
    \]
    where the second equality follows from Definition \ref{def:BFS_Mor} \ref{Bimod_mor} and the third one from the equality $\rho(e) = \boldsymbol{c} \circ \boldsymbol{r}$.
\end{proof}
\subsubsection{The category \texorpdfstring{$\K{\text-}\BFS_\fil$}{k-BFSfil}}

\begin{definition} \label{def:FnMor}
    Let $\A \in \K{\text-}\alg_\fil$ and $\bM$ and $\bM^\prime \in \A{\text-}\rmod_\fil$. For $n \in \Z$, define \linebreak $\F^n \Mor_{\A{\text-}\rmod}(\bM, \bM^\prime)$ to be the set of right $\A$-module morphisms $\boldsymbol{\varphi} : \bM \to \bM^\prime$ such that $\boldsymbol{\varphi}(\F^k\bM) \subset \F^{k+n}\bM^\prime$ for any $k \in \Z$.
\end{definition}

\begin{lemma}
    \label{MorArmod_filtration}
    Let $\A \in \K{\text-}\alg_\fil$ and $\bM$ and $\bM^\prime \in \A{\text-}\rmod_\fil$. The sequence of right \linebreak $\A$-modules $(\F^n \Mor_{\A{\text-}\rmod}(\bM, \bM^\prime))_{n \in \Z}$ is decreasing and compatible with composition, that is, for $\boldsymbol{\varphi} \in \F^n \Mor_{\A{\text-}\rmod}(\bM, \bM^\prime)$ and $\boldsymbol{\varphi^\prime} \in \F^{n^\prime} \Mor_{\A{\text-}\rmod}(\bM^\prime, \bM^{\prime\prime})$, we have 
    \[
        \boldsymbol{\varphi^\prime} \circ \boldsymbol{\varphi} \in \F^{n+n^\prime} \Mor_{\A{\text-}\rmod}(\bM, \bM^{\prime\prime}).
    \]
    In particular, the $\K$-module $\mathrm{End}_{\A{\text-}\rmod}(\bM)$ equipped with the filtration $(\F^n \mathrm{End}_{\A{\text-}\rmod}(\bM))_{n \in \Z}$ is an object of $\K{\text-}\alg_\fil$. 
\end{lemma}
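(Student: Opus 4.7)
The plan is to unwind Definition \ref{def:FnMor} and derive each assertion by a short inclusion chase; I expect no genuine obstacle here, only a careful verification of the axioms of $\K{\text-}\alg_\fil$ at the end.

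First, for the decreasingness of the sequence $(\F^n \Mor_{\A{\text-}\rmod}(\bM, \bM^\prime))_{n \in \Z}$, I would take any $\boldsymbol{\varphi} \in \F^{n+1} \Mor_{\A{\text-}\rmod}(\bM, \bM^\prime)$ and use that the filtration of the target $\bM^\prime$ is itself decreasing to weaken the containment $\boldsymbol{\varphi}(\F^k\bM) \subset \F^{k+n+1}\bM^\prime$ into $\boldsymbol{\varphi}(\F^k\bM) \subset \F^{k+n}\bM^\prime$. Observe also that $\F^n \Mor_{\A{\text-}\rmod}(\bM, \bM^\prime)$ is a $\K$-submodule of $\Mor_{\A{\text-}\rmod}(\bM, \bM^\prime)$, since the defining condition is stable under $\K$-linear combinations.

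Second, for compatibility with composition, given $\boldsymbol{\varphi} \in \F^n \Mor_{\A{\text-}\rmod}(\bM, \bM^\prime)$ and $\boldsymbol{\varphi^\prime} \in \F^{n^\prime} \Mor_{\A{\text-}\rmod}(\bM^\prime, \bM^{\prime\prime})$, I would simply chain the defining inclusions: for every $k \in \Z$,
\[
    (\boldsymbol{\varphi^\prime} \circ \boldsymbol{\varphi})(\F^k\bM) \subset \boldsymbol{\varphi^\prime}(\F^{k+n}\bM^\prime) \subset \F^{k+n+n^\prime}\bM^{\prime\prime},
\]
which is exactly the desired containment $\boldsymbol{\varphi^\prime} \circ \boldsymbol{\varphi} \in \F^{n+n^\prime} \Mor_{\A{\text-}\rmod}(\bM, \bM^{\prime\prime})$.

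Finally, for the ``in particular'' clause, I would check the axioms of $\K{\text-}\alg_\fil$ on $\mathrm{End}_{\A{\text-}\rmod}(\bM)$ equipped with the filtration $(\F^n \mathrm{End}_{\A{\text-}\rmod}(\bM))_{n \in \Z}$: the underlying $\K$-algebra structure is the standard composition algebra; each $\F^n \mathrm{End}_{\A{\text-}\rmod}(\bM)$ is a $\K$-submodule and the sequence is decreasing by the first step; the multiplicative compatibility $\F^k \cdot \F^n \subset \F^{k+n}$ is precisely the special case $\bM = \bM^\prime = \bM^{\prime\prime}$ of the second step; and $\mathrm{id}_{\bM}$ lies in $\F^0 \mathrm{End}_{\A{\text-}\rmod}(\bM)$ since it sends $\F^k\bM$ to itself for every $k \in \Z$. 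The whole lemma thus reduces to routine bookkeeping.
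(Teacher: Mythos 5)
Your proof is correct and proceeds exactly as intended: the paper simply labels the proof ``Immediate,'' and your unwinding of Definition \ref{def:FnMor} — decreasingness from the decreasing filtration on $\bM^\prime$, composition by chaining the containments, and the algebra axioms (submodule closure, multiplicative compatibility as the $\bM=\bM^\prime=\bM^{\prime\prime}$ case, and $\mathrm{id}_\bM \in \F^0$) — is precisely the routine verification the authors had in mind.
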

\begin{proof}
    Immediate.
\end{proof}

\noindent Thanks to Lemma \ref{MorArmod_filtration}, we may define the following:
\begin{definition} \label{def:BFSfil}
    \begin{enumerate}[label=(\alph*), leftmargin=*]
        \item A \emph{filtered $\K$-bimodule} is a $\K$-bimodule $(\B, \bM, \A, \rho)$ such that $\A$ and $\B$ are objects of $\K{\text-}\alg_\fil$, $\bM$ is an object of $\A{\text-}\rmod_\fil$ and $\rho : \B \to \mathrm{End}_{\A{\text-}\rmod}(\bM)$ is a morphism of $\K{\text-}\alg_\fil$. 
        \item A \emph{filtered factorization structure} on a filtered $\K$-bimodule $(\B, \bM, \A, \rho)$ is a factorization structure $(e, \boldsymbol{r}, \boldsymbol{c})$ on the $\K$-bimodule $(\B, \bM, \A, \rho)$ such that
        \[
            e \in \F^1\B, \, \boldsymbol{r} \in \F^0\Mor_{\A{\text-}\rmod}(\bM, \A) \text{ and } \boldsymbol{c} \in \F^1\Mor_{\A{\text-}\rmod}(\A, \bM).
        \]
        \item A \emph{filtered $\K$-bimodule with factorization structure} is a filtered $\K$-bimodule equipped with a filtered factorization structure.  
    \end{enumerate}
\end{definition}

\begin{remark}
    For a filtered factorization structure $(e, \boldsymbol{r}, \boldsymbol{c})$ on a filtered $\K$-bimodule $(\B, \bM, \A, \rho)$, the identity $\rho(e) = \boldsymbol{c} \circ \boldsymbol{r}$ is an equality in $\F^1\mathrm{End}_{\A{\text-}\rmod}(\bM)$.
\end{remark}

\begin{lemma}\label{lem:morAM_M_isofil}
    If $\A$ is an object of $\K{\text-}\alg_\fil$ and $\bM$ an object of $\A{\text-}\rmod_\fil$, then the filtered $\K$-modules $\Mor_{\A{\text-}\rmod}(\A, \bM)$ and $\bM$ are isomorphic.     
\end{lemma}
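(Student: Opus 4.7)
The plan is to exhibit the classical right $\A$-module isomorphism $\Mor_{\A{\text-}\rmod}(\A, \bM) \simeq \bM$ given by evaluation at $1$, and then verify that this identification is compatible with the filtrations on both sides in the sense of Definition \ref{def:FnMor}.

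First, I would define the map
\[
    \Phi : \Mor_{\A{\text-}\rmod}(\A, \bM) \longrightarrow \bM, \qquad \boldsymbol{\varphi} \longmapsto \boldsymbol{\varphi}(1_\A),
\]
together with its candidate inverse
\[
    \Psi : \bM \longrightarrow \Mor_{\A{\text-}\rmod}(\A, \bM), \qquad m \longmapsto \bigl(a \mapsto m \cdot a\bigr).
\]
The fact that $\Psi(m)$ is a right $\A$-module morphism, and that $\Phi \circ \Psi = \mathrm{id}_{\bM}$ and $\Psi \circ \Phi = \mathrm{id}$, is the standard unfiltered identification, so I would dispatch it briefly without writing out the routine verifications.

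Next, I would check compatibility with the filtrations in both directions. For $\Phi$: if $\boldsymbol{\varphi} \in \F^n \Mor_{\A{\text-}\rmod}(\A, \bM)$, then since $1_\A \in \F^0 \A$ (this is part of the definition of a filtered $\K$-algebra), we have $\Phi(\boldsymbol{\varphi}) = \boldsymbol{\varphi}(1_\A) \in \F^{0+n} \bM = \F^n \bM$, so $\Phi$ is filtered. For $\Psi$: if $m \in \F^n \bM$, then for every $k \in \Z$ and every $a \in \F^k \A$ the compatibility $\F^n \bM \cdot \F^k \A \subset \F^{n+k} \bM$ of the right $\A$-module structure on $\bM$ gives $\Psi(m)(a) = m \cdot a \in \F^{n+k} \bM$, so $\Psi(m) \in \F^n \Mor_{\A{\text-}\rmod}(\A, \bM)$, proving that $\Psi$ is filtered as well.

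Combining these two compatibilities with the fact that $\Phi$ and $\Psi$ are mutually inverse yields the desired filtered $\K$-module isomorphism. I do not anticipate any genuine obstacle here: the only subtle point is remembering that the definition of $\K{\text-}\alg_\fil$ explicitly includes $1 \in \F^0 \A$, which is precisely what guarantees that evaluation at $1_\A$ does not shift degrees in either direction.
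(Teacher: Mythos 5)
Your proposal is correct and follows exactly the same route as the paper's proof: evaluation at $1_\A$ with inverse $m \mapsto (a \mapsto ma)$, and verification that both maps respect the filtrations. You merely spell out the filtration checks that the paper leaves implicit.
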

\begin{proof}
    The maps
    \[
        \Mor_{\A{\text-}\rmod}(\A, \bM) \to \bM, \ \phi \mapsto \phi(1) \text{ and } \bM\to\Mor_{\A{\text-}\rmod}(\A, \bM), \ m \mapsto (a \mapsto ma)
    \]
    are compatible with the filtrations, and are inverse isomorphisms. 
\end{proof}

\begin{definition}
    A \emph{filtered $\K$-bimodule morphism compatible with factorization structures} is a morphism $(\boldsymbol{g}, \boldsymbol{\varphi}, \boldsymbol{f}) : (\B, \bM, \A, \rho, e, \boldsymbol{r}, \boldsymbol{c}) \to (\B^\prime, \bM^\prime, \A^\prime, \rho^\prime, e^\prime, \boldsymbol{r}^\prime, \boldsymbol{c}^\prime)$ of $\K{\text-}\BFS$ such that $\boldsymbol{f}$, $\boldsymbol{g}$ are morphisms of $\K{\text-}\alg_\fil$ and $\boldsymbol{\varphi}$ is a morphism of $\K{\text-}\Mod_\fil$.  
\end{definition}

\begin{lemma}
    A category $\K{\text-}\BFS_\fil$ can be defined such that objects are filtered $\K$-bimodules with factorization structures and morphisms are filtered $\K$-bimodule morphisms compatible with factorization structures.
\end{lemma}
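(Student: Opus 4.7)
The plan is to inherit the category structure directly from $\K{\text-}\BFS$, whose existence has already been established, and merely to verify that the additional filtered compatibility condition on morphisms is closed under composition and is satisfied by identities. No new data needs to be introduced: objects, composition and identities are all taken from the ambient category $\K{\text-}\BFS$, and I only restrict the morphism class.

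First, I would exhibit the identity morphism on any filtered $\K$-bimodule with factorization structure $(\B, \bM, \A, \rho, e, \boldsymbol{r}, \boldsymbol{c})$ as the triple $(\mathrm{id}_\B, \mathrm{id}_\bM, \mathrm{id}_\A)$, each component of which is trivially a morphism of $\K{\text-}\alg_\fil$ (resp. $\K{\text-}\Mod_\fil$) since identities preserve every filtration piece. Next, I would take two composable filtered morphisms $(\boldsymbol{g}, \boldsymbol{\varphi}, \boldsymbol{f})$ and $(\boldsymbol{g}^\prime, \boldsymbol{\varphi}^\prime, \boldsymbol{f}^\prime)$ compatible with factorization structures and argue that the componentwise composition $(\boldsymbol{g}^\prime \circ \boldsymbol{g}, \boldsymbol{\varphi}^\prime \circ \boldsymbol{\varphi}, \boldsymbol{f}^\prime \circ \boldsymbol{f})$ is again of the required form. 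Compatibility with the bimodule structures and with the factorization structures is already known from $\K{\text-}\BFS$, so all that remains is to observe that $\boldsymbol{g}^\prime \circ \boldsymbol{g}$ and $\boldsymbol{f}^\prime \circ \boldsymbol{f}$ are morphisms of $\K{\text-}\alg_\fil$ and $\boldsymbol{\varphi}^\prime \circ \boldsymbol{\varphi}$ is a morphism of $\K{\text-}\Mod_\fil$; this is the standard fact that a composition of filtration-preserving maps is filtration-preserving, which is also the degree-zero instance of Lemma \ref{MorArmod_filtration}.

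Associativity and the unit laws are inherited from $\K{\text-}\BFS$ since the composition law is literally the same. No step presents a genuine obstacle: the verification is pure bookkeeping, and the written proof can reasonably be left as ``Immediate.''
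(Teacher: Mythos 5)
Your proposal is correct and matches the paper's intent: the paper simply declares the proof ``Immediate,'' and your write-up spells out exactly the bookkeeping that justifies that word — identities preserve filtrations, compositions of filtration-preserving maps are filtration-preserving, and associativity and unit laws are inherited from $\K{\text-}\BFS$.
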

\begin{proof}
    Immediate.
\end{proof}

\begin{remark}
    The forgetful functors $\K{\text-}\Mod_\fil \to \K{\text-}\Mod$ and $\K{\text-}\alg_\fil \to \K{\text-}\alg$ induce a functor
    \begin{equation} \label{BFSfil_BFS}
        \K{\text-}\BFS_\fil \to \K{\text-}\BFS.
    \end{equation}
\end{remark}

\noindent In the subsequent sections, we shall make use of the following pullback property:
\begin{proposition} \label{prop:pullback_fil}
    Let $(\B, \bM, \A, \rho, e, \boldsymbol{r}, \boldsymbol{c})$ be an object of $\K{\text-}\BFS_\fil$ and $(\B^\prime, \bM^\prime, \A^\prime, \rho^\prime)$ be a filtered $\K$-bimodule. Assume that there exists a filtered $\K$-bimodule isomorphism
    \[
        (\boldsymbol{g}, \boldsymbol{\varphi}, \boldsymbol{f}) : (\B, \bM, \A, \rho) \to (\B^\prime, \bM^\prime, \A^\prime, \rho^\prime).
    \]
    Set $e^\prime := \boldsymbol{g}(e) \in \B^\prime$, $\boldsymbol{r}^\prime := \boldsymbol{f} \circ \boldsymbol{r} \circ \boldsymbol{\varphi}^{-1} \in \Mor_{\A^\prime{\text-}\rmod}(\bM^\prime, \A^\prime)$ and $\boldsymbol{c}^\prime := \boldsymbol{\varphi} \circ \boldsymbol{c} \circ \boldsymbol{f}^{-1} \in \Mor_{\A^\prime{\text-}\rmod}(\A^\prime, \bM^\prime)$. Then, the tuple
    \[
        (\B^\prime, \bM^\prime, \A^\prime, \rho^\prime, e^\prime, \boldsymbol{r}^\prime, \boldsymbol{c}^\prime)
    \]
    is an object of $\K{\text-}\BFS_\fil$. 
\end{proposition}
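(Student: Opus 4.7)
The strategy is to reduce to the already-established unfiltered pullback and then verify that the three new data live in the correct filtered pieces. More precisely, since every morphism in $\K{\text-}\BFS_\fil$ is in particular a morphism of $\K{\text-}\BFS$ (via the forgetful functor \eqref{BFSfil_BFS}), Proposition \ref{prop:pullback} immediately yields that the tuple $(\B^\prime, \bM^\prime, \A^\prime, \rho^\prime, e^\prime, \boldsymbol{r}^\prime, \boldsymbol{c}^\prime)$ is an object of $\K{\text-}\BFS$; in particular the factorization identity $\rho^\prime(e^\prime) = \boldsymbol{c}^\prime \circ \boldsymbol{r}^\prime$ holds. It therefore only remains to check the three filtered conditions of Definition \ref{def:BFSfil}(b).

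Next, I would check each condition. For $e^\prime$: since $\boldsymbol{g}$ is a morphism of $\K{\text-}\alg_\fil$, it sends $\F^1\B$ into $\F^1\B^\prime$, and since $e \in \F^1\B$, we get $e^\prime = \boldsymbol{g}(e) \in \F^1\B^\prime$. For $\boldsymbol{r}^\prime$ and $\boldsymbol{c}^\prime$, the key point is that because $(\boldsymbol{g}, \boldsymbol{\varphi}, \boldsymbol{f})$ is a filtered \emph{isomorphism}, the inverse triple $(\boldsymbol{g}^{-1}, \boldsymbol{\varphi}^{-1}, \boldsymbol{f}^{-1})$ is also a morphism of filtered data; in particular $\boldsymbol{\varphi}^{-1}(\F^k\bM^\prime) \subset \F^k\bM$ for every $k \in \Z$, and similarly for $\boldsymbol{f}^{-1}$. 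Combining these inclusions with $\boldsymbol{r} \in \F^0\Mor_{\A{\text-}\rmod}(\bM, \A)$ and the filtration-preservation of $\boldsymbol{f}$ gives, for every $k \in \Z$,
\[
    \boldsymbol{r}^\prime(\F^k\bM^\prime) = \boldsymbol{f}\big(\boldsymbol{r}(\boldsymbol{\varphi}^{-1}(\F^k\bM^\prime))\big) \subset \boldsymbol{f}\big(\boldsymbol{r}(\F^k\bM)\big) \subset \boldsymbol{f}(\F^k\A) \subset \F^k\A^\prime,
\]
so $\boldsymbol{r}^\prime \in \F^0\Mor_{\A^\prime{\text-}\rmod}(\bM^\prime, \A^\prime)$. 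An analogous computation using $\boldsymbol{c} \in \F^1\Mor_{\A{\text-}\rmod}(\A, \bM)$ shows $\boldsymbol{c}^\prime(\F^k\A^\prime) \subset \F^{k+1}\bM^\prime$, i.e. $\boldsymbol{c}^\prime \in \F^1\Mor_{\A^\prime{\text-}\rmod}(\A^\prime, \bM^\prime)$.

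The only point requiring slight care — and the closest thing to an obstacle in an otherwise entirely formal argument — is the use of the filtered invertibility of $\boldsymbol{\varphi}$ and $\boldsymbol{f}$. This is built into the notion of \emph{isomorphism} in $\K{\text-}\Mod_\fil$ and $\K{\text-}\alg_\fil$ (a filtered isomorphism is by definition a filtered morphism whose inverse is also filtered), and I would state this explicitly at the start of the proof to justify the inclusions on $\boldsymbol{\varphi}^{-1}$ and $\boldsymbol{f}^{-1}$ used above. Once these are in place, the verification is immediate and the proof concludes.
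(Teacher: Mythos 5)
Your proposal is correct and follows essentially the same route as the paper's own proof: invoke Proposition \ref{prop:pullback} for the underlying unfiltered statement, then verify the three filtration conditions on $e'$, $\boldsymbol{r}'$, $\boldsymbol{c}'$. You spell out one point the paper's proof leaves implicit — namely that a filtered \emph{isomorphism} has by definition a filtered inverse, so that $\boldsymbol{\varphi}^{-1}$ and $\boldsymbol{f}^{-1}$ preserve filtrations — which is a legitimate and useful clarification but not a different argument.
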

\begin{proof}
    Recall from Proposition \ref{prop:pullback} that the tuple
    \((\B^\prime, \bM^\prime, \A^\prime, \rho^\prime, e^\prime, \boldsymbol{r}^\prime, \boldsymbol{c}^\prime)\)
    is an object of $\K{\text-}\BFS$. Moreover, since $e \in \F^1\B$ (resp. $\boldsymbol{r} \in \F^0\Mor_{\A{\text-}\rmod}(\bM, \A)$ and $\boldsymbol{c} \in \F^1\Mor_{\A{\text-}\rmod}(\A, \bM)$) and $\boldsymbol{g} : \B \to \B^\prime$ (resp $\boldsymbol{\varphi} : \bM \to \bM^\prime$ and $\boldsymbol{f} : \A \to \A^\prime$) preserve filtrations, it follows that $e^\prime = \boldsymbol{g}(e) \in \F^1\B^\prime$ (resp. $\boldsymbol{r}^\prime = \boldsymbol{f} \circ \boldsymbol{r} \circ \boldsymbol{\varphi}^{-1} \in \F^0\Mor_{\A^\prime{\text-}\rmod}(\bM^\prime, \A^\prime)$ and $\boldsymbol{c}^\prime =  \boldsymbol{\varphi} \circ \boldsymbol{c} \circ \boldsymbol{f}^{-1} \in \F^1\Mor_{\A^\prime{\text-}\rmod}(\A^\prime, \bM^\prime)$).
\end{proof}

\subsubsection{The category \texorpdfstring{$\K{\text-}\BFS_\gr$}{k-BFSgr}}
\begin{definition}
    \label{grading_Mor}
    Let $\A \in \K{\text-}\alg_\gr$ and $\bM$ and $\bM^\prime \in \A{\text-}\rmod_\gr$. For $n \in \Z$, define \linebreak $\Mor_{\A{\text-}\rmod}(\bM, \bM^\prime)_n$ the set of right $\A$-module morphisms $\varphi : \bM \to \bM^\prime$ such that \linebreak $\varphi(\bM_k) \subset \bM^\prime_{k+n}$ for any $k \in \Z$.
\end{definition}

\begin{lemma}
    \label{MorArmod_grading}
    Let $\A \in \K{\text-}\alg_\gr$ and $\bM$ and $\bM^\prime \in \A{\text-}\rmod_\gr$. The sequence of right \linebreak $\A$-modules $(\Mor_{\A{\text-}\rmod}(\bM, \bM^\prime)_n)_{n \in \Z}$ defines a grading on $\Mor_{\A{\text-}\rmod}(\bM, \bM^\prime)$ and is \linebreak compatible with composition, that is, for $\boldsymbol{\varphi} \in \Mor_{\A{\text-}\rmod}(\bM, \bM^\prime)_n$ and $\boldsymbol{\varphi^\prime} \in \Mor_{\A{\text-}\rmod}(\bM^\prime, \bM^{\prime\prime})_{n^\prime}$, we have 
    \[
        \boldsymbol{\varphi^\prime} \circ \boldsymbol{\varphi} \in \Mor_{\A{\text-}\rmod}(\bM, \bM^{\prime\prime})_{n+n^\prime}.
    \]
    In particular, the $\K$-module $\mathrm{End}_{\A{\text-}\rmod}(\bM)$ equipped with the grading $(\mathrm{End}_{\A{\text-}\rmod}(\bM)_n)_{n \in \Z}$ is an object of $\K{\text-}\alg_\gr$. 
\end{lemma}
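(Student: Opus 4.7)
The plan is to verify two claims: (i) every morphism $\boldsymbol{\varphi} \in \Mor_{\A{\text-}\rmod}(\bM, \bM^\prime)$ decomposes as a (pointwise) sum of its homogeneous components $\boldsymbol{\varphi}_n \in \Mor_{\A{\text-}\rmod}(\bM, \bM^\prime)_n$, and (ii) composition is graded in the stated sense. The ``in particular'' statement about $\mathrm{End}_{\A{\text-}\rmod}(\bM)$ then follows as a specialization of these two items together with the observation $\mathrm{id}_\bM \in \mathrm{End}_{\A{\text-}\rmod}(\bM)_0$.

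For (i), given $\boldsymbol{\varphi}$, I would use the grading $\bM^\prime = \bigoplus_{l \in \Z} \bM^\prime_l$ to define $\boldsymbol{\varphi}_n$ on a homogeneous element $m \in \bM_k$ as the projection of $\boldsymbol{\varphi}(m)$ onto $\bM^\prime_{k+n}$, and extend linearly to all of $\bM = \bigoplus_k \bM_k$. By construction $\boldsymbol{\varphi}_n(\bM_k) \subset \bM^\prime_{k+n}$, so the only thing left to verify is right $\A$-linearity of $\boldsymbol{\varphi}_n$; this follows directly from the compatibility of the $\A$-actions with the gradings on $\bM$ and $\bM^\prime$ (right multiplication by $a \in \A_j$ shifts grading by $j$ on both sides, so the projection commutes with this multiplication). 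The identity $\boldsymbol{\varphi} = \sum_n \boldsymbol{\varphi}_n$ evaluated on any homogeneous $m$ reduces to a finite sum, since $\boldsymbol{\varphi}(m)$ has finite support in the grading of $\bM^\prime$; uniqueness is immediate from the direct sum decomposition of $\bM^\prime$.

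For (ii), if $\boldsymbol{\varphi} \in \Mor_{\A{\text-}\rmod}(\bM, \bM^\prime)_n$ and $\boldsymbol{\varphi}^\prime \in \Mor_{\A{\text-}\rmod}(\bM^\prime, \bM^{\prime\prime})_{n^\prime}$, then on a homogeneous $m \in \bM_k$ we have $\boldsymbol{\varphi}(m) \in \bM^\prime_{k+n}$ and hence $\boldsymbol{\varphi}^\prime(\boldsymbol{\varphi}(m)) \in \bM^{\prime\prime}_{(k+n)+n^\prime}$, so $\boldsymbol{\varphi}^\prime \circ \boldsymbol{\varphi} \in \Mor_{\A{\text-}\rmod}(\bM, \bM^{\prime\prime})_{n+n^\prime}$. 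Specializing to $\bM = \bM^\prime = \bM^{\prime\prime}$ shows that composition on $\mathrm{End}_{\A{\text-}\rmod}(\bM)$ respects the grading, which together with $\mathrm{id}_\bM \in \mathrm{End}_{\A{\text-}\rmod}(\bM)_0$ gives the desired graded $\K$-algebra structure.

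I do not anticipate any serious obstacle; the argument parallels the filtered case of Lemma \ref{MorArmod_filtration} formally. The one point requiring mild care is the reading of the sum $\sum_n \boldsymbol{\varphi}_n$: infinitely many components $\boldsymbol{\varphi}_n$ may be nonzero as morphisms, but only finitely many are nonzero when evaluated at any fixed element of $\bM$, which is exactly what allows the formal sum to represent $\boldsymbol{\varphi}$ in the direct sum decomposition of $\Mor_{\A{\text-}\rmod}(\bM, \bM^\prime)$.
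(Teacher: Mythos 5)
Your items (ii) (compatibility with composition) and the ``in particular'' conclusion about $\mathrm{End}_{\A{\text-}\rmod}(\bM)$ are correct and match what the paper intends (the paper's own proof is a single word: ``Immediate''). The issue is with item (i), and you flag it yourself but then dismiss it incorrectly. Defining $\boldsymbol{\varphi}_n$ by projecting $\boldsymbol{\varphi}(m)$ onto $\bM^\prime_{k+n}$ for homogeneous $m \in \bM_k$ does produce $\A$-linear maps $\boldsymbol{\varphi}_n \in \Mor_{\A{\text-}\rmod}(\bM,\bM^\prime)_n$ with $\boldsymbol{\varphi}(m) = \sum_n \boldsymbol{\varphi}_n(m)$ for each fixed $m$. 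But this pointwise finiteness does \emph{not} place $\boldsymbol{\varphi}$ in the internal direct sum $\bigoplus_n \Mor_{\A{\text-}\rmod}(\bM,\bM^\prime)_n$: membership in a direct sum requires finitely many nonzero \emph{components}, not finitely many nonzero evaluations. Concretely, if $\A = \K$ in degree $0$, $\bM = \bigoplus_{k \geq 0}\K$ with the evident grading, and $\bM^\prime = \K$ in degree $0$, then $\Mor_{\A{\text-}\rmod}(\bM,\bM^\prime) = \prod_{k\geq 0}\K$ whereas $\bigoplus_n \Mor_{\A{\text-}\rmod}(\bM,\bM^\prime)_n = \bigoplus_{k\geq 0}\K$, a proper subspace. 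So the family $(\Mor_{\A{\text-}\rmod}(\bM,\bM^\prime)_n)_n$ spans a graded subspace (the ``graded Hom'') rather than grading the full Hom, unless one imposes a finiteness hypothesis on $\bM$ (e.g.\ finite generation over $\A$). The paper applies the lemma only to $\bM$ free of finite rank over $\A$, where the two spaces coincide, which is presumably why the authors regard the proof as immediate; but your closing sentence, which asserts that pointwise finiteness ``is exactly what allows the formal sum to represent $\boldsymbol{\varphi}$ in the direct sum decomposition,'' is the precise point at which the argument breaks.
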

\begin{proof}
    Immediate.
\end{proof}

\noindent Thanks to Lemma \ref{MorArmod_grading}, we may define the following:
\begin{definition} \label{def:BFSgr}
    \begin{enumerate}[label=(\alph*), leftmargin=*]
        \item A \emph{graded $\K$-bimodule} is a $\K$-bimodule $(\B, \bM, \A, \rho)$ such that $\A$ and $\B$ are objects of $\K{\text-}\alg_\gr$, $\bM$ is an object of $\A{\text-}\rmod_\gr$ and $\rho : \B \to \mathrm{End}_{\A{\text-}\rmod}(\bM)$ is a morphism of $\K{\text-}\alg_\gr$. 
        \item A \emph{graded factorization structure} on a graded $\K$-bimodule $(\B, \bM, \A, \rho)$ is a factorization structure $(e, \boldsymbol{r}, \boldsymbol{c})$ on the $\K$-bimodule $(\B, \bM, \A, \rho)$ such that
        \[
            e \in \B_1, \, \boldsymbol{r} \in \Mor_{\A{\text-}\rmod}(\bM, \A)_0 \text{ and } \boldsymbol{c} \in \Mor_{\A{\text-}\rmod}(\A, \bM)_1.
        \]
        \item A \emph{graded $\K$-bimodule with factorization structure} is a graded $\K$-bimodule equipped with a graded factorization structure.  
    \end{enumerate}
\end{definition}

\begin{remark}
    For a graded factorization structure $(e, \boldsymbol{r}, \boldsymbol{c})$ on a graded $\K$-bimodule $(\B, \bM, \A, \rho)$, the identity $\rho(e) = \boldsymbol{c} \circ \boldsymbol{r}$ is an equality in $\mathrm{End}_{\A{\text-}\rmod}(\bM)_1$.
\end{remark}

\begin{definition}
    A \emph{graded $\K$-bimodule morphism compatible with factorization structures} is a morphism $(\boldsymbol{g}, \boldsymbol{\varphi}, \boldsymbol{f}) : (\B, \bM, \A, \rho, e, \boldsymbol{r}, \boldsymbol{c}) \to (\B^\prime, \bM^\prime, \A^\prime, \rho^\prime, e^\prime, \boldsymbol{r}^\prime, \boldsymbol{c}^\prime)$ of $\K{\text-}\BFS$ such that $\boldsymbol{f}$, $\boldsymbol{g}$ are morphisms of $\K{\text-}\alg_\gr$ and $\boldsymbol{\varphi}$ is a morphism of $\K{\text-}\Mod_\gr$.  
\end{definition}

\begin{lemma}
    A category $\K{\text-}\BFS_\gr$ can be defined such that objects are graded $\K$-bimodules with factorization structures and morphisms are graded $\K$-bimodule morphisms compatible with factorization structures.
\end{lemma}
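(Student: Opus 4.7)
The plan is to verify directly that the data of graded $\K$-bimodules with factorization structures as objects, together with graded $\K$-bimodule morphisms compatible with factorization structures as morphisms, satisfies the axioms of a category. The argument is entirely parallel to the analogous statement in the filtered setting, and amounts to producing identities and a well-defined composition law inheriting associativity from that of functions.

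First I would show that for any object $(\B, \bM, \A, \rho, e, \boldsymbol{r}, \boldsymbol{c})$ the triple $(\mathrm{id}_\B, \mathrm{id}_\bM, \mathrm{id}_\A)$ is a morphism in the sense required: the components are trivially morphisms in $\K{\text-}\alg_\gr$ and $\K{\text-}\Mod_\gr$ respectively (identities preserve all gradings), the two bimodule compatibility conditions of Definition \ref{def:BFS_Mor}\ref{Bimod_mor} hold tautologically, and the three compatibility conditions with factorization structures reduce to $e = e$, $\boldsymbol{r} = \boldsymbol{r}$, and $\boldsymbol{c} = \boldsymbol{c}$.

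Next I would define composition componentwise, setting
\[
    (\boldsymbol{g}^\prime, \boldsymbol{\varphi}^\prime, \boldsymbol{f}^\prime) \circ (\boldsymbol{g}, \boldsymbol{\varphi}, \boldsymbol{f}) := (\boldsymbol{g}^\prime \circ \boldsymbol{g}, \boldsymbol{\varphi}^\prime \circ \boldsymbol{\varphi}, \boldsymbol{f}^\prime \circ \boldsymbol{f}),
\]
and verify that the result is again a graded $\K$-bimodule morphism compatible with factorization structures. Gradedness of $\boldsymbol{g}^\prime \circ \boldsymbol{g}$ and $\boldsymbol{f}^\prime \circ \boldsymbol{f}$ as algebra morphisms, and of $\boldsymbol{\varphi}^\prime \circ \boldsymbol{\varphi}$ as a $\K$-module morphism, follows from closure of graded morphisms under composition (the algebra case being standard, the module case following from Lemma \ref{MorArmod_grading} applied with the zero grading shifts). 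The bimodule compatibility conditions (i) and (ii) of Definition \ref{def:BFS_Mor} propagate through composition by a direct chase, using the corresponding identities for the two factor morphisms. The three factorization compatibility conditions likewise chain together: $(\boldsymbol{g}^\prime \circ \boldsymbol{g})(e) = \boldsymbol{g}^\prime(e^\prime) = e^{\prime\prime}$, $(\boldsymbol{f}^\prime \circ \boldsymbol{f}) \circ \boldsymbol{r} = \boldsymbol{f}^\prime \circ \boldsymbol{r}^\prime \circ \boldsymbol{\varphi} = \boldsymbol{r}^{\prime\prime} \circ (\boldsymbol{\varphi}^\prime \circ \boldsymbol{\varphi})$, and analogously for $\boldsymbol{c}$.

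Finally, associativity of the composition and its compatibility with identities are inherited from the corresponding properties of ordinary function composition on each of the three component slots. No step here is a genuine obstacle; the only mild point to watch is that Lemma \ref{MorArmod_grading} must be invoked to ensure closure of graded right $\A$-module morphisms under composition in the middle slot, and that the base algebras $\A$, $\A^\prime$, $\A^{\prime\prime}$ change along the composition, so one must use the outer algebra morphisms $\boldsymbol{f}, \boldsymbol{f}^\prime$ to transport right-module structures—but this is already encoded in condition (i) of Definition \ref{def:BFS_Mor}\ref{Bimod_mor}.
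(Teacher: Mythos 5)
Your proposal is correct and follows the same approach the paper takes: the paper's proof is simply the word ``Immediate,'' and your verification (identities, componentwise composition, closure of the graded/bimodule/factorization conditions under composition, inherited associativity) is exactly the routine check that word elides.
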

\begin{proof}
    Immediate.
\end{proof}

\noindent In the subsequent sections, we shall make use of the following pullback property:
\begin{proposition} \label{prop:pullback_gr}
    Let $(\B, \bM, \A, \rho, e, \boldsymbol{r}, \boldsymbol{c})$ be an object of $\K{\text-}\BFS_\gr$ and $(\B^\prime, \bM^\prime, \A^\prime, \rho^\prime)$ be a graded $\K$-bimodule. Assume that there exists a graded $\K$-bimodule isomorphism
    \[
        (\boldsymbol{g}, \boldsymbol{\varphi}, \boldsymbol{f}) : (\B, \bM, \A, \rho) \to (\B^\prime, \bM^\prime, \A^\prime, \rho^\prime).
    \]
    Set $e^\prime := \boldsymbol{g}(e) \in \B^\prime$, $\boldsymbol{r}^\prime := \boldsymbol{f} \circ \boldsymbol{r} \circ \boldsymbol{\varphi}^{-1} \in \Mor_{\A^\prime{\text-}\rmod}(\bM^\prime, \A^\prime)$ and $\boldsymbol{c}^\prime := \boldsymbol{\varphi} \circ \boldsymbol{c} \circ \boldsymbol{f}^{-1} \in \Mor_{\A^\prime{\text-}\rmod}(\A^\prime, \bM^\prime)$. Then, the tuple
    \[
        (\B^\prime, \bM^\prime, \A^\prime, \rho^\prime, e^\prime, \boldsymbol{r}^\prime, \boldsymbol{c}^\prime)
    \]
    is an object of $\K{\text-}\BFS_\gr$. 
\end{proposition}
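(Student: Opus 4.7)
The plan is to mimic exactly the two-step strategy used for Proposition \ref{prop:pullback_fil}, adapted to the graded setting. First, I would invoke Proposition \ref{prop:pullback} to obtain the factorization identity $\rho^\prime(e^\prime) = \boldsymbol{c}^\prime \circ \boldsymbol{r}^\prime$ in $\mathrm{End}_{\A^\prime\text{-}\rmod}(\bM^\prime)$, which shows that $(\B^\prime, \bM^\prime, \A^\prime, \rho^\prime, e^\prime, \boldsymbol{r}^\prime, \boldsymbol{c}^\prime)$ is already an object of $\K\text{-}\BFS$. This step requires nothing new; it is the underlying ungraded content and reuses the chain of equalities from the proof of Proposition \ref{prop:pullback}.

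The remaining task is to verify that the three elements of the factorization structure live in the correct graded pieces, namely $e^\prime \in \B^\prime_1$, $\boldsymbol{r}^\prime \in \Mor_{\A^\prime\text{-}\rmod}(\bM^\prime, \A^\prime)_0$, and $\boldsymbol{c}^\prime \in \Mor_{\A^\prime\text{-}\rmod}(\A^\prime, \bM^\prime)_1$. Since $(\boldsymbol{g}, \boldsymbol{\varphi}, \boldsymbol{f})$ is a graded bimodule isomorphism, each of $\boldsymbol{g}$, $\boldsymbol{f}$ and $\boldsymbol{\varphi}$ preserves the grading, and the same holds for the inverses $\boldsymbol{f}^{-1}$ and $\boldsymbol{\varphi}^{-1}$. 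Combined with the compatibility of composition with gradings furnished by Lemma \ref{MorArmod_grading}, one immediately gets that $e^\prime = \boldsymbol{g}(e)$ lies in $\B^\prime_1$ from $e \in \B_1$, that $\boldsymbol{r}^\prime$ is of degree $0 + 0 + 0 = 0$, and that $\boldsymbol{c}^\prime$ is of degree $0 + 1 + 0 = 1$.

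I expect no genuine obstacle: the statement is the graded analogue of an already-proven filtered statement, and the proof is a bookkeeping exercise in degrees. The only minor thing to be careful about is that inverses of graded isomorphisms automatically preserve the grading in the appropriate sense (they are morphisms of $\K\text{-}\Mod_\gr$ and $\K\text{-}\alg_\gr$ respectively), so that the pre- and post-composition with $\boldsymbol{\varphi}^{-1}$ and $\boldsymbol{f}^{-1}$ in the definitions of $\boldsymbol{r}^\prime$ and $\boldsymbol{c}^\prime$ does not shift the degree. Once this is noted, the verification is two lines.
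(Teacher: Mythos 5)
Your proposal is correct and follows exactly the same two-step route as the paper's proof: first invoke Proposition \ref{prop:pullback} for the underlying ungraded factorization identity, then check the degrees using that $\boldsymbol{g}$, $\boldsymbol{\varphi}$, $\boldsymbol{f}$ and the inverses $\boldsymbol{\varphi}^{-1}$, $\boldsymbol{f}^{-1}$ preserve gradings and that composition is degree-additive (Lemma \ref{MorArmod_grading}). You even correctly record $\boldsymbol{c}^\prime$ as lying in degree $1$, in accordance with Definition \ref{def:BFSgr} (the paper's proof text has a small typo writing degree $0$ there).
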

\begin{proof}
    Recall from Proposition \ref{prop:pullback} that the tuple
    \((\B^\prime, \bM^\prime, \A^\prime, \rho^\prime, e^\prime, \boldsymbol{r}^\prime, \boldsymbol{c}^\prime)\)
    is an object of $\K{\text-}\BFS$. Moreover, since $e \in \B_1$ (resp. $\boldsymbol{r} \in \Mor_{\A{\text-}\rmod}(\bM, \A)_0$ and $\boldsymbol{c} \in \Mor_{\A{\text-}\rmod}(\A, \bM)_0$) and \linebreak $\boldsymbol{g} : \B \to \B^\prime$ (resp. $\boldsymbol{\varphi} : \bM \to \bM^\prime$ and $\boldsymbol{f} : \A \to \A^\prime$) preserve gradings, it follows that $e^\prime = \boldsymbol{g}(e) \in \B^\prime_1$ (resp. $\boldsymbol{r}^\prime = \boldsymbol{f} \circ \boldsymbol{r} \circ \boldsymbol{\varphi}^{-1} \in \Mor_{\A^\prime{\text-}\rmod}(\bM^\prime, \A^\prime)_0$ and $\boldsymbol{c}^\prime =  \boldsymbol{\varphi} \circ \boldsymbol{c} \circ \boldsymbol{f}^{-1} \in \Mor_{\A^\prime{\text-}\rmod}(\A^\prime, \bM^\prime)_0$).
\end{proof}

\subsection{The functor \texorpdfstring{$\K{\text-}\BFS_\fil \to \K{\text-}\BFS_\gr$}{k-BFSfil->k-BFSgr}} \label{BFSfil_BFSgr}

Let $\A$ be an object of $\K{\text-}\alg_\fil$ and $\bM$ and $\bM^\prime$ be two objects of $\A{\text-}\rmod_\fil$. Let $n \in \Z$. Thanks to Definition \ref{def:FnMor}, we have the following $\K$-module morphism
\[
    \F^n\Mor_{\A{\text-}\rmod}(\bM, \bM^\prime) \otimes \F^k\bM \to \F^{n+k}\bM^\prime 
\]
for any $k \in \Z$. One immediately checks that it induces a $\K$-module morphism
\[
    \gr_n\left(\Mor_{\A{\text-}\rmod}(\bM, \bM^\prime)\right) \otimes \gr_k(\bM) \to \gr_{n+k}(\bM^\prime) 
\]
and then the $\K$-module morphism
\[
    \gr_n\left(\Mor_{\A{\text-}\rmod}(\bM, \bM^\prime)\right) \to \Mor_{\A{\text-}\rmod}(\gr_k(\bM), \gr_{n+k}(\bM^\prime)).
\]
which enables us to define the $\K$-module morphism
\[
    \gr_n\left(\Mor_{\A{\text-}\rmod}(\bM, \bM^\prime)\right) \to \Mor_{\gr_m(\A){\text-}\rmod}(\gr_k(\bM) \otimes \gr_m(\A), \gr_{n+k}(\bM^\prime) \otimes \gr_m(\A)),
\]
for any $m \in \Z$. Thanks to Definition \ref{grading_Mor}, one then defines the $\K$-module morphism
\[
    \mathcal{g}_n^{\bM,\bM^\prime} : \gr_n\left(\Mor_{\A{\text-}\rmod}(\bM, \bM^\prime)\right) \to \Mor_{\gr(\A){\text-}\rmod}(\gr(\bM), \gr(\bM^\prime))_n 
\]
to be the direct sum over $k,m \in \Z$ of these $\K$-module morphisms. In particular, we set
\[
    \mathcal{g}_n^\bM := \mathcal{g}_n^{\bM,\bM} : \gr_n\left(\mathrm{End}_{\gr(\A){\text-}\rmod}(\bM)\right) \to \mathrm{End}_{\A{\text-}\rmod}(\gr(\bM))_n
\]
\begin{lemma}
    \label{grMor_compo}
    Let $\A$ be an object of $\K{\text-}\alg_\fil$ and $\bM$ and $\bM^\prime$ be two objects of $\A{\text-}\rmod_\fil$.
    For any $n \in \Z$, the $\K$-module morphism
    \[
        \mathcal{g}_n^{\bM,\bM^\prime} : \gr_n\left(\Mor_{\A{\text-}\rmod}(\bM, \bM^\prime)\right) \to \Mor_{\gr(\A){\text-}\rmod}(\gr(\bM), \gr(\bM^\prime))_n 
    \]
    is such that the following diagram
    \[\begin{tikzcd}
        \mbox{\footnotesize $\gr_n\left(\Mor_{\A{\text-}\rmod}(\bM, \bM^\prime)\right) \otimes \gr_{n'}\left(\Mor_{\A{\text-}\rmod}(\bM^\prime, \bM^{\prime\prime})\right)$} \ar[r] \ar[d, "\mathcal{g}_n^{\bM,\bM^\prime} \otimes \mathcal{g}_{n^\prime}^{\bM^\prime,\bM^{\prime\prime}}"'] & \mbox{\footnotesize $\gr_{n+n^\prime}\left(\Mor_{\A{\text-}\rmod}(\bM, \bM^{\prime\prime})\right)$} \ar[d, "\mathcal{g}_{n+n^\prime}^{\bM,\bM^{\prime\prime}}"] \\
        \mbox{\footnotesize $\Mor_{\gr(\A){\text-}\rmod}(\gr(\bM), \gr(\bM^\prime))_n \otimes \Mor_{\gr(\A){\text-}\rmod}(\gr(\bM^\prime), \gr(\bM^{\prime\prime}))_{n^\prime}$} \ar[r] & \mbox{\footnotesize $\Mor_{\gr(\A){\text-}\rmod}(\gr(\bM), \gr(\bM^{\prime\prime}))_{n+n^\prime}$}
    \end{tikzcd}\]
    commutes. In particular, the map
    \[
        \mathcal{g}^\bM := \bigoplus_{n \in \Z} \mathcal{g}_n^{\bM} : \gr\left(\mathrm{End}_{\A{\text-}\rmod}(\bM)\right) \to \mathrm{End}_{\gr(\A){\text-}\rmod}(\gr(\bM)) 
    \]
    is a morphism of $\K{\text-}\alg_\gr$.
\end{lemma}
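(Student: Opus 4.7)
The plan is to verify the commutativity of the displayed square by direct computation on representatives, then deduce the assertion about $\mathcal{g}^\bM$ as an immediate corollary. Given $\boldsymbol{\varphi} \in \F^n\Mor_{\A{\text-}\rmod}(\bM,\bM^\prime)$ and $\boldsymbol{\varphi^\prime} \in \F^{n^\prime}\Mor_{\A{\text-}\rmod}(\bM^\prime,\bM^{\prime\prime})$, the problem reduces to showing that both composites in the square send the elementary tensor $[\boldsymbol{\varphi}]_n \otimes [\boldsymbol{\varphi^\prime}]_{n^\prime}$ to the same graded morphism, and it suffices to test this equality on a homogeneous element $[m]_k \in \gr_k(\bM)$ for each $k \in \Z$.

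Along the top-then-right path, Lemma \ref{MorArmod_filtration} ensures that $\boldsymbol{\varphi^\prime} \circ \boldsymbol{\varphi}$ lies in $\F^{n+n^\prime}\Mor_{\A{\text-}\rmod}(\bM,\bM^{\prime\prime})$, so the composition map on gradeds sends the elementary tensor to $[\boldsymbol{\varphi^\prime} \circ \boldsymbol{\varphi}]_{n+n^\prime}$; unwinding the definition of $\mathcal{g}_{n+n^\prime}^{\bM,\bM^{\prime\prime}}$ shows that this class acts on $[m]_k$ by $[m]_k \mapsto [\boldsymbol{\varphi^\prime}(\boldsymbol{\varphi}(m))]_{k+n+n^\prime}$. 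Along the left-then-bottom path, $\mathcal{g}_n^{\bM,\bM^\prime}([\boldsymbol{\varphi}]_n)$ sends $[m]_k$ to $[\boldsymbol{\varphi}(m)]_{k+n}$ and $\mathcal{g}_{n^\prime}^{\bM^\prime,\bM^{\prime\prime}}([\boldsymbol{\varphi^\prime}]_{n^\prime})$ sends $[m^\prime]_{k^\prime}$ to $[\boldsymbol{\varphi^\prime}(m^\prime)]_{k^\prime+n^\prime}$; composing them on $[m]_k$ yields the same expression $[\boldsymbol{\varphi^\prime}(\boldsymbol{\varphi}(m))]_{k+n+n^\prime}$. This establishes the commutative square.

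For the final assertion, specializing to $\bM = \bM^\prime = \bM^{\prime\prime}$ and assembling the square over all $n, n^\prime \in \Z$ shows that $\mathcal{g}^\bM$ is multiplicative. It is graded by construction as a direct sum of maps of pure degree, and it preserves the unit because $\mathrm{id}_\bM \in \F^0\mathrm{End}_{\A{\text-}\rmod}(\bM)$ induces the identity on each $\F^k\bM$, which descends to $\mathrm{id}_{\gr_k(\bM)}$. I anticipate no conceptual obstacle; the argument is essentially index bookkeeping, the only subtlety being the preliminary check that $\mathcal{g}_n^{\bM,\bM^\prime}$ is well defined on classes modulo $\F^{n+1}$, which follows because any morphism in $\F^{n+1}\Mor_{\A{\text-}\rmod}(\bM,\bM^\prime)$ sends $\F^k\bM$ into $\F^{k+n+1}\bM^\prime$ and hence induces the zero map $\gr_k(\bM) \to \gr_{k+n}(\bM^\prime)$.
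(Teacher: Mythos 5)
Your proof is correct and follows the only reasonable approach: unwinding the definition of $\mathcal{g}_n^{\bM,\bM^\prime}$, checking the commutativity of the square on elementary tensors evaluated at homogeneous elements, and then deducing multiplicativity of $\mathcal{g}^\bM$; the paper declares this ``Immediate'' and your write-up supplies exactly the bookkeeping that lies behind that word.
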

\begin{proof}
    Immediate.
\end{proof}

\begin{lemma}
    \begin{enumerate}[label=(\alph*), leftmargin=*]
        \item \label{BFS_ob_gr_fil} If $(\B, \bM, \A, \rho, e, \boldsymbol{r}, \boldsymbol{c})$ is an object of $\K{\text-}\BFS_\fil$, then 
        \[
            \left(\gr(\B), \gr(\bM), \gr(\A), \mathcal{g}^\bM \circ \gr(\rho), [e]_1, \mathcal{g}_0^{\bM, \A}([\boldsymbol{r}]_0), \mathcal{g}_1^{\A, \bM}([\boldsymbol{c}]_1)\right)
        \]
        is an object of $\K{\text-}\BFS_\gr$.
        \item \label{BFS_ar_gr_fil} If $(\boldsymbol{g}, \boldsymbol{\varphi}, \boldsymbol{f})$ is a morphism of $\K{\text-}\BFS_\fil$, then the triple
        \[
            \big(\gr(\boldsymbol{g}), \gr(\boldsymbol{\varphi}), \gr(\boldsymbol{f})\big)
        \]
        is a morphism of $\K{\text-}\BFS_\gr$.
    \end{enumerate}
    \label{BFS_gr_fil}
\end{lemma}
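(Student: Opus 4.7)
The plan is to unpack each of the claims (a) and (b) into a list of routine checks and then identify the one non-trivial compatibility, which is the transport of the factorization identity $\rho(e) = \boldsymbol{c} \circ \boldsymbol{r}$ through $\gr$ via the morphisms $\mathcal{g}^{-,-}$.

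For part (a), I would first verify that the graded bimodule part $(\gr(\B), \gr(\bM), \gr(\A), \mathcal{g}^\bM \circ \gr(\rho))$ is indeed a graded $\K$-bimodule. That $\gr(\A), \gr(\B) \in \K{\text-}\alg_\gr$ and $\gr(\bM) \in \gr(\A){\text-}\rmod_\gr$ was already recorded in Section 1. For the action, note that $\gr(\rho) : \gr(\B) \to \gr(\mathrm{End}_{\A{\text-}\rmod}(\bM))$ is a morphism of graded $\K$-algebras by the functoriality of $\gr$ on $\K{\text-}\alg_\fil$, while $\mathcal{g}^\bM$ is a morphism of $\K{\text-}\alg_\gr$ by Lemma \ref{grMor_compo}; their composition is therefore a graded algebra morphism $\gr(\B) \to \mathrm{End}_{\gr(\A){\text-}\rmod}(\gr(\bM))$.

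Next I would check the degrees of the candidate factorization triple. Since $e \in \F^1\B$ by Definition \ref{def:BFSfil}, the class $[e]_1$ lies in $\gr_1(\B) = \gr(\B)_1$. Since $\boldsymbol{r} \in \F^0\Mor_{\A{\text-}\rmod}(\bM,\A)$ and $\boldsymbol{c} \in \F^1\Mor_{\A{\text-}\rmod}(\A,\bM)$, their classes $[\boldsymbol{r}]_0$ and $[\boldsymbol{c}]_1$ lie in the appropriate graded pieces, and applying $\mathcal{g}_0^{\bM,\A}$ and $\mathcal{g}_1^{\A,\bM}$ lands them in $\Mor_{\gr(\A){\text-}\rmod}(\gr(\bM),\gr(\A))_0$ and $\Mor_{\gr(\A){\text-}\rmod}(\gr(\A),\gr(\bM))_1$ respectively, matching Definition \ref{def:BFSgr}(b). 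The core point is then to verify the factorization identity
\[
    (\mathcal{g}^\bM \circ \gr(\rho))([e]_1) = \mathcal{g}_1^{\A,\bM}([\boldsymbol{c}]_1) \circ \mathcal{g}_0^{\bM,\A}([\boldsymbol{r}]_0).
\]
The left-hand side equals $\mathcal{g}_1^\bM([\rho(e)]_1)$ by naturality of $\gr$. The right-hand side equals $\mathcal{g}_1^\bM([\boldsymbol{c} \circ \boldsymbol{r}]_1)$ by the commutativity of the diagram in Lemma \ref{grMor_compo} applied with $(n,n')=(0,1)$. Both sides then agree because $\rho(e) = \boldsymbol{c} \circ \boldsymbol{r}$ inside $\F^1\mathrm{End}_{\A{\text-}\rmod}(\bM)$.

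For part (b), I would argue that $\gr(\boldsymbol{g})$ and $\gr(\boldsymbol{f})$ are morphisms of $\K{\text-}\alg_\gr$ and $\gr(\boldsymbol{\varphi})$ is a morphism of $\K{\text-}\Mod_\gr$ by functoriality of $\gr$, and that the bimodule-morphism axioms of Definition \ref{def:BFS_Mor}(a) are preserved grading-piece-by-piece. The three factorization-structure compatibilities then transfer as follows: $\gr(\boldsymbol{g})([e]_1) = [\boldsymbol{g}(e)]_1 = [e']_1$ from $\boldsymbol{g}(e) = e'$; the equality $\boldsymbol{f} \circ \boldsymbol{r} = \boldsymbol{r}' \circ \boldsymbol{\varphi}$ in $\F^0\Mor_{\A{\text-}\rmod}(\bM, \A')$ descends to an equality of classes in degree $0$, and applying $\mathcal{g}_0^{\bM,\A'}$ together with Lemma \ref{grMor_compo} yields $\gr(\boldsymbol{f}) \circ \mathcal{g}_0^{\bM,\A}([\boldsymbol{r}]_0) = \mathcal{g}_0^{\bM',\A'}([\boldsymbol{r}']_0) \circ \gr(\boldsymbol{\varphi})$; the argument for $\boldsymbol{c}$ is symmetric with $(n,n')=(1,0)$ replaced by $(0,1)$.

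I expect the main obstacle to be the bookkeeping in the factorization identity: ensuring that the composition $\mathcal{g}_1^{\A,\bM}([\boldsymbol{c}]_1) \circ \mathcal{g}_0^{\bM,\A}([\boldsymbol{r}]_0)$ is really what Lemma \ref{grMor_compo} outputs when we start from $[\boldsymbol{c} \circ \boldsymbol{r}]_1$, because the lemma's diagram is phrased in terms of a single composition map and we must invoke it at the specific bidegree $(0,1)$. Once this identification is in hand, both (a) and (b) reduce to routine diagram checks, and I would state the remainder as immediate.
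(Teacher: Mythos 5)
Your proof is correct and follows essentially the same path as the paper's: the paper packages the two key facts you invoke --- that $[-]_1$ intertwines $\F^1\rho$ with $\gr_1\rho$ and the composition in $\F^0 \otimes \F^1 \to \F^1$ with the composition in $\gr_0 \otimes \gr_1 \to \gr_1$, and that Lemma \ref{grMor_compo} then transports everything through $\mathcal{g}^\bM$ --- into a single three-row commutative diagram \eqref{diag:Fil_gr_Mor_End}, whereas you chain the two steps verbally. The degree checks and the treatment of part (b) as a routine verification likewise match the paper.
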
 
\begin{proof}
    \begin{enumerate}[label=(\alph*), leftmargin=*]
        \item Through a direct verification, one checks that the tuple 
        \[
            \left(\gr(\B), \gr(\bM), \gr(\A), \mathcal{g}^\bM \circ \gr(\rho)\right)
        \]
        is a graded $\K$-bimodule. Regarding the factorization structure, since $e \in \F^1\B$, \linebreak $\boldsymbol{r} \in \F^0\Mor_{\A{\text-}\rmod}(\bM, \A)$ and $\boldsymbol{c} \in \F^1\Mor_{\A{\text-}\rmod}(\A, \bM)$, one then obtains $[e]_1 \in \gr_1(\B)$, $\mathcal{g}_0^{\bM, \A}([\boldsymbol{r}]_0) \in \Mor_{\gr(\A){\text-}\rmod}(\gr(\bM), \gr(\A))_0$ and $\mathcal{g}_1^{\A, \bM}([\boldsymbol{c}]_1) \in \Mor_{\gr(\A){\text-}\rmod}(\gr(\A), \gr(\bM))_1$, respectively. Next, let us consider the diagram
        \begin{equation}\label{diag:Fil_gr_Mor_End}\begin{tikzcd}
            \mbox{\footnotesize$\F^0\Mor_{\A{\text-}\rmod}(\bM, \A) \otimes \F^1\Mor_{\A{\text-}\rmod}(\A, \bM)$} \ar[r] \ar[d, "{[-]_0 \otimes [-]_1}"', two heads] & \mbox{\footnotesize$\F^1\mathrm{End}_{\A{\text-}\rmod}(\bM)$} \ar[d, "{[-]_1}", two heads] & \F^1\B \ar[l, "\F^1\rho"'] \ar[d, "{[-]_1}", two heads] \\
            \mbox{\footnotesize$\gr_0\Mor_{\A{\text-}\rmod}(\bM, \A) \otimes \gr_1\Mor_{\A{\text-}\rmod}(\A, \bM)$} \ar[r] \ar[d, "{\mathcal{g}_0^{\bM, \A} \otimes \mathcal{g}_1^{\bM, \A}}"', two heads] & \mbox{\footnotesize$\gr_1\mathrm{End}_{\A{\text-}\rmod}(\bM)$} \ar[d, "{\mathcal{g}_1^\bM}", two heads] & \gr_1(\B) \ar[l, "\gr_1\rho"'] \ar[d, equal] \\
            \mbox{\footnotesize$\Mor_{\gr(\A){\text-}\rmod}(\gr(\bM), \gr(\A))_0 \otimes \Mor_{\gr(\A){\text-}\rmod}(\gr(\A), \gr(\bM))_1$} \ar[r] & \mbox{\footnotesize$\mathrm{End}_{\gr(\A){\text-}\rmod}(\gr(\bM))_1$} & \gr_1(\B) \ar[l, "\gr_1\rho"']
        \end{tikzcd}\end{equation}
        The top squares commute by the definitions of associated graded morphisms and the bottom squares commute thanks to Lemma \ref{grMor_compo}. Recall that $\rho(e) = \boldsymbol{c} \circ \boldsymbol{r}$ (equality in $\F^1\mathrm{End}_{\A{\text-}\rmod}(\bM)$). The image of this equality by $[-]_1$ is, using the commutativity of the top squares of Diagram \eqref{diag:Fil_gr_Mor_End}, the following equality
        \[
            \gr\rho([e]_1) = [\boldsymbol{c}]_1 \circ [\boldsymbol{r}]_0 \in \gr_1\mathrm{End}_{\A{\text-}\rmod}(\bM).
        \]
        Using the commutativity of the bottom square of diagram \eqref{diag:Fil_gr_Mor_End}, the image of this equality by $\mathcal{g}^\bM_1$ is the following equality
        \[
            \mathcal{g}^\bM_1 \circ \gr\rho([e]_1) = \mathcal{g}^{\A, \bM}_1([\boldsymbol{c}]_1) \circ \mathcal{g}^{\bM, \A}_0([\boldsymbol{r}]_0) \in \mathrm{End}_{\gr(\A){\text-}\rmod}(\gr(\bM))_1   
        \]
        \item Direct verification.
    \end{enumerate}
\end{proof}

\begin{corollary}
    The assignment given by \ref{BFS_ob_gr_fil} and \ref{BFS_ar_gr_fil} of Lemma \ref{BFS_gr_fil} defines a functor $\K{\text-}\BFS_\fil \to \BFS_\gr$.
\end{corollary}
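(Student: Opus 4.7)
The plan is to reduce the functoriality assertion to the already-established functoriality of the associated graded functors on the underlying categories $\K{\text-}\Mod_\fil \to \K{\text-}\Mod_\gr$ and $\K{\text-}\alg_\fil \to \K{\text-}\alg_\gr$. The content of parts \ref{BFS_ob_gr_fil} and \ref{BFS_ar_gr_fil} of Lemma \ref{BFS_gr_fil} already establishes that the prescribed assignments land in the correct categories; what remains for the corollary is only to verify compatibility with identities and composition, which is routine.

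First I would address identities. For an object $(\B, \bM, \A, \rho, e, \boldsymbol{r}, \boldsymbol{c})$ of $\K{\text-}\BFS_\fil$, the identity morphism is $(\mathrm{id}_\B, \mathrm{id}_\bM, \mathrm{id}_\A)$. Its image under the assignment from Lemma \ref{BFS_gr_fil} \ref{BFS_ar_gr_fil} is $(\gr(\mathrm{id}_\B), \gr(\mathrm{id}_\bM), \gr(\mathrm{id}_\A))$, which coincides with $(\mathrm{id}_{\gr(\B)}, \mathrm{id}_{\gr(\bM)}, \mathrm{id}_{\gr(\A)})$ by functoriality of $\gr$ on each component category. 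This is precisely the identity of the object produced by Lemma \ref{BFS_gr_fil} \ref{BFS_ob_gr_fil} in $\K{\text-}\BFS_\gr$.

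Next I would verify compatibility with composition. Given two composable morphisms $(\boldsymbol{g}, \boldsymbol{\varphi}, \boldsymbol{f})$ and $(\boldsymbol{g}', \boldsymbol{\varphi}', \boldsymbol{f}')$ in $\K{\text-}\BFS_\fil$, their composition is $(\boldsymbol{g}' \circ \boldsymbol{g}, \boldsymbol{\varphi}' \circ \boldsymbol{\varphi}, \boldsymbol{f}' \circ \boldsymbol{f})$. Applying the assignment and using functoriality of $\gr$ componentwise yields
\[
    (\gr(\boldsymbol{g}' \circ \boldsymbol{g}), \gr(\boldsymbol{\varphi}' \circ \boldsymbol{\varphi}), \gr(\boldsymbol{f}' \circ \boldsymbol{f})) = (\gr(\boldsymbol{g}') \circ \gr(\boldsymbol{g}), \gr(\boldsymbol{\varphi}') \circ \gr(\boldsymbol{\varphi}), \gr(\boldsymbol{f}') \circ \gr(\boldsymbol{f})),
\]
which is the composition in $\K{\text-}\BFS_\gr$ of the images. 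This completes the functor axioms.

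The only step requiring any substantive thought is the observation that $\gr$ is already a functor on each of the underlying categories — a fact recalled in the earlier sections of the paper (and standard, as in \cite[Chap. III, Sec. 2, no. 3 and no. 4]{Bbk}). I do not anticipate a genuine obstacle: Lemma \ref{BFS_gr_fil} has absorbed all the non-trivial content (well-definedness of the object assignment, including the compatibility of the graded factorization identity via the commutative diagram \eqref{diag:Fil_gr_Mor_End}, and well-definedness of the morphism assignment). The corollary is thus a formal consequence, and I would simply write ``Immediate, thanks to Lemma \ref{BFS_gr_fil} and the functoriality of $\gr$ on $\K{\text-}\Mod_\fil$ and $\K{\text-}\alg_\fil$.''
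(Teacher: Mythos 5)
Your proposal is correct and matches the paper's approach, which simply declares the verification ``Immediate.'' You have spelled out exactly the two routine checks (preservation of identities and of composition) that the paper leaves implicit, correctly reducing both to the functoriality of $\gr$ on $\K{\text-}\Mod_\fil$ and $\K{\text-}\alg_\fil$.
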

\begin{proof}
    Immediate.
\end{proof}

\subsection{The functors \texorpdfstring{$\K{\text-}\BFS \to \mathsf{Mor}(\K{\text-}\alg)$}{BFS->Mor(k-alg)}, \texorpdfstring{$\K{\text-}\BFS_\fil \to \mathsf{Mor}(\K{\text-}\alg_\fil)$}{BFSfil->Mor(k-algfil)} and \texorpdfstring{$\K{\text-}\BFS_\gr \to \mathsf{Mor}(\K{\text-}\alg_\gr)$}{BFSgr->Mor(k-alggr)}}
For any category $\mathcal{C}$, define the category $\mathsf{Mor}(\mathcal{C})$ whose objects are morphisms of $\mathcal{C}$ and whose morphisms are commutative diagrams. \newline
The forgetful functor $\K{\text-}\alg_\fil \to \K{\text-}\alg$ induces a functor
\begin{equation}
    \label{Moralgfil_Moralg}
    \mathsf{Mor}(\K{\text-}\alg_\fil) \to \mathsf{Mor}(\K{\text-}\alg).
\end{equation}
Moreover, the functor $\gr : \K{\text-}\alg_\fil \to \K{\text-}\alg_\gr$ induces a functor
\begin{equation}
    \label{Moralgfil_Moralggr}
    \mathsf{Mor}(\K{\text-}\alg_\fil) \to \mathsf{Mor}(\K{\text-}\alg_\gr).
\end{equation}

\subsubsection{The functor \texorpdfstring{$\K{\text-}\BFS \to \mathsf{Mor}(\K{\text-}\alg)$}{BFS->Mor(k-alg)}} \label{BFS_Moralg}
\begin{lemma}
    Let $\B$ be an object of $\K{\text-}\alg$ and $e \in \B$. For $b, b^\prime \in \B$, denote $b \cdot_e b^\prime := b e b^\prime$. Then $\K \oplus (\B, \cdot_e)$ is an object of $\K{\text-}\alg$ whose product\footnote{which will be abusively denoted ``$\cdot_e$" as well.} is given explicitly by
    \[
        (\lambda, b) \cdot_e (\lambda^\prime, b^\prime) := (\lambda \lambda^\prime, \lambda b^\prime + \lambda^\prime b + b \cdot_e b^\prime).
    \]
\end{lemma}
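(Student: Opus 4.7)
The plan is to exhibit $\K \oplus (\B, \cdot_e)$ as the standard unitalization of the (a priori non-unital) associative $\K$-algebra $(\B, \cdot_e)$. First, I would check that $(\B, \cdot_e)$ with product $b \cdot_e b' := beb'$ is $\K$-bilinear (inherited from the bilinearity of the product of $\B$) and associative, the latter reducing to the one-line identity
\[
    (b \cdot_e b') \cdot_e b'' \; = \; (beb')eb'' \; = \; be(b'eb'') \; = \; b \cdot_e (b' \cdot_e b''),
\]
which uses only the associativity of the original product on $\B$.

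Next, I would verify that the bilinear operation on $\K \oplus \B$ prescribed by the displayed formula admits $(1, 0)$ as a two-sided unit and is associative. The unit axiom is immediate: substituting $(\lambda, b) = (1, 0)$ (respectively $(\lambda', b') = (1, 0)$) collapses the right-hand side to $(\lambda', b')$ (respectively $(\lambda, b)$), since the $\cdot_e$-term and one of the two "mixed" terms involve the zero element of $\B$. For associativity, I would expand both iterated products $((\lambda, b) \cdot_e (\lambda', b')) \cdot_e (\lambda'', b'')$ and $(\lambda, b) \cdot_e ((\lambda', b') \cdot_e (\lambda'', b''))$ and compare them component-wise: the first components are both $\lambda\lambda'\lambda''$, while the second components decompose into six "mixed" terms, each linear in exactly one of the scalars $\lambda, \lambda', \lambda''$, which match on the nose, together with a single genuine triple product that reads $(b \cdot_e b') \cdot_e b''$ on one side and $b \cdot_e (b' \cdot_e b'')$ on the other and hence agrees by the first step.

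No genuine obstacle is expected here: the content of the statement is exactly the unitalization construction applied to the twisted product $(\B, \cdot_e)$, and once the associativity of $\cdot_e$ is recorded, the rest is formal bookkeeping of linear and trilinear terms.
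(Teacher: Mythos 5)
Your proof is correct, and since the paper's own proof is simply "Direct verification," your detailed check (associativity of $\cdot_e$, unit $(1,0)$, associativity of the unitalized product) is precisely what the paper leaves to the reader, nicely organized by recognizing the construction as the standard unitalization of the non-unital algebra $(\B,\cdot_e)$.
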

\begin{proof}
    Direct verification.
\end{proof}

\begin{propdef}
    \label{Delta}
    Let $(\B, \bM, \A, \rho, e, \boldsymbol{r}, \boldsymbol{c})$ be an object of $\K{\text-}\BFS$ and consider the evaluation map $\mathrm{End}_{\A{\text-}\rmod}(\A) \to \A$, $u \mapsto u(1_\A)$.
    Then, the map $\Delta : \K \oplus (\B, \cdot_e) \to \A$ given by $1 \mapsto 1_\A$ and for $b \in \B$,
    \[
        b \mapsto \boldsymbol{r} \circ \rho(b) \circ \boldsymbol{c} \, (1_\A),
    \]
    is a morphism of $\K{\text-}\alg$.
\end{propdef}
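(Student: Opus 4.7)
The plan is to factor the verification through the $\K$-algebra isomorphism
\[
    \mathrm{End}_{\A{\text-}\rmod}(\A) \xrightarrow{\sim} \A, \qquad u \mapsto u(1_\A),
\]
where $\mathrm{End}_{\A{\text-}\rmod}(\A)$ carries composition as its product. Any right $\A$-linear endomorphism $u$ of $\A$ is determined by its value at $1_\A$ via $u(a) = u(1_\A) \cdot a$, and under this identification composition of endomorphisms corresponds to multiplication in $\A$. So it suffices to show that $b \mapsto \boldsymbol{r} \circ \rho(b) \circ \boldsymbol{c}$ defines a non-unital $\K$-algebra morphism $(\B, \cdot_e) \to \mathrm{End}_{\A{\text-}\rmod}(\A)$, and to extend it to a unital morphism on $\K \oplus (\B, \cdot_e)$ by sending $1 \in \K$ to $\mathrm{id}_\A$; then $\Delta$ is obtained by post-composition with the evaluation map.

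$\K$-linearity is immediate since $\rho$ is $\K$-linear in $b$ and the maps $\boldsymbol{r}, \boldsymbol{c}$ are fixed $\K$-linear maps. The one substantive step is multiplicativity: for $b, b' \in \B$ I compute, inside $\mathrm{End}_{\A{\text-}\rmod}(\A)$,
\[
    \boldsymbol{r} \circ \rho(b \cdot_e b') \circ \boldsymbol{c} = \boldsymbol{r} \circ \rho(b) \circ \rho(e) \circ \rho(b') \circ \boldsymbol{c} = \boldsymbol{r} \circ \rho(b) \circ \boldsymbol{c} \circ \boldsymbol{r} \circ \rho(b') \circ \boldsymbol{c},
\]
where the first equality uses that $\rho$ is a morphism of $\K{\text-}\alg$ and the second inserts the factorization identity $\rho(e) = \boldsymbol{c} \circ \boldsymbol{r}$ in the middle. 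This is exactly the statement that $b \mapsto \boldsymbol{r} \circ \rho(b) \circ \boldsymbol{c}$ transforms $\cdot_e$ into composition. The mixed terms $\lambda b' + \lambda' b$ appearing in the explicit formula for $(\lambda, b) \cdot_e (\lambda', b')$ and the unit are then handled by $\K$-bilinearity and the prescription $1 \mapsto 1_\A$.

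I do not anticipate any genuine obstacle: the argument is a one-line manipulation inside an endomorphism algebra, and the role of the axiom $\rho(e) = \boldsymbol{c} \circ \boldsymbol{r}$ is precisely to convert the product $\cdot_e$ on $\B$ into composition on the endomorphism side. The only subtle point deserving explicit mention is the $\K$-algebra nature of $\mathrm{ev}_{1_\A}$, which follows from the description of elements of $\mathrm{End}_{\A{\text-}\rmod}(\A)$ as right multiplications.
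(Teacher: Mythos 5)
Your proposal is correct and matches the paper's proof: both reduce to showing that $b \mapsto \boldsymbol{r} \circ \rho(b) \circ \boldsymbol{c}$ defines a $\K$-algebra morphism into $\mathrm{End}_{\A{\text-}\rmod}(\A)$ and then compose with the evaluation isomorphism, with the multiplicativity verification using $\rho(e) = \boldsymbol{c}\circ\boldsymbol{r}$ exactly as you wrote. The only cosmetic difference is that the paper defines the unital map $\widetilde{\rho}$ on $\K\oplus(\B,\cdot_e)$ directly rather than first giving a non-unital morphism and then extending, but the substance is identical.
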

\begin{proof}
    Since the evaluation map $\mathrm{End}_{\A{\text-}\rmod}(\A) \to \A$, $u \mapsto u(1_\A)$ is a morphism of $\K{\text-}\alg$ (actually an isomorphism), it suffices to show that $\widetilde{\rho} : \K \oplus (\B, \cdot_e) \to \mathrm{End}_{\A{\text-}\rmod}(\A)$ given by $1 \mapsto 1_\A$ and for $b \in \B$,
    \[
        b \mapsto \boldsymbol{r} \circ \rho(b) \circ \boldsymbol{c},
    \]
    is a morphism of $\K{\text-}\alg$. Since, by definition $\widetilde{\rho}(1) = 1_\A$, it remains to show that $\widetilde{\rho}(b \cdot_e b^\prime) = \widetilde{\rho}(b) \circ \widetilde{\rho}(b^\prime)$ for $b, b^\prime \in \B$. Indeed, we have
    \begin{align*}
        \widetilde{\rho}(b \cdot_e b^\prime) & = \widetilde{\rho}(b e b^\prime) = \boldsymbol{r} \circ \rho(b e b^\prime) \circ \boldsymbol{c} = \boldsymbol{r} \circ \rho(b) \circ \rho(e) \circ \rho(b^\prime) \circ \boldsymbol{c} \\
        & = \boldsymbol{r} \circ \rho(b) \circ \boldsymbol{c} \circ \boldsymbol{r} \circ \rho(b^\prime) \circ \boldsymbol{c} = \widetilde{\rho}(b) \circ \widetilde{\rho}(b^\prime),   
    \end{align*}
    where the third equality comes from the fact that $\rho : \B \to \mathrm{End}_{\A{\text-}\rmod}(\bM)$ is an algebra morphism and the fourth one from the identity $\rho(e) = \boldsymbol{c} \circ \boldsymbol{r}$.
\end{proof}

\begin{corollary}
    The assignment
    \begin{equation}
        \label{func:BFS_MorAlg}
        (\B, \bM, \A, \rho, e, \boldsymbol{r}, \boldsymbol{c}) \mapsto (\Delta : \K \oplus (\B, \cdot_e) \to \A)
    \end{equation}
    defines a functor $\K{\text-}\BFS \to \mathsf{Mor}(\K{\text-}\alg)$.
\end{corollary}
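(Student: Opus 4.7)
The plan is to check the three things required for a functor: (i) the assignment on objects is well-defined, (ii) we specify and verify the assignment on morphisms, and (iii) functoriality (preservation of identities and composition). Point (i) is exactly the content of Proposition-Definition \ref{Delta}. Points (ii) and (iii) are what remains to address.

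First I would define the assignment on a morphism $(\boldsymbol{g}, \boldsymbol{\varphi}, \boldsymbol{f}) : (\B, \bM, \A, \rho, e, \boldsymbol{r}, \boldsymbol{c}) \to (\B^\prime, \bM^\prime, \A^\prime, \rho^\prime, e^\prime, \boldsymbol{r}^\prime, \boldsymbol{c}^\prime)$ of $\K{\text-}\BFS$. The natural candidate is the pair $(\K \oplus \boldsymbol{g}, \boldsymbol{f})$, where $\K \oplus \boldsymbol{g} : \K \oplus (\B, \cdot_e) \to \K \oplus (\B^\prime, \cdot_{e^\prime})$ sends $(\lambda, b)$ to $(\lambda, \boldsymbol{g}(b))$. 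To see that this is a morphism of $\K{\text-}\alg$, one notes that $\boldsymbol{g}(b \cdot_e b^\prime) = \boldsymbol{g}(b)\boldsymbol{g}(e)\boldsymbol{g}(b^\prime) = \boldsymbol{g}(b) \cdot_{e^\prime} \boldsymbol{g}(b^\prime)$ since $\boldsymbol{g}$ is an algebra morphism with $\boldsymbol{g}(e) = e^\prime$ (Definition \ref{def:BFS_Mor}).

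The core step is then to verify that the square
\[\begin{tikzcd}
\K \oplus (\B, \cdot_e) \ar[r, "\Delta"] \ar[d, "\K \oplus \boldsymbol{g}"'] & \A \ar[d, "\boldsymbol{f}"] \\
\K \oplus (\B^\prime, \cdot_{e^\prime}) \ar[r, "\Delta^\prime"'] & \A^\prime
\end{tikzcd}\]
commutes in $\K{\text-}\alg$, so that $(\K \oplus \boldsymbol{g}, \boldsymbol{f})$ is indeed a morphism of $\mathsf{Mor}(\K{\text-}\alg)$. Commutativity on the summand $\K$ is automatic since both $\Delta$ and $\Delta^\prime$ preserve the unit and $\boldsymbol{f}$ is unital. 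For $b \in \B$, the computation proceeds by applying the three compatibility conditions of Definition \ref{def:BFS_Mor} in sequence: start from $\Delta^\prime(\boldsymbol{g}(b)) = \boldsymbol{r}^\prime \circ \rho^\prime(\boldsymbol{g}(b)) \circ \boldsymbol{c}^\prime(1_{\A^\prime})$, use $\boldsymbol{c}^\prime \circ \boldsymbol{f} = \boldsymbol{\varphi} \circ \boldsymbol{c}$ with $\boldsymbol{f}(1_\A) = 1_{\A^\prime}$ to rewrite $\boldsymbol{c}^\prime(1_{\A^\prime}) = \boldsymbol{\varphi}(\boldsymbol{c}(1_\A))$, then $\rho^\prime(\boldsymbol{g}(b)) \circ \boldsymbol{\varphi} = \boldsymbol{\varphi} \circ \rho(b)$, and finally $\boldsymbol{r}^\prime \circ \boldsymbol{\varphi} = \boldsymbol{f} \circ \boldsymbol{r}$, to arrive at $\boldsymbol{f}(\boldsymbol{r} \circ \rho(b) \circ \boldsymbol{c}(1_\A)) = \boldsymbol{f}(\Delta(b))$. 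This is essentially the only non-routine piece of the argument, though it remains a direct unwinding.

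Functoriality is then immediate: $\K \oplus \mathrm{id}_\B = \mathrm{id}_{\K \oplus (\B, \cdot_e)}$ and the assignment on the $\A$-component is literally $\boldsymbol{f} \mapsto \boldsymbol{f}$, so identities are preserved; and for a composable pair of morphisms in $\K{\text-}\BFS$, the composition $(\K \oplus (\boldsymbol{g}^\prime \circ \boldsymbol{g}), \boldsymbol{f}^\prime \circ \boldsymbol{f})$ coincides with the composition of the images because $\K \oplus (-)$ is itself functorial on algebra morphisms fixing $e \mapsto e^\prime \mapsto e^{\prime\prime}$. I therefore expect the proof to consist mainly of writing down the commutative square, performing the three-step rewriting above, and remarking that the remaining verifications are immediate.
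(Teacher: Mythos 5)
Your proposal is correct and takes the expected approach; the paper's own proof is simply the phrase ``Immediate verification,'' so what you have written is precisely the spelled-out version of the verification the authors leave to the reader, with the key unwinding being the three-step rewriting via the compatibility conditions of Definition \ref{def:BFS_Mor}(b).
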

\begin{proof}
    Immediate verification.
\end{proof}

\begin{remark}
    Considering the functors $F, G : \K{\text-}\BFS \to \K{\text-}\alg$ given by $(\B, \bM, \A, \rho, e, \boldsymbol{r}, \boldsymbol{c}) \mapsto \K \oplus (\B, \cdot_e)$ and $(\B, \bM, \A, \rho, e, \boldsymbol{r}, \boldsymbol{c}) \mapsto \A$ respectively, the assignment 
    \[
        (\B, \bM, \A, \rho, e, \boldsymbol{r}, \boldsymbol{c}) \mapsto (\Delta : \K \oplus (\B, \cdot_e) \to \A)
    \]
    is a natural transformation from $F$ to $G$. 
\end{remark}

\subsubsection{The functor \texorpdfstring{$\K{\text-}\BFS_\fil \to \mathsf{Mor}(\K{\text-}\alg_\fil)$}{BFSfil->Mor(k-algfil)}} \label{BFSfil_Moralgfil}
\begin{lemma}
    Let $\B$ be an object of $\K{\text-}\alg_\fil$ and $e \in \F^1\B$. Then $\K \oplus (\B, \cdot_e)$ is an object of $\K{\text-}\alg_\fil$ with an algebra filtration given by
    \[
        \F^0(\K \oplus (\B, \cdot_e)) = \K \oplus (\B, \cdot_e) \text{ and } \F^n (\K \oplus (\B, \cdot_e)) = \F^{n-1}\B \text{ for } n \geq 1
    \]
\end{lemma}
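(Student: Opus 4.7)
The plan is to verify the two requirements in the definition of a filtered $\K$-algebra: that $(\F^n(\K \oplus (\B, \cdot_e)))_{n \in \Z}$ is a decreasing $\K$-module filtration, and that the product $\cdot_e$ is compatible with it in the sense that $\F^k \cdot_e \F^n \subseteq \F^{k+n}$ and $1 \in \F^0$. For the indices $n \leq 0$ not mentioned in the statement, I read $\F^n := \K \oplus \B$, so that the extended family is still decreasing and agrees with the stated formula for $n \geq 0$; correspondingly I use the convention $\F^m \B = \B$ for $m \leq 0$ that comes with a filtered algebra.

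The decreasing property and the unit condition are immediate. The inclusion $\F^1 = \F^0\B \subseteq \K \oplus \B = \F^0$ identifies $b \mapsto (0, b)$, and for $n \geq 1$ the inclusion $\F^{n+1} = \F^n\B \subseteq \F^{n-1}\B = \F^n$ is part of the given filtration on $\B$; the unit $(1, 0)$ sits in $\F^0$ by construction.

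The only content is multiplicativity, which I would check by cases. The main case is $k, n \geq 1$: for $(0, b) \in \F^k$ with $b \in \F^{k-1}\B$ and $(0, b^\prime) \in \F^n$ with $b^\prime \in \F^{n-1}\B$, the explicit formula above the lemma gives
\[
   (0, b) \cdot_e (0, b^\prime) = (0, b e b^\prime),
\]
and since $e \in \F^1\B$ the filtration axioms on $\B$ yield $b e b^\prime \in \F^{k-1}\B \cdot \F^1\B \cdot \F^{n-1}\B \subseteq \F^{k+n-1}\B = \F^{k+n}$. The mixed cases where exactly one of $k, n$ vanishes introduce an additional scalar term $\lambda b^\prime$ or $\lambda^\prime b$, each of which lies in the required filtration piece by the same reasoning (using $\F^0\B = \B$ to handle the factor with no filtration constraint); the case $k = n = 0$ is trivial since then $\F^{k+n}$ is the whole algebra.

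There is no real obstacle here: the statement is a routine consequence of the hypothesis $e \in \F^1\B$ together with the filtration axioms on $\B$, the only mild point being the bookkeeping of the boundary case $k = 0$ or $n = 0$ where a scalar summand of the product formula must be absorbed into the filtration.
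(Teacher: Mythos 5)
Your proof is correct and spells out in full the routine verification that the paper compresses into the one-line remark ``Follows from the fact that $e \in \F^1\B$''; the case $k, n \geq 1$, where $b e b' \in \F^{k-1}\B \cdot \F^1\B \cdot \F^{n-1}\B \subseteq \F^{k+n-1}\B$ uses $e \in \F^1\B$ in the middle slot, is exactly the content, and the boundary cases are as trivial as you say. One small caveat: the convention $\F^m\B = \B$ for $m \leq 0$ that you invoke in the mixed cases is not actually part of the paper's definition of $\K{\text-}\alg_\fil$ (which only asks for a decreasing $\Z$-indexed filtration with $1 \in \F^0\B$ and $\F^k\B \cdot \F^n\B \subseteq \F^{k+n}\B$), so it should be read as a tacit hypothesis on $\B$ rather than an automatic feature --- without it, $\F^0 \cdot_e \F^n \subseteq \F^n$ can fail, as one sees for $\B = \K[x,x^{-1}]$ with $\F^n\B = x^n\K[x]$ and $e = x$. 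This is harmless in the paper, which only applies the lemma to non-negatively filtered algebras such as $\V^\Betti$.
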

\begin{proof}
    Follows from the fact that $e \in \F^1\B$.
\end{proof}

\begin{corollary} \label{cor:Deltafil}
    If $(\B, \bM, \A, \rho, e, \boldsymbol{r}, \boldsymbol{c})$ is an object of $\K{\text-}\BFS_\fil$, then the map $\Delta : \K \oplus (\B, \cdot_e) \to \A$ defined in Proposition-Definition \ref{Delta} is a morphism of $\K{\text-}\alg_\fil$. Moreover, the assignment
    \begin{equation}
        \label{func:BFS_MorAlgfil}
        (\B, \bM, \A, \rho, e, \boldsymbol{r}, \boldsymbol{c}) \mapsto (\Delta : \K \oplus (\B, \cdot_e) \to \A)
    \end{equation}
    is a functor $\K{\text-}\BFS_\fil \to \mathsf{Mor}(\K{\text-}\alg_\fil)$.
\end{corollary}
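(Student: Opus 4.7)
The claim has two parts: first, that $\Delta$ is a morphism in $\K{\text-}\alg_\fil$, and second, that the assignment is functorial. Since Proposition-Definition \ref{Delta} already produces $\Delta$ as a morphism of $\K{\text-}\alg$, the first part reduces to checking compatibility with the filtrations set up in the preceding lemma.

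The main computation is an index-chase using Lemma \ref{MorArmod_filtration}. Given $n \geq 1$ and $b \in \F^n(\K \oplus (\B, \cdot_e)) = \F^{n-1}\B$, the fact that $\rho$ is a morphism of $\K{\text-}\alg_\fil$ gives $\rho(b) \in \F^{n-1}\mathrm{End}_{\A{\text-}\rmod}(\bM)$. Combined with $\boldsymbol{r} \in \F^0\Mor_{\A{\text-}\rmod}(\bM, \A)$ and $\boldsymbol{c} \in \F^1\Mor_{\A{\text-}\rmod}(\A, \bM)$, Lemma \ref{MorArmod_filtration} then places the composition $\boldsymbol{r} \circ \rho(b) \circ \boldsymbol{c}$ in $\F^n\Mor_{\A{\text-}\rmod}(\A, \A)$; evaluating at $1_\A \in \F^0\A$ and using Definition \ref{def:FnMor} delivers $\Delta(b) \in \F^n\A$. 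The remaining level $n = 0$ is covered by $\Delta(1) = 1_\A \in \F^0\A$ together with the inclusion $\Delta(\F^0\B) \subset \F^1\A \subset \F^0\A$ obtained from the case $n = 1$.

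For the functoriality half, starting from a morphism $(\boldsymbol{g}, \boldsymbol{\varphi}, \boldsymbol{f})$ of $\K{\text-}\BFS_\fil$, I would observe that the corresponding square in $\mathsf{Mor}(\K{\text-}\alg)$ already commutes by the functoriality of \eqref{func:BFS_MorAlg}, so only the filtered nature of its sides must be verified. The induced map $\K \oplus (\B, \cdot_e) \to \K \oplus (\B^\prime, \cdot_{e^\prime})$ sends $\F^n$ to $\F^n$ because $\boldsymbol{g}$ takes $\F^{n-1}\B$ into $\F^{n-1}\B^\prime$ for $n \geq 1$ and the identity on $\K$ handles $n = 0$; the map $\boldsymbol{f}$ is filtered by hypothesis.

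I do not anticipate any real obstacle: the argument is purely a matter of tracking filtration indices on top of the already-established algebra-morphism statement. The only slightly delicate point is the asymmetric description of the filtration on $\K \oplus (\B, \cdot_e)$ at the level $n = 0$ versus $n \geq 1$, which is absorbed by $1_\A \in \F^0\A$ together with the $n = 1$ computation as indicated above.
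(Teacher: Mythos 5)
Your argument is correct and follows the same route as the paper's (very terse) proof: filtration-compatibility of $\rho$, the degree assumptions $\boldsymbol{r} \in \F^0$ and $\boldsymbol{c} \in \F^1$, and the compatibility of composition with filtrations from Lemma \ref{MorArmod_filtration}, followed by evaluating at $1_\A \in \F^0\A$. You simply make explicit the index bookkeeping (including the slight asymmetry at level $n=0$, which the paper leaves implicit by relying on $\F^0\B = \B$ as in all its applications) and the straightforward verification of functoriality, neither of which changes the substance of the argument.
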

\begin{proof}
    Follows from the fact that $\rho$ is compatible with filtrations, from $\boldsymbol{r} \in \F^0\Mor_{\A{\text-}\rmod}(\bM, \A)$ and $\boldsymbol{c} \in \F^1\Mor_{\A{\text-}\rmod}(\A, \bM)$ and from the compatibility of the composition with filtrations.
\end{proof}

\subsubsection{The functor \texorpdfstring{$\K{\text-}\BFS_\gr \to \mathsf{Mor}(\K{\text-}\alg_\gr)$}{BFSgr->Mor(k-alggr)}} \label{BFSgr_Moralggr}
\begin{lemma}
    Let $\B$ be an object of $\K{\text-}\alg_\gr$ and $e \in \B_1$. Then $\K \oplus (\B, \cdot_e)$ is an object of $\K{\text-}\alg_\gr$ with an algebra grading given by
    \[
        (\K \oplus (\B, \cdot_e))_0 = \K \text{ and } (\K \oplus (\B, \cdot_e))_n = \B_{n-1} \text{ for } n \geq 1
    \]
\end{lemma}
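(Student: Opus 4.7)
The plan is to verify directly the three defining properties of a graded $\K$-algebra for the decomposition
\[
    \K \oplus (\B, \cdot_e) = \K \oplus \bigoplus_{n \geq 1} \B_{n-1},
\]
that is: (i) the given family of $\K$-submodules realises a grading on the underlying $\K$-module; (ii) the product $\cdot_e$ sends $(\K \oplus (\B, \cdot_e))_k \otimes (\K \oplus (\B, \cdot_e))_n$ into $(\K \oplus (\B, \cdot_e))_{k+n}$; (iii) the unit $1 = (1_\K, 0)$ lies in degree $0$.

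For (i), the decomposition is immediate from the grading $\B = \bigoplus_{m \in \Z} \B_m$ on $\B$ as an object of $\K{\text-}\alg_\gr$, combined with the external direct sum with $\K$ placed in degree $0$ (noting that only nonnegative degrees occur, since in the convention adopted $\B$ has no components below degree $0$, or, more neutrally, the formula $(\K \oplus (\B, \cdot_e))_n = \B_{n-1}$ is extended by $\B_{n-1} = 0$ for $n \leq 0$ apart from the $\K$-summand in degree $0$).

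For (ii), I would check the four cases arising from the product formula
\[
    (\lambda, b) \cdot_e (\lambda^\prime, b^\prime) = (\lambda \lambda^\prime, \lambda b^\prime + \lambda^\prime b + b e b^\prime).
\]
The scalar--scalar case is trivial. The scalar--element cases give $(0, \lambda b^\prime)$ (resp.\ $(0, \lambda^\prime b)$), which lies in the correct degree since multiplication by a scalar preserves the grading of $\B$. The key case is $(0,b) \cdot_e (0,b^\prime) = (0, b e b^\prime)$ with $b \in \B_{k-1}$ and $b^\prime \in \B_{n-1}$: since $e \in \B_1$ and the product on $\B$ respects the grading, $b e b^\prime \in \B_{(k-1) + 1 + (n-1)} = \B_{k+n-1} = (\K \oplus (\B, \cdot_e))_{k+n}$, as required. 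Finally (iii) is built into the definition of the degree-$0$ part. I do not anticipate any genuine obstacle: the statement is the graded counterpart of the filtered lemma immediately above, and the hypothesis $e \in \B_1$ plays exactly the same role as $e \in \F^1 \B$ did there, shifting the grading of $\B$ by one inside $\K \oplus (\B, \cdot_e)$.
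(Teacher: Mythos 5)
Your proof is correct and is essentially the paper's argument spelled out: the paper's entire proof is the sentence ``Follows from the fact that $e \in \B_1$,'' which is exactly your computation $b \, e \, b' \in \B_{(k-1)+1+(n-1)} = \B_{k+n-1}$. Your parenthetical observation that the stated grading only accounts for $\B_m$ with $m \geq 0$ (so the lemma implicitly assumes $\B$ is non-negatively graded, as it is for $\V^{\DeRham}$ in the application) is a reasonable point of care that the paper leaves unspoken.
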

\begin{proof}
    Follows from the fact that $e \in \B_1$.
\end{proof}

\begin{corollary} \label{cor:Deltagr}
    If $(\B, \bM, \A, \rho, e, \boldsymbol{r}, \boldsymbol{c})$ is an object of $\K{\text-}\BFS_\gr$, then the map $\Delta : \K \oplus (\B, \cdot_e) \to \A$ defined in Proposition-Definition \ref{Delta} is a morphism of $\K{\text-}\alg_\gr$. Moreover, the assignment
    \begin{equation*} 
        (\B, \bM, \A, \rho, e, \boldsymbol{r}, \boldsymbol{c}) \mapsto (\Delta : \K \oplus (\B, \cdot_e) \to \A)
    \end{equation*}
    is a functor $\K{\text-}\BFS_\gr \to \mathsf{Mor}(\K{\text-}\alg_\gr)$.
\end{corollary}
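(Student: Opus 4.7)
The plan is to transport the argument for Corollary \ref{cor:Deltafil} to the graded setting, replacing the filtration bounds by the exact degree counting provided by Lemma \ref{grading_Mor} and Definition \ref{def:BFSgr}. Since Proposition-Definition \ref{Delta} already establishes that $\Delta$ is a morphism of $\K{\text-}\alg$, the only new content is (i) verifying compatibility with gradings on objects, and (ii) checking functoriality on arrows.

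For (i), I would fix $n \geq 1$ and take $b$ in the degree-$n$ component of $\K \oplus (\B, \cdot_e)$, which by the preceding lemma equals $\B_{n-1}$. Tracing through the definition $\Delta(b) = \boldsymbol{r} \circ \rho(b) \circ \boldsymbol{c}(1_\A)$, I would read off the degree of each factor using the graded data: $1_\A \in \A_0$; $\boldsymbol{c} \in \Mor_{\A{\text-}\rmod}(\A, \bM)_1$ gives $\boldsymbol{c}(1_\A) \in \bM_1$; $\rho$ being a morphism of $\K{\text-}\alg_\gr$ yields $\rho(b) \in \mathrm{End}_{\A{\text-}\rmod}(\bM)_{n-1}$, whence $\rho(b) \circ \boldsymbol{c}(1_\A) \in \bM_n$; finally $\boldsymbol{r} \in \Mor_{\A{\text-}\rmod}(\bM, \A)_0$ lands the result in $\A_n$. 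Combined with $\Delta(1) = 1_\A \in \A_0$, this shows $\Delta$ preserves gradings. The composition step uses Lemma \ref{MorArmod_grading}, so this is essentially bookkeeping rather than a genuine obstacle.

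For (ii), given a morphism $(\boldsymbol{g}, \boldsymbol{\varphi}, \boldsymbol{f})$ of $\K{\text-}\BFS_\gr$, the analogue of the functor in Section \ref{BFS_Moralg} produces the commutative square whose vertical arrows are $\boldsymbol{g}$ (extended by identity on $\K$) and $\boldsymbol{f}$. Since $\boldsymbol{g}$ and $\boldsymbol{f}$ are graded by assumption, and since $\boldsymbol{g}(e) = e'$ guarantees that the source algebra structures $\cdot_e$ and $\cdot_{e'}$ match, the induced square lives in $\mathsf{Mor}(\K{\text-}\alg_\gr)$. Functoriality on composition and identities is inherited directly from the functor $\K{\text-}\BFS \to \mathsf{Mor}(\K{\text-}\alg)$.

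I do not anticipate a hard step: the graded version is strictly rigid compared to the filtered one, with equalities of degrees replacing inequalities. The only place to be careful is the shift $(\K \oplus (\B, \cdot_e))_n = \B_{n-1}$, which exactly absorbs the extra $+1$ produced by $\boldsymbol{c}$ having degree $1$, so that $\Delta$ indeed sends degree $n$ to degree $n$ rather than degree $n-1$ to degree $n$.
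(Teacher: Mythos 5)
Your argument is correct and follows the same approach as the paper, whose proof simply asserts that the result follows from $\rho$ being compatible with gradings, from $\boldsymbol{r} \in \Mor_{\A{\text-}\rmod}(\bM, \A)_0$ and $\boldsymbol{c} \in \Mor_{\A{\text-}\rmod}(\A, \bM)_1$, and from compatibility of composition with gradings. Your explicit degree-tracing through $\boldsymbol{c}(1_\A) \in \bM_1 \mapsto \rho(b)(\boldsymbol{c}(1_\A)) \in \bM_n \mapsto \boldsymbol{r}(\cdots) \in \A_n$, together with the observation that the shift $(\K \oplus (\B,\cdot_e))_n = \B_{n-1}$ cancels the $+1$ contributed by $\boldsymbol{c}$, is precisely what that terse statement unpacks to.
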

\begin{proof}
    Follows from the fact that $\rho$ is compatible with gradings, from $\boldsymbol{r} \in \Mor_{\A{\text-}\rmod}(\bM, \A)_0$ and $\boldsymbol{c} \in \Mor_{\A{\text-}\rmod}(\A, \bM)_1$, and from the compatibility of the composition with gradings.
\end{proof}
    \section{The objects \texorpdfstring{$\mathcal{O}^\Betti_\mat$}{OBmat}, \texorpdfstring{$\mathcal{O}^\Betti_{\mat, \fil}$}{OBmatfil} and \texorpdfstring{$\mathcal{O}^\DeRham_\mat$}{ODRmat} and the harmonic coproducts} \label{sec:OBmat_OBmatfil_ODRmat}
In this section, we construct explicit bimodules with factorization structures associated to the Betti and de Rham realizations of the double shuffle theory developed in \cite{EF1} (Corollary \ref{cor:tuple_OBmat}). Building on the functor $\K{\text-}\BFS \to \mathsf{Mor}(\K{\text-}\alg)$ introduced in the previous section, we then define algebra morphisms \eqref{eq:Delta_OB} and \eqref{eq:Delta_ODR} that we demonstrate to correspond precisely to the harmonic coproducts described in \cite{EF1} (Theorems \ref{thm:DeltaWB_DeltaOB} and \ref{thm:DeltaWDR_DeltaODR}).

\subsection{The object \texorpdfstring{$\mathcal{O}^\Betti_\mat$}{OBmat} of \texorpdfstring{$\K{\text-}\BFS$}{k-BFS}}
Let $F_2$ be the free group with generators $X_0$ and $X_1$ and let $\V^\Betti := \K{F_2}$ be its group algebra. For $i \in \{0,1\}$, we will abusively denote
\[
    X_i := X_i \otimes 1 \in \V^\Betti \otimes \V^\Betti \text{ and } Y_i := 1 \otimes X_i \in \V^\Betti \otimes \V^\Betti. 
\]
Recall from \cite[Sec. 7.2.3]{EF1} the algebra morphism $\underline{\rho} : \V^\Betti \to \M_3(\V^\Betti \otimes \V^\Betti)$ given by
\[
    \mbox{\small$\underline{\rho}(X_0) = \begin{pmatrix} X_0 & 0 & 0 \\ 0 & (1 - X_1) X_0 + Y_1^{-1} Y_0 Y_1 & (1 - X_1) X_0 X_1 \\ 0 & X_0 - Y_1^{-1} Y_0 Y_1 X_1^{-1} & X_0 X_1 \end{pmatrix} \text{ and } \underline{\rho}(X_1) = \begin{pmatrix} (X_1 - 1) Y_1 + 1 & Y_1 (1 - Y_1) & 0 \\ 1 -X_1 & Y_1 & 0 \\ 0 & 0 & 1 \end{pmatrix}$}
\]

\begin{propdef} \label{propdef:Betti_bimod}
    The tuple $(\V^\Betti, (\V^\Betti \otimes \V^\Betti)^{\oplus 3}, \V^\Betti \otimes \V^\Betti, \mathsf{r}\underline{\rho})$ is a $\K$-bimodule where $\mathsf{r}\underline{\rho} : \V^\Betti \to \M_3(\V^\Betti \otimes \V^\Betti)$ is the $\K$-algebra morphism given by the composition
    \[
        \mathsf{r}\underline{\rho} := \Ad_{\diag(Y_1, X_1, (X_0X_1)^{-1} Y_1^{-1} Y_0 Y_1)^{-1}} \circ \M_3(\mathrm{op}_{F_2^2}) \circ {^t}(-) \circ \underline{\rho} \circ \mathrm{op}_{F_2}.
    \]
\end{propdef}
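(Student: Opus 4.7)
The plan is to reduce the statement to showing that $\mathsf{r}\underline{\rho}$ is a $\K$-algebra morphism from $\V^\Betti$ to the right-endomorphism algebra of $(\V^\Betti \otimes \V^\Betti)^{\oplus 3}$, which is canonically identified with $\M_3(\V^\Betti \otimes \V^\Betti)$. The bimodule structure then comes for free, since a $\K$-algebra morphism into this endomorphism algebra is exactly what is needed to equip the free right $(\V^\Betti \otimes \V^\Betti)$-module $(\V^\Betti \otimes \V^\Betti)^{\oplus 3}$ with a commuting left $\V^\Betti$-action.

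To establish that $\mathsf{r}\underline{\rho}$ is an algebra morphism, I would perform a parity count on the five-factor composition defining it, classifying each piece as an algebra morphism (``even'') or an anti-algebra morphism (``odd''). The outer factors are straightforward: $\mathrm{op}_{F_2}$ and $\mathrm{op}_{F_2^2}$ are odd, being the anti-automorphisms of $\V^\Betti$ and $\V^\Betti \otimes \V^\Betti$ induced by group inversion on $F_2$ and $F_2 \times F_2$ respectively; $\underline{\rho}$ is even by \cite[Sec.~7.2.3]{EF1}; and $\Ad_{D^{-1}}$, with $D := \diag(Y_1, X_1, (X_0 X_1)^{-1} Y_1^{-1} Y_0 Y_1)$, is even once I observe that $D$ is invertible, each diagonal entry lying in the image of $F_2 \times F_2 \subset (\V^\Betti \otimes \V^\Betti)^\times$.

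The main obstacle concerns the middle factor $\M_3(\mathrm{op}_{F_2^2}) \circ {}^t(-)$. When entries lie in a non-commutative ring $R$, neither matrix transposition ${}^t(-) : \M_3(R) \to \M_3(R)$ nor the entrywise application $\M_3(\mathrm{op}_{F_2^2})$ is by itself an algebra (anti-)morphism---for instance $(AB)^t \neq B^t A^t$ in general---so one must group these two factors before applying any parity reasoning. I would then verify by a direct entrywise computation that, for any anti-automorphism $\phi$ of $R$ and any $A, B \in \M_3(R)$,
\[
    \big(\M_3(\phi) \circ {}^t\big)(AB) = \big(\M_3(\phi) \circ {}^t\big)(B) \cdot \big(\M_3(\phi) \circ {}^t\big)(A),
\]
both sides having $(i,j)$-entry equal to $\sum_k \phi(B_{ki})\phi(A_{jk})$, so that the combined factor is odd. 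The composition then chains two odd factors ($\mathrm{op}_{F_2}$ and $\M_3(\mathrm{op}_{F_2^2}) \circ {}^t(-)$) with two even ones ($\underline{\rho}$ and $\Ad_{D^{-1}}$); an even number of anti-morphisms composes to an algebra morphism, which finishes the verification.
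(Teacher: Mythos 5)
Your proposal is correct and fills in exactly what the paper's terse proof leaves implicit: the paper simply appeals to the algebra morphism status of $\underline{\rho}$ and the identification $\M_3(\V^\Betti \otimes \V^\Betti) \simeq \mathrm{End}_{(\V^\Betti \otimes \V^\Betti)\text{-}\rmod}\big((\V^\Betti \otimes \V^\Betti)^{\oplus 3}\big)$, relying tacitly on the appendix identity \eqref{tMf_identity} which is precisely your observation that $\M_3(\phi) \circ {}^t(-)$ is an algebra anti-morphism when $\phi$ is. Your parity count and your warning that neither ${}^t(-)$ nor $\M_3(\mathrm{op}_{F_2^2})$ is individually an (anti-)morphism over the noncommutative ring $\V^\Betti \otimes \V^\Betti$ are accurate and capture the real content of the assertion.
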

\begin{proof}
    This follows from the algebra morphism status of $\underline{\rho}$ and from the $\K$-algebra isomorphism $\M_3(\V^\Betti \otimes \V^\Betti) \simeq \mathrm{End}_{(\V^\Betti \otimes \V^\Betti){\text-}\rmod}\left((\V^\Betti \otimes \V^\Betti)^{\oplus 3}\right)$. 
\end{proof}

\begin{definition} \label{def:rurow_rucol}
    Set
    \begin{equation*}
        \mathsf{r}\underline{\row} := \begin{pmatrix} 1 & -X_1 & 0\end{pmatrix} \ Y_1 \in \M_{1,3}(\V^\Betti \otimes \V^\Betti) \text{ and } \mathsf{r}\underline{\col} := Y_1^{-1} \begin{pmatrix}X_1 - 1 \\ 1 - Y_1 \\ 0\end{pmatrix} \in \M_{3,1}(\V^\Betti \otimes \V^\Betti).
    \end{equation*}
\end{definition}

\begin{proposition}\label{prop:rurho_rucolcircrurow}
    We have (equality in $\M_3(\V^\Betti \otimes \V^\Betti)$)
    \[
        \mathsf{r}\underline{\rho}(X_1 - 1) = \mathsf{r}\underline{\col} \cdot \mathsf{r}\underline{\row}.
    \]
\end{proposition}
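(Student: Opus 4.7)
Since $\mathsf{r}\underline{\rho}$ is a $\K$-algebra morphism, we have $\mathsf{r}\underline{\rho}(X_1 - 1) = \mathsf{r}\underline{\rho}(X_1) - I_3$, so the claim reduces to computing $\mathsf{r}\underline{\rho}(X_1) - I_3$ as a $3 \times 3$ matrix and verifying that it equals $\mathsf{r}\underline{\col} \cdot \mathsf{r}\underline{\row}$. The plan is to exploit the rank-one structure of both sides rather than performing two independent matrix computations, then track that structure through each of the five operations defining $\mathsf{r}\underline{\rho}$.

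The starting point is the observation that $\underline{\rho}(X_1) - I_3$ itself admits a rank-one factorization
\[
    \underline{\rho}(X_1) - I_3 = \begin{pmatrix} -Y_1 \\ 1 \\ 0 \end{pmatrix} \cdot \begin{pmatrix} 1 - X_1 & Y_1 - 1 & 0 \end{pmatrix},
\]
which is verified by a direct expansion using the given formula for $\underline{\rho}(X_1)$ and the commutativity of $X_1$ and $Y_1$ in $\V^\Betti \otimes \V^\Betti$. Each of the subsequent operations preserves rank-one structure: transposition swaps column and row; entrywise $\mathrm{op}_{F_2^2}$ acts as the identity on the entries, since all entries of the intermediate matrix lie in the commutative subalgebra generated by $X_1$ and $Y_1$ where the anti-involution restricts to the identity; and conjugation by $D := \diag(Y_1, X_1, (X_0X_1)^{-1} Y_1^{-1} Y_0 Y_1)$ distributes as $D^{-1}(v \cdot w) D = (D^{-1} v) \cdot (w \, D)$. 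The third diagonal entry of $D$ plays no role, since the third row and column of the intermediate matrices vanish throughout.

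Carrying the column and row through the composition thus produces explicit expressions, which are compared term by term with $\mathsf{r}\underline{\col}$ and $\mathsf{r}\underline{\row}$ from Definition \ref{def:rurow_rucol}. The commutativity of $X_1$ and $Y_1$ is used repeatedly to simplify expressions of the form $Y_1^{-1}(X_1 - 1) Y_1 = X_1 - 1$ arising in the conjugation step, after which the factors coincide with the stated column and row.

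The main obstacle is careful bookkeeping: the rank-one decomposition is only unique up to an overall scalar (so matching the factors requires controlling signs and the distribution of $Y_1, Y_1^{-1}$ between the two factors), and one must keep straight the convention for $\mathrm{op}_{F_2}$ on the generator $X_1$ before applying $\underline{\rho}$. Should this prove too delicate, a fallback plan is to abandon the rank-one shortcut and simply verify the matrix identity entry by entry: expand $\mathsf{r}\underline{\col} \cdot \mathsf{r}\underline{\row}$ into an explicit $3 \times 3$ matrix from Definition \ref{def:rurow_rucol}, compute $\mathsf{r}\underline{\rho}(X_1) - I_3$ from the composition in Proposition-Definition \ref{propdef:Betti_bimod}, and check equality in each of the nine entries using only the commutation relations among $X_0, X_1, Y_0, Y_1$.
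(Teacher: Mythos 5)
Your overall strategy---tracking a rank-one factorization through the defining composition of $\mathsf{r}\underline{\rho}$---is the same as the paper's, but two concrete steps in your main plan are wrong and would not get you to $\mathsf{r}\underline{\col} \cdot \mathsf{r}\underline{\row}$. First, because $\mathrm{op}_{F_2}$ is applied to the argument \emph{before} $\underline{\rho}$, the matrix you must factor is $\underline{\rho}(\mathrm{op}_{F_2}(X_1-1)) = \underline{\rho}(X_1^{-1}-1) = \underline{\rho}(X_1)^{-1}-I_3$, not $\underline{\rho}(X_1)-I_3$. These are genuinely different rank-one matrices; the paper starts from the factorization of $\underline{\rho}(X_1^{-1}-1)$ quoted from \cite[page 46]{EF1}, and the factor $(X_1Y_1)^{-1}$ appearing there (which gets redistributed between the row and column) comes precisely from inverting $I_3+cr$. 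You acknowledge the $\mathrm{op}_{F_2}$ convention as a bookkeeping issue but your stated starting point does not account for it.

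Second, the claim that $\mathrm{op}_{F_2^2}$ ``acts as the identity on the entries'' because they lie in the commutative subalgebra generated by $X_1$ and $Y_1$ is false. By Definition \ref{def:opG}, $\mathrm{op}_{F_2^2}$ inverts group elements, so $\mathrm{op}_{F_2^2}(X_1)=X_1^{-1}\neq X_1$ and $\mathrm{op}_{F_2^2}(Y_1)=Y_1^{-1}\neq Y_1$. Commutativity of the subalgebra makes $\mathrm{op}_{F_2^2}$ a ring \emph{morphism} when restricted there, but not the identity. In the paper's computation this step is essential: it sends $X_1-1\mapsto X_1^{-1}-1$, $-X_1^{-1}\mapsto -X_1$, $(X_1Y_1)^{-1}\mapsto X_1Y_1$, and so on, which is exactly what produces the stated $\mathsf{r}\underline{\row}$ and $\mathsf{r}\underline{\col}$ after the final conjugation. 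One can check that the two errors do not cancel (for instance $\M_3(\mathrm{op}_{F_2^2})\big(\underline{\rho}(X_1^{-1})\big) \neq \underline{\rho}(X_1)$: the $(1,1)$ entries already disagree). Your fallback of an entry-by-entry check would succeed, but as written your primary plan does not.
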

\begin{proof}
    By definition we have
    \begin{align*}
        \mathsf{r}\underline{\rho}(X_1 - 1) = \diag(Y_1, X_1, (X_0X_1)^{-1} Y_1^{-1} Y_0 Y_1)^{-1} \ \M_3(\mathrm{op}_{F_2^2}) & \left({^t}(\underline{\rho}(\mathrm{op}_{F_2}(X_1 - 1)))\right) \\
        & \diag(Y_1, X_1, (X_0X_1)^{-1} Y_1^{-1} Y_0 Y_1).
    \end{align*}
    On the other hand, from \cite[page 46]{EF1}, we have
    \begin{align*}
        \underline{\rho}(\mathrm{op}_{F_2}(X_1 - 1)) & = \underline{\rho}(X_1^{-1} - 1) = -\begin{pmatrix} Y_1 \\ -1 \\ 0\end{pmatrix} (X_1 Y_1)^{-1} \begin{pmatrix} X_1 - 1 & 1- Y_1 & 0 \end{pmatrix} \\ 
        & = \begin{pmatrix} -X_1^{-1} \\ (X_1 Y_1)^{-1} \\ 0\end{pmatrix} \begin{pmatrix} X_1 - 1 & 1 - Y_1 & 0 \end{pmatrix} \\. 
    \end{align*}
    Moreover, applying identity \eqref{tMf_identity} for $(t,n,s) = (3,1,3)$, $E = \mathbf kF_2^2$ and $f = \mathrm{op}_{F_2^2}$, it follows that
    \begin{align*}
        \M_3(\mathrm{op}_{F_2^2})\left({}^t(\underline{\rho}(\mathrm{op}_{F_2}(X_1 - 1)))\right) & = 
        \M_{1,3}(\mathrm{op}_{F_2^2})\left({}\prescript{t}{}{\begin{pmatrix} X_1 - 1 & 1- Y_1 & 0 \end{pmatrix}}\right) \ 
        \M_{3,1}(\mathrm{op}_{F_2^2})\left({}\prescript{t}{}{\begin{pmatrix} -X_1^{-1} \\ (X_1 Y_1)^{-1} \\ 0\end{pmatrix}}\right) \\
        & = \begin{pmatrix} X_1^{-1} - 1 \\ 1- Y_1^{-1} \\ 0 \end{pmatrix} \ \begin{pmatrix} -X_1 & X_1 Y_1 & 0\end{pmatrix} 
    \end{align*}
    Then
    \begin{align*}
        & \begin{aligned}\mathsf{r}\underline{\rho}(X_1 - 1) = \diag(Y_1, X_1, (X_0X_1)^{-1} Y_1^{-1} Y_0 Y_1)^{-1} \ \begin{pmatrix} X_1^{-1} - 1 \\ 1- Y_1^{-1} \\ 0 \end{pmatrix} & \begin{pmatrix} -X_1 & X_1 Y_1 & 0\end{pmatrix} \\ & \diag(Y_1, X_1, (X_0X_1)^{-1} Y_1^{-1} Y_0 Y_1) \end{aligned} \\
        & = \begin{pmatrix} (X_1 Y_1)^{-1} - Y_1^{-1} \\ X_1^{-1} - (X_1 Y_1)^{-1} \\ 0 \end{pmatrix} \ \begin{pmatrix} -X_1 Y_1 & X_1 Y_1 X_1 & 0\end{pmatrix} = \begin{pmatrix} X_1 Y_1^{-1} - Y_1^{-1} \\ Y_1^{-1} - 1 \\ 0 \end{pmatrix} \ \begin{pmatrix} Y_1 & - X_1 Y_1 & 0 \end{pmatrix},
    \end{align*}
    which is the announced result.
\end{proof}

\begin{corollary} \label{cor:tuple_OBmat}
    The tuple
    \begin{equation} \label{tuple_OBmat}
        \mathcal{O}^\Betti_\mat := (\V^\Betti, (\V^\Betti \otimes \V^\Betti)^{\oplus 3}, \V^\Betti \otimes \V^\Betti, \mathsf{r}\underline{\rho}, X_1 - 1, \mathsf{r}\underline{\row}, \mathsf{r}\underline{\col}) 
    \end{equation}
    is an object of $\K{\text-}\BFS$, where $\mathsf{r}\underline{\row}$ (resp. $\mathsf{r}\underline{\col}$) is identified with its corresponding right \linebreak $(\V^\Betti \otimes \V^\Betti)$-module morphism $(\V^\Betti \otimes \V^\Betti)^{\oplus 3} \to \V^\Betti \otimes \V^\Betti$ (resp. $\V^\Betti \otimes \V^\Betti \to (\V^\Betti \otimes \V^\Betti)^{\oplus 3}$).
\end{corollary}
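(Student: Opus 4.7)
The plan is to verify the conditions in Definition \ref{def:BFS} by assembling the pieces already constructed in the preceding propositions; essentially nothing new needs to be done. First I would invoke Proposition-Definition \ref{propdef:Betti_bimod} to obtain the underlying $\K$-bimodule $(\V^\Betti, (\V^\Betti \otimes \V^\Betti)^{\oplus 3}, \V^\Betti \otimes \V^\Betti, \mathsf{r}\underline{\rho})$, which takes care of the first four entries of the tuple.

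Next, I would identify the remaining data as a factorization structure. The element $X_1 - 1 \in \V^\Betti$ is tautological, and for the row and column entries I would use the standard $\K$-module identifications
\[
    \M_{1,3}(\V^\Betti \otimes \V^\Betti) \simeq \Mor_{(\V^\Betti \otimes \V^\Betti){\text-}\rmod}\bigl((\V^\Betti \otimes \V^\Betti)^{\oplus 3}, \V^\Betti \otimes \V^\Betti\bigr)
\]
and
\[
    \M_{3,1}(\V^\Betti \otimes \V^\Betti) \simeq \Mor_{(\V^\Betti \otimes \V^\Betti){\text-}\rmod}\bigl(\V^\Betti \otimes \V^\Betti, (\V^\Betti \otimes \V^\Betti)^{\oplus 3}\bigr),
\]
under which $\mathsf{r}\underline{\row}$ and $\mathsf{r}\underline{\col}$ of Definition \ref{def:rurow_rucol} become right $(\V^\Betti \otimes \V^\Betti)$-module morphisms with the required source and target.

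Finally, I would verify the factorization identity $\mathsf{r}\underline{\rho}(X_1 - 1) = \mathsf{r}\underline{\col} \circ \mathsf{r}\underline{\row}$ in $\mathrm{End}_{(\V^\Betti \otimes \V^\Betti){\text-}\rmod}((\V^\Betti \otimes \V^\Betti)^{\oplus 3})$ by transporting the matrix equality supplied by Proposition \ref{prop:rurho_rucolcircrurow} through the $\K$-algebra isomorphism $\M_3(\V^\Betti \otimes \V^\Betti) \simeq \mathrm{End}_{(\V^\Betti \otimes \V^\Betti){\text-}\rmod}((\V^\Betti \otimes \V^\Betti)^{\oplus 3})$ already invoked in the proof of Proposition-Definition \ref{propdef:Betti_bimod}; under this isomorphism matrix multiplication corresponds to composition, so the two sides match. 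There is no genuine obstacle: the substantive content (the bimodule structure and the matrix factorization) has been done in Propositions \ref{propdef:Betti_bimod} and \ref{prop:rurho_rucolcircrurow}, and the corollary is just their repackaging in the language of Definition \ref{def:BFS}.
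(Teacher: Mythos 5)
Your proposal is essentially the paper's own proof, which simply cites Proposition-Definition~\ref{propdef:Betti_bimod} for the bimodule structure and Proposition~\ref{prop:rurho_rucolcircrurow} for the factorization identity; you have merely spelled out the routine matrix-to-morphism identifications that the paper leaves implicit. The reasoning is correct and follows the same route.
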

\begin{proof}
    This follows from Proposition-Definition \ref{propdef:Betti_bimod} and Proposition \ref{prop:rurho_rucolcircrurow}.
\end{proof}

\subsection{The image of \texorpdfstring{$\mathcal{O}^\Betti_\mat$}{OBmat} in \texorpdfstring{$\mathsf{Mor}(\K{\text-}\alg)$}{Mor(k-alg)} and the coproduct \texorpdfstring{$\Delta^{\W,\Betti}$}{DeltaWB}}
Applying the functor $\K{\text-}\BFS \to \mathsf{Mor}(\K{\text-}\alg)$ given in \eqref{func:BFS_MorAlg} to the object $\mathcal{O}^\Betti_\mat$ given in \eqref{tuple_OBmat}, one defines the algebra morphism
\begin{equation} \label{eq:Delta_OB}
    \Delta_{\mathcal{O}^\Betti_\mat} : \K \oplus (\V^\Betti, \cdot_{(X_1-1)}) \to \V^\Betti \otimes \V^\Betti.
\end{equation}
Explicitly, for $b \in \V^\Betti$ we have
\[
    \Delta_{\mathcal{O}^\Betti_\mat}(b) = \mathsf{r}\underline{\row} \cdot \mathsf{r}\underline{\rho}(b) \cdot \mathsf{r}\underline{\col},
\]
On the other hand, recall from \cite[Sec. 2.1]{EF1} the subalgebra $\W^\Betti$ of $\V^\Betti$ given by
\[
    \W^\Betti := \K \oplus \V^\Betti (X_1-1).
\]
There is an algebra morphism $\K \oplus (\V^\Betti, \cdot_{(X_1-1)}) \to \W^\Betti$ given by $v \mapsto v \cdot (X_1-1)$. It is obviously surjective, and it is injective since right multiplication by $X_1-1$ is an injective endomorphism of $\V^\Betti$. 
On the other hand, it follows from \cite[Proposition 2.3]{EF1} that the algebra $\W^\Betti$ is generated by 
\begin{equation}
    \label{eq:gen_of_WB}
    X_1^{-1} \text{ and } X_0^n(X_1-1) \text{ for } n \in \Z. 
\end{equation}

\noindent An algebra morphism $\Delta^{\W, \Betti} : \W^{\Betti} \to \W^{\Betti} \otimes \W^{\Betti}$ is given by (see \cite[Lemma 2.11]{EF1})
\[
    \Delta^{\W, \Betti}(X_1^{-1}) = X_1^{-1} \otimes X_1^{-1},
\]
and for $n \in \Z$,
\[
    \Delta^{\W, \Betti}(X_0^n(X_1-1)) = X_0^n(X_1-1) \otimes 1 + 1 \otimes X_0^n(X_1-1) - \sum_{k=1}^{n-1} X_0^k(X_1-1) \otimes X_0^{n-k}(X_1-1),
\]
using the convention that for a map $f$ from $\Z$ to an abelian group and $p, q \in \Z$,
\begin{equation}
    \label{convention_sum}
    \sum_{k=p}^q f(k) :=
    \begin{cases}
        f(p) + \cdots + f(q) & \text{if } q > p - 1 \\
        0 & \text{if } q = p - 1 \\
        -f(p-1) - \cdots - f(q+1) & \text{if } q < p - 1
    \end{cases}
\end{equation}

\begin{theorem} \label{thm:DeltaWB_DeltaOB}
    The following diagram
    \begin{equation}\label{diag:DeltaWB_DeltaOB}\begin{tikzcd}
        \W^\Betti \ar[rrr, "\Delta^{\W, \Betti}"] \ar[d, "\simeq"'] &&& \W^\Betti \otimes \W^\Betti \ar[d, hook'] \\
        \K \oplus (\V^\Betti, \cdot_{X_1 - 1}) \ar[rrr, "\Delta_{\mathcal{O}^\Betti_\mat}"] &&& \V^\Betti \otimes \V^\Betti 
    \end{tikzcd}\end{equation}
    commutes.
\end{theorem}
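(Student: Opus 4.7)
The plan is to reduce the verification to checking agreement of two $\K$-algebra morphisms on a generating set. Both routes around \eqref{diag:DeltaWB_DeltaOB} are $\K$-algebra morphisms from $\K \oplus (\V^\Betti, \cdot_{X_1-1})$ to $\V^\Betti \otimes \V^\Betti$: the bottom route is $\Delta_{\mathcal{O}^\Betti_\mat}$ itself, which is an algebra morphism by Proposition-Definition \ref{Delta}, while the top route is $\Delta^{\W,\Betti}$ precomposed with the isomorphism $\theta\colon (\lambda, v)\mapsto \lambda + v(X_1-1)$ onto $\W^\Betti$ and postcomposed with the inclusion $\W^\Betti \otimes \W^\Betti \hookrightarrow \V^\Betti \otimes \V^\Betti$, all algebra morphisms. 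Hence commutativity is equivalent to agreement on a set of algebra generators of the source.

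From the decompositions $X_1^{-1} = 1 + (-X_1^{-1})(X_1-1)$ and $X_0^n(X_1-1) = \theta(0, X_0^n)$ together with \eqref{eq:gen_of_WB}, the source $\K \oplus (\V^\Betti, \cdot_{X_1-1})$ is generated as a $\K$-algebra by $(1, -X_1^{-1})$ and the family $(0, X_0^n)$ for $n \in \Z$. Thus what must be checked is
\[
1 \otimes 1 + \mathsf{r}\underline{\row} \cdot \mathsf{r}\underline{\rho}(-X_1^{-1}) \cdot \mathsf{r}\underline{\col} = X_1^{-1} \otimes X_1^{-1},
\]
which I would dispatch by a direct matrix computation from the explicit forms of $\mathsf{r}\underline{\rho}(X_1)$, $\mathsf{r}\underline{\row}$ and $\mathsf{r}\underline{\col}$, together with, for each $n \in \Z$,
\[
\mathsf{r}\underline{\row} \cdot \mathsf{r}\underline{\rho}(X_0)^n \cdot \mathsf{r}\underline{\col} = X_0^n(X_1-1) \otimes 1 + 1 \otimes X_0^n(X_1-1) - \sum_{k=1}^{n-1} X_0^k(X_1-1) \otimes X_0^{n-k}(X_1-1).
\]

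For this second identity, I would exploit the block-diagonal structure of $\underline{\rho}(X_0)$—namely the $1\times 1$ block at position $(1,1)$ equal to $X_0$ together with a $2\times 2$ block on rows and columns $\{2,3\}$—and observe that this block form is preserved by each of the operations defining $\mathsf{r}\underline{\rho}$ (opposite on $F_2$, transposition, entrywise application of $\mathrm{op}_{F_2^2}$, and conjugation by $\diag(Y_1, X_1, (X_0 X_1)^{-1} Y_1^{-1} Y_0 Y_1)$). Consequently $\mathsf{r}\underline{\rho}(X_0)^n$ is block diagonal for every $n \in \Z$, and since the third coordinate of both $\mathsf{r}\underline{\row}$ and $\mathsf{r}\underline{\col}$ vanishes, the sandwich product decouples into a $(1,1)$-contribution built from $X_0^n$ alone and a $(2,2)$-contribution. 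The plan is then to identify the $(2,2)$-entry of $\mathsf{r}\underline{\rho}(X_0)^n$ by induction on $|n|$, using the convention \eqref{convention_sum} to treat $n \leq 0$ uniformly, and finally to assemble the sandwich product and recognize the displayed right-hand side.

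The main obstacle will be this inductive identification of the $(2,2)$-entry: it mixes $X_0, X_1, Y_0$ and $Y_1^{\pm 1}$ nontrivially, and extracting the precise telescoping sum $\sum_{k=1}^{n-1} X_0^k(X_1-1) \otimes X_0^{n-k}(X_1-1)$ from iterated matrix products requires delicate bookkeeping. A cleaner alternative, likely the one adopted by the authors, is to transport the analogous identity already established in \cite{EF1} for the left-action matrix realization across the algebra anti-isomorphism/transposition/diagonal $\Ad$ that defines $\mathsf{r}\underline{\rho}$, $\mathsf{r}\underline{\row}$ and $\mathsf{r}\underline{\col}$; this reduces the work to verifying that these three operations intertwine the left- and right-sandwich products compatibly, and bypasses the combinatorics entirely.
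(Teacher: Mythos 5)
Your proposal takes essentially the same route as the paper: reduce to the generating set $X_1^{-1}$, $X_0^n(X_1-1)$ ($n \in \Z$) of $\W^\Betti$, check $X_1^{-1}$ by a direct matrix computation, and for $X_0^n(X_1-1)$ transport the known left-action sandwich identity of \cite[Lemma 7.12]{EF1} through the operations ($\mathrm{op}_{F_2}$, transposition, entrywise $\mathrm{op}_{F_2^2}$, conjugation by the diagonal matrix) defining $\mathsf{r}\underline{\rho}$, $\mathsf{r}\underline{\row}$ and $\mathsf{r}\underline{\col}$. This is exactly the ``cleaner alternative'' you correctly anticipate at the end; the paper does not pursue your Route A (block-diagonality plus induction on the $(2,2)$-block), but rather rewrites $\mathsf{r}\underline{\row}\cdot\mathsf{r}\underline{\rho}(X_0^n)\cdot\mathsf{r}\underline{\col}$ via identities \eqref{tMM_MtM} and \eqref{tMf_identity} into an $\mathrm{op}_{F_2^2}$-image of the left-action sandwich, invokes \cite[Lemma 7.12]{EF1}, and then reindexes the sum using \eqref{convention_sum}, matching your Route B.
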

\begin{proof}
    Since all arrows of diagram \eqref{diag:DeltaWB_DeltaOB} are algebra morphisms, it suffices to establish the commutativity through evaluation on a system of generators of $\W^\Betti$, which we take to be the ones given in \eqref{eq:gen_of_WB}. \newline
    The image of $X_1^{-1}$ by the composition $\W^\Betti \to (\W^\Betti \otimes \W^\Betti) \hookrightarrow (\V^\Betti \otimes \V^\Betti)$ is given by
    \[
        X_1^{-1} \mapsto X_1^{-1} \otimes X_1^{-1}.
    \]
    On the other hand, recall that $X_1^{-1} = - X_1^{-1} (X_1 - 1) + 1$. The image of $X_1^{-1}$ by the composition $\W^\Betti \simeq \left(\K \oplus (\V^\Betti, \cdot_{X_1-1})\right) \hookrightarrow (\V^\Betti \otimes \V^\Betti)$ is then given by
    \[
        X_1^{-1} \mapsto - \mathsf{r}\underline{\row} \cdot \mathsf{r}\underline{\rho}(X_1^{-1}) \cdot \mathsf{r}\underline{\col} + 1.
    \]
    We have
    \begin{align*}
        \mathsf{r}\underline{\rho}(X_1^{-1}) & = \Ad_{\diag(Y_1, X_1, (X_0X_1)^{-1} Y_1^{-1} Y_0 Y_1)^{-1}} \circ \M_3(\mathrm{op}_{F_2^2}) \left({^t}\underline{\rho}(X_1) \right) \\
        & = \Ad_{\diag(Y_1, X_1, (X_0X_1)^{-1} Y_1^{-1} Y_0 Y_1)^{-1}} \circ \M_3(\mathrm{op}_{F_2^2}) \left(\begin{pmatrix} Y_1 X_1 - Y_1 + 1 & -X_1 + 1 & 0 \\ Y_1 - Y_1^2 & Y_1 & 0 \\ 0 & 0 & 1\end{pmatrix} \right) \\
        & = \Ad_{\diag(Y_1, X_1, (X_0X_1)^{-1} Y_1^{-1} Y_0 Y_1)^{-1}} \left(\begin{pmatrix} (Y_1 X_1)^{-1} - Y_1^{-1} + 1 & -X_1^{-1} + 1 & 0 \\ Y_1^{-1} - Y_1^{-2} & Y_1^{-1} & 0 \\ 0 & 0 & 1\end{pmatrix} \right) \\
        & = \begin{pmatrix} (Y_1 X_1)^{-1} - Y_1^{-1} + 1 & -Y_1^{-1} + Y_1^{-1} X_1 & 0 \\ X_1^{-1} - X_1^{-1} Y_1^{-1} & Y_1^{-1} & 0 \\ 0 & 0 & 1\end{pmatrix},
    \end{align*}
    where the second equality comes from \cite[Lemma 7.11]{EF1}. Therefore,
    \begin{align*}
        - \mathsf{r}\underline{\row} \cdot \mathsf{r}\underline{\rho}(X_1^{-1}) \cdot \mathsf{r}\underline{\col} + 1 & = - 1 + (X_1 Y_1)^{-1} + 1 = X_1^{-1} \otimes X_1^{-1},  
    \end{align*}
    thus proving the equality of the images of $X_1^{-1}$. \newline
    Next, for $n \in \Z$, the image of $X_0^n (X_1 - 1)$ by the composition $\W^\Betti \to (\W^\Betti \otimes \W^\Betti) \hookrightarrow (\V^\Betti \otimes \V^\Betti)$ is given by
    \[
        X_0^n (X_1 - 1) \mapsto X_0^n(X_1-1) \otimes 1 + 1 \otimes X_0^n (X_1-1) - \sum_{k=1}^{n-1} X_0^k(X_1-1) \otimes X_0^{n-k} (X_1-1).
    \]
    On the other hand, the image of $X_0^n (X_1 - 1)$ by the composition $\W^\Betti \simeq \left(\K \oplus (\V^\Betti, \cdot_{X_1-1})\right) \hookrightarrow (\V^\Betti \otimes \V^\Betti)$ is given by 
    \[
        X_0^n (X_1 - 1) \mapsto \Delta_{\mathcal{O}^\Betti_\mat}(X_0^n) = \mathsf{r}\underline{\row} \cdot \mathsf{r}\underline{\rho}(X_0^n) \cdot \mathsf{r}\underline{\col}.
    \]
    We have
    \[
        \mathsf{r}\underline{\rho}(X_0^n) = \mbox{\small$\diag(Y_1, X_1, (X_0X_1)^{-1} Y_1^{-1} Y_0 Y_1)$}^{-1} \cdot \M_3(\mathrm{op}_{F_2^2}) \left({^t}\underline{\rho}(X_0^{-n})\right) \cdot \mbox{\small$\diag(Y_1, X_1, (X_0X_1)^{-1} Y_1^{-1} Y_0 Y_1)$}.
    \]
    Notice that
    \[
        \mathsf{r}\underline{\row} \cdot \mbox{\small$\diag(Y_1, X_1, (X_0X_1)^{-1} Y_1^{-1} Y_0 Y_1)$}^{-1} = \begin{pmatrix} 1 & -Y_1 & 0 \end{pmatrix},
    \]
    and
    \[
        \mbox{\small$\diag(Y_1, X_1, (X_0X_1)^{-1} Y_1^{-1} Y_0 Y_1)$}^{-1} \cdot \mathsf{r}\underline{\col} = \begin{pmatrix} X_1 - 1 \\ X_1 Y_1^{-1} - X_1 \\ 0 \end{pmatrix}. 
    \]
    Then
    \begin{align*}
        & \Delta_{\mathcal{O}^\Betti_\mat}(X_0^n) = \begin{pmatrix} 1 & -Y_1 & 0 \end{pmatrix} \cdot \M_3(\mathrm{op}_{F_2^2}) \circ {}^t(-)\left(\underline{\rho}(X_0^{-n})\right) \cdot \begin{pmatrix} X_1 - 1 \\ X_1 Y_1^{-1} - X_1 \\ 0 \end{pmatrix} \\
        & \begin{aligned} = \M_{1,3}(\mathrm{op}_{F_2^2}) \circ {}^t(-) \left(\begin{pmatrix} 1 \\ -Y_1^{-1} \\ 0 \end{pmatrix}\right) & \cdot \M_3(\mathrm{op}_{F_2^2}) \circ {}^t(-)\left(\underline{\rho}(X_0^{-n})\right) \\
        & \cdot \M_{3,1}(\mathrm{op}_{F_2^2}) \circ {}^t(-) \left(\begin{pmatrix} X_1^{-1} - 1 & X_1^{-1} Y_1 - X_1^{-1} & 0 \end{pmatrix}\right)\end{aligned} \\
        & = \mathrm{op}_{F_2^2}\left(-X_1^{-1} \begin{pmatrix} -1 + X_1 & -Y_1 + 1 & 0 \end{pmatrix} \underline{\rho}(X_0^{-n}) \begin{pmatrix} Y_1 \\ -1 \\ 0 \end{pmatrix} Y_1^{-1}\right) \\
        & \begin{aligned} = \mathrm{op}_{F_2^2}\Bigg(- X_1^{-1} \otimes 1 \Big((X_1 - 1) X_0^{-n} \otimes 1 + & 1 \otimes (1 - X_1^{-1}) X_0^{-n} X_1 \\
        & - \sum_{k=1}^{-n-1} (X_1 - 1) X_0^k \otimes (1 - X_1^{-1}) X_0^{-n-k} X_1\Big) 1 \otimes X_1^{-1}\Bigg)\end{aligned} \\
        & = X_0^n (X_1 - 1) \otimes X_1 + X_1 \otimes X_0^n (X_1 - 1) + \sum_{k=1}^{-n-1} X_0^{-k} (X_1 - 1) \otimes X_0^{n+k} (X_1 - 1) \\
        & = X_0^n (X_1 - 1) \otimes X_1 + X_1 \otimes X_0^n (X_1 - 1) - \sum_{k=0}^{n} X_0^{k} (X_1 - 1) \otimes X_0^{n-k} (X_1 - 1) \\
        & = X_0^n (X_1 - 1) \otimes X_1 + X_1 \otimes X_0^n (X_1 - 1) - (X_1 - 1) \otimes X_0^n (X_1 - 1) - X_0^n (X_1 - 1) \otimes (X_1 - 1) \\
        & - \sum_{k=1}^{n-1} X_0^k (X_1 - 1) \otimes X_0^{n-k} (X_1 - 1) \\
        & = X_0^n (X_1 - 1) \otimes 1 + 1 \otimes X_0^n (X_1 - 1) - \sum_{k=1}^{n-1} X_0^{k} (X_1 - 1) \otimes X_0^{n-k} (X_1 - 1) 
    \end{align*}
    where the third equality follows by applying identities \eqref{tMM_MtM} and \eqref{tMf_identity}, the fourth one from \cite[Lemma 7.12]{EF1}, and the sixth one by using the identity (coming from \eqref{convention_sum})
    \[
        \sum_{k=1}^{-n-1}f(k)=-\sum_{k=0}^n f(-k),
    \]
    for any map $f$ from $\Z$ to an abelian group. This concludes the proof of the equality of the images of $X_0^n (X_1 - 1)$.
\end{proof}

\begin{remark}
    Under the identification $\W^\Betti \simeq \K \oplus (\V^\Betti, \cdot_{X_1 - 1})$, the commutativity of diagram \eqref{diag:DeltaWB_DeltaOB} enables us to obtain that the image of $\Delta_{\mathcal{O}^\Betti_\mat}$ lies in $\W^\Betti \otimes \W^\Betti$ as well as the identity
    \[
        \Delta_{\mathcal{O}^\Betti_\mat} = \Delta^{\W, \Betti}.
    \]
\end{remark}

\subsection{The object \texorpdfstring{$\mathcal{O}^\Betti_{\mat, \fil}$}{OBmatfil} of \texorpdfstring{$\K{\text-}\BFS_\fil$}{k-BFSfil} and its image in \texorpdfstring{$\mathsf{Mor}(\K{\text-}\alg_\fil)$}{Mor(k-algfil)}}
Recall from Proposition-Definition \ref{propdef:Betti_bimod}, Definition \ref{def:rurow_rucol} and \eqref{tuple_OBmat} the object $\mathcal{O}^\Betti_\mat$ of $\K{\text-}\BFS$ given by
\begin{equation*}
    \mathcal{O}^\Betti_\mat := (\V^\Betti, (\V^\Betti \otimes \V^\Betti)^{\oplus 3}, \V^\Betti \otimes \V^\Betti, \mathsf{r}\underline{\rho}, X_1 - 1, \mathsf{r}\underline{\row}, \mathsf{r}\underline{\col}) 
\end{equation*}
\noindent The group algebra $\V^\Betti = \K{F_2}$ is naturally equipped with a filtration
\begin{equation}
    \label{eq:filVB}
    \F^0\K{F_2} = \K{F_2} \text{ and for } n \geq 1, \ \F^n\K{F_2} = I_{F_2}^n,
\end{equation}
where $I_{F_2}$ denotes the augmentation ideal of the group algebra $\K{F_2}$, which is the $\K$-submodule of $\K{F_2}$ generated by the elements $g - 1$, where $g \in F_2$ and is also the right (or left) $\K{F_2}$-submodule of $\K{F_2}$ generated by $X_0 - 1$ and $X_1 - 1$.
The pair $(\V^\Betti, (\F^n\V^\Betti)_{n \in \Z})$ is an object of $\K{\text-}\alg_\fil$.

\begin{lemma}
    \label{lem:filVB2}
    A filtration $(\F^n(\V^\Betti \otimes \V^\Betti))_{n \in \Z}$ of the $\K$-algebra $\V^\Betti \otimes \V^\Betti$ is defined, for $n \geq 0$, by
    \[
        \F^n(\V^\Betti \otimes \V^\Betti) := \sum_{i+j=n} \F^i\V^\Betti \otimes \F^j\V^\Betti.
    \]
    The pair $(\V^\Betti \otimes \V^\Betti, (\F^n(\V^\Betti \otimes \V^\Betti))_{n \in \Z})$ is an object of $\K{\text-}\alg_\fil$.
\end{lemma}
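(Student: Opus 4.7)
The plan is to verify directly the three defining conditions for an object of $\K\text{-}\alg_\fil$: the family $(\F^n(\V^\Betti \otimes \V^\Betti))_{n \in \Z}$ should be a decreasing sequence of $\K$-submodules, multiplication should satisfy $\F^k(\V^\Betti \otimes \V^\Betti) \cdot \F^n(\V^\Betti \otimes \V^\Betti) \subset \F^{k+n}(\V^\Betti \otimes \V^\Betti)$, and $1 \otimes 1$ should lie in $\F^0(\V^\Betti \otimes \V^\Betti)$.

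I would first normalize by extending \eqref{eq:filVB} to negative indices via $\F^n\V^\Betti := \V^\Betti$ for $n \leq 0$, which is consistent with the given definition and with $\V^\Betti$ being itself an object of $\K\text{-}\alg_\fil$. Under the defining formula, this forces $\F^n(\V^\Betti \otimes \V^\Betti) = \V^\Betti \otimes \V^\Betti$ for all $n \leq 0$, so the substantive content lies at $n \geq 0$. The decreasing property then reduces to checking, for each decomposition $i + j = n+1$ with $i \geq 1$, that $\F^i\V^\Betti \otimes \F^j\V^\Betti \subset \F^{i-1}\V^\Betti \otimes \F^j\V^\Betti$, which follows from the decreasing property of the filtration on $\V^\Betti$; the right-hand side is one of the summands of $\F^n(\V^\Betti \otimes \V^\Betti)$.

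For compatibility with multiplication, I would take elementary tensors $a \otimes b \in \F^i\V^\Betti \otimes \F^j\V^\Betti$ with $i+j = k$ and $a' \otimes b' \in \F^{i'}\V^\Betti \otimes \F^{j'}\V^\Betti$ with $i'+j' = n$; then the product $(a \otimes b)(a' \otimes b') = aa' \otimes bb'$ lies in $\F^{i+i'}\V^\Betti \otimes \F^{j+j'}\V^\Betti$ because $\V^\Betti$ is itself filtered as a $\K$-algebra, and since $(i+i') + (j+j') = k+n$, this tensor sits among the summands of $\F^{k+n}(\V^\Betti \otimes \V^\Betti)$. Bilinearity then propagates the inclusion to arbitrary sums, and the unit condition $1 \otimes 1 \in \F^0(\V^\Betti \otimes \V^\Betti)$ is immediate from $1 \in \F^0\V^\Betti$.

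There is no real obstacle here; the statement is essentially the standard construction of the tensor product filtration on a pair of filtered $\K$-algebras, and every step uses only that $(\V^\Betti, (\F^n\V^\Betti)_{n \in \Z})$ is itself an object of $\K\text{-}\alg_\fil$.
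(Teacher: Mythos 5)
Your proof is correct and supplies exactly what the paper dismisses as ``Immediate verification'': the standard checks that the tensor-product filtration is decreasing, multiplicative, and contains $1 \otimes 1$ in degree $0$, each reduced to the corresponding property of the filtration on $\V^\Betti$. One tiny lacuna: in your verification of the decreasing property you restrict to summands with $i \geq 1$; for the summand with $i = 0$ (hence $j = n+1$) you should either lower $j$ instead, or observe that your own convention $\F^{-1}\V^\Betti = \V^\Betti$ makes the displayed argument apply verbatim at $i = 0$.
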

\begin{proof}
    Immediate verification.
\end{proof}

\begin{lemma}
    \label{lem:filVB23}
    A filtration $(\F^n(\V^\Betti \otimes \V^\Betti)^{\oplus 3})_{n \in \Z}$ of the right $(\V^\Betti \otimes \V^\Betti)$-module $(\V^\Betti \otimes \V^\Betti)^{\oplus 3}$ is given by
    \[
        \F^n(\V^\Betti \otimes \V^\Betti)^{\oplus 3} := \left(\F^n(\V^\Betti \otimes \V^\Betti)\right)^{\oplus 3},
    \]
    for $n \geq 0$. The pair $\left((\V^\Betti \otimes \V^\Betti)^{\oplus 3}, (\F^n(\V^\Betti \otimes \V^\Betti)^{\oplus 3})_{n \in \Z}\right)$ is a filtered bimodule over the pair of filtered algebras $(\V^\Betti, \V^\Betti \otimes \V^\Betti)$. 
\end{lemma}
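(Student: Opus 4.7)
The plan is to check the two conditions required by Definition~\ref{def:BFSfil}(a): that $\bM := (\V^\Betti \otimes \V^\Betti)^{\oplus 3}$ is a filtered right module over $\V^\Betti \otimes \V^\Betti$, and that the left action $\mathsf{r}\underline{\rho}$ is a morphism of filtered $\K$-algebras. The first condition $\F^k\bM \cdot \F^n(\V^\Betti \otimes \V^\Betti) \subset \F^{k+n}\bM$ is immediate since both the right action and the filtration on $\bM$ are componentwise, so it reduces entry by entry to Lemma~\ref{lem:filVB2}. The same componentwise reasoning, together with evaluation on the basis vectors $e_j \in \F^0\bM$, identifies the filtration of Definition~\ref{def:FnMor} under the isomorphism $\mathrm{End}_{(\V^\Betti \otimes \V^\Betti){\text-}\rmod}(\bM) \simeq \M_3(\V^\Betti \otimes \V^\Betti)$ with entry-wise membership: a matrix $M$ lies in $\F^n\mathrm{End}$ if and only if all its entries lie in $\F^n(\V^\Betti \otimes \V^\Betti)$.

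For the filtered algebra morphism condition on $\mathsf{r}\underline{\rho}$, since $\mathsf{r}\underline{\rho}$ is already an algebra morphism (Proposition-Definition~\ref{propdef:Betti_bimod}) and $\F^n\V^\Betti = I_{F_2}^n$ with $I_{F_2}$ generated as a two-sided ideal by $X_0 - 1$ and $X_1 - 1$, it suffices to show that the matrices $\mathsf{r}\underline{\rho}(X_0 - 1)$ and $\mathsf{r}\underline{\rho}(X_1 - 1)$ have all entries in $\F^1(\V^\Betti \otimes \V^\Betti)$. The case $i = 1$ is essentially free: by Proposition~\ref{prop:rurho_rucolcircrurow} we have $\mathsf{r}\underline{\rho}(X_1 - 1) = \mathsf{r}\underline{\col} \cdot \mathsf{r}\underline{\row}$, and reading off Definition~\ref{def:rurow_rucol} the entries of $\mathsf{r}\underline{\col}$ lie in $\F^1$ (since $X_1 - 1, 1 - Y_1 \in \F^1$ and $Y_1^{\pm 1} \in \F^0$), while those of $\mathsf{r}\underline{\row}$ lie in $\F^0$, so the rank-one product has entries in $\F^1 \cdot \F^0 \subset \F^1$.

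The remaining case $\mathsf{r}\underline{\rho}(X_0 - 1)$ is the principal computational step; the cleanest route is to verify that each factor in $\mathsf{r}\underline{\rho} = \Ad_{D^{-1}} \circ \M_3(\mathrm{op}_{F_2^2}) \circ {}^t(-) \circ \underline{\rho} \circ \mathrm{op}_{F_2}$ preserves the appropriate filtration, and then to reduce to the filtered status of $\underline{\rho}$ itself. Indeed, $\mathrm{op}_{F_2}$ and $\mathrm{op}_{F_2^2}$ preserve their augmentation filtrations (via $g^{-1} - 1 = -g^{-1}(g - 1)$); transposition and the entrywise application of $\mathrm{op}_{F_2^2}$ leave matrix-entry filtration degrees unchanged; and conjugation by $D^{-1}$, with both $D$ and $D^{-1}$ having entries in $\F^0$, sends matrices with $\F^n$ entries to matrices with entries in $\F^0 \cdot \F^n \cdot \F^0 \subset \F^n$. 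The filtered status of $\underline{\rho}$ itself then reduces to direct inspection of the displayed formulas for $\underline{\rho}(X_0)$ and $\underline{\rho}(X_1)$: each entry of $\underline{\rho}(X_i) - I_3$ has trivial image under the augmentation $\V^\Betti \otimes \V^\Betti \to \K$ and therefore lies in $\F^1(\V^\Betti \otimes \V^\Betti)$. This entry-by-entry verification of $\underline{\rho}$ is the main, though routine, obstacle.
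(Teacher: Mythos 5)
Your proof is correct and reaches the same conclusion as the paper's, but the route you take through the main computational step is genuinely different. Both you and the paper reduce (via the algebra-morphism property of $\mathsf{r}\underline{\rho}$ and generation of $I_{F_2}$ by $X_0-1$, $X_1-1$) to showing $\mathsf{r}\underline{\rho}(X_i - 1) \in \M_3(\F^1(\V^\Betti\otimes\V^\Betti))$ for $i\in\{0,1\}$. The paper then simply displays the explicit matrices $\mathsf{r}\underline{\rho}(X_0-1)$ and $\mathsf{r}\underline{\rho}(X_1-1)$ and reads off the membership entry by entry. You instead (a) for $X_1-1$ invoke the rank-one factorization $\mathsf{r}\underline{\rho}(X_1-1)=\mathsf{r}\underline{\col}\cdot\mathsf{r}\underline{\row}$ from Proposition~\ref{prop:rurho_rucolcircrurow} and observe $\mathsf{r}\underline{\col}\in\M_{3,1}(\F^1)$, $\mathsf{r}\underline{\row}\in\M_{1,3}(\F^0)$, and (b) for $X_0-1$ avoid computing $\mathsf{r}\underline{\rho}(X_0-1)$ entirely by walking through the factors of $\mathsf{r}\underline{\rho}=\Ad_{D^{-1}}\circ\M_3(\mathrm{op}_{F_2^2})\circ{}^t(-)\circ\underline{\rho}\circ\mathrm{op}_{F_2}$: each preserves the relevant filtrations, so the question reduces to $\underline{\rho}(X_0)-I_3\in\M_3(\F^1)$, which you verify via the augmentation-ideal characterization $\F^1(\V^\Betti\otimes\V^\Betti)=I_{F_2^2}$. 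Your version trades the paper's explicit-but-messy matrix display for a cleaner factor-by-factor argument working with the simpler left-action matrices $\underline{\rho}(X_i)$; as a by-product it isolates the statement that $\underline{\rho}$ itself is filtered, which is a conceptually useful observation. The paper's version has the merit of being verifiable by pure inspection once the matrices are written out and of producing explicit data that is reused later (e.g. in Proposition~\ref{prop:iso_grOB_ODR}). The identification of $\F^n\mathrm{End}_{(\V^\Betti\otimes\V^\Betti)\text{-}\rmod}\bigl((\V^\Betti\otimes\V^\Betti)^{\oplus 3}\bigr)$ with $\M_3(\F^n(\V^\Betti\otimes\V^\Betti))$ that you spell out in the first paragraph is correct and is implicitly used by the paper as well.
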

\begin{proof}
    The first statement can be proved by a direct verification. For the second statement, we have that
    \begin{align*}
        \mathsf{r}\underline{\rho}(X_0 - 1) & = \mbox{\small$\begin{pmatrix} X_0 - 1 & 0 & 0 \\ 0 & (X_1 Y_1)^{-1} Y_0 Y_1 - 1 & (X_1 Y_1)^{-1} (1 - X_1^{-1} X_0^{-1} Y_0) Y_0Y_1 \\ 0 & (Y_0 Y_1)^{-1} X_0 (1 - X_1) Y_0 Y_1 & X_0 - 1 + (1 - X_0 X_1^{-1} X_0^{-1}) Y_1^{-1} Y_0 Y_1 \end{pmatrix}$} \\
        & \in \M_3(\F^1(\V^\Betti \otimes \V^\Betti)) 
    \end{align*}
    and
    \[
        \mathsf{r}\underline{\rho}(X_1 - 1) = \begin{pmatrix} X_1 (X_1 - 1) Y_1^{-1} & X_1 (1 - X_1) & 0 \\ 1 - Y_1 & X_1 (Y_1 - 1) & 0 \\ 0 & 0 & 0 \end{pmatrix} \in \M_3(\F^1(\V^\Betti \otimes \V^\Betti)),
    \]
    which implies that $\mathsf{r}\underline{\rho}(I_{F_2}) \subset \M_3(\F^1(\V^\Betti \otimes \V^\Betti))$ and therefore for any $n \geq 1$, we obtain $\mathsf{r}\underline{\rho}(I_{F_2}^n) \subset \M_3(\F^n(\V^\Betti \otimes \V^\Betti))$.
\end{proof}

\begin{lemma}
    \label{lem:rucolinF1}
    We have \(\mathsf{r}\underline{\col} \in \F^1 (\V^\Betti \otimes \V^\Betti)^{\oplus 3}\).
\end{lemma}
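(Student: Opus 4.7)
The plan is to verify the claim by direct componentwise computation. Using the conventions $X_i = X_i \otimes 1$ and $Y_i = 1 \otimes X_i$ from the beginning of the section, one simply expands the three entries of $\mathsf{r}\underline{\col} = Y_1^{-1}(X_1-1, 1-Y_1, 0)^t$, rewrites each as an element of $\V^\Betti \otimes \V^\Betti$, and checks that each belongs to $\F^1(\V^\Betti \otimes \V^\Betti)$ in the sense of Lemma \ref{lem:filVB2}.

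First I would note that $Y_1$ commutes with any element of the form $X_i \otimes 1$ since the tensor product of the $X$-factor and the $Y$-factor obviously commute on each tensor slot, so $Y_1^{-1}(X_1-1) = (X_1-1)Y_1^{-1} = (X_1-1) \otimes X_1^{-1}$. Because $X_1 - 1 \in I_{F_2} = \F^1 \V^\Betti$ and $X_1^{-1} \in \F^0\V^\Betti$, this element lies in $\F^1\V^\Betti \otimes \F^0\V^\Betti \subset \F^1(\V^\Betti \otimes \V^\Betti)$. For the second entry, $Y_1^{-1}(1-Y_1) = Y_1^{-1} - 1 = 1 \otimes (X_1^{-1} - 1)$, which lies in $\F^0\V^\Betti \otimes \F^1\V^\Betti \subset \F^1(\V^\Betti \otimes \V^\Betti)$. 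The third entry is $0$ and trivially lies in $\F^1(\V^\Betti \otimes \V^\Betti)$.

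By the definition of the filtration on $(\V^\Betti \otimes \V^\Betti)^{\oplus 3}$ from Lemma \ref{lem:filVB23}, namely $\F^1(\V^\Betti \otimes \V^\Betti)^{\oplus 3} = (\F^1(\V^\Betti \otimes \V^\Betti))^{\oplus 3}$, the three computations above combine to give $\mathsf{r}\underline{\col} \in \F^1(\V^\Betti \otimes \V^\Betti)^{\oplus 3}$. There is no real obstacle here: the whole content of the lemma is that the factor $Y_1^{-1}$ (which is a unit) on the left correctly cancels with the $Y_1$ in the denominators, and that the surviving combinations $X_1-1$ and $X_1^{-1}-1$ are augmentation ideal elements.
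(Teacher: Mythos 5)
Your proof is correct and takes essentially the same approach as the paper, which simply asserts that the claim follows by inspecting the definition of $\mathsf{r}\underline{\col}$; you have just carried out the componentwise verification that the paper leaves implicit.
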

\begin{proof}
    This follows by definition of $\mathsf{r}\underline{\col}$.
\end{proof}

\begin{corollary} \label{cor:tuple_OBmatfil}
    The filtrations given in \eqref{eq:filVB}, Lemmas \ref{lem:filVB2} and \ref{lem:filVB23} define a filtered structure on $\mathcal{O}^\Betti_\mat$, which defines an object $\mathcal{O}^\Betti_{\mat, \fil}$ of the category $\K{\text-}\BFS_\fil$. 
\end{corollary}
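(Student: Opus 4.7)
The plan is to unwind the definition of $\K{\text-}\BFS_\fil$ from Definition \ref{def:BFSfil} and verify each required compatibility by calling on the preceding lemmas, which have already done the real work. Concretely, I need to check (a) that the underlying algebras, module, and structure map form a filtered bimodule in the sense of Definition \ref{def:BFSfil}, and (b) that the factorization data $(X_1-1,\mathsf{r}\underline{\row},\mathsf{r}\underline{\col})$ satisfy the degree bounds $\F^1\V^\Betti$, $\F^0\Mor$, $\F^1\Mor$.

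For (a), the algebra $\V^\Betti$ with the $I_{F_2}$-adic filtration \eqref{eq:filVB} and $\V^\Betti\otimes\V^\Betti$ with the tensor-product filtration are objects of $\K{\text-}\alg_\fil$ by Lemma \ref{lem:filVB2}, and $(\V^\Betti\otimes\V^\Betti)^{\oplus 3}$ is a filtered right module over $\V^\Betti\otimes\V^\Betti$ by Lemma \ref{lem:filVB23}. To see that $\mathsf{r}\underline{\rho}$ is a morphism of $\K{\text-}\alg_\fil$, I would identify $\mathrm{End}_{(\V^\Betti\otimes\V^\Betti){\text-}\rmod}((\V^\Betti\otimes\V^\Betti)^{\oplus 3})$ with $\M_3(\V^\Betti\otimes\V^\Betti)$; under this identification the filtration of Lemma \ref{MorArmod_filtration} is precisely $\F^n\M_3(\V^\Betti\otimes\V^\Betti)=\M_3(\F^n(\V^\Betti\otimes\V^\Betti))$. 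The inclusion $\mathsf{r}\underline{\rho}(I_{F_2}^n)\subset\M_3(\F^n(\V^\Betti\otimes\V^\Betti))$ that was extracted in the proof of Lemma \ref{lem:filVB23} then gives exactly the required compatibility with filtrations.

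For (b), the three conditions can be read off directly. First, $X_1-1\in I_{F_2}=\F^1\V^\Betti$ is immediate from \eqref{eq:filVB}. Second, the entries of $\mathsf{r}\underline{\row}$ lie in $\V^\Betti\otimes\V^\Betti=\F^0(\V^\Betti\otimes\V^\Betti)$, so left multiplication by $\mathsf{r}\underline{\row}$ sends $\F^n(\V^\Betti\otimes\V^\Betti)^{\oplus 3}$ into $\F^n(\V^\Betti\otimes\V^\Betti)$ and therefore lies in $\F^0\Mor_{(\V^\Betti\otimes\V^\Betti){\text-}\rmod}((\V^\Betti\otimes\V^\Betti)^{\oplus 3},\V^\Betti\otimes\V^\Betti)$. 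Third, through the filtered isomorphism $\Mor_{(\V^\Betti\otimes\V^\Betti){\text-}\rmod}(\V^\Betti\otimes\V^\Betti,(\V^\Betti\otimes\V^\Betti)^{\oplus 3})\simeq(\V^\Betti\otimes\V^\Betti)^{\oplus 3}$ of Lemma \ref{lem:morAM_M_isofil}, the morphism $\mathsf{r}\underline{\col}$ corresponds to $\mathsf{r}\underline{\col}(1)$, which belongs to $\F^1(\V^\Betti\otimes\V^\Betti)^{\oplus 3}$ by Lemma \ref{lem:rucolinF1}.

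Since each verification reduces to a lemma already established, there is no genuine obstacle; the corollary is simply the assembly of the pieces. The only bookkeeping to keep straight is the correspondence between the abstract filtration on $\Mor$ and $\mathrm{End}$ coming from Definition \ref{def:FnMor} and the matrix-entry filtrations on $\M_3$, $\M_{1,3}$, $\M_{3,1}$ used implicitly in Lemma \ref{lem:filVB23} and in the definitions of $\mathsf{r}\underline{\row}$ and $\mathsf{r}\underline{\col}$.
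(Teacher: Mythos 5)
Your proof is correct and takes the same route as the paper: the filtered bimodule part is exactly Lemmas \ref{lem:filVB2} and \ref{lem:filVB23}, and the three factorization-structure bounds are verified precisely as you do, with $\mathsf{r}\underline{\col}\in\F^1\Mor$ obtained from Lemma \ref{lem:rucolinF1} through the filtered identification of Lemma \ref{lem:morAM_M_isofil}. The paper's proof is terser (it leaves the conditions on $X_1-1$ and $\mathsf{r}\underline{\row}$ implicit), but there is no substantive difference.
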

\begin{proof}
    Follows from Lemmas \ref{lem:filVB2}, \ref{lem:filVB23} and \ref{lem:rucolinF1}, where in the latter lemma, we identify $\mathsf{r}\underline{\col}$ with its corresponding right $(\V^\Betti \otimes \V^\Betti)$-module morphism $\V^\Betti \otimes \V^\Betti \to (\V^\Betti \otimes \V^\Betti)^{\oplus 3}$ and use the isomorphism of filtered $\K$-modules $\Mor_{(\V^\Betti \otimes \V^\Betti){\text-}\rmod}(\V^\Betti \otimes \V^\Betti, (\V^\Betti \otimes \V^\Betti)^{\oplus 3}) \simeq (\V^\Betti \otimes \V^\Betti)^{\oplus 3}$ given by applying Lemma \ref{lem:morAM_M_isofil} to $\A = \V^\Betti \otimes \V^\Betti$ and $\bM = (\V^\Betti \otimes \V^\Betti)^{\oplus 3}$.
\end{proof}

\begin{corollary} \label{cor:DeltaOBfil}
    The image of the object $\mathcal{O}^\Betti_{\mat, \fil}$ by the functor $\K{\text-}\BFS_\fil \to \mathsf{Mor}(\K{\text-}\alg_\fil)$ given in \eqref{func:BFS_MorAlgfil} is a filtered algebra morphism
    \begin{equation*} 
        \Delta_{\mathcal{O}^\Betti_{\mat, \fil}} : \K \oplus (\V^\Betti, \cdot_{(X_1-1)}) \to \V^\Betti \otimes \V^\Betti,
    \end{equation*}
    which, as an algebra morphism, is equal to $\Delta_{\mathcal{O}^\Betti_\mat} : \K \oplus (\V^\Betti, \cdot_{(X_1-1)}) \to \V^\Betti \otimes \V^\Betti$ given in \eqref{eq:Delta_OB}.
\end{corollary}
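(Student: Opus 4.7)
The statement is a direct consequence of the functorial formalism assembled in the preceding section, and the plan is to split the claim into its two assertions and discharge each by invoking the appropriate piece of that formalism.

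For the first assertion, I would observe that $\mathcal{O}^\Betti_{\mat, \fil}$ is an object of $\K{\text-}\BFS_\fil$ thanks to Corollary \ref{cor:tuple_OBmatfil}. Applying the functor \eqref{func:BFS_MorAlgfil} of Corollary \ref{cor:Deltafil} to this object then produces, essentially by definition, a morphism of $\K{\text-}\alg_\fil$
\[
    \Delta_{\mathcal{O}^\Betti_{\mat, \fil}} : \K \oplus (\V^\Betti, \cdot_{X_1-1}) \to \V^\Betti \otimes \V^\Betti,
\]
where the filtration on the source is the one attached to $X_1 - 1 \in \F^1\V^\Betti$ described in the lemma preceding Corollary \ref{cor:Deltafil}, and the filtration on the target is the one of Lemma \ref{lem:filVB2}.

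For the second assertion, I would observe that the formula defining the functor \eqref{func:BFS_MorAlgfil} on objects is literally the same as the one defining the functor \eqref{func:BFS_MorAlg} from Proposition-Definition \ref{Delta}, namely $b \mapsto \boldsymbol{r} \circ \rho(b) \circ \boldsymbol{c}(1_\A)$. Applying the forgetful functor \eqref{BFSfil_BFS} from $\K{\text-}\BFS_\fil$ to $\K{\text-}\BFS$ to $\mathcal{O}^\Betti_{\mat, \fil}$ returns the object $\mathcal{O}^\Betti_\mat$ of Corollary \ref{cor:tuple_OBmat}, since the tuple $(\V^\Betti, (\V^\Betti \otimes \V^\Betti)^{\oplus 3}, \V^\Betti \otimes \V^\Betti, \mathsf{r}\underline{\rho}, X_1 - 1, \mathsf{r}\underline{\row}, \mathsf{r}\underline{\col})$ is unchanged and only the filtrations are forgotten. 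Hence the underlying algebra morphism of $\Delta_{\mathcal{O}^\Betti_{\mat, \fil}}$ coincides with $\Delta_{\mathcal{O}^\Betti_\mat}$ from \eqref{eq:Delta_OB}. This is precisely the commutativity of the left square of diagram \eqref{eq:func_diag} evaluated at $\mathcal{O}^\Betti_{\mat, \fil}$.

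There is no substantive obstacle in this proof: it is entirely a bookkeeping argument combining Corollary \ref{cor:Deltafil} with the naturality of the forgetful functor \eqref{BFSfil_BFS}. The only mildly delicate point to spell out, if one wishes, is that the filtrations prescribed on source and target by the filtered formalism agree with those implicit in the statement, but this is automatic from the construction of $\mathcal{O}^\Betti_{\mat, \fil}$ carried out in Corollary \ref{cor:tuple_OBmatfil}.
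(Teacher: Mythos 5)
Your proposal is correct and follows essentially the same route as the paper's proof: both invoke Corollary \ref{cor:tuple_OBmatfil} to place $\mathcal{O}^\Betti_{\mat, \fil}$ in $\K{\text-}\BFS_\fil$, apply the filtered functor of Corollary \ref{cor:Deltafil}, and then appeal to the commutativity of the left square of diagram \eqref{eq:func_diag} to identify the underlying algebra morphism with $\Delta_{\mathcal{O}^\Betti_\mat}$. You spell out slightly more explicitly why that square commutes (the forgetful functor sends $\mathcal{O}^\Betti_{\mat, \fil}$ to $\mathcal{O}^\Betti_\mat$ and the defining formula for $\Delta$ is the same in both cases), but the substance is identical.
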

\begin{proof}
    The left hand side of \eqref{eq:func_diag} is a commutative diagram of functors. The result follows by evaluating the images of the object $\mathcal{O}^\Betti$ by the functors $\K{\text-}\BFS_\fil \to \K{\text-}\BFS \to \mathsf{Mor}(\K{\text-}\alg)$ and $\K{\text-}\BFS_\fil \to \mathsf{Mor}(\K{\text-}\alg) \to \mathsf{Mor}(\K{\text-}\alg_\fil)$.
\end{proof}

\subsection{The objects \texorpdfstring{$\mathcal{O}^\DeRham_\mat$}{ODRmat} and \texorpdfstring{$\gr(\mathcal{O}^\Betti_{\mat, \fil})$}{grOBmatfil} of \texorpdfstring{$\K{\text-}\BFS_\gr$}{k-BFSgr}} \label{sec:ODRmat_grOBmat}

Let $\mathfrak{f}_2$ be the free graded $\K$-Lie algebra with generators $e_0$ and $e_1$ of degree $1$ and let $\V^{\DeRham} := U(\mathfrak{f}_2)$ be its universal enveloping algebra, which is an object of $\K{\text-}\alg_\gr$ thanks to \cite[Sec. 2.1]{EF1}.
Set $e_\infty := - e_0 - e_1$. For $i \in \{0, 1, \infty\}$, we will abusively denote
\[
    e_i := e_i \otimes 1 \in \V^\DeRham \otimes \V^\DeRham \text{ and } f_i := 1 \otimes e_i \in \V^\DeRham \otimes \V^\DeRham. 
\]
Recall from \cite[Sec. 5.2.3]{EF1} the algebra morphism $\rho : \V^\DeRham \to \M_3(\V^\DeRham \otimes \V^\DeRham)$ given by
\[
    \rho(e_0) = \begin{pmatrix} e_0 & 0 & 0 \\ 0 & - e_1 + f_0 & - e_1 \\ 0 & - e_\infty - f_0 & - e_\infty \end{pmatrix} \text{ and } \rho(e_1) = \begin{pmatrix} e_1 & - f_1 & 0 \\ - e_1 & f_1 & 0 \\ 0 & 0 & 0 \end{pmatrix}.
\]
Moreover, the algebra $\V^\DeRham \otimes \V^\DeRham$ is equipped with the grading given by
\begin{equation*} 
    (\V^\DeRham \otimes \V^\DeRham)_n := \sum_{i+j=n} \V^\DeRham_i \otimes \V^\DeRham_j,
\end{equation*}
for any $n \geq 0$. Additionally, the right $(\V^\DeRham \otimes \V^\DeRham)$-module $(\V^\DeRham \otimes \V^\DeRham)^{\oplus 3}$ is equipped with the grading given by
\[
    ((\V^\DeRham \otimes \V^\DeRham)^{\oplus 3})_n := ((\V^\DeRham \otimes \V^\DeRham)_n)^{\oplus 3},
\]
for any $n \geq 0$, which we denote by $(\V^\DeRham \otimes \V^\DeRham)^{\oplus 3}_n$.

\begin{propdef} \label{propdef:DeRham_bimod}
    The tuple $(\V^\DeRham, (\V^\DeRham \otimes \V^\DeRham)^{\oplus 3}, \V^\DeRham \otimes \V^\DeRham, \mathsf{r}\rho)$ is a graded $\K$-bimodule where $\mathsf{r}\rho : \V^\DeRham \to \M_3(\V^\DeRham \otimes \V^\DeRham)$ is the $\K$-algebra morphism given by
    \[
        \mathsf{r}\rho := \M_3(\mathrm{S}_{\mathfrak{f}_2^{\oplus 2}}) \circ {^t}(-) \circ \rho \circ \mathrm{S}_{\mathfrak{f}_2}.
    \]
\end{propdef}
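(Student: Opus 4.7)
The plan is to mirror the proof of the Betti analogue (Proposition-Definition \ref{propdef:Betti_bimod}), adapted to the graded setting. There are three things to verify: first, that $\mathsf{r}\rho$ is a well-defined morphism of $\K{\text-}\alg$; second, that it is graded, i.e.\ a morphism of $\K{\text-}\alg_\gr$; and third, that via the standard $\K$-algebra isomorphism $\M_3(\V^\DeRham \otimes \V^\DeRham) \simeq \mathrm{End}_{(\V^\DeRham \otimes \V^\DeRham){\text-}\rmod}((\V^\DeRham \otimes \V^\DeRham)^{\oplus 3})$ this delivers the graded bimodule structure in the sense of Definition \ref{def:BFSgr}, where the right-hand side is equipped with the grading from Lemma \ref{MorArmod_grading}.

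For the algebra morphism property, the main point is to track the interaction of each factor with multiplication. The antipode $\mathrm{S}_{\mathfrak{f}_2}$ of the Hopf algebra $U(\mathfrak{f}_2) = \V^\DeRham$ is an anti-algebra morphism, and likewise $\mathrm{S}_{\mathfrak{f}_2^{\oplus 2}} = \mathrm{S}_{\mathfrak{f}_2} \otimes \mathrm{S}_{\mathfrak{f}_2}$ on $U(\mathfrak{f}_2 \oplus \mathfrak{f}_2) \simeq \V^\DeRham \otimes \V^\DeRham$. The key auxiliary observation is that for any anti-algebra morphism $S : R \to R$, the composition $\M_3(S) \circ {}^t(-) : \M_3(R) \to \M_3(R)$ is itself an anti-algebra morphism: a direct entry-wise computation shows that the $(i,j)$-coefficient of $(\M_3(S) \circ {}^t)(AB)$ equals $\sum_k S(B_{ki})\,S(A_{jk})$, matching the $(i,j)$-entry of $(\M_3(S) \circ {}^t)(B)\cdot(\M_3(S) \circ {}^t)(A)$. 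Since $\rho$ is an algebra morphism, the total composition $\mathsf{r}\rho$ is an anti-morphism after an algebra morphism after an anti-morphism, which is an algebra morphism.

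Compatibility with gradings is then essentially automatic: the antipode of a graded connected Hopf algebra preserves degrees (acting as $-1$ on primitive degree-$1$ generators), so $\mathrm{S}_{\mathfrak{f}_2}$ and $\mathrm{S}_{\mathfrak{f}_2^{\oplus 2}}$ are graded; the transpose preserves the matrix grading; and $\rho$ is graded by inspection of the explicit formulas recalled just before the statement, as every entry of $\rho(e_0)$ and $\rho(e_1)$ lies in $(\V^\DeRham \otimes \V^\DeRham)_1$. Hence $\mathsf{r}\rho$ is a morphism of graded $\K$-algebras from $\V^\DeRham$ to $\M_3(\V^\DeRham \otimes \V^\DeRham)$, and invoking the standard graded isomorphism with the endomorphism algebra of $(\V^\DeRham \otimes \V^\DeRham)^{\oplus 3}$ yields the graded bimodule structure.

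The only truly delicate step is the anti-morphism bookkeeping for $\M_3(S) \circ {}^t(-)$, since this is where the potentially counterintuitive behaviour of matrix transposition over non-commutative rings is hidden; the other ingredients are routine and parallel the Betti proof.
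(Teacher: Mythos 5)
Your proposal is correct and takes essentially the same route as the paper, so I will keep this brief. The paper's proof simply asserts that the bimodule structure ``follows from the algebra morphism status of $\rho$'' together with the standard isomorphism $\M_3(\V^\DeRham \otimes \V^\DeRham) \simeq \mathrm{End}_{(\V^\DeRham \otimes \V^\DeRham){\text-}\rmod}((\V^\DeRham \otimes \V^\DeRham)^{\oplus 3})$, leaving the anti-morphism/transpose bookkeeping implicit; you spell it out, and your entry-wise verification that $\M_3(S)\circ {}^t(-)$ is an anti-algebra morphism when $S$ is one is exactly the hidden content. For the graded part, the paper directly computes the matrices $\mathsf{r}\rho(e_0)$ and $\mathsf{r}\rho(e_1)$ and observes they lie in $\M_3((\V^\DeRham\otimes\V^\DeRham)_1)$ (those explicit matrices are then reused in Proposition \ref{prop:iso_grOB_ODR}), whereas you argue factor-by-factor that $\mathrm{S}_{\mathfrak{f}_2}$, $\mathrm{S}_{\mathfrak{f}_2^{\oplus 2}}$, transposition and $\rho$ are all degree-preserving; since $\V^\DeRham$ is generated in degree $1$ by $e_0, e_1$, both arguments amount to the same check, and either version is acceptable.
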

\begin{proof}
    The bimodule structure follows from the algebra morphism status of $\rho$ and from the $\K$-algebra isomorphism $\M_3(\V^\DeRham \otimes \V^\DeRham) \simeq \mathrm{End}_{(\V^\DeRham \otimes \V^\DeRham){\text-}\rmod}\left((\V^\DeRham \otimes \V^\DeRham)^{\oplus 3}\right)$. The graded status of the bimodule $(\V^\DeRham, (\V^\DeRham \otimes \V^\DeRham)^{\oplus 3}, \V^\DeRham \otimes \V^\DeRham, \mathsf{r}\rho)$ follows from the fact that
    \[
        \mathsf{r}\rho(e_0) = \begin{pmatrix} e_0 & 0 & 0 \\ 0 & - e_1 + f_0 & - e_\infty - f_0 \\ 0 & - e_1 & - e_\infty \end{pmatrix} \in \M_3((\V^\DeRham \otimes \V^\DeRham)_1)
    \]
    and
    \[
        \mathsf{r}\rho(e_1) = \begin{pmatrix} e_1 & - e_1 & 0 \\ - f_1 & f_1 & 0 \\ 0 & 0 & 0 \end{pmatrix} \in \M_3((\V^\DeRham \otimes \V^\DeRham)_1).
    \]
\end{proof}

\begin{definition} \label{def:rrow_rcol}
    Set
    \begin{equation*}
        \mathsf{r}\row := \begin{pmatrix} 1 & -1 & 0\end{pmatrix} \in \M_{1,3}(\V^\DeRham \otimes \V^\DeRham) \text{ and } \mathsf{r}\col := \begin{pmatrix}e_1 \\ -f_1 \\ 0\end{pmatrix} \in \M_{3,1}(\V^\DeRham \otimes \V^\DeRham).
    \end{equation*}
\end{definition}

\begin{lemma}\label{lem:rrho_rcolcircrrow}
    We have (equality in $\M_3(\V^\DeRham \otimes \V^\DeRham)$)
    \[
        \mathsf{r}\rho(e_1) = \mathsf{r}\col \cdot \mathsf{r}\row.
    \]
\end{lemma}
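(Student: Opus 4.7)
The plan is a direct matrix computation: compute both sides explicitly and compare entries. No subtle step is involved; the lemma is the de Rham counterpart of Proposition \ref{prop:rurho_rucolcircrurow} and the verification is analogous but considerably simpler, since one works with the symmetrized morphism $\rho$ rather than the conjugation-twisted $\underline{\rho}$.

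First, I would observe that the value of $\mathsf{r}\rho(e_1)$ has already been written down in the proof of Proposition-Definition \ref{propdef:DeRham_bimod}, namely
\[
    \mathsf{r}\rho(e_1) = \begin{pmatrix} e_1 & -e_1 & 0 \\ -f_1 & f_1 & 0 \\ 0 & 0 & 0 \end{pmatrix},
\]
so there is nothing to recompute. If one wants to re-derive it, apply the definition $\mathsf{r}\rho = \M_3(\mathrm{S}_{\f_2^{\oplus 2}}) \circ {}^t(-) \circ \rho \circ \mathrm{S}_{\f_2}$, use $\mathrm{S}_{\f_2}(e_1) = -e_1$ to get $\rho(-e_1)$ from the formula for $\rho(e_1)$, transpose, and then apply the componentwise antipode, which flips the signs of each $e_1$ and $f_1$ appearing.

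Next, from Definition \ref{def:rrow_rcol}, compute
\[
    \mathsf{r}\col \cdot \mathsf{r}\row = \begin{pmatrix} e_1 \\ -f_1 \\ 0 \end{pmatrix} \begin{pmatrix} 1 & -1 & 0 \end{pmatrix} = \begin{pmatrix} e_1 & -e_1 & 0 \\ -f_1 & f_1 & 0 \\ 0 & 0 & 0 \end{pmatrix}.
\]
Comparing the two matrices entry by entry establishes the required equality in $\M_3(\V^\DeRham \otimes \V^\DeRham)$. I do not anticipate any obstacle; the only point to be mindful of is the correct application of the antipode to the tensor factors when re-deriving $\mathsf{r}\rho(e_1)$, but since this formula is already on record, the proof reduces to a one-line matrix multiplication.
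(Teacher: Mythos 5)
Your proof is correct and takes the same approach the paper uses: the paper labels this lemma ``Immediate verification,'' and your explicit computation is exactly that verification, reusing the formula for $\mathsf{r}\rho(e_1)$ already displayed in the proof of Proposition-Definition \ref{propdef:DeRham_bimod} and multiplying out $\mathsf{r}\col\cdot\mathsf{r}\row$.
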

\begin{proof}
    Immediate verification.
\end{proof}

\begin{lemma}
    \label{lem:rcolin1}
    We have \(\mathsf{r}\col \in (\V^\Betti \otimes \V^\Betti)^{\oplus 3}_1\).
\end{lemma}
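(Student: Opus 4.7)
The plan is a direct verification that each of the three entries of the column $\mathsf{r}\col$ sits in the degree-1 component of the appropriate graded right-module, following immediately from Definition \ref{def:rrow_rcol} and the gradings recalled at the start of Section \ref{sec:ODRmat_grOBmat}. Since $\mathsf{r}\col = (e_1, -f_1, 0)^t$, with the abuse of notation $e_1 = e_1 \otimes 1$ and $f_1 = 1 \otimes e_1$ introduced in Section \ref{sec:ODRmat_grOBmat}, the whole task reduces to showing that $e_1$, $-f_1$, and $0$ all lie in $(\V^\DeRham \otimes \V^\DeRham)_1$, which is the natural ambient graded algebra for the objects defined in Definition \ref{def:rrow_rcol}.

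First I would recall that $\mathfrak{f}_2$ is the free graded $\K$-Lie algebra on generators $e_0$ and $e_1$ placed in degree $1$, so that $\V^\DeRham = U(\mathfrak{f}_2)$ is graded with $\V^\DeRham_0 = \K$ and $e_0, e_1 \in \V^\DeRham_1$. Using the tensor product grading
\[
    (\V^\DeRham \otimes \V^\DeRham)_1 = \V^\DeRham_0 \otimes \V^\DeRham_1 \ + \ \V^\DeRham_1 \otimes \V^\DeRham_0
\]
from Section \ref{sec:ODRmat_grOBmat}, the element $e_1 = e_1 \otimes 1$ belongs to $\V^\DeRham_1 \otimes \V^\DeRham_0 \subset (\V^\DeRham \otimes \V^\DeRham)_1$, and $f_1 = 1 \otimes e_1$ belongs to $\V^\DeRham_0 \otimes \V^\DeRham_1 \subset (\V^\DeRham \otimes \V^\DeRham)_1$; the zero entry lies trivially in every graded piece.

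It then suffices to invoke the definition of the grading on the direct sum, namely $(\V^\DeRham \otimes \V^\DeRham)^{\oplus 3}_1 := ((\V^\DeRham \otimes \V^\DeRham)_1)^{\oplus 3}$, to conclude the claimed degree-1 membership of $\mathsf{r}\col$. No step presents any real obstacle; this is a purely definitional unwinding and is just the bookkeeping counterpart, on the column side, to the analogous trivial checks $\mathsf{r}\row \in \Mor_{\A{\text-}\rmod}(\bM, \A)_0$ and $e_1 \in \V^\DeRham_1$. Together these ingredients will then guarantee, via Lemma \ref{lem:rrho_rcolcircrrow}, that the triple $(e_1, \mathsf{r}\row, \mathsf{r}\col)$ forms a graded factorization structure in the sense of Definition \ref{def:BFSgr} on the graded $\K$-bimodule supplied by Proposition-Definition \ref{propdef:DeRham_bimod}.
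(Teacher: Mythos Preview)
Your proposal is correct and takes essentially the same approach as the paper, which simply says the claim follows from Definition \ref{def:rrow_rcol}; you have merely spelled out the definitional unwinding in more detail. Note, incidentally, that the lemma statement as printed contains a typo: it should read $(\V^\DeRham \otimes \V^\DeRham)^{\oplus 3}_1$ rather than $(\V^\Betti \otimes \V^\Betti)^{\oplus 3}_1$, and you have correctly interpreted and proved the intended statement.
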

\begin{proof}
    This follows by definition of $\mathsf{r}\col$ given in Definition \ref{def:rrow_rcol}.
\end{proof}

\begin{corollary} \label{cor:tuple_ODRmat}
    The tuple
    \begin{equation} \label{tuple_ODRmat}
        \mathcal{O}^\DeRham_\mat := (\V^\DeRham, (\V^\DeRham \otimes \V^\DeRham)^{\oplus 3}, \V^\DeRham \otimes \V^\DeRham, \mathsf{r}\rho, e_1, \mathsf{r}\row, \mathsf{r}\col) 
    \end{equation}
    is an object of $\K{\text-}\BFS_\gr$, where $\mathsf{r}\row$ (resp. $\mathsf{r}\col$) is identified with its corresponding right \linebreak $(\V^\DeRham \otimes \V^\DeRham)$-module morphism $(\V^\DeRham \otimes \V^\DeRham)^{\oplus 3} \to \V^\DeRham \otimes \V^\DeRham$ (resp. $\V^\DeRham \otimes \V^\DeRham \to (\V^\DeRham \otimes \V^\DeRham)^{\oplus 3}$).
\end{corollary}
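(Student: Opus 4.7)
The plan is to assemble the three ingredients already established above. The underlying graded bimodule status of $(\V^\DeRham, (\V^\DeRham \otimes \V^\DeRham)^{\oplus 3}, \V^\DeRham \otimes \V^\DeRham, \mathsf{r}\rho)$ is given by Proposition-Definition \ref{propdef:DeRham_bimod}, while the factorization identity $\mathsf{r}\rho(e_1) = \mathsf{r}\col \cdot \mathsf{r}\row$ (in $\M_3(\V^\DeRham \otimes \V^\DeRham)$) is Lemma \ref{lem:rrho_rcolcircrrow}. Under the $\K$-algebra isomorphism $\M_3(\V^\DeRham \otimes \V^\DeRham) \simeq \mathrm{End}_{(\V^\DeRham \otimes \V^\DeRham){\text-}\rmod}\big((\V^\DeRham \otimes \V^\DeRham)^{\oplus 3}\big)$ and the identification of $\mathsf{r}\row$ (resp. $\mathsf{r}\col$) with the right $(\V^\DeRham \otimes \V^\DeRham)$-module morphism $(\V^\DeRham \otimes \V^\DeRham)^{\oplus 3} \to \V^\DeRham \otimes \V^\DeRham$ (resp. $\V^\DeRham \otimes \V^\DeRham \to (\V^\DeRham \otimes \V^\DeRham)^{\oplus 3}$), this yields the underlying $\K{\text-}\BFS$ object: $(\V^\DeRham, (\V^\DeRham \otimes \V^\DeRham)^{\oplus 3}, \V^\DeRham \otimes \V^\DeRham, \mathsf{r}\rho, e_1, \mathsf{r}\row, \mathsf{r}\col) \in \K{\text-}\BFS$.

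It remains to check that the factorization structure $(e_1, \mathsf{r}\row, \mathsf{r}\col)$ is graded in the sense of Definition \ref{def:BFSgr}(b). First, $e_1 \in \V^\DeRham_1$ since $e_1$ is a degree $1$ generator of the free graded Lie algebra $\mathfrak{f}_2$. Second, the entries of $\mathsf{r}\row$ lie in $\K \subset (\V^\DeRham \otimes \V^\DeRham)_0$, so the induced right module morphism sends $((\V^\DeRham \otimes \V^\DeRham)^{\oplus 3})_k$ into $(\V^\DeRham \otimes \V^\DeRham)_k$ for every $k$, placing it in $\Mor_{(\V^\DeRham \otimes \V^\DeRham){\text-}\rmod}((\V^\DeRham \otimes \V^\DeRham)^{\oplus 3}, \V^\DeRham \otimes \V^\DeRham)_0$ by Definition \ref{grading_Mor}. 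Third, Lemma \ref{lem:rcolin1} says that $\mathsf{r}\col \in (\V^\DeRham \otimes \V^\DeRham)^{\oplus 3}_1$; combined with the graded analogue of Lemma \ref{lem:morAM_M_isofil} (the isomorphism $\Mor_{(\V^\DeRham \otimes \V^\DeRham){\text-}\rmod}(\V^\DeRham \otimes \V^\DeRham, (\V^\DeRham \otimes \V^\DeRham)^{\oplus 3}) \simeq (\V^\DeRham \otimes \V^\DeRham)^{\oplus 3}$ is compatible with gradings, since the map $\phi \mapsto \phi(1)$ preserves degree), this places $\mathsf{r}\col$ in $\Mor_{(\V^\DeRham \otimes \V^\DeRham){\text-}\rmod}(\V^\DeRham \otimes \V^\DeRham, (\V^\DeRham \otimes \V^\DeRham)^{\oplus 3})_1$.

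There is no real obstacle here; the statement is essentially a bookkeeping corollary that collects Proposition-Definition \ref{propdef:DeRham_bimod}, Lemma \ref{lem:rrho_rcolcircrrow}, and Lemma \ref{lem:rcolin1}, and verifies the degree conditions of Definition \ref{def:BFSgr}(b). The only point worth being explicit about is the identification of $\mathsf{r}\row$ and $\mathsf{r}\col$ (given as matrices) with right module morphisms of the prescribed degree, which is handled entrywise using the grading on $\V^\DeRham \otimes \V^\DeRham$. This finishes the proof.
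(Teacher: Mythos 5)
Your proof is correct and follows the same route as the paper: the paper's proof is a one-line citation of Proposition-Definition \ref{propdef:DeRham_bimod}, Lemma \ref{lem:rrho_rcolcircrrow}, and Lemma \ref{lem:rcolin1}, which are exactly the three ingredients you assemble. You are somewhat more explicit in spelling out the degree checks on $e_1$, $\mathsf{r}\row$, and $\mathsf{r}\col$ (including the implicit appeal to the graded analogue of Lemma \ref{lem:morAM_M_isofil}), but the substance is identical.
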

\begin{proof}
    This follows from Proposition-Definition \ref{propdef:DeRham_bimod} and Lemmas \ref{lem:rrho_rcolcircrrow} and \ref{lem:rcolin1}.
\end{proof}

\begin{proposition} \label{prop:iso_grOB_ODR}
    The tuple
    \[
        \gr(\mathcal{O}^\Betti_{\mat, \fil}) := (\gr \V^\Betti, \gr((\V^\Betti \otimes \V^\Betti)^{\oplus 3}), \gr(\V^\Betti \otimes \V^\Betti), \gr(\mathsf{r}\underline{\rho}), [X_1 - 1]_1, \gr_0(\mathsf{r}\underline{\row}), \gr_1(\mathsf{r}\underline{\col}))
    \]
    is an object of $\K{\text-}\BFS_\gr$. Moreover, the objects $\gr(\mathcal{O}^\Betti_{\mat, \fil})$ and $\mathcal{O}^\DeRham_\mat$ are isomorphic.
\end{proposition}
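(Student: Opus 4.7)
The plan is to apply the functor $\K{\text-}\BFS_\fil \to \K{\text-}\BFS_\gr$ built in Section \ref{BFSfil_BFSgr} to the object $\mathcal{O}^\Betti_{\mat, \fil}$ of $\K{\text-}\BFS_\fil$ produced in Corollary \ref{cor:tuple_OBmatfil}; by Lemma \ref{BFS_gr_fil}(a), the tuple displayed in the statement is then automatically an object of $\K{\text-}\BFS_\gr$, which settles the first assertion.

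For the second assertion, I would exhibit an isomorphism of graded $\K$-bimodules
\[
\big(\gr\V^\Betti,\, \gr((\V^\Betti\otimes\V^\Betti)^{\oplus 3}),\, \gr(\V^\Betti\otimes\V^\Betti),\, \gr(\mathsf{r}\underline{\rho})\big) \xrightarrow{\ \sim\ } \big(\V^\DeRham,\, (\V^\DeRham\otimes\V^\DeRham)^{\oplus 3},\, \V^\DeRham\otimes\V^\DeRham,\, \mathsf{r}\rho\big),
\]
and then invoke Proposition \ref{prop:pullback_gr} to transport the factorization structure. At the algebra level, the input is the classical Magnus--Quillen identification $\gr \K F_2 \simeq U(\mathfrak{f}_2) = \V^\DeRham$ of graded algebras, sending $[X_i - 1]_1 \mapsto e_i$ for $i \in \{0,1\}$; tensoring with itself and using that the filtration on $\V^\Betti \otimes \V^\Betti$ from Lemma \ref{lem:filVB2} is the tensor product filtration, we get $\gr(\V^\Betti \otimes \V^\Betti) \simeq \V^\DeRham \otimes \V^\DeRham$ with $[Y_i - 1]_1 \mapsto f_i$, and passing to the rank-three free module yields $\gr((\V^\Betti \otimes \V^\Betti)^{\oplus 3}) \simeq (\V^\DeRham \otimes \V^\DeRham)^{\oplus 3}$.

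To verify compatibility with the left actions, it suffices to check $\gr(\mathsf{r}\underline{\rho}) = \mathsf{r}\rho$ on the two algebra generators $X_0 - 1$ and $X_1 - 1$ of $I_{F_2} \subset \V^\Betti$. Using the explicit matrices $\mathsf{r}\underline{\rho}(X_0 - 1)$ and $\mathsf{r}\underline{\rho}(X_1 - 1)$ recorded in the proof of Lemma \ref{lem:filVB23}, I would expand each entry as a formal polynomial in the elements $X_i^{\pm 1} - 1$ and $Y_i^{\pm 1} - 1$; extracting the degree-one component amounts to substituting $X_i^{\pm 1} - 1 \mapsto \pm e_i$ and $Y_i^{\pm 1} - 1 \mapsto \pm f_i$ and dropping all higher products, and yields exactly the matrices $\mathsf{r}\rho(e_0)$ and $\mathsf{r}\rho(e_1)$ of Proposition-Definition \ref{propdef:DeRham_bimod}.

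Finally, the factorization data transports along this isomorphism: $[X_1 - 1]_1 \mapsto e_1$ by construction, while the same degree-one reduction applied to $\mathsf{r}\underline{\row} = \begin{pmatrix} Y_1 & -X_1 Y_1 & 0 \end{pmatrix}$ and to $\mathsf{r}\underline{\col}$, whose non-zero entries $Y_1^{-1}(X_1 - 1)$ and $Y_1^{-1}(1 - Y_1) \equiv -(Y_1 - 1) \bmod \F^2$ lie in $\F^1$, produces $\mathsf{r}\row = (1, -1, 0)$ and $\mathsf{r}\col = (e_1, -f_1, 0)^t$, respectively. Proposition \ref{prop:pullback_gr} then closes the argument. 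The main---though essentially routine---obstacle is the matrix-entry computation establishing $\gr(\mathsf{r}\underline{\rho})(X_i - 1) = \mathsf{r}\rho(e_i)$ for $i \in \{0, 1\}$, since the twistings by $\mathrm{Ad}_{\diag(\dots)^{-1}}$, $\M_3(\mathrm{op}_{F_2^2})$, and transposition in the definition of $\mathsf{r}\underline{\rho}$ all become trivial at the level of degree-one components.
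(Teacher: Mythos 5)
Your proposal is correct and follows essentially the same route as the paper: apply the functor of Section~\ref{BFSfil_BFSgr} for the first assertion, then identify the underlying graded bimodules via the Magnus isomorphism $\gr\V^\Betti \simeq \V^\DeRham$ and check that the left action and the factorization data agree on the degree-one generators by the same degree-one reductions the paper records as the identities \eqref{g0}--\eqref{gh-1}. One small imprecision in your closing aside: the twistings in the definition of $\mathsf{r}\underline{\rho}$ do not ``all become trivial'' in degree one---the adjoint $\Ad_{\diag(\dots)^{-1}}$ does, but $\mathrm{op}$ passes to the antipode $\mathrm{S}$ and the transpose persists, which is precisely why $\gr(\mathsf{r}\underline{\rho})$ matches $\mathsf{r}\rho$ rather than $\rho$; this is harmless since your actual verification goes through the explicit matrix entries of $\mathsf{r}\underline{\rho}(X_i-1)$ from Lemma~\ref{lem:filVB23}.
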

\begin{proof}
    The first statement follows by applying the functor $\K{\text-}\BFS_\fil \to \K{\text-}\BFS_\gr$ defined in \linebreak Sec. \ref{BFSfil_BFSgr}. For the second statement, recall from \cite[Sec. 2.4.1]{EF1} that there is a graded algebra isomorphism $\gr\V^\Betti \simeq \V^\DeRham$ given by $[X_i - 1]_1 \mapsto e_i$ ($i \in \{0, 1\}$). This induces a graded algebra isomorphism $\gr(\V^\Betti \otimes \V^\Betti) \simeq \V^\DeRham \otimes \V^\DeRham$, therefore a graded right module isomorphism $\gr((\V^\Betti \otimes \V^\Betti)^{\oplus 3}) \simeq (\V^\DeRham \otimes \V^\DeRham)^{\oplus 3}$ over the graded algebra isomorphism $\gr(\V^\Betti \otimes \V^\Betti) \simeq \V^\DeRham \otimes \V^\DeRham$. Finally, the following diagrams are commutative
    \[\begin{tikzcd}
        \gr_0(\V^\Betti \otimes \V^\Betti)^{\oplus 3} \ar[rr, "\gr_0(\mathsf{r}\underline{\row})"] \ar[d, "\simeq"'] && \gr_0(\V^\Betti) \ar[d, "\simeq"] \\
        (\V^\DeRham \otimes \V^\DeRham)^{\oplus 3} \ar[rr, "\mathsf{r}\row"] && \V^\DeRham
    \end{tikzcd}
    \hspace{10mm}
    \begin{tikzcd}
        \gr_1(\V^\Betti) \ar[rr, "\gr_1(\mathsf{r}\underline{\col})"] \ar[d, "\simeq"'] && \gr_1(\V^\Betti \otimes \V^\Betti)^{\oplus 3} \ar[d, "\simeq"] \\
        \V^\DeRham \ar[rr, "\mathsf{r}\col"] && (\V^\DeRham \otimes \V^\DeRham)^{\oplus 3}
    \end{tikzcd}\]
    and
    \[\begin{tikzcd}
        \gr(\V^\Betti) \ar[r, "\gr(\mathsf{r}\underline{\rho})"] \ar[d, "\simeq"'] & \M_3(\gr(\V^\Betti \otimes \V^\Betti)) \ar[d, "\simeq"] \\
        \V^\DeRham \ar[r, "\mathsf{r}\rho"] & \M_3(\V^\DeRham \otimes \V^\DeRham)
    \end{tikzcd}\]
    Indeed, for the first two diagrams, it suffices to prove that $\gr(\mathsf{r}\underline{\row}) = \mathsf{r}\row$ and $\gr(\mathsf{r}\underline{\col}) = \mathsf{r}\col$, under the isomorphism $\gr\V^\Betti \simeq \V^\DeRham$.
    For $g, h, k \in F_2^2$, we recall the following identities
    \begin{align}
        [g]_0 &= 1 \label{g0}\\
        [g(h-1)k]_1 & = [h-1]_1 \label{gh-1k} \\
        [g^{-1}-1]_1 & = -[g-1]_1 \label{g-1} \\
        [gh-1]_1 & = [g-1]_1 + [h-1]_1 \label{gh-1}
    \end{align}
    We have
    \begin{align*}
        \gr_0(\mathsf{r}\underline{\row}) & = \begin{pmatrix} [Y_1]_0 & -[X_1Y_1]_0 & 0 \end{pmatrix} \\
        & = \begin{pmatrix} 1 & -1 & 0\end{pmatrix} = \mathsf{r}\row,
    \end{align*}
    where the second equality follows from identity \eqref{g0} for applied to $g = Y_1$ and to $g = X_1 Y_1$. On the other hand,
    \begin{align*}
        \gr_1(\mathsf{r}\underline{\col}) = \begin{pmatrix} [Y_1^{-1}(X_1 - 1)]_1 \\ [Y_1^{-1} - 1]_1 \\ 0\end{pmatrix} = \begin{pmatrix} [X_1 - 1]_1 \\ -[Y_1 - 1]_1 \\ 0\end{pmatrix} = \begin{pmatrix}e_1 \\ -f_1 \\ 0\end{pmatrix} = \mathsf{r}\col,
    \end{align*}
    where the second equality follows from identity \eqref{gh-1k} applied to $(g, h, k)=(Y_1^{-1}, X_1, 1)$ and from identity \eqref{g-1} applied to $g = Y_1$. \newline
    For the last diagram, it suffices to prove that $\gr_1(\mathsf{r}\underline{\rho})(X_i - 1) = \mathsf{r}\rho(e_i)$ ($i \in \{0, 1\}$), under the isomorphism $\gr\V^\Betti \simeq \V^\DeRham$.
    We have
    \begin{align*}
        \gr_1(\mathsf{r}\underline{\rho})(X_1 - 1) & = \begin{pmatrix} [X_1 Y_1^{-1} (X_1 - 1)]_1 & [-X_1 (X_1 - 1)]_1 & 0 \\ [-(Y_1 - 1)]_1 & [X_1 (Y_1 - 1)]_1 & 0 \\ 0 & 0 & 0 \end{pmatrix} \\
        & = \begin{pmatrix} [X_1 - 1]_1 & -[X_1 - 1]_1 & 0 \\ -[Y_1 - 1]_1 & [Y_1 - 1]_1 & 0 \\ 0 & 0 & 0 \end{pmatrix} \\
        & = \begin{pmatrix} e_1 & - e_1 & 0 \\ -f_1 & f_1 & 0 \\ 0 & 0 & 0 \end{pmatrix} = \mathsf{r}\rho(e_i),
    \end{align*}
    where the second equality follows from identity \eqref{gh-1k} applied to $(g, h, k)=(X_1Y_1^{-1}, X_1, 1)$, $(g, h, k)=(X_1, X_1, 1)$ and to $(g, h, k)=(X_1, Y_1, 1)$.
    On the other hand, we have
    \begin{align*}
        \gr_1(\mathsf{r}\underline{\rho})(X_0 - 1) & = \begin{pmatrix} [X_0 - 1]_1 & 0 & 0 \\ 0 & [(X_1 Y_1)^{-1} Y_0 Y_1 - 1]_1 & [(X_1 Y_1)^{-1} (1 - X_1^{-1} X_0^{-1} Y_0) Y_0Y_1]_1 \\ 0 & [(Y_0 Y_1)^{-1} X_0 (1 - X_1) Y_0 Y_1]_1 & [X_0 - 1 + (1 - X_0 X_1^{-1} X_0^{-1}) Y_1^{-1} Y_0 Y_1]_1 \end{pmatrix} \\
        & = \begin{pmatrix} [X_0 - 1]_1 & 0 & 0 \\ 0 & - [X_1 - 1]_1 + [Y_0 - 1]_1 &  [X_0 - 1]_1 + [X_1 - 1]_1 - [Y_0 - 1]_1 \\ 0 & - [X_1 - 1]_1 & [X_0 - 1]_1 + [X_1 - 1]_1 \end{pmatrix} \\
        & = \begin{pmatrix} e_0 & 0 & 0 \\ 0 & - e_1 + f_0 & - e_\infty - f_0 \\ 0 & - e_1 & - e_\infty \end{pmatrix},
    \end{align*}
    where the second equality follows from these identities in $I_{F_2} / I_{F_2}^2$:
    \begin{align*}
        [(X_1 Y_1)^{-1} Y_0 Y_1 - 1]_1 = [Y_1^{-1}(X_1^{-1} Y_0 - 1)Y_1]_1 & \overset{\eqref{gh-1k}}{=} [X_1^{-1} Y_0 - 1]_1 \overset{\eqref{gh-1}}{=} [X_1^{-1} - 1]_1 + [Y_0 - 1]_1 \\
        & \overset{\eqref{g-1}}{=} - [X_1 - 1]_1 + [Y_0 - 1]_1,
    \end{align*}
    and
    \begin{align*}
        [(X_1 Y_1)^{-1} (1 - X_1^{-1} X_0^{-1} Y_0) Y_0Y_1]_1 & \overset{\eqref{gh-1k}}{=} [1 - X_1^{-1} X_0^{-1} Y_0)]_1 \overset{\eqref{gh-1}}{=} - [X_1^{-1} - 1]_1 - [X_0^{-1} - 1]_1 - [Y_0 - 1]_1 \\
        & \overset{\eqref{g-1}}{=} [X_0 - 1]_1 + [X_1 - 1]_1 - [Y_0 - 1]_1,
    \end{align*}
    and
    \begin{align*}
        [(Y_0 Y_1)^{-1} X_0 (1 - X_1) Y_0 Y_1]_1 & \overset{\eqref{gh-1k}}{=} - [X_1 - 1]_1,
    \end{align*}
    and
    \begin{align*}
        [X_0 - 1 + (1 - X_0 X_1^{-1} X_0^{-1}) Y_1^{-1} Y_0 Y_1]_1 & \overset{\eqref{gh-1k}}{=} [X_0 - 1]_1 - [X_0 X_1^{-1} X_0^{-1} - 1]_1 \\
        & \overset{\eqref{gh-1}}{=} [X_0 - 1]_1 - [X_0 - 1]_1 - [X_1^{-1} - 1]_1 - [X_0^{-1} - 1]_1 \\
        & \overset{\eqref{g-1}}{=} [X_0 - 1]_1 + [X_1 - 1]_1.
    \end{align*}
    This concludes the proof of the isomorphism claim between the two objects of $\K{\text-}\BFS_\gr$.
\end{proof}

\subsection{The image of \texorpdfstring{$\mathcal{O}^\DeRham_\mat$}{ODRmat} in \texorpdfstring{$\mathsf{Mor}(\K{\text-}\alg_\gr)$}{Mor(k-alg)} and the coproduct \texorpdfstring{$\Delta^{\W,\DeRham}$}{DeltaWDR}}

Applying the functor $\K{\text-}\BFS_\gr \to \mathsf{Mor}(\K{\text-}\alg_\gr)$ given in Sec. \ref{BFSgr_Moralggr} to the object $\mathcal{O}^\DeRham_\mat$ given in \eqref{tuple_ODRmat}, one defines the algebra morphism
\begin{equation} \label{eq:Delta_ODR}
    \Delta_{\mathcal{O}^\DeRham_\mat} : \K \oplus (\V^\DeRham, \cdot_{e_1}) \to \V^\DeRham \otimes \V^\DeRham.
\end{equation}
Explicitly, for $v \in \V^\DeRham$ we have
\[
    \Delta_{\mathcal{O}^\DeRham_\mat}(v) = \mathsf{r}\row \cdot \mathsf{r}\rho(v) \cdot \mathsf{r}\col,
\]
On the other hand, recall from \cite[Sec. 1.1]{EF1} the subalgebra $\W^\DeRham$ of $\V^\DeRham$ given by
\[
    \W^\DeRham := \K \oplus \V^\DeRham e_1.
\]
There is an algebra morphism $\K \oplus (\V^\DeRham, \cdot_{e_1}) \to \W^\DeRham$ given by $v \mapsto v \cdot e_1$. It is obviously surjective, and it is injective since right multiplication by $e_1$ is an injective endomorphism of $\V^\DeRham$.
On the other hand, it follows from \cite[Sec. 1.2]{EF1} that the algebra $\W^\DeRham$ is freely generated by 
\begin{equation}
    \label{eq:gen_of_WDR}
    e_0^n e_1 \text{ for } n \in \Z_{\geq 0}. 
\end{equation}

\noindent An algebra morphism $\Delta^{\W, \DeRham} : \W^{\DeRham} \to \W^{\DeRham} \otimes \W^{\DeRham}$ is given by (see \cite[(1.2.1)]{EF1})
\[
    \Delta^{\W, \DeRham}(e_0^n e_1) = e_0^n e_1 \otimes 1 + 1 \otimes e_0^n e_1 - \sum_{k=0}^{n-1} e_0^k e_1 \otimes e_0^{n-k-1} e_1,
\]
for $n \in \Z_{\geq 0}$.

\begin{theorem} \label{thm:DeltaWDR_DeltaODR}
    The following diagram
    \begin{equation}\label{diag:DeltaWDR_DeltaODR}\begin{tikzcd}
        \W^\DeRham \ar[rrr, "\Delta^{\W, \DeRham}"] \ar[d, "\simeq"'] &&& \W^\DeRham \otimes \W^\DeRham \ar[d, hook'] \\
        \K \oplus (\V^\DeRham, \cdot_{e_1}) \ar[rrr, "\Delta_{\mathcal{O}^\DeRham_\mat}"] &&& \V^\DeRham \otimes \V^\DeRham 
    \end{tikzcd}\end{equation}
    commutes.
\end{theorem}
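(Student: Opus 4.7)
The approach is to mirror the proof of Theorem~\ref{thm:DeltaWB_DeltaOB}, specialised to the de Rham setting. Since every arrow in diagram~\eqref{diag:DeltaWDR_DeltaODR} is an algebra morphism, it suffices to check commutativity on a generating set of $\W^\DeRham$, which by~\eqref{eq:gen_of_WDR} may be taken to be $\{e_0^n e_1 \mid n \in \Z_{\geq 0}\}$. Under the isomorphism $\W^\DeRham \simeq \K \oplus (\V^\DeRham, \cdot_{e_1})$, the generator $e_0^n e_1$ corresponds to $e_0^n$, so the identity to establish reduces to
\[
    \mathsf{r}\row \cdot \mathsf{r}\rho(e_0^n) \cdot \mathsf{r}\col = e_0^n e_1 \otimes 1 + 1 \otimes e_0^n e_1 - \sum_{k=0}^{n-1} e_0^k e_1 \otimes e_0^{n-k-1} e_1
\]
inside $\V^\DeRham \otimes \V^\DeRham$. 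Compared with the Betti case only non-negative exponents appear, so no use of convention~\eqref{convention_sum} is needed.

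First, I would unpack the definition $\mathsf{r}\rho = \M_3(\mathrm{S}_{\mathfrak{f}_2^{\oplus 2}}) \circ {}^t(-) \circ \rho \circ \mathrm{S}_{\mathfrak{f}_2}$ from Proposition-Definition~\ref{propdef:DeRham_bimod}. Since $\mathrm{S}_{\mathfrak{f}_2}$ is an anti-automorphism with $\mathrm{S}_{\mathfrak{f}_2}(e_0) = -e_0$, this gives $\mathsf{r}\rho(e_0^n) = (-1)^n \, \M_3(\mathrm{S}_{\mathfrak{f}_2^{\oplus 2}})\bigl({}^t\rho(e_0)^n\bigr)$. Applying the transpose/tensor compatibilities~\eqref{tMM_MtM} and~\eqref{tMf_identity} as in the Betti proof, the product $\mathsf{r}\row \cdot \mathsf{r}\rho(e_0^n) \cdot \mathsf{r}\col$ rewrites as the image under $\mathrm{S}_{\mathfrak{f}_2^{\oplus 2}}$ of an explicit row--matrix--column product involving $\rho(e_0)^n$ sandwiched between vectors obtained by transposing $\mathsf{r}\col$ and $\mathsf{r}\row$.

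The heart of the argument is the evaluation of this sandwich. Since $\rho(e_0)$ is block-diagonal, with a $1\times 1$ block $(e_0)$ and a non-trivial $2\times 2$ block whose entries involve $-e_1 + f_0$, $-e_1$, $-e_\infty - f_0$ and $-e_\infty$, and since both transposed vectors have vanishing third coordinate, only the $2 \times 2$ block contributes. The hard part will be to establish a closed form for the relevant matrix entry of $\rho(e_0)^n$, which I expect to be handled by the de Rham analog of \cite[Lemma~7.12]{EF1} (presumably available within \cite[Sec.~5]{EF1}). The resulting expression should be a sum of terms of the form $e_0^k e_1 \otimes e_0^{n-k-1} e_1$ indexed by $k = 0, \ldots, n-1$, plus two boundary contributions producing $e_0^n e_1 \otimes 1$ and $1 \otimes e_0^n e_1$ after applying $\mathrm{S}_{\mathfrak{f}_2^{\oplus 2}}$ and absorbing the sign $(-1)^n$; a straightforward reindexing then matches the right-hand side.

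Alternatively, as a consistency check, the conclusion may be reached indirectly from Theorem~\ref{thm:DeltaWB_DeltaOB} combined with Corollary~\ref{cor:DeltaOBfil}, Proposition~\ref{prop:iso_grOB_ODR} and the commutativity of diagram~\eqref{eq:func_diag}: these together identify $\Delta_{\mathcal{O}^\DeRham_\mat}$ with the associated graded of $\Delta_{\mathcal{O}^\Betti_{\mat,\fil}}$ and hence, via Theorem~\ref{thm:DeltaWB_DeltaOB}, with $\gr(\Delta^{\W,\Betti})$, which is known from~\cite{EF1} to coincide with $\Delta^{\W,\DeRham}$ under the identification $\gr \W^\Betti \simeq \W^\DeRham$.
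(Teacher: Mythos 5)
Your proposal takes essentially the paper's route: reduce to the generators $e_0^n e_1$, unwind $\mathsf{r}\rho$ via $\mathrm{S}_{\mathfrak{f}_2}$ and the transpose identities, and appeal to the de Rham analogue of Lemma~7.12 of \cite{EF1} --- that analogue is precisely Lemma~5.7 of \cite{EF1}, which the paper cites to conclude. The alternative indirect derivation via $\gr$ that you sketch is logically valid but is not used here, since the paper instead deploys Theorems~\ref{thm:DeltaWB_DeltaOB} and~\ref{thm:DeltaWDR_DeltaODR} afterward to give an independent proof of $\gr(\Delta^{\W,\Betti}) = \Delta^{\W,\DeRham}$.
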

\begin{proof}
    Since all arrows of diagram \eqref{diag:DeltaWDR_DeltaODR} are algebra morphisms, it suffices to establish the commutativity through evaluation on a system of generators of $\W^\DeRham$, which we take to be the ones given in \eqref{eq:gen_of_WDR}. \newline
    Let $n \in \Z_{\geq 0}$. The image of $e_0^n e_1$ by the composition $\W^\DeRham \to (\W^\DeRham \otimes \W^\DeRham) \hookrightarrow (\V^\DeRham \otimes \V^\DeRham)$ is given by
    \[
        e_0^n e_1 \mapsto e_0^n e_1 \otimes 1 + 1 \otimes e_0^n e_1 - \sum_{k=0}^{n-1} e_0^k e_1 \otimes e_0^{n-k-1} e_1.
    \]
    On the other hand, the image of $e_0^n e_1$ by the composition $\mbox{\small$\W^\DeRham \simeq \left(\K \oplus (\V^\DeRham, \cdot_{e_1})\right) \hookrightarrow (\V^\DeRham \otimes \V^\DeRham)$}$ is given by
    \[
        e_0^n e_1 \mapsto \mathsf{r}\row \cdot \mathsf{r}\rho(e_0^n) \cdot \mathsf{r}\col.
    \]
    We have
    \begin{align*}
        \mathsf{r}\row \cdot \mathsf{r}\rho(e_0^n) \cdot \mathsf{r}\col & = \prescript{t}{}{\Bigg(}{}^t\mathsf{r}\col \cdot \rho(e_0^n) \cdot {}^t\mathsf{r}\row \Bigg) \\
        & = e_0^n e_1 \otimes 1 + 1 \otimes e_0^n e_1 - \sum_{k=0}^{n-1} e_0^k e_1 \otimes e_0^{n-k-1} e_1,
    \end{align*}
    where the last equality comes from \cite[Lemma 5.7]{EF1}.
\end{proof}

\begin{remark}
    Under the identification $\W^\DeRham \simeq \K \oplus (\V^\DeRham, \cdot_{e_1})$, the commutativity of diagram \eqref{diag:DeltaWDR_DeltaODR} enables us to obtain that the image of $\Delta_{\mathcal{O}^\DeRham_\mat}$ lies in $\W^\DeRham \otimes \W^\DeRham$ as well as the identity
    \[
        \Delta_{\mathcal{O}^\DeRham_\mat} = \Delta^{\W, \DeRham}.
    \]
\end{remark}
\noindent Thanks to \cite[Proposition 2.8]{EF1}, the graded algebra isomorphism $\gr(\V^\Betti) \simeq \V^\DeRham$ induces a graded algebra isomorphism $\gr(\W^\Betti) \simeq \W^\DeRham$. An alternate proof of \cite[Proposition 2.16]{EF1} is enabled by the setting introduced in this paper, and is displayed in the following result:
\begin{corollary}
    The following diagram of $\K{\text-}\alg_\gr$ morphisms
    \begin{equation*}
        \begin{tikzcd}
            \gr(\W^\Betti) \ar[rrr, "\gr(\Delta^{\W, \Betti})"] \ar[d, "\simeq"'] &&& \gr(\W^\Betti \otimes \W^\Betti) \ar[d, "\simeq"] \\
            \W^\DeRham \ar[rrr, "\Delta^{\W, \DeRham}"] &&& \W^\DeRham \otimes \W^\DeRham
        \end{tikzcd}
    \end{equation*}
    commutes.
\end{corollary}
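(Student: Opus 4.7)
The plan is to derive the commutativity as a formal consequence of the functorial machinery encapsulated in diagram \eqref{eq:func_diag}, evaluated on the filtered object $\mathcal{O}^\Betti_{\mat, \fil}$ of $\K{\text-}\BFS_\fil$ from Corollary \ref{cor:tuple_OBmatfil}. The two paths around the right-hand square of \eqref{eq:func_diag}, from $\K{\text-}\BFS_\fil$ to $\mathsf{Mor}(\K{\text-}\alg_\gr)$, will produce the two sides of the desired commutative square up to canonical identifications.

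Going via the lower left, the functor $\K{\text-}\BFS_\fil \to \mathsf{Mor}(\K{\text-}\alg_\fil)$ of Corollary \ref{cor:Deltafil} gives the filtered morphism $\Delta_{\mathcal{O}^\Betti_{\mat, \fil}}$, and passing to the associated graded via \eqref{Moralgfil_Moralggr} yields $\gr(\Delta_{\mathcal{O}^\Betti_{\mat, \fil}})$. Combining Corollary \ref{cor:DeltaOBfil} with Theorem \ref{thm:DeltaWB_DeltaOB}, and transporting along the filtered algebra isomorphism $\W^\Betti \simeq \K \oplus (\V^\Betti, \cdot_{X_1-1})$ induced by right multiplication by $X_1 - 1$, this graded morphism identifies with $\gr(\Delta^{\W, \Betti})$. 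Going via the upper right, the functor $\K{\text-}\BFS_\fil \to \K{\text-}\BFS_\gr$ of Section \ref{BFSfil_BFSgr} produces $\gr(\mathcal{O}^\Betti_{\mat, \fil})$, which by Proposition \ref{prop:iso_grOB_ODR} is isomorphic to $\mathcal{O}^\DeRham_\mat$; the functor $\K{\text-}\BFS_\gr \to \mathsf{Mor}(\K{\text-}\alg_\gr)$ of Corollary \ref{cor:Deltagr} then returns $\Delta_{\mathcal{O}^\DeRham_\mat}$, which by Theorem \ref{thm:DeltaWDR_DeltaODR} is identified via the graded algebra isomorphism $\W^\DeRham \simeq \K \oplus (\V^\DeRham, \cdot_{e_1})$ with $\Delta^{\W, \DeRham}$.

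The natural equivalence between the two functors $\K{\text-}\BFS_\fil \rightrightarrows \mathsf{Mor}(\K{\text-}\alg_\gr)$ arising from the right square of \eqref{eq:func_diag} then forces $\gr(\Delta^{\W, \Betti})$ and $\Delta^{\W, \DeRham}$ to coincide after transport along the graded algebra isomorphism $\gr(\W^\Betti) \simeq \W^\DeRham$ of \cite[Proposition 2.8]{EF1} (and its tensor square on the codomain side). The main obstacle is bookkeeping: one must verify that the filtered (resp. graded) algebra isomorphism $\W^\Betti \simeq \K \oplus (\V^\Betti, \cdot_{X_1-1})$ (resp. $\W^\DeRham \simeq \K \oplus (\V^\DeRham, \cdot_{e_1})$) is compatible with the filtration (resp. grading) inherited from $\V^\Betti$ (resp. $\V^\DeRham$), and that taking associated graded of the former recovers precisely the identification of \cite[Proposition 2.8]{EF1}. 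This ultimately reduces to the injectivity of right multiplication by $e_1 = [X_1 - 1]_1$ on $\V^\DeRham$, a standard consequence of $e_1$ being a free generator of the free Lie algebra $\mathfrak{f}_2$.
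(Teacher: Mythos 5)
Your proposal is correct and follows essentially the same route as the paper: the paper draws an explicit commutative cube whose back face is the image under $\K{\text-}\BFS_\gr \to \mathsf{Mor}(\K{\text-}\alg_\gr)$ of the isomorphism $\gr(\mathcal{O}^\Betti_{\mat,\fil}) \simeq \mathcal{O}^\DeRham_\mat$, whose top and bottom faces are Theorems \ref{thm:DeltaWB_DeltaOB} (after $\gr$) and \ref{thm:DeltaWDR_DeltaODR}, and whose left and right faces are the transport isomorphisms you describe — which is exactly your ``two paths around the right square of \eqref{eq:func_diag}'' argument. Your explicit remark that the left and right faces ultimately rest on the filtered compatibility of $\W^\Betti \simeq \K \oplus (\V^\Betti, \cdot_{X_1-1})$ and the injectivity of right multiplication by $e_1$ on $\V^\DeRham$ is a sound and slightly more careful gloss on what the paper calls ``immediate.''
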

\begin{proof}
    Let us consider the following cube
    \[\begin{tikzcd}[row sep={40,between origins}, column sep={40,between origins}]
        &  & \gr(\K \oplus (\V^\Betti, \cdot_{X_1-1})) \arrow[ddd, "\simeq"] \arrow[rrrr, "\gr(\Delta_{\mathcal{O}^\Betti_{\mat, \fil}})"] & & & & \gr(\V^\Betti \otimes \V^\Betti) \arrow[ddd, "\simeq"] \\
        \gr(\W^{\Betti}) \arrow[rrrr, "\hspace{1.9cm} \gr(\Delta^{\W, \Betti})"] \arrow[ddd, "\simeq"'] \arrow[rru, "\simeq"] & & & & \gr(\W^\Betti \otimes \W^\Betti) \arrow[ddd, "\simeq"] \arrow[rru, hook] & & \\
        & & & & & & \\
        & & \K \oplus (\V^\DeRham, \cdot_{e_1}) \arrow[rrrr, "\hspace{-1.8cm}\Delta_{\mathcal{O}^\DeRham_\mat}"'] & & & & \V^\DeRham \otimes \V^\DeRham \\
        \W^\DeRham \arrow[rrrr, "\Delta^{\W, \DeRham}"] \arrow[rru, "\simeq"] & & & & \W^\DeRham \otimes \W^\DeRham \arrow[rru, hook] & &
    \end{tikzcd}\]
    It is immediate that the left and right diagrams commute.
    The lower diagram commutes thanks to Theorem \ref{thm:DeltaWDR_DeltaODR}.
    Thanks to Corollary \ref{cor:DeltaOBfil}, the upper diagram is obtained by applying the $\gr$ functor to the commutative diagram of Theorem \ref{thm:DeltaWB_DeltaOB}, proving its commutativity.
    By applying the functor $\K{\text-}\BFS_\gr \to \mathsf{Mor}(\K{\text-}\alg_\gr)$ to the isomorphism $\gr(\mathcal{O}^\Betti_{\mat, \fil}) \simeq \mathcal{O}^\DeRham_\mat$ from Proposition \ref{prop:iso_grOB_ODR} then using the equality $\gr(\Delta_{\mathcal{O}^\Betti_{\mat, \fil}}) = \Delta_{\gr(\mathcal{O}^\Betti_{\mat, \fil})}$ obtained from from the right hand side of the commutative diagram of functors \eqref{eq:func_diag}, we deduce that the diagram in the back is commutative.
    Finally, this collection of commutativities enables us to deduce that the front diagram commutes, thus proving the result.
\end{proof}
    \section{The objects \texorpdfstring{$\mathcal{O}^\Betti$}{OB}, \texorpdfstring{$\mathcal{O}^\Betti_\fil$}{OBfil} and \texorpdfstring{$\mathcal{O}^\DeRham$}{ODR}} 

In this section, we present an alternative construction of the bimodules from Sec. \ref{sec:OBmat_OBmatfil_ODRmat}. This construction is geometric, based on the groups and Lie algebras corresponding to braids (see \cite{EF1}). It gives rise to a graded bimodule $\bM^\DeRham$ (Proposition-Definition \ref{propdef:bimod_M}) and a filtered bimodule $\bM^\Betti$ (Proposition-Definition \ref{propdef:bimod_uM}), which are equipped with a bimodule morphism $\bM^\DeRham \to \gr(\bM^\Betti)$ (see \eqref{eq:morph_MDR_grMB}).
The bimodules $\bM^\Betti$ and $\bM^\DeRham$ are respectively related by isomorphisms to the filtered and graded bimodules of Sec. \ref{sec:OBmat_OBmatfil_ODRmat} (see Theorems \ref{thm:betti_bimod_iso_fil} and \ref{thm:derham_bimod_iso}), which enables one to equip them with factorization structures by pullback (Proposition-Definitions \ref{propdef:facto_struct_MB} and \ref{propdef:facto_struct_MDR}) and to prove that $\bM^\DeRham \to \gr(\bM^\Betti)$ is an isomorphism (Theorem \ref{thm:grbimodB_iso_bimodDR}).

\subsection{Betti geometric material} \label{sec:betti_geom}
Let $K_4$ be the braid group with four strands, that is, the group presented by generators $x_{12}$, $x_{13}$, $x_{14}$, $x_{23}$, $x_{24}$ and $x_{34}$ which satisfy the following relations \cite{Art}:
\begin{enumerate}[label=(\roman*), leftmargin=*]
    \item $(x_{ij} x_{ik} x_{jk}, x_{ij}) = (x_{ij} x_{ik} x_{jk}, x_{ik}) = (x_{ij} x_{ik} x_{jk}, x_{jk}) = 1$ for $i<j<k \in \llbracket 1, 4 \rrbracket$.
    \item $(x_{12}, x_{34}) = (x_{13}, x_{12}^{-1} x_{24} x_{12}) = (x_{14}, x_{23}) = 1$.
\end{enumerate}

\noindent Let $\omega_4 := x_{12} x_{13} x_{23} x_{14} x_{24} x_{34}$. One checks that $\omega_4$ is a generator of the group $Z(K_4)$. One then defines
\[
    P_5^{\ast} := K_4 / Z(K_4) = K_4 / \langle\omega_4\rangle.
\]
We shall abusively use the same notation the generators of $K_4$ and their classes in $P_5^{\ast}$.

\begin{lemma}[{\cite[Lemma 7.6]{EF1}}] \label{lem:pr}
    \begin{enumerate}[label=(\alph*), leftmargin=*]
        \item There are group morphisms $\underline{\pr}_1, \underline{\pr}_2, \underline{\pr}_5 : P_5^{\ast} \to F_2$ given by
        \[ \def\arraystretch{1.4}
        \begin{array}{|c|c|c|c|c|c|c|}
            \hline
            x & x_{12} & x_{13} & x_{14} & x_{23} & x_{24} & x_{34} \\
            \hline
            \underline{\pr}_1(x) & 1 & 1 & 1 & X_0 & (X_1 X_0)^{-1} & X_1 \\ 
            \hline
            \underline{\pr}_2(x) & 1 & (X_0 X_1)^{-1} & X_0 & 1 & 1 & X_1 \\
            \hline
            \underline{\pr}_5(x) & X_1 & (X_0 X_1)^{-1} & X_0 & X_0 & (X_1 X_0)^{-1} & X_1 \\
            \hline
        \end{array}\]
        \item There is a group morphism $\underline{\ell} : F_2 \to P_5^{\ast}$ given by $X_0 \mapsto x_{23}$ and $X_1 \mapsto x_{12}$. It is such that $\underline{\pr}_5 \circ \underline{\ell} = \mathrm{id}_{F_2}$.
    \end{enumerate}
\end{lemma}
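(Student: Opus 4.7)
The plan is to construct each $\underline{\pr}_i$ in two stages using the presentation of $P_5^\ast = K_4/\langle \omega_4\rangle$. By the universal property of free groups, the assignment in the table extends uniquely to a morphism from the free group on the six symbols $x_{jk}$ to $F_2$. It then suffices to check (a) that each Artin relation of $K_4$ (the four families of type (i), one per triple $i<j<k$ in $\llbracket 1,4\rrbracket$, and the three relations of type (ii)) is preserved, and (b) that $\omega_4$ maps to $1$; the resulting morphism then descends to $P_5^\ast$. For part (b) of the lemma, one uses the freeness of $F_2$ on $\{X_0,X_1\}$ to define $\underline{\ell}$ and then checks $\underline{\pr}_5\circ\underline{\ell}=\mathrm{id}_{F_2}$ on the two generators by reading the table.

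For $\underline{\pr}_1$ and $\underline{\pr}_2$, the verification is largely cosmetic because the map kills every generator incident to strand $1$, resp. $2$. In each case, three of the four type-(i) triples become trivial (two of their three factors map to $1$, so $x_{ij}x_{ik}x_{jk}$ maps to a single $X$-word which trivially commutes with itself and with $1$), while the remaining triple is calibrated so that the product vanishes in $F_2$: for $\underline{\pr}_1$, $X_0\cdot(X_1X_0)^{-1}\cdot X_1=1$, and for $\underline{\pr}_2$, $(X_0X_1)^{-1}\cdot X_0\cdot X_1=1$. The three type-(ii) commutations are also trivial since in every pair at least one factor maps to $1$. The condition $\underline{\pr}_i(\omega_4)=1$ reduces in both cases to the same telescoping identity.

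For $\underline{\pr}_5$ no generator is killed, but each of the four triangle products $\underline{\pr}_5(x_{ij}x_{ik}x_{jk})$ directly telescopes to $1$ (for example $X_1(X_0X_1)^{-1}X_0=1$), so every type-(i) relation reduces to $(1,\cdot)=1$. The type-(ii) relations $(x_{12},x_{34})$ and $(x_{14},x_{23})$ become the self-commutators $(X_1,X_1)$ and $(X_0,X_0)$; the only slightly non-trivial piece is $(x_{13},x_{12}^{-1}x_{24}x_{12})$, where one computes $x_{12}^{-1}x_{24}x_{12}\mapsto X_1^{-1}(X_1X_0)^{-1}X_1=X_1^{-1}X_0^{-1}=(X_0X_1)^{-1}$, matching $\underline{\pr}_5(x_{13})$ and again producing a self-commutator. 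Finally, $\underline{\pr}_5(\omega_4)=X_1(X_0X_1)^{-1}X_0X_0(X_1X_0)^{-1}X_1$ telescopes to $1$.

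There is no real conceptual obstacle: the proof is a bookkeeping exercise ensuring that the Artin presentation plus the centrality of $\omega_4$ are consumed by the table. The only verification that does not reduce to ``one factor is $1$'' or ``the product telescopes'' is the conjugation calculation for $(x_{13},x_{12}^{-1}x_{24}x_{12})$ under $\underline{\pr}_5$, where the image is a nontrivial element that commutes with itself. Once these computations are assembled, part (b) follows at once from $\underline{\pr}_5(x_{23})=X_0$ and $\underline{\pr}_5(x_{12})=X_1$.
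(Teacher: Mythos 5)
The paper does not actually prove this lemma: it states it as a citation to \cite[Lemma 7.6]{EF1}, so there is no in-text argument to compare against. Your direct verification is correct and is the standard approach: extend the table to the free group on the six generators, check that each Artin relation of type (i) and (ii) and the word $\omega_4$ map to $1$ (which you do accurately, including the conjugation $x_{12}^{-1}x_{24}x_{12}\mapsto(X_0X_1)^{-1}$ under $\underline{\pr}_5$), and then observe that $\underline{\pr}_5\circ\underline{\ell}=\mathrm{id}_{F_2}$ holds on the two free generators.
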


\begin{definition}
    Define $\underline{\pr}_{12} : P_5^{\ast} \to F_2^{2}$ to be the group morphism \(p \mapsto (\underline{\pr}_1(p), \underline{\pr}_2(p))\). We assign to this group morphism and to each group morphism of Lemma \ref{lem:pr} the following morphisms of $\K{\text-}\alg$:
    \begin{enumerate}[label=(\alph*), leftmargin=*]
        \begin{multicols}{2}
        \item $\K\underline{\pr}_j : \K P_5^\ast \to \V^\Betti$, for $j \in \{1, 2, 5\}$;
        \item $\K\underline{\pr}_{12} : \K P_5^\ast \to \V^\Betti \otimes \V^\Betti$;    
        \end{multicols}
        \item $\K\underline{\ell} : \V^\Betti \to \K P_5^\ast$.
    \end{enumerate}
\end{definition}

\noindent Recall that $\ker(\K\underline{\pr}_5)$ is a two-sided ideal of $\K P_5^\ast$. This induces, in particular, a natural right $\K P_5^\ast$-module structure on $\ker(\K\underline{\pr}_5)$. 
\begin{lemma}
    \label{iso_P5_kerkpr5}
    The map $(\K P_5^\ast)^{\oplus 3} \to \ker(\K\underline{\pr}_5)$ given by
   \begin{equation*}
        (\underline{p}_1, \underline{p}_2, \underline{p}_3) \mapsto (x_{15} - 1) \underline{p}_1 + (x_{25} - 1) \underline{p}_2 + (x_{35} - 1) \underline{p}_3
   \end{equation*}
    is a right $\K P_5^\ast$-module isomorphism, where $x_{15}$, $x_{25}$ and $x_{35}$, given by $x_{15}:= (x_{12} x_{13} x_{14})^{-1}$, $x_{25}:= (x_{12} x_{23} x_{24})^{-1}$ and $x_{35}:= (x_{13} x_{23} x_{34})^{-1}$, freely generate $\ker(\K\underline{\pr}_5)$.
\end{lemma}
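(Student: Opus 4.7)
The splitting of the exact sequence $1 \to N \to P_5^\ast \xrightarrow{\underline{\pr}_5} F_2 \to 1$, where $N := \ker(\underline{\pr}_5)$, provided by the section $\underline{\ell}$ of Lemma \ref{lem:pr}, identifies $P_5^\ast$ with the semidirect product $N \rtimes \underline{\ell}(F_2)$. Correspondingly, $\K P_5^\ast$ becomes a free left $\K N$-module with basis $\{\underline{\ell}(f) \mid f \in F_2\}$, and in particular is flat over $\K N$. My plan is to first prove that $N$ is a free group of rank $3$ with basis $(x_{15}, x_{25}, x_{35})$, then invoke Magnus's theorem at the $\K N$-level, and finally lift the answer to $\K P_5^\ast$ by tensoring.

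For the freeness of $N$, I would first verify from the table of Lemma \ref{lem:pr} that $x_{i5} \in N$ for $i \in \{1,2,3,4\}$, setting $x_{45} := (x_{14}x_{24}x_{34})^{-1}$, and then check by a direct braid-group computation that the product $x_{15} x_{25} x_{35} x_{45}$ lies in $Z(K_4) = \langle \omega_4 \rangle$, so that it becomes trivial in $P_5^\ast$. Interpreting $P_5^\ast$ topologically as the pure mapping class group of the $5$-punctured sphere, Birman's exact sequence for the forget-the-fifth-puncture map identifies $N$ with $\pi_1(S^2 \setminus \{p_1,\ldots,p_4\})$, a free group of rank $3$: the elements $x_{i5}$ play the role of small loops around $p_i$ based at $p_5$, and the unique relation $x_{15} x_{25} x_{35} x_{45} = 1$ leaves $(x_{15}, x_{25}, x_{35})$ as a free basis. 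A purely combinatorial alternative is a Reidemeister--Schreier computation using $\underline{\ell}(F_2)$ as a Schreier transversal for $N$ in $P_5^\ast$.

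Once $N$ is known to be free of rank $3$ on $(x_{15}, x_{25}, x_{35})$, Magnus's classical theorem yields an isomorphism of right $\K N$-modules $I_N = \bigoplus_{i=1}^{3}(x_{i5}-1)\cdot \K N$, where $I_N$ denotes the augmentation ideal of $\K N$. One then identifies $\ker(\K\underline{\pr}_5) = I_N \cdot \K P_5^\ast$: the inclusion $\supseteq$ is clear since $\ker(\K\underline{\pr}_5)$ is a right ideal containing $I_N$, and conversely any $\alpha \in \ker(\K\underline{\pr}_5)$ decomposes uniquely as $\alpha = \sum_{f \in F_2} \alpha_f \underline{\ell}(f)$ with $\alpha_f \in \K N$, so the identity $0 = \K\underline{\pr}_5(\alpha) = \sum_f \varepsilon_{\K N}(\alpha_f)\, f$ in $\K F_2$ forces $\alpha_f \in I_N$ for every $f$. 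Flatness of $\K P_5^\ast$ over $\K N$ then produces right $\K P_5^\ast$-module isomorphisms
\[
\ker(\K\underline{\pr}_5) = I_N \cdot \K P_5^\ast \simeq I_N \otimes_{\K N} \K P_5^\ast \simeq \bigoplus_{i=1}^{3}(x_{i5}-1)\cdot \K P_5^\ast,
\]
which assemble into the map of the statement.

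The main obstacle is the freeness claim for $N$: once a free generating set of $N$ is exhibited among the $x_{i5}$, the rest is standard Magnus theory plus tensor-product bookkeeping. Whether one takes the topological route via Birman's exact sequence (requiring some care with the passage from $P_5$ to $P_5^\ast$ and the centers involved) or the combinatorial Reidemeister--Schreier route, the isolation of a free basis of $N$ among the four $x_{i5}$ is the only nontrivial input.
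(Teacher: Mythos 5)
Your argument is correct and amounts to a self-contained version of what the paper delegates to the citation: the paper's one-line proof simply says to mirror \cite[Lemma 7.10]{EF1} from left to right actions, and that lemma rests on exactly the ingredients you supply — the freeness of $\ker(\underline{\pr}_5)$ on $x_{15},x_{25},x_{35}$ (which the paper elsewhere takes from \cite[Lemma 7.9 1)]{EF1}), Magnus's description of $I_N$ as a free $\K N$-module on $(x_{i5}-1)_{1\le i\le 3}$, the identification $\ker(\K\underline{\pr}_5)=I_N\cdot \K P_5^\ast\simeq I_N\otimes_{\K N}\K P_5^\ast$ via the split sequence, and flatness of $\K P_5^\ast$ over $\K N$. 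So the approaches coincide; you correctly isolate the freeness of $\ker(\underline{\pr}_5)$ as the one nontrivial input, which is precisely the point that \cite{EF1} handles and the paper does not re-prove.
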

\begin{proof}
    This is analogous to \cite[Lemma 7.10]{EF1}. The two-sided ideal nature of $\ker(\K\underline{\pr}_5)$ enables one to replace left actions by right actions in this proof.
\end{proof}

\subsection{A Betti bimodule \texorpdfstring{$(\V^\Betti, \bM^\Betti, \V^\Betti \otimes \V^\Betti, \mathsf{r}\underline{\boldsymbol{\rho}})$}{(VB,MB,VBxVB,rubrho)}}

\begin{lemma} \label{bimod:kerupr5}
    The algebra morphism $\K\underline{\ell} : \V^\Betti \to \K P_5^\ast$ equips the $\K$-submodule $\ker(\K\underline{\pr}_5)$ of $\K P_5^\ast$ with a $(\V^\Betti, \K P_5^\ast)$-bimodule structure.
\end{lemma}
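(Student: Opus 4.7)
The plan is as follows. The right $\K P_5^\ast$-module structure on $\ker(\K\underline{\pr}_5)$ is already in hand: right multiplication in $\K P_5^\ast$ sends $\ker(\K\underline{\pr}_5)$ to itself because, as recorded just before Lemma \ref{iso_P5_kerkpr5}, $\ker(\K\underline{\pr}_5)$ is a two-sided ideal of $\K P_5^\ast$. Thus the only thing to construct is the left $\V^\Betti$-action, which I would define by pulling back left multiplication in $\K P_5^\ast$ along the algebra morphism $\K\underline{\ell}$: explicitly, set
\[
    \rho : \V^\Betti \to \mathrm{End}_{\K P_5^\ast{\text-}\rmod}(\ker(\K\underline{\pr}_5)), \qquad v \mapsto \bigl(m \mapsto \K\underline{\ell}(v) \cdot m\bigr),
\]
where the product on the right is computed in $\K P_5^\ast$.

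Three routine verifications then have to be carried out, each an immediate consequence of one of the three inputs at hand. First, for any $v \in \V^\Betti$, the map $m \mapsto \K\underline{\ell}(v) \cdot m$ indeed sends $\ker(\K\underline{\pr}_5)$ into itself, again because $\ker(\K\underline{\pr}_5)$ is a two-sided ideal of $\K P_5^\ast$. Second, this map is a right $\K P_5^\ast$-module morphism by associativity of multiplication in $\K P_5^\ast$: for $m \in \ker(\K\underline{\pr}_5)$ and $p \in \K P_5^\ast$, one has $\K\underline{\ell}(v) \cdot (m \cdot p) = (\K\underline{\ell}(v) \cdot m) \cdot p$. Third, $\rho$ is a $\K{\text-}\alg$ morphism: $\rho(1_{\V^\Betti}) = \mathrm{id}$ since $\K\underline{\ell}(1) = 1$, and $\rho(v v') = \rho(v) \circ \rho(v')$ follows from the multiplicativity of $\K\underline{\ell}$ together with associativity in $\K P_5^\ast$.

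There is essentially no obstacle here: each step is a one-line consequence of the algebra-morphism property of $\K\underline{\ell}$, associativity in $\K P_5^\ast$, and the two-sided ideal status of $\ker(\K\underline{\pr}_5)$. This matches the style of similar preparatory bimodule constructions in the excerpt (for instance Proposition \ref{prop:bimods_as_ker}\ref{MMker_Armod}, where a right-module structure is derived from a two-sided kernel), and I expect the author's proof to read simply \emph{Immediate}.
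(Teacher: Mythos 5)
Your proposal is correct and takes essentially the same approach as the paper: both obtain the right module structure from the two-sided ideal status of $\ker(\K\underline{\pr}_5)$ and the left $\V^\Betti$-action by pulling back the left $\K P_5^\ast$-action along $\K\underline{\ell}$, the three compatibility checks you spell out being exactly what the paper compresses into the phrase ``pulling back this structure.'' (The paper's proof is not the one-word ``Immediate'' you anticipated, but it is the same short argument written out.)
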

\begin{proof}
    Recall that the $\K$-submodule $\ker(\K\underline{\pr}_5)$ of $\K P_5^\ast$ is a two-sided ideal of $\K P_5^\ast$. This naturally equips it with a $(\K P_5^\ast, \K P_5^\ast)$-bimodule structure. Pulling back this structure by the algebra morphism $\K\underline{\ell} : \V^\Betti \to \K P_5^\ast$, one equips  $\ker(\K\underline{\pr}_5)$ with the structure of a $(\V^\Betti, \K P_5^\ast)$-bimodule. Its right module structure is that of $\ker(\K\underline{\pr}_5)$, and its compatible left module structure is given by
    \[
        \V^\Betti \xrightarrow{\K\underline{\ell}} \K P_5^\ast \to \mathrm{End}_{\K P_5^\ast{\text-}\rmod}(\ker(\K\underline{\pr}_5)),
    \]
    where the second arrow is given by the left $\K P_5^\ast$-module structure of $\ker(\K\underline{\pr}_5)$.
\end{proof}

\begin{lemma} \label{bimod:VB2}
    The algebra morphism $\K\underline{\pr}_{12} : \K P_5^\ast \to \V^\Betti \otimes \V^\Betti$ equips the $\K$-module $\V^\Betti \otimes \V^\Betti$ with a $(\K P_5^\ast, \V^\Betti \otimes \V^\Betti)$-bimodule structure.
\end{lemma}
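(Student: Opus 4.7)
The plan is to mimic the construction used in the preceding lemma (Lemma \ref{bimod:kerupr5}), namely to start with a canonical bimodule structure on $\V^\Betti \otimes \V^\Betti$ and then pull back the left action along the algebra morphism $\K\underline{\pr}_{12}$. Concretely, $\V^\Betti \otimes \V^\Betti$, viewed as an algebra, carries its regular $(\V^\Betti \otimes \V^\Betti, \V^\Betti \otimes \V^\Betti)$-bimodule structure coming from left and right multiplication; in particular the right multiplication makes it an object of $(\V^\Betti \otimes \V^\Betti){\text-}\rmod$, with $1 \in \V^\Betti \otimes \V^\Betti$ as the standard generator.

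The only step is then to define the left $\K P_5^\ast$-action by the composition
\[
    \K P_5^\ast \xrightarrow{\K\underline{\pr}_{12}} \V^\Betti \otimes \V^\Betti \to \mathrm{End}_{(\V^\Betti \otimes \V^\Betti){\text-}\rmod}(\V^\Betti \otimes \V^\Betti),
\]
where the second arrow sends $u \in \V^\Betti \otimes \V^\Betti$ to the endomorphism of left multiplication by $u$. Both arrows are morphisms of $\K{\text-}\alg$: the first by hypothesis on $\K\underline{\pr}_{12}$, and the second because left and right multiplication in any algebra commute (hence left multiplication really lands in right-module endomorphisms) and because left multiplication is an anti-homomorphism-free algebra morphism $(\V^\Betti \otimes \V^\Betti) \to \mathrm{End}_{(\V^\Betti \otimes \V^\Betti){\text-}\rmod}(\V^\Betti \otimes \V^\Betti)$ by associativity of the product on $\V^\Betti \otimes \V^\Betti$. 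The composition is therefore a morphism of $\K{\text-}\alg$ as required by Definition \ref{def:BFS}, so $(\K P_5^\ast, \V^\Betti \otimes \V^\Betti, \V^\Betti \otimes \V^\Betti, \rho)$ with $\rho$ this composition is a $\K$-bimodule.

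There is no real obstacle here; the statement is a direct instance of pulling back a regular bimodule structure along an algebra morphism, and the verification is entirely parallel to (in fact simpler than) that of Lemma \ref{bimod:kerupr5}, the simplification being that one does not need to cut down to an ideal. A one-line proof along the lines of ``immediate verification'' or ``the statement is the pullback, via $\K\underline{\pr}_{12}$, of the regular $(\V^\Betti \otimes \V^\Betti, \V^\Betti \otimes \V^\Betti)$-bimodule structure on $\V^\Betti \otimes \V^\Betti$'' would suffice.
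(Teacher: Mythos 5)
Your proposal is correct and takes essentially the same approach as the paper: both define the left $\K P_5^\ast$-action as the composition of $\K\underline{\pr}_{12}$ with the canonical identification of $\V^\Betti \otimes \V^\Betti$ with $\mathrm{End}_{(\V^\Betti \otimes \V^\Betti){\text-}\rmod}(\V^\Betti \otimes \V^\Betti)$ (you phrase it via left multiplication, the paper via its inverse, evaluation at $1$, as in Proposition-Definition \ref{Delta} — these are the same isomorphism).
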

\begin{proof}
    It is immediate that the algebra $\V^\Betti \otimes \V^\Betti$ is naturally a right $(\V^\Betti \otimes \V^\Betti)$-module. On the other hand, let us consider the composition
    \[
        \K P_5^\ast \xrightarrow{\K\underline{\pr}_{12}} \V^\Betti \otimes \V^\Betti \simeq \mathrm{End}_{(\V^\Betti \otimes \V^\Betti){\text-}\rmod}(\V^\Betti \otimes \V^\Betti),
    \]
    where the $\mathrm{End}_{(\V^\Betti \otimes \V^\Betti){\text-}\rmod}(\V^\Betti \otimes \V^\Betti) \simeq \V^\Betti \otimes \V^\Betti$ is the algebra isomorphism given by the evaluation at $1$ as in Proposition-Definition \ref{Delta}. This equips $\V^\Betti \otimes \V^\Betti$ with a compatible left $\K P_5^\ast$-module structure.
\end{proof}

\begin{propdef}
    \label{propdef:bimod_uM}
    The $(\V^\Betti, \K P_5^\ast)$-bimodule $\ker(\K\underline{\pr}_5)$ and the $(\K P_5^\ast, \V^\Betti \otimes \V^\Betti)$-bimodule $\V^\Betti \otimes \V^\Betti$ define a $(\V^\Betti, \V^\Betti \otimes \V^\Betti)$-bimodule 
    \[
        \bM^\Betti := \ker(\K\underline{\pr}_5) \otimes_{\K P_5^\ast} (\V^\Betti \otimes \V^\Betti).
    \]
    More explicitly, the left $\V^\Betti$-module structure on $\bM^\Betti$ is given by
    \[
        \mathsf{r}\underline{\boldsymbol{\rho}} : \V^\Betti \to \mathrm{End}_{(\V^\Betti \otimes \V^\Betti){\text-}\rmod}(\bM^\Betti), \quad \underline{v} \mapsto \left(\underline{p} \otimes \underline{w} \mapsto \K\underline{\ell}(\underline{v})\underline{p} \otimes \underline{w}\right). 
    \]
\end{propdef}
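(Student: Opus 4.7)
The plan is to assemble the bimodule structure on $\bM^\Betti$ from the two separate bimodule structures already obtained in Lemmas \ref{bimod:kerupr5} and \ref{bimod:VB2}, by an immediate application of the tensor-product construction of Proposition \ref{prop:bimodtens}.

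First, I would invoke Lemma \ref{bimod:kerupr5} to view $\ker(\K\underline{\pr}_5)$ as an object of the form $(\V^\Betti, \ker(\K\underline{\pr}_5), \K P_5^\ast, \rho_1)$ where $\rho_1 : \V^\Betti \to \mathrm{End}_{\K P_5^\ast{\text-}\rmod}(\ker(\K\underline{\pr}_5))$ is the composition of $\K\underline{\ell}$ with the algebra morphism $\K P_5^\ast \to \mathrm{End}_{\K P_5^\ast{\text-}\rmod}(\ker(\K\underline{\pr}_5))$ given by left multiplication (which is well-defined because $\ker(\K\underline{\pr}_5)$ is a two-sided ideal of $\K P_5^\ast$, as used in Lemma \ref{iso_P5_kerkpr5}). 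Similarly, Lemma \ref{bimod:VB2} gives the $\K$-bimodule $(\K P_5^\ast, \V^\Betti \otimes \V^\Betti, \V^\Betti \otimes \V^\Betti, \rho_2)$ where $\rho_2$ is the composition of $\K\underline{\pr}_{12}$ with the left regular representation of $\V^\Betti \otimes \V^\Betti$ on itself (identified with $\mathrm{End}_{(\V^\Betti \otimes \V^\Betti){\text-}\rmod}(\V^\Betti \otimes \V^\Betti)$ via evaluation at $1$, as in Proposition-Definition \ref{Delta}).

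Then I would apply Proposition \ref{prop:bimodtens} directly: its conclusion is that the tensor product
\[
    \bM^\Betti := \ker(\K\underline{\pr}_5) \otimes_{\K P_5^\ast} (\V^\Betti \otimes \V^\Betti)
\]
is a $\K$-bimodule $(\V^\Betti, \bM^\Betti, \V^\Betti \otimes \V^\Betti, \rho_\otimes)$, where $\rho_\otimes(\underline{v}) = \rho_1(\underline{v}) \otimes \mathrm{id}$ for $\underline{v} \in \V^\Betti$. Unpacking the definition of $\rho_1$, one has $\rho_1(\underline{v})(\underline{p}) = \K\underline{\ell}(\underline{v}) \underline{p}$ for $\underline{p} \in \ker(\K\underline{\pr}_5)$, and consequently
\[
    \rho_\otimes(\underline{v})(\underline{p} \otimes \underline{w}) = \K\underline{\ell}(\underline{v}) \underline{p} \otimes \underline{w},
\]
which matches the announced formula for $\mathsf{r}\underline{\boldsymbol{\rho}}$.

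Since both ingredients (the two bimodule structures of Lemmas \ref{bimod:kerupr5} and \ref{bimod:VB2}) and the assembly tool (Proposition \ref{prop:bimodtens}) are already established in the paper, there is no serious obstacle: the proof reduces to citing these three results and reading off the formula for the induced left action. The only minor point requiring attention is that the left $\V^\Betti$-action on $\ker(\K\underline{\pr}_5)$ commutes with the right $\K P_5^\ast$-action, which is automatic because this left action factors through $\K P_5^\ast$ acting by left multiplication on the two-sided ideal $\ker(\K\underline{\pr}_5)$.
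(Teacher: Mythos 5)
Your proposal is correct and follows exactly the paper's own proof: citing Lemmas \ref{bimod:kerupr5} and \ref{bimod:VB2} for the two bimodule structures and applying Proposition \ref{prop:bimodtens} to tensor them over $\K P_5^\ast$. You simply spell out the details that the paper leaves implicit, such as unpacking $\rho_\otimes$ to obtain the explicit formula for $\mathsf{r}\underline{\boldsymbol{\rho}}$.
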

\begin{proof}
    This follows from Lemmas \ref{bimod:kerupr5} and \ref{bimod:VB2} and Proposition \ref{prop:bimodtens}.
\end{proof}

\noindent The remainder of this paragraph is dedicated to the proof of the following result:

\begin{proposition}
    \label{prop:betti_bimod_iso}
    The bimodules
    \[
        (\V^\Betti, \bM^\Betti, \V^\Betti \otimes \V^\Betti, \mathsf{r}\underline{\boldsymbol{\rho}}) \text{ and } (\V^\Betti, (\V^\Betti \otimes \V^\Betti)^{\oplus 3}, \V^\Betti \otimes \V^\Betti, \mathsf{r}\underline{\rho})
    \]
    are isomorphic.
\end{proposition}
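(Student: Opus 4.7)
The plan is to build the isomorphism in two stages: first as a right $(\V^\Betti \otimes \V^\Betti)$-module isomorphism, then by verifying that the transferred left $\V^\Betti$-action coincides with $\mathsf{r}\underline{\rho}$. For the first stage, Lemma \ref{iso_P5_kerkpr5} provides a right $\K P_5^\ast$-module isomorphism $(\K P_5^\ast)^{\oplus 3} \xrightarrow{\sim} \ker(\K\underline{\pr}_5)$, $(\underline{p}_1, \underline{p}_2, \underline{p}_3) \mapsto \sum_j (x_{j5}-1)\underline{p}_j$. Tensoring on the right with $\V^\Betti \otimes \V^\Betti$, viewed as a left $\K P_5^\ast$-module via $\K\underline{\pr}_{12}$ (cf.\ Lemma \ref{bimod:VB2}), and using the canonical identification $\K P_5^\ast \otimes_{\K P_5^\ast}(\V^\Betti \otimes \V^\Betti) \simeq \V^\Betti \otimes \V^\Betti$, one obtains a right $(\V^\Betti \otimes \V^\Betti)$-module isomorphism $\bM^\Betti \simeq (\V^\Betti \otimes \V^\Betti)^{\oplus 3}$ sending $(x_{j5}-1)\underline{p} \otimes \underline{w}$ to the tuple with $\K\underline{\pr}_{12}(\underline{p})\underline{w}$ in the $j$-th slot.

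For the second stage, since $\mathsf{r}\underline{\boldsymbol{\rho}}$ and $\mathsf{r}\underline{\rho}$ are $\K$-algebra morphisms from $\V^\Betti = \K F_2$, equality of the two actions reduces to checking it on the generators $X_0$ and $X_1$. The image of the $j$-th standard basis vector under $\mathsf{r}\underline{\boldsymbol{\rho}}(X_i)$ is obtained by computing $\K\underline{\ell}(X_i)(x_{j5}-1) \in \ker(\K\underline{\pr}_5)$ and expressing the result in the free right $\K P_5^\ast$-basis $\{(x_{15}-1), (x_{25}-1), (x_{35}-1)\}$ as $\sum_k (x_{k5}-1)q_{kj}^{(i)}$ with $q_{kj}^{(i)} \in \K P_5^\ast$. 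One then has to show that the matrix $\bigl(\K\underline{\pr}_{12}(q_{kj}^{(i)})\bigr)_{k,j} \in \M_3(\V^\Betti \otimes \V^\Betti)$ coincides with $\mathsf{r}\underline{\rho}(X_i)$ as displayed in Proposition-Definition \ref{propdef:Betti_bimod}.

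The hard part is the braid-theoretic rewriting: one must use the defining relations of $P_5^\ast$ together with $\omega_4 = 1$ to move $x_{23} = \K\underline{\ell}(X_0)$ and $x_{12} = \K\underline{\ell}(X_1)$ past the generators $x_{15}, x_{25}, x_{35}$ of $\ker(\K\underline{\pr}_5)$ described in Lemma \ref{iso_P5_kerkpr5}, thereby producing explicit coefficients $q_{kj}^{(i)}$. A convenient shortcut is to invoke the corresponding ``left-sided'' analysis of \cite[Sec.~7.2.3]{EF1}, where a bimodule analogous to $\bM^\Betti$ but with the roles of $\V^\Betti$ and $\V^\Betti \otimes \V^\Betti$ swapped is treated and the matrix $\underline{\rho}$ is derived from exactly these braid identities. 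The definition $\mathsf{r}\underline{\rho} = \Ad_{\diag(\cdots)^{-1}} \circ \M_3(\mathrm{op}_{F_2^2}) \circ {}^t(-) \circ \underline{\rho} \circ \mathrm{op}_{F_2}$ then encodes, at the matrix level, the three operations needed to pass from that left-sided setting to the present right-sided one: reversing the sides of the two module structures (the factors $\mathrm{op}_{F_2}$ and $\mathrm{op}_{F_2^2}$), switching rows and columns in the matrix representation (the transpose), and changing the chosen basis of $\ker(\K\underline{\pr}_5)$ to the one of Lemma \ref{iso_P5_kerkpr5} (the conjugation by $\diag(Y_1, X_1, (X_0 X_1)^{-1} Y_1^{-1} Y_0 Y_1)^{-1}$). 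Unwinding these three correspondences converts the verification into a formal check rather than an independent braid computation.
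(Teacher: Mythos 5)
Your proposal follows essentially the same route as the paper: lift the free right-module isomorphism of Lemma \ref{iso_P5_kerkpr5} by tensoring with $\V^\Betti \otimes \V^\Betti$ to identify $\bM^\Betti$ with $(\V^\Betti \otimes \V^\Betti)^{\oplus 3}$ as right modules, then confirm the transferred left $\V^\Betti$-action equals $\mathsf{r}\underline{\rho}$ by exploiting that $\mathsf{r}\underline{\rho}$ is, by definition, a transformed $\underline{\rho}$, which pushes the braid computation back to \cite[Sec.~7.2.3]{EF1}. Where the paper is tighter is in how it organizes the ``formal check'': it introduces the intermediate algebra morphism $\mathsf{r}\underline{\varpi} : \K P_5^\ast \to \M_3(\K P_5^\ast)$ recording how $\underline{p}(x_{j5}-1)$ decomposes in the right basis $\{x_{i5}-1\}$, proves $\mathsf{r}\underline{\varpi} = \Ad_{\diag(x_{15},x_{25},x_{35})^{-1}} \circ \M_3(\mathrm{op}_{P_5^\ast}) \circ {}^t(-) \circ \underline{\varpi} \circ \mathrm{op}_{P_5^\ast}$ (Lemma \ref{lem:ruvarpi_uvarpi}), and deduces $\mathsf{r}\underline{\rho} = \M_3(\K\underline{\pr}_{12}) \circ \mathsf{r}\underline{\varpi} \circ \K\underline{\ell}$ (Lemma \ref{lem:rurho_rvarpi}) from the analogous factorization of $\underline{\rho}$, which establishes the bimodule compatibility for all of $\V^\Betti$ in one stroke rather than by checking matrix entries on the generators $X_0, X_1$. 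One minor calibration on your interpretation of the diagonal conjugation: it does not so much ``change the chosen basis to the one of Lemma \ref{iso_P5_kerkpr5}'' as absorb the unit factors $-x_{i5}^{-1}$ that appear when $\mathrm{op}_{P_5^\ast}$ turns the free generators $x_{i5}-1$ into $x_{i5}^{-1}-1 = -(x_{i5}-1)x_{i5}^{-1}$; its image under $\K\underline{\pr}_{12}$ is precisely $\diag(Y_1, X_1, (X_0 X_1)^{-1} Y_1^{-1} Y_0 Y_1)$, which recovers the displayed formula for $\mathsf{r}\underline{\rho}$.
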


\noindent To prove the proposition, we first establish that the algebra morphism $\mathsf{r}\underline{\rho} : \V^\Betti \to \M_3(\V^\Betti \otimes \V^\Betti)$ is related to the geometric material introduced in Sec. \ref{sec:betti_geom}.
\begin{lemma}[{\cite[Lemma 4.1 and Sec. 7.2.2]{EF1}}]
    \begin{enumerate}[label=(\alph*), leftmargin=*]
        \item For any $\underline p\in \K P_5^\ast$, there is a unique matrix $(\underline{a}_{ij}(\underline{p}))_{i,j \in \llbracket 1, 3\rrbracket}\in \M_3(\K P_5^\ast)$ such that
        \begin{equation} \label{matrix_ua}
            (x_{i5} - 1) \ \underline{p} = \sum_{i=1}^3 \underline{a}_{ij}(\underline{p}) \ (x_{j5} - 1).
        \end{equation}
        \item The map
        \[
            \underline{\varpi} : \K P_5^\ast \to \M_3(\K P_5^\ast), \quad \underline{p} \mapsto \left(\underline{a}_{ij}(\underline{p})\right)_{i,j \in \llbracket 1, 3\rrbracket}  
        \]
        is an algebra morphism. 
    \end{enumerate}
    \label{lem:intro_uvarpi}  
\end{lemma}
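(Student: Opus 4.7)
The proof of (a) will rest entirely on the freeness statement in Lemma \ref{iso_P5_kerkpr5}. Since $\K\underline\pr_5 : \K P_5^\ast \to \V^\Betti$ is a $\K$-algebra morphism, its kernel is a two-sided ideal of $\K P_5^\ast$; in particular, for each $i \in \{1,2,3\}$ and each $\underline p \in \K P_5^\ast$, the element $(x_{i5}-1)\underline p$ lies in $\ker(\K\underline\pr_5)$, because already $x_{i5}-1$ does (its image under $\K\underline\pr_5$ is $1-1=0$, as the defining relations give $\underline\pr_5(x_{15}) = \underline\pr_5(x_{25}) = \underline\pr_5(x_{35}) = 1$). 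Lemma \ref{iso_P5_kerkpr5} then asserts that $\ker(\K\underline\pr_5)$ is freely generated as a right $\K P_5^\ast$-module by $x_{15}-1, x_{25}-1, x_{35}-1$, so $(x_{i5}-1)\underline p$ admits a unique decomposition of the form $\sum_{j=1}^3 \underline a_{ij}(\underline p)(x_{j5}-1)$ with $\underline a_{ij}(\underline p) \in \K P_5^\ast$. This yields both existence and uniqueness in \eqref{matrix_ua}.

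For (b), the algebra-morphism property of $\underline\varpi$ splits into three verifications, each following from the uniqueness statement in (a). First, $\K$-linearity of $\underline p \mapsto \underline a_{ij}(\underline p)$ follows by comparing the two expansions of $(x_{i5}-1)(\lambda\underline p + \underline p')$ and invoking uniqueness. Second, $\underline\varpi(1) = I_3$ follows from comparing $(x_{i5}-1)\cdot 1 = \sum_j \delta_{ij}(x_{j5}-1)$ with \eqref{matrix_ua}. Third, for multiplicativity, I would fix $\underline p, \underline q \in \K P_5^\ast$ and compute $(x_{i5}-1)\underline{p}\,\underline{q}$ in two ways: iterating \eqref{matrix_ua} first for $\underline p$ and then for $\underline q$ gives
\[
    (x_{i5}-1)\underline{p}\,\underline{q} = \sum_{j=1}^3 \underline a_{ij}(\underline p)(x_{j5}-1)\underline q = \sum_{j,k=1}^3 \underline a_{ij}(\underline p)\,\underline a_{jk}(\underline q)\,(x_{k5}-1),
\]
whereas applying \eqref{matrix_ua} directly to $\underline{p}\,\underline{q}$ gives $\sum_k \underline a_{ik}(\underline{p}\,\underline{q})(x_{k5}-1)$. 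Uniqueness of the decomposition forces $\underline a_{ik}(\underline{p}\,\underline{q}) = \sum_j \underline a_{ij}(\underline p)\,\underline a_{jk}(\underline q)$, which is exactly the $(i,k)$-entry of $\underline\varpi(\underline p)\underline\varpi(\underline q)$.

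No real obstacle is expected in this proof: the substantive content is the freeness of $\ker(\K\underline\pr_5)$ provided by Lemma \ref{iso_P5_kerkpr5} (itself imported from the argument of \cite[Lemma 7.10]{EF1}), and once this is granted, parts (a) and (b) reduce to formal manipulations of the defining identity \eqref{matrix_ua}. The only minor point requiring care is to keep the order of the factors consistent, since $\K P_5^\ast$ is non-commutative, so that the associativity $(x_{i5}-1)(\underline p \underline q) = ((x_{i5}-1)\underline p)\underline q$ is what forces the product in $\M_3(\K P_5^\ast)$ to be the ordinary (as opposed to opposite) matrix product.
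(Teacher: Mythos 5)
Your argument for (a) contains a genuine misstep in identifying the freeness property being used. Lemma \ref{iso_P5_kerkpr5} provides freeness of $\ker(\K\underline{\pr}_5)$ as a \emph{right} $\K P_5^\ast$-module over the generators $x_{15}-1,x_{25}-1,x_{35}-1$, i.e., every element of the kernel decomposes uniquely as $\sum_{j}(x_{j5}-1)\,\underline{q}_j$ with the coefficients $\underline{q}_j$ on the \emph{right}. But the decomposition \eqref{matrix_ua} that you need is $\sum_j \underline{a}_{ij}(\underline p)\,(x_{j5}-1)$, with coefficients on the \emph{left}; existence and uniqueness of this form require freeness of $\ker(\K\underline{\pr}_5)$ as a \emph{left} $\K P_5^\ast$-module over the same generators, which is a different statement. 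Since $\K P_5^\ast$ is noncommutative, the two freenesses are not interchangeable, so the sentence ``freely generated as a right $\K P_5^\ast$-module~\ldots so $(x_{i5}-1)\underline p$ admits a unique decomposition of the form $\sum_j \underline a_{ij}(\underline p)(x_{j5}-1)$'' is inconsistent. The ingredient you actually need is \cite[Lemma 7.10]{EF1} (the left-module freeness), of which the paper's Lemma \ref{iso_P5_kerkpr5} is the right-module analogue built precisely so that the right-variant Lemma \ref{lem:intro_ruvarpi} can be established; the paper records the left-variant Lemma \ref{lem:intro_uvarpi} by direct citation to [EF1, Lemma 4.1 and Sec.~7.2.2] rather than deducing it from Lemma \ref{iso_P5_kerkpr5}.

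Once the correct (left) freeness is in hand, the remainder of your argument is sound: the observation that $(x_{i5}-1)\underline{p}$ lies in the two-sided ideal $\ker(\K\underline{\pr}_5)$ is correct, and the verification of (b) — linearity, $\underline\varpi(1)=I_3$, and the iterated expansion $(x_{i5}-1)\underline{p}\underline{q}=\sum_{j,k}\underline a_{ij}(\underline p)\underline a_{jk}(\underline q)(x_{k5}-1)$ together with uniqueness giving $\underline a_{ik}(\underline{pq})=\sum_j\underline a_{ij}(\underline p)\underline a_{jk}(\underline q)$ — is exactly right, including the remark that associativity on the noncommutative algebra is what forces the ordinary (not opposite) matrix product on $\M_3(\K P_5^\ast)$.
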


\begin{lemma}
    \begin{enumerate}[label=(\alph*), leftmargin=*]
        \item \label{matrix_ub} For any $\underline p\in \K P_5^\ast$, there is a unique matrix $(\underline{b}_{ij}(\underline{p}))_{i,j \in \llbracket 1, 3\rrbracket}\in \M_3(\K P_5^\ast)$ such that
        \[
            \underline{p} \ (x_{j5} - 1) = \sum_{i=1}^3 (x_{i5} - 1) \ \underline{b}_{ij}(\underline{p}).
        \]
        \item \label{ruvarpi} The map
        \[
            \mathsf{r}\underline{\varpi} : \K P_5^\ast \to \M_3(\K P_5^\ast), \quad \underline{p} \mapsto \left(\underline{b}_{ij}(\underline{p})\right)_{i,j \in \llbracket 1, 3\rrbracket}  
        \]
        is an algebra morphism. 
    \end{enumerate}
    \label{lem:intro_ruvarpi}  
\end{lemma}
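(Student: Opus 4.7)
The plan is to mirror the proof of Lemma \ref{lem:intro_uvarpi}, swapping the roles of left and right actions throughout, and substituting the right-module freeness coming from Lemma \ref{iso_P5_kerkpr5} for the analogous left-module statement used there.

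For part (a), the first step is to observe that since $\ker(\K\underline{\pr}_5)$ is a two-sided ideal of $\K P_5^\ast$ (as the kernel of an algebra morphism), every element $\underline{p}(x_{j5}-1)$ lies in $\ker(\K\underline{\pr}_5)$. Lemma \ref{iso_P5_kerkpr5} then tells us that $x_{15}-1$, $x_{25}-1$, $x_{35}-1$ form a free basis of $\ker(\K\underline{\pr}_5)$ as a right $\K P_5^\ast$-module, so the existence and uniqueness of the coefficients $\underline{b}_{ij}(\underline{p}) \in \K P_5^\ast$ in the claimed expansion of $\underline{p}(x_{j5}-1)$ follow directly.

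For part (b), I would proceed in three short steps. First, the $\K$-linearity of $\mathsf{r}\underline{\varpi}$ is an immediate consequence of the uniqueness in (a), together with the $\K$-linearity of $\underline{p} \mapsto \underline{p}(x_{j5}-1)$. Second, to establish multiplicativity, the key idea is to expand $\underline{p}\underline{p}'(x_{j5}-1)$ in two different ways and compare: directly via the definition of $\mathsf{r}\underline{\varpi}(\underline{p}\underline{p}')$ as $\sum_k(x_{k5}-1)\underline{b}_{kj}(\underline{p}\underline{p}')$, and via the chain $\underline{p}\underline{p}'(x_{j5}-1) = \underline{p}\bigl(\sum_i(x_{i5}-1)\underline{b}_{ij}(\underline{p}')\bigr) = \sum_{i,k}(x_{k5}-1)\underline{b}_{ki}(\underline{p})\underline{b}_{ij}(\underline{p}')$. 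Uniqueness from (a) then forces the identification $\underline{b}_{kj}(\underline{p}\underline{p}') = \sum_i \underline{b}_{ki}(\underline{p})\underline{b}_{ij}(\underline{p}')$, which is exactly matrix multiplication, so $\mathsf{r}\underline{\varpi}(\underline{p}\underline{p}') = \mathsf{r}\underline{\varpi}(\underline{p})\,\mathsf{r}\underline{\varpi}(\underline{p}')$. Third, the unit check is trivial since $1 \cdot (x_{j5}-1) = \sum_i (x_{i5}-1)\delta_{ij}$, giving $\mathsf{r}\underline{\varpi}(1) = I_3$.

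The argument is formally dual to that of Lemma \ref{lem:intro_uvarpi} and presents no substantive obstacle. The only point one should verify carefully is that the right $\K P_5^\ast$-module structure on $\ker(\K\underline{\pr}_5)$ appearing in Lemma \ref{iso_P5_kerkpr5} is precisely the one induced by the two-sided ideal structure of $\ker(\K\underline{\pr}_5)$ in $\K P_5^\ast$, since this is what legitimizes moving $\underline{p}$ across the generators $x_{i5}-1$ in the second expansion above.
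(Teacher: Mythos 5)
Your argument is correct and takes precisely the approach the paper intends: the paper's proof is the single remark that the lemma is the right-analogue of Lemma \ref{lem:intro_uvarpi}, and your proof is exactly that dual argument written out in detail (two-sided-ideal membership plus the free right $\K P_5^\ast$-module structure from Lemma \ref{iso_P5_kerkpr5} for existence and uniqueness, then comparison of the two expansions of $\underline{p}\underline{p}'(x_{j5}-1)$ together with uniqueness for multiplicativity). The closing caveat you raise — that the right-module structure in Lemma \ref{iso_P5_kerkpr5} must agree with the ambient ideal structure — is the correct thing to check and is indeed how the paper sets it up.
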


\begin{proof}
    Both \ref{matrix_ub} and \ref{ruvarpi} are right-analogues of Lemma \ref{lem:intro_uvarpi}.
\end{proof}

\begin{lemma} \label{lem:ruvarpi_uvarpi}
    We have (equality of algebra morphisms $\K{P_5}^\ast \to \M_3(\K{P_5}^\ast)$)
    \[
        \mathsf{r}\underline{\varpi} = \Ad_{\diag(x_{15}, x_{25}, x_{35})^{-1}} \circ \M_3(\mathrm{op}_{P_5^\ast}) \circ {^t}(-) \circ \underline{\varpi} \circ \mathrm{op}_{P_5^\ast},
    \]
    where $\mathrm{op}_{P_5^\ast}$ and $\M_3(\mathrm{op}_{P_5^\ast})$ are given by Definitions \ref{def:opG} and \ref{def:Mtsf} respectively.
\end{lemma}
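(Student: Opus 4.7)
The plan is to compare the matrices of $\mathsf{r}\underline{\varpi}$ and of the composite $\Ad_{\diag(x_{15}, x_{25}, x_{35})^{-1}} \circ \M_3(\mathrm{op}_{P_5^\ast}) \circ {}^t(-) \circ \underline{\varpi} \circ \mathrm{op}_{P_5^\ast}$ entry by entry, exploiting the uniqueness clause of Lemma \ref{lem:intro_uvarpi}(a). Since $\mathrm{op}_{P_5^\ast}$ is an antiautomorphism of $\K P_5^\ast$, it reverses products, which is precisely what converts the left-multiplication identity defining $\underline{\varpi}$ into a right-multiplication identity of the shape defining $\mathsf{r}\underline{\varpi}$. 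The main step is therefore a symbolic manipulation followed by an appeal to uniqueness, after which the conjugation by $\diag(x_{15}, x_{25}, x_{35})^{-1}$ will absorb the leftover factors $x_{i5}^{\pm 1}$ produced by the identity $\mathrm{op}_{P_5^\ast}(x_{j5}-1) = -x_{j5}^{-1}(x_{j5}-1)$.

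Concretely, I would fix $\underline{p} \in \K P_5^\ast$ and start from the defining identity for $\mathsf{r}\underline{\varpi}(\underline{p})$:
\[
    \underline{p} \, (x_{j5}-1) = \sum_{i=1}^3 (x_{i5}-1) \, \underline{b}_{ij}(\underline{p}).
\]
Applying $\mathrm{op}_{P_5^\ast}$, using that it reverses products and that $\mathrm{op}_{P_5^\ast}(x_{k5}-1) = -x_{k5}^{-1}(x_{k5}-1)$, one gets after clearing the signs and multiplying by $x_{j5}$ on the left the identity
\[
    (x_{j5}-1)\, \mathrm{op}_{P_5^\ast}(\underline{p}) = \sum_{i=1}^3 \bigl(x_{j5} \, \mathrm{op}_{P_5^\ast}(\underline{b}_{ij}(\underline{p})) \, x_{i5}^{-1}\bigr) \, (x_{i5}-1).
\]
Comparing with the defining identity for $\underline{\varpi}(\mathrm{op}_{P_5^\ast}(\underline{p}))$, namely $(x_{j5}-1)\,\mathrm{op}_{P_5^\ast}(\underline{p}) = \sum_i \underline{a}_{ji}(\mathrm{op}_{P_5^\ast}(\underline{p}))(x_{i5}-1)$, and invoking the uniqueness in Lemma \ref{lem:intro_uvarpi}(a), one obtains the entrywise formula $\underline{b}_{ij}(\underline{p}) = x_{i5}^{-1} \, \mathrm{op}_{P_5^\ast}\bigl(\underline{a}_{ji}(\mathrm{op}_{P_5^\ast}(\underline{p}))\bigr) \, x_{j5}$.

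The remaining step is to recognise this entrywise formula as the prescribed composition. The transposition $(i,j) \leftrightarrow (j,i)$ is exactly ${}^t(-)$; then the componentwise application of $\mathrm{op}_{P_5^\ast}$ is $\M_3(\mathrm{op}_{P_5^\ast})$; and the conjugation by $\diag(x_{15}, x_{25}, x_{35})^{-1}$ contributes the prefactor $x_{i5}^{-1}$ and the suffix $x_{j5}$ in the $(i,j)$-entry. This yields the stated equality at the level of matrices, hence the equality of maps $\K P_5^\ast \to \M_3(\K P_5^\ast)$.

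I do not expect any genuine obstacle: the algebra-morphism status of both sides is already secured (the left-hand side by Lemma \ref{lem:intro_ruvarpi}(b), the right-hand side because it is a composition of algebra morphisms, using the functoriality of $\M_3(-)$ together with the standard fact that $\Ad_D \circ \M_3(\mathrm{op}_{P_5^\ast}) \circ {}^t(-)$ is an algebra morphism thanks to the identity ${}^t(AB) = {}^tB \cdot {}^tA$ being compensated by the antimorphism property of $\mathrm{op}_{P_5^\ast}$). The only slightly delicate bookkeeping is keeping track of which index is summed over after transposition; this is handled cleanly by writing out the identity for a fixed column index $j$, as above.
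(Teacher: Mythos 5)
Your proof is correct and follows essentially the same route as the paper: apply the antiautomorphism $\mathrm{op}_{P_5^\ast}$ to a defining relation, clear the factors produced by $\mathrm{op}_{P_5^\ast}(x_{k5}-1) = -x_{k5}^{-1}(x_{k5}-1)$, and identify the resulting coefficients via the uniqueness clause. The only cosmetic difference is that the paper starts from the identity \eqref{matrix_ua} defining $\underline{\varpi}$ and performs a substitution $\underline{q} := \mathrm{op}_{P_5^\ast}(\underline{p})(-x_{i5}^{-1})$, using the algebra-morphism property of $\underline{\varpi}$ for a simplification, whereas you start from the identity defining $\mathsf{r}\underline{\varpi}$ and invoke uniqueness directly; both yield the same entrywise formula $\underline{b}_{ij}(\underline{p}) = x_{i5}^{-1}\,\mathrm{op}_{P_5^\ast}\bigl(\underline{a}_{ji}(\mathrm{op}_{P_5^\ast}(\underline{p}))\bigr)\,x_{j5}$, which is precisely the $(i,j)$-entry of the stated composite.
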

\begin{proof}
    Let $\underline{p} \in \K P_5^\ast$ and $i \in \llbracket 1, 3\rrbracket$. Applying the antimorphism $\mathrm{op}_{P_5^\ast}$ to the equality \eqref{matrix_ua} enables us to obtain
    \[
        \mathrm{op}_{P_5^\ast}(\underline{p}) \ (x_{i5}^{-1} - 1) = \sum_{j=1}^3 (x_{j5}^{-1} - 1) \ \mathrm{op}_{P_5^\ast}\left(\underline{a}_{ij}(\underline{p})\right).
    \]
    Set $\underline{q} := \mathrm{op}_{P_5^\ast}(\underline{p}) (-x_{i5}^{-1})$. We then have
    \begin{align}
        \label{uvarpiop}
        \underline{q} \ (x_{i5} - 1) & = \sum_{j=1}^3 (x_{j5}^{-1} - 1) \ \mathrm{op}_{P_5^\ast}\left(\underline{a}_{ij}\left((-x_{i5}^{-1}) \mathrm{op}_{P_5^\ast}(\underline{q})\right)\right) \\
        & = \sum_{j=1}^3 (x_{j5} - 1) \ (-x_{i5}^{-1}) \ \mathrm{op}_{P_5^\ast}\left(\underline{a}_{ij}\left((-x_{i5}^{-1}) \mathrm{op}_{P_5^\ast}(\underline{q})\right)\right). \notag
    \end{align}
    Let us evaluate $\underline{b}_{ji} := (-x_{i5}^{-1}) \ \mathrm{op}_{P_5^\ast}\left(\underline{a}_{ij}\left((-x_{i5}^{-1}) \mathrm{op}_{P_5^\ast}(\underline{q})\right)\right)$ for any $j \in \llbracket 1, 3\rrbracket$. We have
    \begin{align*}
        \underline{b}_{ji} & = (-x_{i5}^{-1}) \ \mathrm{op}_{P_5^\ast}\left(\underline{a}_{ij}\left((-x_{i5}^{-1}) \mathrm{op}_{P_5^\ast}(\underline{q})\right)\right) = (-x_{i5}^{-1}) \mathrm{op}_{P^\ast_5}\left(\sum_{k=1}^3 \underline{a}_{ik}(-x_{i5}^{-1}) \underline{a}_{kj}(\mathrm{op}_{P^\ast_5}(\underline{q}))\right) \\
        & = (-x_{i5}^{-1}) \mathrm{op}_{P^\ast_5}\left(-x_{i5}^{-1} \ \underline{a}_{ij}(\mathrm{op}_{P^\ast_5}(\underline{q})) \right) = x_{i5}^{-1} \ \mathrm{op}_{P^\ast_5}(\underline{a}_{ij}(\mathrm{op}_{P^\ast_5}(\underline{q}))) \ x_{i5}, 
    \end{align*}
    where the first equality comes from the fact that $\underline{\varpi}$ is an algebra morphism and the second equality follows 
    from the identity $\underline{a}_{ik}(-x_{i5}^{-1}) = -x_{i5}^{-1} \delta_{ik}$, for any $k \in \llbracket 1, 3\rrbracket$. Therefore, in equality \eqref{uvarpiop}, we have
    \[
        \underline{q} \ (x_{i5} - 1) = \sum_{j=1}^3 (x_{j5} - 1) \ x_{i5}^{-1} \ \mathrm{op}_{P^\ast_5}(\underline{a}_{ij}(\mathrm{op}_{P^\ast_5}(\underline{q}))) \ x_{i5}. 
    \]
    This implies the equality 
    \[
        \mathsf{r}\underline{\varpi}(\underline{q}) = \diag(x_{15}, x_{25}, x_{35})^{-1} \ \M_3(\mathrm{op}_{P_5^\ast}) \left({^t}(\underline{\varpi}(\mathrm{op}_{P_5^\ast}(\underline{q})))\right) \ \diag(x_{15}, x_{25}, x_{35}),
    \]
    from which one immediately deduces the equality for $\underline{p}$ thanks to the bijectivity of the map $\underline{p} \mapsto \mathrm{op}_{P_5^\ast}(\underline{p}) (-x_{i5}^{-1})=\underline{q}$, for any $i \in \llbracket 1, 3\rrbracket$.
\end{proof}

\begin{lemma} \label{lem:rurho_ruvarpi}
    We have (equality of algebra morphisms $\V^\Betti \to \M_3(\V^\Betti \otimes \V^\Betti)$)
    \[
        \mathsf{r}\underline{\rho} = \M_3(\K\underline{\pr}_{12}) \circ \mathsf{r}\underline{\varpi} \circ \K\underline{\ell}.
    \]
\end{lemma}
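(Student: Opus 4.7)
The plan is to reduce the identity to the left-handed analogue
\[
    \underline{\rho} = \M_3(\K\underline{\pr}_{12}) \circ \underline{\varpi} \circ \K\underline{\ell},
\]
which is essentially the defining formula for $\underline{\rho}$ given in \cite[Sec.~7.2.3]{EF1}, and then to transport this identity through the conjugation/transposition/$\mathrm{op}$ operations that convert the left-handed objects $\underline{\rho}$, $\underline{\varpi}$ into their right-handed counterparts $\mathsf{r}\underline{\rho}$, $\mathsf{r}\underline{\varpi}$ (Proposition-Definition \ref{propdef:Betti_bimod} and Lemma \ref{lem:ruvarpi_uvarpi}).

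First I would unfold both definitions side by side, writing
\[
    \mathsf{r}\underline{\rho} = \Ad_{D_F^{-1}} \circ \M_3(\mathrm{op}_{F_2^2}) \circ {}^t(-) \circ \underline{\rho} \circ \mathrm{op}_{F_2}
\]
with $D_F := \diag(Y_1, X_1, (X_0X_1)^{-1} Y_1^{-1} Y_0 Y_1)$, and
\[
    \mathsf{r}\underline{\varpi} = \Ad_{D_P^{-1}} \circ \M_3(\mathrm{op}_{P_5^\ast}) \circ {}^t(-) \circ \underline{\varpi} \circ \mathrm{op}_{P_5^\ast}
\]
with $D_P := \diag(x_{15}, x_{25}, x_{35})$. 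The key preliminary calculation is the diagonal compatibility $\M_3(\K\underline{\pr}_{12})(D_P) = D_F$: using the defining products $x_{15}=(x_{12}x_{13}x_{14})^{-1}$, $x_{25}=(x_{12}x_{23}x_{24})^{-1}$ and $x_{35}=(x_{13}x_{23}x_{34})^{-1}$ together with the table of values of $\underline{\pr}_1, \underline{\pr}_2$ in Lemma \ref{lem:pr}, one checks $\K\underline{\pr}_{12}(x_{15}) = Y_1$, $\K\underline{\pr}_{12}(x_{25}) = X_1$ and $\K\underline{\pr}_{12}(x_{35}) = (X_0X_1)^{-1} Y_1^{-1} Y_0 Y_1$, which is precisely the entries of $D_F$.

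Second I would appeal to the naturality of $\mathrm{op}$: for any group morphism $\varphi : G \to H$ one has $\mathrm{op}_H \circ \K\varphi = \K\varphi \circ \mathrm{op}_G$, since $\varphi(g^{-1}) = \varphi(g)^{-1}$. Applied to $\underline{\ell}$ and $\underline{\pr}_{12}$ this yields
\[
    \K\underline{\ell} \circ \mathrm{op}_{F_2} = \mathrm{op}_{P_5^\ast} \circ \K\underline{\ell} \quad \text{and} \quad \mathrm{op}_{F_2^2} \circ \K\underline{\pr}_{12} = \K\underline{\pr}_{12} \circ \mathrm{op}_{P_5^\ast},
\]
and the analogous matrix identity upon applying $\M_3$ entry-wise. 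Since $\M_3(\K\underline{\pr}_{12})$ acts entry-wise, it also commutes with transposition ${}^t(-)$. Moreover, moving $\M_3(\K\underline{\pr}_{12})$ through $\Ad_{D_P^{-1}}$ produces $\Ad_{\M_3(\K\underline{\pr}_{12})(D_P)^{-1}} \circ \M_3(\K\underline{\pr}_{12})$, which by the diagonal identity of the previous step equals $\Ad_{D_F^{-1}} \circ \M_3(\K\underline{\pr}_{12})$.

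Chaining these compatibilities in order, starting from $\M_3(\K\underline{\pr}_{12}) \circ \mathsf{r}\underline{\varpi} \circ \K\underline{\ell}$ and expanding $\mathsf{r}\underline{\varpi}$, one slides $\M_3(\K\underline{\pr}_{12})$ rightward past $\Ad_{D_P^{-1}}$, $\M_3(\mathrm{op}_{P_5^\ast})$ and ${}^t(-)$, then slides $\mathrm{op}_{P_5^\ast}$ past $\K\underline{\ell}$ by naturality; the left-handed identity collapses the remaining $\M_3(\K\underline{\pr}_{12}) \circ \underline{\varpi} \circ \K\underline{\ell}$ into $\underline{\rho}$, yielding exactly the unfolded expression of $\mathsf{r}\underline{\rho}$. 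The only non-formal ingredient is the diagonal identity $\M_3(\K\underline{\pr}_{12})(D_P) = D_F$: this is what secretly dictates the choice of the conjugation factor $D_F^{-1}$ in the definition of $\mathsf{r}\underline{\rho}$ so as to match the pullback of the conjugation factor $D_P^{-1}$ appearing in $\mathsf{r}\underline{\varpi}$, and it is the main (though very concrete) obstacle in the argument.
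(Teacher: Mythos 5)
Your proof is correct and follows essentially the same route as the paper: expand the definitions of $\mathsf{r}\underline{\rho}$ and $\mathsf{r}\underline{\varpi}$, reduce to the left-handed identity $\underline{\rho}=\M_3(\K\underline{\pr}_{12})\circ\underline{\varpi}\circ\K\underline{\ell}$ from \cite[(7.2.1)]{EF1}, and push $\M_3(\K\underline{\pr}_{12})$ through $\mathrm{op}$, transposition and the conjugation factors using Lemma~\ref{lem:commut_op} and identity~\eqref{tMM_MtM}. The paper reads the same chain starting from the left-hand side rather than the right, and leaves the diagonal compatibility $\K\underline{\pr}_{12}(x_{15},x_{25},x_{35})=(Y_1,\,X_1,\,(X_0X_1)^{-1}Y_1^{-1}Y_0Y_1)$ implicit in one rewrite; you are right to flag it as the one genuinely computational step.
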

\begin{proof}
    Recall from \cite[(7.2.1)]{EF1} the equality of algebra morphisms $\V^\Betti \to \M_3(\V^\Betti \otimes \V^\Betti)$: 
    \[
        \underline{\rho} = \M_3(\K\underline{\pr}_{12}) \circ \underline{\varpi} \circ \K\underline{\ell}.
    \]
    We have
    \begin{align*}
        \mathsf{r}\underline{\rho} & = \Ad_{\diag(Y_1, X_1, (X_0X_1)^{-1} Y_1^{-1} Y_0 Y_1)^{-1}} \circ \M_3(\mathrm{op}_{F_2^2}) \circ {^t}(-) \circ \underline{\rho} \circ \mathrm{op}_{F_2} \\
        & = \Ad_{\diag(Y_1, X_1, (X_0X_1)^{-1} Y_1^{-1} Y_0 Y_1)^{-1}} \circ \M_3(\mathrm{op}_{F_2^2}) \circ {^t}(-) \circ \M_3(\K\underline{\pr}_{12}) \circ \underline{\varpi} \circ \K\underline{\ell} \circ \mathrm{op}_{F_2} \\
        & = \Ad_{\diag(Y_1, X_1, (X_0X_1)^{-1} Y_1^{-1} Y_0 Y_1)^{-1}} \circ \M_3(\mathrm{op}_{F_2^2}) \circ \M_3(\K\underline{\pr}_{12}) \circ {^t}(-) \circ \underline{\varpi} \circ \K\underline{\ell} \circ \mathrm{op}_{F_2} \\
        & = \Ad_{\diag(\underline{\pr}_{12}(x_{15}), \underline{\pr}_{12}(x_{25}), \underline{\pr}_{12}(x_{35}))^{-1}} \circ \M_3(\K\underline{\pr}_{12}) \circ \M_3(\mathrm{op}_{P_5^\ast}) \circ {^t}(-) \circ \underline{\varpi} \circ \mathrm{op}_{P_5^\ast} \circ \K\underline{\ell} \\
        & = \M_3(\K\underline{\pr}_{12}) \circ \Ad_{\diag(x_{15}, x_{25}, x_{35})^{-1}} \circ \M_3(\mathrm{op}_{P_5^\ast}) \circ {^t}(-) \circ \underline{\varpi} \circ \mathrm{op}_{P_5^\ast} \circ \K\underline{\ell} \\
        & = \M_3(\K\underline{\pr}_{12}) \circ \mathsf{r}\underline{\varpi} \circ \K\underline{\ell},
    \end{align*}
    where the third equality comes from identity \eqref{tMM_MtM}, the fourth one from Lemma \ref{lem:commut_op} applied to both group morphisms $\underline{\pr}_{12}$ and $\underline{\ell}$ and the sixth one from Lemma \ref{lem:ruvarpi_uvarpi}.  
\end{proof}

\begin{lemma}
    The map $(\V^\Betti \otimes \V^\Betti)^{\oplus 3} \to \bM^\Betti$ given by 
    \begin{equation}
        \label{iso:uA3_uM}
        (a_1, a_2, a_3) \mapsto \sum_{i=1}^3 (x_{i5} -1 ) \otimes a_i
    \end{equation}
    is a right $(\V^\Betti \otimes \V^\Betti)$-module isomorphism.
\end{lemma}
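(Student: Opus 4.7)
The plan is to build the stated map as the composition of two natural right $(\V^\Betti \otimes \V^\Betti)$-module isomorphisms, both of which we already have at hand. First, Lemma \ref{iso_P5_kerkpr5} provides a right $\K P_5^\ast$-module isomorphism
\[
    \iota : (\K P_5^\ast)^{\oplus 3} \xrightarrow{\sim} \ker(\K\underline{\pr}_5), \quad (\underline{p}_1, \underline{p}_2, \underline{p}_3) \mapsto \sum_{i=1}^3 (x_{i5} - 1)\,\underline{p}_i.
\]
In other words, $\ker(\K\underline{\pr}_5)$ is a free right $\K P_5^\ast$-module with basis $\{x_{15}-1, x_{25}-1, x_{35}-1\}$.

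Next, I would apply the right-exact functor $- \otimes_{\K P_5^\ast} (\V^\Betti \otimes \V^\Betti)$ (where $\V^\Betti \otimes \V^\Betti$ is viewed as a left $\K P_5^\ast$-module via $\K\underline{\pr}_{12}$, as in Lemma \ref{bimod:VB2}) to $\iota$. This produces a right $(\V^\Betti \otimes \V^\Betti)$-module isomorphism
\[
    \iota \otimes \mathrm{id} : (\K P_5^\ast)^{\oplus 3} \otimes_{\K P_5^\ast} (\V^\Betti \otimes \V^\Betti) \xrightarrow{\sim} \ker(\K\underline{\pr}_5) \otimes_{\K P_5^\ast} (\V^\Betti \otimes \V^\Betti) = \bM^\Betti.
\]
Composing with the standard right $(\V^\Betti \otimes \V^\Betti)$-module isomorphism
\[
    (\V^\Betti \otimes \V^\Betti)^{\oplus 3} \xrightarrow{\sim} (\K P_5^\ast)^{\oplus 3} \otimes_{\K P_5^\ast} (\V^\Betti \otimes \V^\Betti), \quad (a_1, a_2, a_3) \mapsto \sum_{i=1}^3 \mathsf{e}_i \otimes a_i,
\]
(where $\mathsf{e}_i$ denotes the $i$-th canonical basis vector of $(\K P_5^\ast)^{\oplus 3}$) yields a right $(\V^\Betti \otimes \V^\Betti)$-module isomorphism whose explicit form sends $(a_1, a_2, a_3)$ to $\sum_{i=1}^3 (x_{i5} - 1) \otimes a_i$, which is exactly the map in the statement.

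Each of the two composed maps is known to be an isomorphism (the first from Lemma \ref{iso_P5_kerkpr5} after tensoring, the second being the elementary distributivity of tensor product over finite direct sums), and right $(\V^\Betti \otimes \V^\Betti)$-linearity is manifest by construction since the $(\V^\Betti \otimes \V^\Betti)$-action lies entirely in the second tensor factor. There is no real obstacle here; the only point requiring a moment's care is the bookkeeping check that the composed map really has the form announced in \eqref{iso:uA3_uM}, which is a direct verification on the basis elements $\mathsf{e}_i$.
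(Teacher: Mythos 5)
Your proposal is correct and follows exactly the same route as the paper: invoke Lemma \ref{iso_P5_kerkpr5} and apply the functor $- \otimes_{\K P_5^\ast} (\V^\Betti \otimes \V^\Betti)$, using the left $\K P_5^\ast$-module structure on $\V^\Betti \otimes \V^\Betti$ from Lemma \ref{bimod:VB2}. You merely spell out the intermediate identification $(\K P_5^\ast)^{\oplus 3} \otimes_{\K P_5^\ast} (\V^\Betti \otimes \V^\Betti) \simeq (\V^\Betti \otimes \V^\Betti)^{\oplus 3}$, which the paper leaves implicit.
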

\begin{proof}
    Recall from Lemma \ref{iso_P5_kerkpr5} the right $\K P_5^\ast$-module isomorphism $(\K P_5^\ast)^{\oplus 3} \to \ker(\K\underline{\pr}_5)$. The left $\K P_5^\ast$-module structure on $\V^\Betti \otimes \V^\Betti$ given by Lemma \ref{bimod:VB2} enables us to apply the functor $- \otimes_{\K P_5^\ast} (\V^\Betti \otimes \V^\Betti)$ to this isomorphism. This induces a right $\V^\Betti \otimes \V^\Betti$-module isomorphism, which is given by the announced formula.
\end{proof}

\begin{proof}[Proof of Proposition \ref{prop:betti_bimod_iso}]
    It follows from Lemma \ref{lem:intro_ruvarpi} that $\bM^\Betti$ is a $(\K P_5^\ast,\V^\Betti \otimes \V^\Betti)$-bimodule for the left action given by $\M_3(\K\underline{\pr}_{12}) \circ \mathsf{r}\underline{\varpi}$, and that the map \eqref{iso:uA3_uM} is an isomorphism of $(\K P_5^\ast,\V^\Betti \otimes \V^\Betti)$-bimodules. Applying the pull-back by the morphism $\K\underline{\ell} : \V^\Betti \to \K P_5^\ast$ it follows that $\bM^\Betti$ is a $(\V^\Betti, \V^\Betti \otimes \V^\Betti)$-bimodule for the left action given by $\M_3(\K\underline{\pr}_{12}) \circ \mathsf{r}\underline{\varpi} \circ \K\underline{\ell}$, and that the map \eqref{iso:uA3_uM} is an isomorphism of $(\V^\Betti,\V^\Betti \otimes \V^\Betti)$-bimodules. This proves the wanted result since $\mathsf{r}\underline{\rho} = \M_3(\K\underline{\pr}_{12}) \circ \mathsf{r}\underline{\varpi} \circ \K\underline{\ell}$, thanks to Lemma \ref{lem:rurho_ruvarpi}.
\end{proof}

\subsection{A filtration on \texorpdfstring{$(\V^\Betti, \bM^\Betti, \V^\Betti \otimes \V^\Betti, \mathsf{r}\underline{\boldsymbol{\rho}})$}{(VB,MB,VBxVB,rubrho)}}

Recall that the group algebra $\K{P_5^\ast}$ is naturally equipped with a filtration
\begin{equation*}
    \F^0\K{P_5^\ast} = \K{P_5^\ast} \text{ and for } n \geq 1, \ \F^n\K{P_5^\ast} = I_{P_5^\ast}^n,
\end{equation*}
where $I_{P_5^\ast}$ denotes the augmentation ideal of the group algebra $\K{P_5^\ast}$, which is the $\K$-submodule of $\K{P_5^\ast}$ generated by the elements $p - 1$, where $p \in P_5^\ast$.
The pair $(\K{P_5^\ast}, (\F^n\K{P_5^\ast})_{n \in \Z})$ is an object of $\K{\text-}\alg_\fil$.
Additionally, recall that the $\K$-module $\ker(\K\underline{\pr}_5 : \K P_5^*\to \K F_2)$ is a two-sided ideal $\K P_5^*$, and therefore, a $(\K P_5^*, \K P_5^*)$-bimodule.  

\begin{lemma} \label{lem:fil_pr12}
    For any $n \geq 0$, the morphism
    \[
        \F^n \K{\underline{\pr}_{12}} : \F^n\K P_5^* \to \F^n(\V^\Betti \otimes \V^\Betti)
    \]
    is surjective.
\end{lemma}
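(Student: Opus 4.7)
The plan is to reduce the statement to the surjectivity of $\underline{\pr}_{12}$ as a group morphism, after which a general principle about augmentation ideals finishes the proof. First I would identify $\F^n(\V^\Betti \otimes \V^\Betti)$ with the $n$-th power of the augmentation ideal of the group algebra $\K(F_2 \times F_2) \simeq \V^\Betti \otimes \V^\Betti$. This rests on the standard computation that $I_{F_2 \times F_2} = I_{F_2} \otimes \K F_2 + \K F_2 \otimes I_{F_2}$, from which $I_{F_2 \times F_2}^n = \sum_{i+j = n} I_{F_2}^i \otimes I_{F_2}^j = \F^n(\V^\Betti \otimes \V^\Betti)$ follows by expanding powers; meanwhile $\F^n \K P_5^\ast = I_{P_5^\ast}^n$ by definition.

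Second, I would verify that the group morphism $\underline{\pr}_{12} : P_5^\ast \to F_2 \times F_2$ is surjective. Using the table of Lemma \ref{lem:pr}, one exhibits preimages of a generating set of $F_2 \times F_2$: for instance
\[
    \underline{\pr}_{12}(x_{23}) = (X_0, 1), \qquad \underline{\pr}_{12}(x_{14}) = (1, X_0),
\]
\[
    \underline{\pr}_{12}(x_{24}^{-1} x_{23}^{-1}) = (X_1, 1), \qquad \underline{\pr}_{12}(x_{14}^{-1} x_{13}^{-1}) = (1, X_1),
\]
and these four elements generate $F_2 \times F_2$ since any $(g, h) \in F_2 \times F_2$ factors as $(g, 1) \cdot (1, h)$.

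Finally, I would invoke the following general fact: if $\phi : G \to H$ is a surjective group morphism, then $\K\phi$ maps $I_G^n$ onto $I_H^n$ for every $n \geq 0$. The argument is routine: $I_H^n$ is $\K$-spanned by products $(h_1 - 1) \cdots (h_n - 1)$ with $h_i \in H$, and lifting each $h_i$ to some $g_i \in G$ via surjectivity produces a preimage $(g_1 - 1) \cdots (g_n - 1) \in I_G^n$. Applying this fact to $\phi = \underline{\pr}_{12}$ and combining it with the filtration identification of the first paragraph yields the desired surjectivity onto $\F^n(\V^\Betti \otimes \V^\Betti)$.

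No essential obstacle is expected: the preimage computations are explicit, and the identification of the tensor-product filtration with the augmentation filtration of $\K(F_2 \times F_2)$, although requiring a brief verification, is entirely standard.
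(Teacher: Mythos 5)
Your proof is correct and follows essentially the same route as the paper: exhibit preimages under $\underline{\pr}_{12}$ of a generating set of $F_2 \times F_2$ to establish surjectivity of the group morphism, then deduce surjectivity on powers of the augmentation ideal. The only differences are cosmetic — you pick $x_{24}^{-1}x_{23}^{-1}$ and $x_{14}^{-1}x_{13}^{-1}$ as preimages of $(X_1,1)$ and $(1,X_1)$ where the paper uses $x_{34}x_{13}x_{14}$ and $x_{23}x_{24}x_{34}$, and you make explicit the identification $\F^n(\V^\Betti \otimes \V^\Betti) = I_{F_2^2}^n$ that the paper uses implicitly.
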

\begin{proof}
    The group morphism $\underline{\pr}_{12} : P_5^*\to F_2^2$ is surjective, since
    \[
        x_{23} \mapsto X_0, x_{14} \mapsto Y_0, x_{34} x_{13} x_{14} \mapsto X_1 \text{ and } x_{23} x_{24} x_{34} \mapsto Y_1.    
    \]
    This implies the surjectivity of the morphism $\K\underline{\pr}_{12} : \K{P_5^*} \to \K{F_2^2}$ and then of the induced morphism $I_{P_5^*} \to I_{F_2^2}$, which in turn implies the surjectivity of the morphism $I_{P_5^*}^n \to I_{F_2^2}^n$ for any $n \geq 0$.
\end{proof}

\begin{propdef} \label{kerp5:bimod:fil}
    A filtration $(\F^n \ker(\K\underline{\pr}_5))_{n \in \Z}$ of the $\K$-module $\ker(\K\underline{\pr}_5)$ is given by 
    \[
        \F^n\ker(\K\underline{\pr}_5):=\F^n \K{P_5^\ast} \cap \ker(\K\underline{\pr}_5),
    \]
    for $n \geq 0$.
    Equipped with this filtration, $\ker(\underline{\pr}_5)$ is a filtered $(\mathbf kP_5^*, \mathbf kP_5^*)$-bimodule. 
\end{propdef}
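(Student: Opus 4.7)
\bigskip

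\noindent\textbf{Proof plan.} The plan is to prove the two assertions in turn, both by direct unwinding of definitions; the statement is essentially a formal consequence of two facts: (i) $\F^\bullet\K P_5^\ast$ is a decreasing filtration of $\K P_5^\ast$ by $\K$-submodules, and (ii) $\ker(\K\underline{\pr}_5)$ is a two-sided ideal of $\K P_5^\ast$.

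First I would check that the prescribed formula indeed defines a $\K$-module filtration of $\ker(\K\underline{\pr}_5)$. This is immediate: for each $n \geq 0$, the set $\F^n\K P_5^\ast \cap \ker(\K\underline{\pr}_5)$ is a $\K$-submodule of $\ker(\K\underline{\pr}_5)$ as an intersection of two $\K$-submodules of $\K P_5^\ast$, and the inclusion $\F^{n+1}\K P_5^\ast \subset \F^n\K P_5^\ast$ gives at once the decreasing property $\F^{n+1}\ker(\K\underline{\pr}_5) \subset \F^n\ker(\K\underline{\pr}_5)$. For $n \leq 0$ one sets $\F^n\ker(\K\underline{\pr}_5) = \ker(\K\underline{\pr}_5)$, compatibly with the analogous convention on $\K P_5^\ast$.

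Next I would verify compatibility with the $(\K P_5^\ast, \K P_5^\ast)$-bimodule structure. Fix $k, n \in \Z$, $p \in \F^k\K P_5^\ast$ and $q \in \F^n\ker(\K\underline{\pr}_5)$. Since $\K P_5^\ast$ is a filtered algebra (with filtration by the powers of its augmentation ideal), we have $p \cdot q \in \F^{k+n}\K P_5^\ast$. On the other hand, $\ker(\K\underline{\pr}_5)$ being a two-sided ideal, $p \cdot q \in \ker(\K\underline{\pr}_5)$. Intersecting gives
\[
    p \cdot q \in \F^{k+n}\K P_5^\ast \cap \ker(\K\underline{\pr}_5) = \F^{k+n}\ker(\K\underline{\pr}_5),
\]
which proves the required compatibility for the left $\K P_5^\ast$-action. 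The compatibility for the right action is proved identically, exchanging the roles of $p$ and $q$ and using that $\ker(\K\underline{\pr}_5)$ is also stable under right multiplication by $\K P_5^\ast$.

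There is no genuine obstacle here: both steps are formal. The only item to keep in mind is that one really needs $\ker(\K\underline{\pr}_5)$ to be a \emph{two-sided} ideal, not merely a left or right one, in order to obtain the bimodule filtration on both sides; this is already recalled just above the statement. This establishes both claims and completes the proof.
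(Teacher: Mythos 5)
Your proposal is correct and follows essentially the same approach as the paper: the paper's (very terse) proof likewise reduces both statements to the facts that $(\F^n\K P_5^\ast)_{n\in\Z}$ is a filtration of $\K P_5^\ast$ (as a $\K$-module, and then as a $(\K P_5^\ast,\K P_5^\ast)$-bimodule) and that $\ker(\K\underline{\pr}_5)$ is a two-sided ideal; you have simply spelled out the intersection argument that the paper leaves implicit.
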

\begin{proof}
    The first statement follows from the fact that $(\F^n\K P_5^*)_{n \in \Z}$ is a filtration of $\K P_5^*$.
    The second statement follows from the fact that $(\F^n\K{P_5^\ast})_{n \in \Z}$ is a filtration of $\K{P_5^\ast}$ as a $(\K{P_5^\ast}, \K{P_5^\ast})$-bimodule. 
\end{proof}

\noindent Recall from Proposition-Definition \ref{propdef:bimod_uM} the $(\V^\Betti, \V^\Betti \otimes \V^\Betti)$-bimodule 
\[
    \bM^\Betti = \ker(\K\underline{\pr}_5) \otimes_{\K P_5^\ast} (\V^\Betti \otimes \V^\Betti).
\]
\begin{propdef} \label{propdef:fil:MM}
    For $n \geq 0$, define
    \[
        \F^n\bM^\Betti := \im\left(\F^n \ker(\K\underline{\pr}_5) \to \ker(\K\underline{\pr}_5) \otimes_{\K P_5^\ast} (\V^\Betti \otimes \V^\Betti), x \mapsto x \otimes 1\right).
    \]
    Then, the pair $(\bM^\Betti, (\F^n\bM^\Betti)_{n \in \Z})$ is a filtered bimodule over the pair formed by the filtered algebras $(\V^\Betti, (\F^n\V^\Betti)_{n \in \Z})$ and $(\V^\Betti \otimes \V^\Betti, (\F^n(\V^\Betti \otimes \V^\Betti))_{n \in \Z})$.
\end{propdef}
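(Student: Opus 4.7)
The plan is to verify the three defining properties of a filtered bimodule for $(\bM^\Betti, (\F^n \bM^\Betti)_{n \in \Z})$: that the sequence is a decreasing chain of $\K$-submodules of $\bM^\Betti$; that the right action satisfies $\F^k \bM^\Betti \cdot \F^n(\V^\Betti \otimes \V^\Betti) \subset \F^{k+n} \bM^\Betti$; and that $\mathsf{r}\underline{\boldsymbol{\rho}}(\F^k \V^\Betti)(\F^n \bM^\Betti) \subset \F^{k+n} \bM^\Betti$ for all $k, n \geq 0$. The first point follows immediately because $\F^n \bM^\Betti$ is by definition the image of $\F^n \ker(\K\underline{\pr}_5)$ under the fixed $\K$-linear map $x \mapsto x \otimes 1$, and $(\F^n \ker(\K\underline{\pr}_5))_{n \in \Z}$ is a decreasing sequence by Proposition-Definition \ref{kerp5:bimod:fil}.

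For right-action compatibility, I would pick an arbitrary element $x \otimes 1$ of $\F^k \bM^\Betti$ (with $x \in \F^k \ker(\K\underline{\pr}_5)$) and $w \in \F^n(\V^\Betti \otimes \V^\Betti)$. Lemma \ref{lem:fil_pr12} provides a lift $p \in \F^n \K P_5^\ast$ such that $\K\underline{\pr}_{12}(p) = w$. Combining the right-action formula $(x \otimes 1) \cdot w = x \otimes w$ with the tensor-product relation $x \otimes \K\underline{\pr}_{12}(p) = xp \otimes 1$, one obtains $(x \otimes 1) \cdot w = xp \otimes 1$. Since $\ker(\K\underline{\pr}_5)$ is a two-sided ideal of $\K P_5^\ast$ and $\F^\bullet \K P_5^\ast$ is an algebra filtration, the element $xp$ lies in $\F^{k+n} \K P_5^\ast \cap \ker(\K\underline{\pr}_5) = \F^{k+n} \ker(\K\underline{\pr}_5)$, whence $xp \otimes 1 \in \F^{k+n} \bM^\Betti$.

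For left-action compatibility, the key intermediate step is that $\K\underline{\ell} : \V^\Betti \to \K P_5^\ast$ preserves the augmentation-ideal filtrations. Since $\K\underline{\ell}$ is an algebra morphism and $\F^1 \V^\Betti = I_{F_2}$ is generated (as a right or left $\V^\Betti$-module) by $X_0 - 1$ and $X_1 - 1$, it suffices to check that $\K\underline{\ell}(X_i - 1) = \underline{\ell}(X_i) - 1 \in I_{P_5^\ast} = \F^1 \K P_5^\ast$, which is clear from Lemma \ref{lem:pr}(b); iterating yields $\K\underline{\ell}(\F^k \V^\Betti) \subset \F^k \K P_5^\ast$ for all $k \geq 0$. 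Then for $v \in \F^k \V^\Betti$ and $x \otimes 1 \in \F^n \bM^\Betti$, the formula from Proposition-Definition \ref{propdef:bimod_uM} gives $\mathsf{r}\underline{\boldsymbol{\rho}}(v)(x \otimes 1) = \K\underline{\ell}(v) \cdot x \otimes 1$, and $\K\underline{\ell}(v) \cdot x \in \F^{k+n} \ker(\K\underline{\pr}_5)$ thanks to the filtered $(\K P_5^\ast, \K P_5^\ast)$-bimodule structure from Proposition-Definition \ref{kerp5:bimod:fil}. No step presents a serious obstacle; the main technical input is Lemma \ref{lem:fil_pr12}, which guarantees that representatives in $\K P_5^\ast$ can be chosen at the correct filtration level when passing through the quotient $\K\underline{\pr}_{12}$.
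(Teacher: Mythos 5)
Your proposal follows essentially the same route as the paper: both proofs write an element of $\F^n\bM^\Betti$ as $x \otimes 1$ with $x \in \F^n\ker(\K\underline{\pr}_5)$, invoke Lemma \ref{lem:fil_pr12} to lift an element of $\F^a(\V^\Betti \otimes \V^\Betti)$ to $\F^a\K P_5^\ast$, use the tensor relation $x \otimes \K\underline{\pr}_{12}(p) = xp \otimes 1$, and appeal to the filtered $(\K P_5^\ast, \K P_5^\ast)$-bimodule structure from Proposition-Definition \ref{kerp5:bimod:fil} together with the fact that $\K\underline{\ell}$ respects filtrations. Your only additions are the (trivial) check that the sequence is decreasing and an explicit verification on generators that $\K\underline{\ell}$ is filtered, both of which the paper leaves implicit; the argument is correct.
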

\begin{proof}
    Let $m, a, b \geq 0$ and $\mu \in \F^m\bM^\Betti$, $\alpha \in \F^a(\V^\Betti \otimes \V^\Betti)$, $\beta \in \F^b\V^\Betti$. \newline
    By definition of $\F^m\bM^\Betti$, there exists $\widetilde{\mu} \in \F^m\ker(\K\underline{\pr}_5)$ such that $\mu = \widetilde{\mu} \otimes 1$. Then
    \[
        \beta \cdot \mu = \K\underline\ell(\beta) \widetilde{\mu} \otimes 1.
    \]
    Since $\K\underline\ell$ is compatible with filtrations, $\K\underline\ell(\beta) \in \F^b\K P_5^*$, which by Proposition-Definition \ref{kerp5:bimod:fil} implies $\K \underline\ell(\beta) \widetilde{\mu} \in \F^{b+m}\ker(\K\underline{\pr}_5)$, hence $\beta \cdot \mu \in \F^{b+m}\bM^\Betti$. \newline
    Thanks to Lemma \ref{lem:fil_pr12}, the morphism $\F^a \K{\underline{\pr}_{12}} : \F^a\K P_5^* \to \F^a(\V^\Betti \otimes \V^\Betti)$ is surjective. This surjectivity implies the existence of $\widetilde{\alpha} \in \F^a \K P_5^*$ such that $\K\underline{\pr}_{12}(\widetilde{\alpha}) = \alpha$. Then
    \[
        \mu \cdot \alpha = \widetilde{\mu} \otimes \alpha = \widetilde{\mu} \widetilde{\alpha} \otimes 1.
    \]
    Finally, Proposition-Definition \ref{kerp5:bimod:fil} implies that $\widetilde{\mu} \widetilde{\alpha} \in \F^{m+a}\ker(\K\underline{\pr}_5)$, which implies that $\mu \cdot \alpha \in \F^{m+a}\bM^\Betti$.  
\end{proof}

\begin{theorem} \label{thm:betti_bimod_iso_fil}
    The right $(\V^\Betti \otimes \V^\Betti)$-module isomorphism $(\V^\Betti \otimes \V^\Betti)^{\oplus 3} \to \bM^\Betti$ given in \eqref{iso:uA3_uM} is an isomorphism of filtered modules with respect to the filtrations given in Lemma \ref{lem:filVB23} and Proposition-Definition \ref{propdef:fil:MM}.
\end{theorem}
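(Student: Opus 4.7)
The plan is to establish the two inclusions $f\bigl(\F^n(\V^\Betti \otimes \V^\Betti)^{\oplus 3}\bigr) \subseteq \F^n\bM^\Betti$ and $\F^n\bM^\Betti \subseteq f\bigl(\F^n(\V^\Betti \otimes \V^\Betti)^{\oplus 3}\bigr)$, where $f$ denotes the module isomorphism of \eqref{iso:uA3_uM}. Together they express that $f$ is strict with respect to the filtrations, hence a filtered isomorphism.

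For the forward inclusion, given $(a_1,a_2,a_3) \in (\F^n(\V^\Betti \otimes \V^\Betti))^{\oplus 3}$, I would invoke Lemma \ref{lem:fil_pr12} to lift each $a_i$ to some $\widetilde{a}_i \in \F^n\K P_5^\ast$. Since each $x_{i5}-1$ lies in $\F^1\K P_5^\ast$, the element $\widetilde{\mu} := \sum_i (x_{i5}-1)\widetilde{a}_i$ lies in $\F^{n+1}\K P_5^\ast \cap \ker(\K\underline{\pr}_5) = \F^{n+1}\ker(\K\underline{\pr}_5) \subseteq \F^n\ker(\K\underline{\pr}_5)$. The identity $\widetilde{\mu}\otimes 1 = \sum_i (x_{i5}-1)\otimes a_i$ inside the tensor product then shows $f(a_1,a_2,a_3) \in \F^n\bM^\Betti$.

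For the reverse inclusion, let $\mu \in \F^n\bM^\Betti$ and pick a preimage $\widetilde{\mu} \in \F^n\ker(\K\underline{\pr}_5)$. Applying Lemma \ref{iso_P5_kerkpr5}, decompose $\widetilde{\mu} = \sum_i (x_{i5}-1)q_i$ uniquely with $q_i \in \K P_5^\ast$. Unfolding in the tensor product yields $\mu = \sum_i (x_{i5}-1)\otimes \K\underline{\pr}_{12}(q_i)$, so by injectivity of $f$ the preimage of $\mu$ under $f$ is $(\K\underline{\pr}_{12}(q_1), \K\underline{\pr}_{12}(q_2), \K\underline{\pr}_{12}(q_3))$, and it remains to show that each $\K\underline{\pr}_{12}(q_i)$ lies in $\F^n(\V^\Betti \otimes \V^\Betti)$.

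The hard part will be precisely this last step, which requires a filtered refinement of Lemma \ref{iso_P5_kerkpr5}: if $\sum_i (x_{i5}-1) q_i$ lies in a given filtration piece of $\ker(\K\underline{\pr}_5)$, then each $q_i$ can be taken (modulo $\ker(\K\underline{\pr}_{12})$, which does not affect $\K\underline{\pr}_{12}(q_i)$) in the appropriate filtration piece of $\K P_5^\ast$. My plan is to prove this by induction on the filtration degree. The base case reduces to the linear independence of the classes $[x_{15}-1]_1$, $[x_{25}-1]_1$, $[x_{35}-1]_1$ in $\gr_1(\K P_5^\ast)$, verified directly from the abelianization of $P_5^\ast$ (freely generated by the $[x_{ij}-1]_1$ modulo the single relation induced by $\omega_4 = 1$). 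The inductive step then exploits the freeness of the right-module decomposition of $\ker(\K\underline{\pr}_5)$, reducing the passage from degree $n-1$ to degree $n$ to the injectivity of the associated graded of $(q_1,q_2,q_3) \mapsto \sum_i (x_{i5}-1) q_i$ in degree $n$. Pushing the resulting filtered bound on each $q_i$ through the filtered morphism $\K\underline{\pr}_{12}$ gives the desired filtration on $\K\underline{\pr}_{12}(q_i)$, completing the proof.
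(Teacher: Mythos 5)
Your overall blueprint --- two inclusions, with the reverse direction being the crux --- is sensible, and you correctly identify where the difficulty lies: a filtered refinement of Lemma~\ref{iso_P5_kerkpr5}, namely a description of $\F^n\ker(\K\underline{\pr}_5)$ in terms of the free generators $x_{15}-1,x_{25}-1,x_{35}-1$. But there are two problems, one clerical and one substantive.

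First, the degree bookkeeping. The filtrations match up with a shift of one, not degree for degree, and this makes your reverse inclusion as stated false. Your own forward computation already signals this: $(a_1,a_2,a_3)\in(\F^n(\V^\Betti\otimes\V^\Betti))^{\oplus 3}$ produces $\widetilde\mu\in\F^{n+1}\ker(\K\underline{\pr}_5)$, hence $f(a_1,a_2,a_3)\in\F^{n+1}\bM^\Betti$, and you then throw a degree away. On the other side, the inclusion $\F^n\bM^\Betti\subseteq f\bigl(\F^n(\V^\Betti\otimes\V^\Betti)^{\oplus 3}\bigr)$ fails already for $n=1$: since $\ker(\K\underline{\pr}_5)\subset I_{P_5^\ast}=\F^1\K P_5^\ast$ one gets $\F^1\ker(\K\underline{\pr}_5)=\ker(\K\underline{\pr}_5)$ and hence $\F^1\bM^\Betti=\bM^\Betti$, whereas $f\bigl(\F^1(\V^\Betti\otimes\V^\Betti)^{\oplus 3}\bigr)$ is a proper submodule (as $f$ is a bijection). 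What the paper proves, and what you should aim for, is the bijection $\F^{n-1}(\V^\Betti\otimes\V^\Betti)^{\oplus 3}\simeq\F^n\bM^\Betti$; in the reverse direction the target is therefore $\K\underline{\pr}_{12}(q_i)\in\F^{n-1}(\V^\Betti\otimes\V^\Betti)$, not $\F^n$. Also, the ``modulo $\ker(\K\underline{\pr}_{12})$'' hedge is unnecessary: the decomposition $\widetilde\mu=\sum_i(x_{i5}-1)q_i$ is unique by the freeness in Lemma~\ref{iso_P5_kerkpr5}, so the $q_i$ are pinned down.

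Second, your induction relocates the hard point rather than resolving it. The inductive step requires the injectivity of $(u_1,u_2,u_3)\mapsto\sum_i[x_{i5}-1]_1\,u_i$ on $\gr(\K P_5^\ast)^{\oplus 3}$ in every degree; this is exactly the statement that $[x_{15}-1]_1,[x_{25}-1]_1,[x_{35}-1]_1$ freely generate $\ker(\gr(\K\underline{\pr}_5))$ as a right $\gr(\K P_5^\ast)$-module, which does not follow from the $n=1$ base case by any elementary bootstrapping. If you are willing to invoke the graded algebra isomorphism $\gr(\K P_5^\ast)\simeq U(\mathfrak p_5)$ (used elsewhere in the paper, e.g.\ in the proof of Theorem~\ref{thm:grbimodB_iso_bimodDR}) together with the de Rham freeness Lemma~\ref{iso_p5_kerUpr5}, then this injectivity --- and with it your induction --- does go through, and you obtain a genuinely different and arguably shorter route than the paper's; but neither ingredient is cited in your sketch and both are essential. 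The paper's proof instead stays entirely on the Betti side: it uses the semidirect-product splitting $F_3\to P_5^\ast\to F_2$ to produce the $\K$-module isomorphism $\K F_3\otimes\K F_2\simeq\K P_5^\ast$, and then the technical heart is showing that the conjugation action of $F_2$ on $F_3$ is trivial on $F_3^{\mathrm{ab}}$, which via Proposition~\ref{prop:semidirect_augmentation} yields the filtered decomposition $\F^n\ker(\K\underline{\pr}_5)=\sum_{i}(x_{i5}-1)\,\F^{n-1}\K P_5^\ast$ directly, without any reference to $U(\mathfrak p_5)$. Your route trades that self-contained Betti computation for an appeal to the de Rham side; it is workable, but you should correct the degree shift and make the two inputs explicit.
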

\begin{proof}
    Let us show that for any $a \geq 0$, the isomorphism $(\V^\Betti \otimes \V^\Betti)^{\oplus 3} \to \bM^\Betti$ induces a bijection
    \[
        \F^{a-1} (\V^\Betti \otimes \V^\Betti)^{\oplus 3} \simeq \F^a \bM^\Betti.
    \]
    For this, we fix $a \geq 0$ and we follow these steps:
    \begin{steplist}
        \item \emph{Let us show that the isomorphism $(\V^\Betti \otimes \V^\Betti)^{\oplus 3} \to \bM^\Betti$ induces an injection}
        \begin{equation} \label{eq:inj_FaVB3_FaMB}
            \F^{a-1} (\V^\Betti \otimes \V^\Betti)^{\oplus 3} \hookrightarrow \F^a \bM^\Betti.
        \end{equation}
        Let $(\alpha_1, \alpha_2, \alpha_3) \in \F^{a-1} (\V^\Betti \otimes \V^\Betti)^{\oplus 3}$. Thanks to Lemma \ref{lem:fil_pr12}, the morphism $\F^{a-1} \K{\underline{\pr}_{12}} : \F^{a-1}\K{P_5^\ast} \to \F^{a-1}(\V^\Betti \otimes \V^\Betti)$ is surjective. This surjectivity implies the existence of \linebreak $(\widetilde{\alpha}_1, \widetilde{\alpha}_2, \widetilde{\alpha}_3) \in (\F^{a-1}\K{P_5^\ast})^{\oplus 3}$ such that $\F^{a-1} \K{\underline{\pr}_{12}}(\widetilde{\alpha}_i) = \alpha_i$ for any $i \in \{1, 2, 3\}$. Therefore, the image of $(\alpha_1, \alpha_2, \alpha_3)$ by the isomorphism $(\V^\Betti \otimes \V^\Betti)^{\oplus 3} \to \bM^\Betti$ is given by
        \[
            \sum_{i=1}^3 (x_{i5} - 1) \otimes \alpha_i = \sum_{i=1}^3 (x_{i5} - 1) \widetilde{\alpha}_i \otimes 1. 
        \]
        By construction, we have 
        \[
            \sum_{i=1}^3 (x_{i5} - 1) \widetilde{\alpha}_i \in \ker(\K\underline{\pr}_5) \cap \F^a\K{P_5^\ast} = \F^a \ker(\K\underline{\pr}_5).  
        \]
        The statement then follows.
        \item Denote by $F_3$ the free group with three generators. \emph{Let us show that there exists a $\K$-module isomorphism}
        \begin{equation} \label{eq:isoPhi_P5}
            \K{F_3} \otimes \K{F_2} \simeq \K{P_5^\ast}.
        \end{equation}
        Thanks to \cite[Lemma 7.9 1)]{EF1}, we have an injective group morphism $F_3 \to P_5^\ast$ such that we identify the generators of $F_3$ with $x_{15}, x_{25}$ and $x_{35}$. We then have the following split exact sequence of groups
        \[
            \{1\} \to F_3 \to P_5^\ast \xrightarrow{\pr_5} F_2 \to \{1\},
        \]
        with splitting $\underline{\ell} : F_2 \to P_5^\ast$. Applying Proposition \ref{prop:semidirect_augmentation}\ref{isoPhi} to this sequence we obtain a $\K$-module isomorphism $\K{F_3} \otimes \K{F_2} \simeq \K{P_5^\ast}$ induced by the bijection\footnote{One may also refer to \cite[Lemma 7.9. 2)]{EF1}.} $F_3 \times F_2 \to P_5^\ast$, $(u,u^\prime) \mapsto u \cdot \underline{\ell}(u^\prime)$.
        \item Let $\Theta : F_2 \to \Aut(F_3)$ be the group morphism given by $h \mapsto \left(\Theta_h : x \mapsto \underline{\ell}(h) \ x \ \underline{\ell}(h)^{-1}\right)$. \emph{Let us show that}
        \begin{equation} \label{eq:Thetahab_idF3ab}
            \Theta_h^\mathrm{ab} = \mathrm{id}_{F_3^\mathrm{ab}}, \forall h \in F_2.
        \end{equation}
        Since $\Theta$ is a group morphism, it suffices to show identity \eqref{eq:Thetahab_idF3ab} for $h=X_0$ and $h=X_1$ and since $\Theta_{X_i}$ ($i\in\{0,1\}$) is a group morphism, it suffices to evaluate at the generators of $F_3$. One has
        \[
            \Theta_{X_0}(x_{j5}) = x_{23} x_{j5} x_{23}^{-1} \text{ and } \Theta_{X_1}(x_{j5}) = x_{12} x_{j5} x_{12}^{-1}
        \]
        for $j \in \{1,2,3\}$. The injection $F_3 \hookrightarrow P_5^\ast$ enables us to evaluate these identities in $P_5^\ast$. Furthermore, thanks to \cite[Lemma 7.3]{EF1}, one may compute these identities in $K_5$. We will abusively use the same notations for the generators of $P_5^\ast$ and $K_5$. One has
        \[
            \Theta_{X_0}(x_{15}) = x_{23} x_{15} x_{23}^{-1} = x_{15}, 
        \]
        where the last equality follows from the fact that one has from \cite[(7.1.3)]{EF1} that $(x_{15}, x_{23}) = 1$ in $K_5$. Next, one has
        \[
            \Theta_{X_0}(x_{25}) = x_{23} x_{25} x_{23}^{-1} = x_{23} x_{25} x_{35}^{-1} x_{35} x_{23}^{-1} = x_{35}^{-1} x_{23} x_{25} x_{35} x_{23}^{-1} = x_{35}^{-1} x_{25} x_{35} x_{23} x_{23}^{-1} = x_{35}^{-1} x_{25} x_{35},
        \]
        where the third equality follows from the fact that $(x_{23} x_{25}, x_{35}^{-1}) = 1$ and the fourth one from $(x_{23}, x_{25} x_{35}) = 1$; with both identities in $K_5$ being a consequence of \cite[(7.1.2)]{EF1}. Next, one has
        \begin{align*}
            \Theta_{X_0}(x_{35}) & = x_{23} x_{35} x_{23}^{-1} = x_{23} (x_{25} x_{35})^{-1} x_{25} x_{35} x_{35} x_{23}^{-1} = (x_{25} x_{35})^{-1} x_{23} x_{25} x_{35} x_{35} x_{23}^{-1} \\
            & = (x_{25} x_{35})^{-1} x_{35} x_{23} x_{25} x_{35} x_{23}^{-1} = (x_{25} x_{35})^{-1} x_{35} x_{25} x_{35} x_{23} x_{23}^{-1} = (x_{25} x_{35})^{-1} x_{35} x_{25} x_{35},
        \end{align*}
        where the third equality follows from the fact that $(x_{23}, (x_{25} x_{35})^{-1}) = 1$, the fourth one from $(x_{23} x_{25} x_{35}, x_{35}) = 1$ and the fifth one from $(x_{23}, x_{25} x_{35}) = 1$; with both identities in $K_5$ being a consequence of \cite[(7.1.2)]{EF1}. Next, one has
        \[
            \Theta_{X_1}(x_{15}) = x_{12} x_{15} x_{12}^{-1} = x_{12} x_{15} x_{25}^{-1} x_{25} x_{12}^{-1} = x_{25}^{-1} x_{12} x_{15} x_{25} x_{12}^{-1} = x_{25}^{-1} x_{15} x_{25} x_{12} x_{12}^{-1} = x_{25}^{-1} x_{15} x_{25}, 
        \]
        where the third equality follows from the fact that $(x_{12} x_{15}, x_{25}^{-1}) = 1$ and the fourth one from $(x_{12}, x_{15} x_{25}) = 1$; with both identities in $K_5$ being a consequence of \cite[(7.1.2)]{EF1}. Next, one has
        \begin{align*}
            \Theta_{X_1}(x_{25}) & = x_{12} x_{25} x_{12}^{-1} = x_{12} (x_{15} x_{25})^{-1} x_{15} x_{25} x_{25} x_{12}^{-1} = (x_{15} x_{25})^{-1} x_{12} x_{15} x_{25} x_{25} x_{12}^{-1} \\
            & = (x_{15} x_{25})^{-1} x_{25} x_{12} x_{15} x_{25} x_{12}^{-1} = (x_{15} x_{25})^{-1} x_{25} x_{15} x_{25} x_{12} x_{12}^{-1} = (x_{15} x_{25})^{-1} x_{25} x_{15} x_{25},
        \end{align*}
        where the third equality follows from the fact that $(x_{12}, (x_{15} x_{25})^{-1}) = 1$, the fourth one from $(x_{12} x_{15} x_{25}, x_{25}) = 1$ and the fifth one from $(x_{12}, x_{15} x_{25}) = 1$; with both identities in $K_5$ being a consequence of \cite[(7.1.2)]{EF1}. Next, one has
        \[
            \Theta_{X_1}(x_{35}) = x_{12} x_{35} x_{12}^{-1} = x_{35},
        \]
        where the last equality follows from the fact that one has from \cite[(7.1.3)]{EF1} that $(x_{12}, x_{35}) = 1$ in $K_5$. This implies that
        \begin{align*}
            \Theta_{X_0}^\mathrm{ab}(x_{15}^\mathrm{ab}) & = x_{15}^\mathrm{ab} & \Theta_{X_1}^\mathrm{ab}(x_{15}^\mathrm{ab}) & = (x_{25}^{-1} x_{15} x_{25})^\mathrm{ab} = x_{15}^\mathrm{ab} \\
            \Theta_{X_0}^\mathrm{ab}(x_{25}^\mathrm{ab}) & = (x_{35}^{-1} x_{25} x_{35})^\mathrm{ab} = x_{25}^\mathrm{ab} & \Theta_{X_1}^\mathrm{ab}(x_{25}^\mathrm{ab}) & = ((x_{15} x_{25})^{-1} x_{25} x_{15} x_{25})^\mathrm{ab} = x_{25}^\mathrm{ab}\\
            \Theta_{X_0}^\mathrm{ab}(x_{35}^\mathrm{ab}) & = ((x_{25} x_{35})^{-1} x_{35} x_{25} x_{35})^\mathrm{ab} = x_{35}^\mathrm{ab} & \Theta_{X_1}^\mathrm{ab}(x_{35}^\mathrm{ab}) & = x_{35}^\mathrm{ab}. 
        \end{align*}
        This proves that $\Theta_{X_i}^\mathrm{ab} = \mathrm{id}_{F_3^\mathrm{ab}}$ for $i \in \{0, 1\}$, which establishes statement \eqref{eq:Thetahab_idF3ab}.
        \item \emph{Let us show the equalities}
        \begin{equation} \label{eq:lhs_Fa_ker}
            \sum_{\substack{b+c=a \\ b>0}} \F^b \ \K{F_3} \otimes \F^c \ \K{F_2} = \left(\sum_{b+c=a} I_{F_3}^b \otimes I_{F_2}^c\right) \cap \left(I_{F_3} \otimes \K{F_2}\right) 
        \end{equation}
        \emph{and}
        \begin{equation} \label{eq:rhs_Fa_ker}
            \F^a \ker(\K\underline{\pr}_5) = \F^a \K{P_5^\ast} \cap \ker(\K\underline{\pr}_5) = I_{P_5^\ast}^a \cap \ker(\K\underline{\pr}_5). 
        \end{equation}
        Equality \eqref{eq:rhs_Fa_ker} is immediate. Let us prove equality \eqref{eq:lhs_Fa_ker}. The inclusion ($\subset$) being immediate, let us prove the converse. Indeed, let $\displaystyle x = \sum_{b+c=a} x_{b,c} \in I_{F_3} \otimes \K{F_2}$ with $x_{b,c} \in I_{F_3}^b \otimes I_{F_2}^c$. Since $x \in I_{F_3} \otimes \K{F_2}$, it follows that $\varepsilon_{F_3} \otimes \mathrm{id}_{\K{F_2}}(x) = 0$. Therefore,
        \[
            \sum_{b+c=a} \varepsilon_{F_3} \otimes \mathrm{id}_{\K{F_2}}(x_{b,c}) = 0.
        \]
        Since $\varepsilon_{F_3} \otimes \mathrm{id}_{\K{F_2}}(x_{b,c}) = 0$ for $b>0$, it follows from this equality that $\varepsilon_{F_3} \otimes \mathrm{id}_{\K{F_2}}(x_{0,a}) = 0$, which implies that $x_{0,a} \in I_{F_3} \otimes I_{F_2}^a$, thus
        \[
            x = \sum_{b+c=a} x_{b,c} \in \sum_{\substack{b+c=a\\b>0}} I_{F_3}^b \otimes I_{F_2}^c.
        \]
        This concludes the proof of equality \eqref{eq:lhs_Fa_ker}.
        \item \emph{Let us show that the isomorphism \eqref{eq:isoPhi_P5} induces the following isomorphism}
        \begin{equation} \label{eq:Fa_ker}
            \sum_{\substack{b+c=a \\ b>0}} \F^b \ \K{F_3} \otimes \F^c \ \K{F_2} \simeq \F^a \ker(\K\underline{\pr}_5).
        \end{equation}
        First, as in the proof of \cite[Lemma 7.10]{EF1}, one checks that the commutativity of the diagram
        \[\begin{tikzcd}
            \K{F_3} \otimes \K{F_2} \ar[rr, "\eqref{eq:isoPhi_P5}"] \ar[rrd, "\varepsilon_{F_3} \otimes \mathrm{id}_{\K{F_2}}"'] && \K{P_5^\ast} \ar[d, "\K{\underline{\pr}_5}"] \\
            && \K{F_2}
        \end{tikzcd}\]
        implies that the isomorphism \eqref{eq:isoPhi_P5} induces an isomorphism
        \begin{equation} \label{eq:kF2_IF3_kerpr5}
            I_{F_3} \otimes \K{F_2} \simeq \ker(\K\underline{\pr}_5).
        \end{equation}
        Second, identity \eqref{eq:Thetahab_idF3ab} enables us to apply Proposition \ref{prop:semidirect_augmentation}\ref{isosumIFaxIHbIGn}. From this, one deduces that the isomorphism \eqref{eq:isoPhi_P5} gives rise to an isomorphism
        \begin{equation} \label{eq:IF2_IF3_IP5}
            \sum_{b+c=a} I_{F_3}^b \otimes I_{F_2}^c \simeq I_{P_5^\ast}^a.
        \end{equation}
        It follows from \eqref{eq:kF2_IF3_kerpr5} and \eqref{eq:IF2_IF3_IP5} that the isomorphism \eqref{eq:isoPhi_P5} induces an isomorphism between the intersection of the left-hand sides and the right-hand sides of these isomorphisms. These intersections are respectively given by \eqref{eq:lhs_Fa_ker} and \eqref{eq:rhs_Fa_ker}, which implies the announced statement.
        \item \emph{Let us show that}
        \begin{equation} \label{eq:kerfildec}
            \F^a \ker(\K\underline{\pr}_5) = \sum_{i=1}^3 (x_{i5} - 1) \F^{a-1}\K{P_5^\ast}. 
        \end{equation}
        Recall that $\displaystyle I_{F_3}^b = \sum_{i=1}^3 (x_{i5} - 1) I_{F_3}^{b-1}$. This implies
        \begin{equation} \label{eq:sumFakP5}
            \sum_{\substack{b+c=a \\ b>0}} \F^b \ \K{F_3} \otimes \F^c \ \K{F_2} = \sum_{i=1}^3 (x_{i5} - 1) \sum_{b+c=a-1} \F^b \ \K{F_3} \otimes \F^c \ \K{F_2} \simeq \sum_{i=1}^3 (x_{i5} - 1) \F^{a-1} \K{P_5^\ast},
        \end{equation}
        where the isomorphism is obtained from \eqref{eq:IF2_IF3_IP5} by replacing $a$ with $a-1$. Finally, equality \eqref{eq:kerfildec} follows from \eqref{eq:Fa_ker} and \eqref{eq:sumFakP5}.
    \end{steplist}
    Finally, one deduces that the map $\F^{a-1} (\V^\Betti \otimes \V^\Betti)^{\oplus 3} \to \F^a \bM^\Betti$ is injective from \eqref{eq:inj_FaVB3_FaMB} and surjective from \eqref{eq:kerfildec}, thus proving the theorem.
\end{proof}

\subsection{De Rham geometric material} \label{sec:derham_geom}
Let $\mathfrak{t}_4$ be the infinitesimal braid Lie algebra with four strands, that is, the Lie algebra presented by generators $t_{12}$, $t_{13}$, $t_{14}$, $t_{23}$, $t_{24}$ and $t_{34}$ which satisfy the following relations \cite{Fur, EF1}:
\begin{enumerate}[label=(\roman*), leftmargin=*]
    \item $[t_{ij}, t_{ik} + t_{jk}] = 0$ for $i<j<k \in \llbracket 1, 4 \rrbracket$.
    \item $[t_{ij}, t_{kl}] = 0$ for $i<j, k<l \in \llbracket 1, 4 \rrbracket$ such that $\{i,j\}\cap\{k,l\}=\varnothing$.
\end{enumerate}

\noindent Let $z_4 := t_{12} + t_{13} + t_{23} + t_{14} + t_{24} + t_{34}$. One checks that $z_4$ is a generator of the Lie algebra $Z(\mathfrak{t}_4)$. One then defines
\[
    \mathfrak{p}_5 := \mathfrak{t}_4 / Z(\mathfrak{t}_4) = \mathfrak{t}_4 / (z_4).
\]
We shall abusively use the same notation the generators of $\mathfrak{t}_4$ and their classes in $\mathfrak{p}_5$.

\begin{lemma}[{\cite[Sec 5.1.2]{EF1}}] \label{lem:prLie}
    \begin{enumerate}[label=(\alph*), leftmargin=*]
        \item There are Lie algebra morphisms $\pr_1, \pr_2, \pr_5 : \mathfrak{p}_5 \to \mathfrak{f}_2$ given by
        \[ \def\arraystretch{1.4}
        \begin{array}{|c|c|c|c|c|c|c|}
            \hline
            t & t_{12} & t_{13} & t_{14} & t_{23} & t_{24} & t_{34} \\
            \hline
            \underline{\pr}_1(t) & 0 & 0 & 0 & e_0 & e_\infty & e_1 \\ 
            \hline
            \underline{\pr}_2(t) & 0 & e_\infty & e_0 & 0 & 0 & e_1 \\
            \hline
            \underline{\pr}_5(t) & e_1 & e_\infty & e_0 & e_0 & e_\infty & e_1 \\
            \hline
        \end{array}\]
        \item There is a Lie algebra morphism $\ell : \mathfrak{f}_2 \to \mathfrak{p}_5$ given by $e_0 \mapsto t_{23}$ and $e_1 \mapsto t_{12}$. It is such that $\pr_5 \circ \ell = \mathrm{id}_{\mathfrak{f}_2}$.
    \end{enumerate}
\end{lemma}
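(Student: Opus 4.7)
The plan is to verify each proposed assignment by direct inspection of the defining relations, paralleling the structure of the Betti analogue (Lemma \ref{lem:pr}) in the infinitesimal setting.

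For part (a), I would first extend each table entry $t_{ij} \mapsto \pr_i(t_{ij})$ $\K$-linearly on the free Lie algebra generated by $\{t_{ij}\}_{i<j \in \llbracket 1,4\rrbracket}$, and then check that the infinitesimal braid relations defining $\mathfrak{t}_4$ are sent to zero in $\mathfrak{f}_2$. The key combinatorial input will be the identity $e_0 + e_1 + e_\infty = 0$ (by definition of $e_\infty$): it makes every triangle relation $[t_{ij}, t_{ik} + t_{jk}] = 0$ collapse to an identity of the form $[x, -x] = 0$ for some $x \in \{e_0, e_1, e_\infty\}$. The disjoint-pair relations $[t_{ij}, t_{kl}] = 0$ for $\{i,j\} \cap \{k,l\} = \varnothing$ will reduce either to $[0, \cdot] = 0$ (whenever the index $1$ appears, for $\pr_1$, and symmetrically the index $2$ for $\pr_2$) or to $[x, x] = 0$ (for $\pr_5$, because $\pr_5(t_{12}) = \pr_5(t_{34}) = e_1$, $\pr_5(t_{13}) = \pr_5(t_{24}) = e_\infty$, and $\pr_5(t_{14}) = \pr_5(t_{23}) = e_0$). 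Once each $\pr_i$ is defined on $\mathfrak{t}_4$, I would verify that it descends to the quotient $\mathfrak{p}_5 = \mathfrak{t}_4/(z_4)$ by computing $\pr_i(z_4)$; in all three cases the answer reduces to $e_0 + e_1 + e_\infty = 0$.

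For part (b), since $\mathfrak{f}_2$ is free on $\{e_0, e_1\}$, the universal property of the free Lie algebra directly provides a unique Lie algebra morphism $\ell : \mathfrak{f}_2 \to \mathfrak{p}_5$ with $e_0 \mapsto t_{23}$ and $e_1 \mapsto t_{12}$, with no relation to check. The composite $\pr_5 \circ \ell$ is then a Lie algebra endomorphism of $\mathfrak{f}_2$, and the identity $\pr_5 \circ \ell = \mathrm{id}_{\mathfrak{f}_2}$ can be verified on the generators: one reads off $\pr_5(\ell(e_0)) = \pr_5(t_{23}) = e_0$ and $\pr_5(\ell(e_1)) = \pr_5(t_{12}) = e_1$ from the table of part (a).

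The hard part is really just bookkeeping: a priori one faces four triangle relations and three disjoint-pair relations for each of the three maps $\pr_1$, $\pr_2$, $\pr_5$. This becomes routine once one observes that each unordered pair of generators of $\mathfrak{p}_5$ is sent by $\pr_i$ into $\{0, e_0, e_1, e_\infty\}$ in such a way that the required bracket in $\mathfrak{f}_2$ either vanishes trivially or is forced to vanish via $e_0 + e_1 + e_\infty = 0$; this single structural observation collapses all the checks simultaneously.
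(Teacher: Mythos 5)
The paper does not prove this lemma; it is stated with a citation to \cite[Sec.~5.1.2]{EF1}, so there is no in-paper proof to compare against. Your direct verification is correct and complete: the triangle relations collapse via $e_0 + e_1 + e_\infty = 0$ (or trivially when a generator maps to $0$), the disjoint-pair relations vanish for exactly the reasons you state, and the descent to $\mathfrak{p}_5$ holds because $\pr_1(z_4) = \pr_2(z_4) = e_0 + e_1 + e_\infty = 0$ and $\pr_5(z_4) = 2(e_0 + e_1 + e_\infty) = 0$ (the factor $2$ is harmless since $\K$ is a $\Q$-algebra). Part~(b) is exactly the universal-property argument one would use.
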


\begin{definition}
    Define $\pr_{12} : \mathfrak{p}_5 \to \mathfrak{f}_2^{\oplus 2}$ to be the Lie algebra morphism \(p \mapsto (\pr_1(p), \pr_2(p))\).
    We assign to this Lie algebra morphism and to each Lie algebra morphism of Lemma \ref{lem:prLie} the following morphisms of $\K{\text-}\alg$:
    \begin{enumerate}[label=(\alph*), leftmargin=*]
        \begin{multicols}{2}
        \item For $j \in \{1, 2, 5\}$, $U(\pr_j) : U(\mathfrak{p}_5) \to \V^\DeRham$;
        \item $U(\pr_{12}) : U(\mathfrak{p}_5) \to (\V^\DeRham)^{\otimes 2}$;    
        \end{multicols}
        \item $U(\ell) : \V^\DeRham \to U(\mathfrak{p}_5)$.
    \end{enumerate}
\end{definition}

\noindent Recall that $\ker(U(\pr_5))$ is a two-sided ideal of $U(\mathfrak{p}_5)$. This induces, in particular, a natural right $U(\mathfrak{p}_5)$-module structure on $\ker(U(\pr_5))$. 
\begin{lemma}
    \label{iso_p5_kerUpr5}
    The map $U(\mathfrak{p}_5)^{\oplus 3} \to \ker(U(\pr_5))$ given by $(p_1, p_2, p_3) \mapsto t_{15} p_1 + t_{25} p_2 + t_{35} p_3$ is a right $U(\mathfrak{p}_5)$-module isomorphism, where $t_{15}$, $t_{25}$ and $t_{35}$ given by $t_{15}:= -t_{12} - t_{13} - t_{14}$, $t_{25}:= -t_{12} - t_{23} - t_{24}$ and $t_{35}:= -t_{12} - t_{13} - t_{14}$ freely generate $\ker(U(\pr_5))$.
\end{lemma}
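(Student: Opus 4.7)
The plan is to establish this lemma as the universal enveloping algebra (de Rham) counterpart of Lemma \ref{iso_P5_kerkpr5}, mirroring the argument that proves \cite[Lemma 7.10]{EF1} with left and right actions interchanged (as in the Betti case, the two-sided ideal status of $\ker(U(\pr_5))$ permits this switch). The key structural input I would use is that $\ker(\pr_5 : \mathfrak{p}_5 \to \mathfrak{f}_2)$ is itself a free Lie algebra on the three generators $t_{15}, t_{25}, t_{35}$, a consequence of the Kohno-Drinfeld presentation of $\mathfrak{t}_5$ modulo its center used in \cite[Sec. 5.1]{EF1}. Combined with the splitting $\ell : \mathfrak{f}_2 \to \mathfrak{p}_5$ from Lemma \ref{lem:prLie}, this exhibits $\mathfrak{p}_5$ as the semidirect product $\mathfrak{f}_3 \rtimes \mathfrak{f}_2$, where $\mathfrak{f}_3$ denotes the free Lie algebra on $t_{15}, t_{25}, t_{35}$.

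Applying the Poincaré-Birkhoff-Witt theorem to this semidirect product decomposition then yields a $\K$-linear isomorphism
\[
    U(\mathfrak{f}_3) \otimes U(\mathfrak{f}_2) \xrightarrow{\sim} U(\mathfrak{p}_5), \quad a \otimes b \mapsto a \cdot U(\ell)(b),
\]
which is simultaneously right $U(\mathfrak{f}_2)$-linear (for the action on the second tensor factor on the source, and via $U(\ell)$ on the target) and left $U(\mathfrak{f}_3)$-linear. Through this identification, $U(\pr_5)$ corresponds to $\varepsilon_{\mathfrak{f}_3} \otimes \mathrm{id}_{U(\mathfrak{f}_2)}$, so $\ker(U(\pr_5))$ corresponds to $I_{\mathfrak{f}_3} \otimes U(\mathfrak{f}_2)$, where $I_{\mathfrak{f}_3}$ denotes the augmentation ideal of $U(\mathfrak{f}_3)$.

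Finally, since $U(\mathfrak{f}_3)$ identifies with the free associative $\K$-algebra $\K\langle t_{15}, t_{25}, t_{35}\rangle$, its augmentation ideal admits the direct sum decomposition $I_{\mathfrak{f}_3} = \bigoplus_{i=1}^3 t_{i5} \cdot U(\mathfrak{f}_3)$ as a right $U(\mathfrak{f}_3)$-module. Combining this with the PBW isomorphism above gives a right $U(\mathfrak{p}_5)$-module decomposition
\[
    \ker(U(\pr_5)) = \bigoplus_{i=1}^3 t_{i5} \cdot U(\mathfrak{p}_5),
\]
which is precisely the image of the map $U(\mathfrak{p}_5)^{\oplus 3} \to \ker(U(\pr_5))$, $(p_1, p_2, p_3) \mapsto \sum_i t_{i5} p_i$; that this map is a right $U(\mathfrak{p}_5)$-module isomorphism and that $t_{15}, t_{25}, t_{35}$ freely generate $\ker(U(\pr_5))$ then follow simultaneously. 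The main obstacle is the first step, namely the Lie algebra freeness of $\ker(\pr_5)$ on $t_{15}, t_{25}, t_{35}$, which I would quote from \cite[Sec. 5.1]{EF1}; the remainder is a formal application of PBW combined with the standard description of augmentation ideals of free associative algebras.
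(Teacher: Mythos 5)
Your proposal is correct and supplies exactly the argument that the paper delegates to \cite[Lemma 5.5]{EF1} (with left/right actions interchanged, as the paper notes): use the Lie-algebra freeness of $\ker(\pr_5)$ on $t_{15}, t_{25}, t_{35}$, the splitting $\ell$ to get the semidirect product $\mathfrak{p}_5 \simeq \mathfrak{f}_3 \rtimes \mathfrak{f}_2$, the PBW $\K$-module isomorphism $U(\mathfrak{f}_3) \otimes U(\mathfrak{f}_2) \simeq U(\mathfrak{p}_5)$ identifying $\ker(U(\pr_5))$ with $I_{\mathfrak{f}_3} \otimes U(\mathfrak{f}_2)$, and the right-module decomposition $I_{\mathfrak{f}_3} = \bigoplus_i t_{i5} U(\mathfrak{f}_3)$ of the free associative algebra. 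The one nit is a citation slip: the paper points to \cite[Lemma 5.5]{EF1} for this de Rham statement rather than to \cite[Lemma 7.10]{EF1} (the Betti one), though modeling your argument on the Betti case is harmless; you might also make fully explicit that injectivity follows because, under the left-$U(\mathfrak{f}_3)$-linear PBW isomorphism, your map corresponds to the isomorphism $U(\mathfrak{f}_3)^{\oplus 3} \xrightarrow{\sim} I_{\mathfrak{f}_3}$ tensored with $\mathrm{id}_{U(\mathfrak{f}_2)}$.
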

\begin{proof}
    This is analogous to \cite[Lemma 5.5]{EF1}. The two-sided ideal nature of $\ker(U(\pr_5))$ enables one to replace left actions by right actions in this proof.
\end{proof}

\subsection{A De Rham graded bimodule \texorpdfstring{$(\V^\DeRham, \bM^\DeRham, \V^\DeRham \otimes \V^\DeRham, \mathsf{r}\boldsymbol{\rho})$}{(VDRxVDR,VDR,MDR,rbrho)}}
\begin{lemma} \label{bimod:kerpr5}
    The graded algebra morphism $U(\ell) : \V^\DeRham \to U(\mathfrak{p}_5)$ equips the graded $\K$-submodule $\ker(U(\pr_5))$ of $U(\mathfrak{p}_5)$ with a graded $(\V^\DeRham, U(\mathfrak{p}_5))$-bimodule structure.
\end{lemma}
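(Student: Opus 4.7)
The plan is to mimic, almost verbatim, the argument given for the Betti analogue in Lemma \ref{bimod:kerupr5}, with the extra observation that every piece of data involved is graded. I would first note that, since $\pr_5 : \mathfrak{p}_5 \to \mathfrak{f}_2$ is a morphism of graded Lie algebras (all generators $t_{ij}$ sit in degree $1$ and map to elements of degree $1$), its universal enveloping $U(\pr_5) : U(\mathfrak{p}_5) \to \V^\DeRham$ is a morphism of $\K{\text-}\alg_\gr$. In particular $\ker(U(\pr_5))$ is a homogeneous two-sided ideal of $U(\mathfrak{p}_5)$, hence a graded $\K$-submodule carrying a natural graded $(U(\mathfrak{p}_5), U(\mathfrak{p}_5))$-bimodule structure.

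Next I would pull back the left action along $U(\ell) : \V^\DeRham \to U(\mathfrak{p}_5)$, exactly as in the proof of Lemma \ref{bimod:kerupr5}. Concretely, the compatible left $\V^\DeRham$-module structure is defined as the composition
\[
    \V^\DeRham \xrightarrow{U(\ell)} U(\mathfrak{p}_5) \to \mathrm{End}_{U(\mathfrak{p}_5){\text-}\rmod}(\ker(U(\pr_5))),
\]
where the second arrow comes from the left $U(\mathfrak{p}_5)$-action on the ideal $\ker(U(\pr_5))$. The right $U(\mathfrak{p}_5)$-module structure is the tautological one. Both are compatible by associativity in $U(\mathfrak{p}_5)$, so this yields a $(\V^\DeRham, U(\mathfrak{p}_5))$-bimodule in the sense of Definition \ref{def:BFS}.

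It then remains to check that this bimodule is graded in the sense of Definition \ref{def:BFSgr}. Here the point is that $\ell : \mathfrak{f}_2 \to \mathfrak{p}_5$ sends the degree-$1$ generators $e_0, e_1$ to the degree-$1$ elements $t_{23}, t_{12}$, so $U(\ell)$ is a morphism of $\K{\text-}\alg_\gr$; the second map in the above composition is graded because the $U(\mathfrak{p}_5)$-action on the graded ideal $\ker(U(\pr_5))$ is by left multiplication, which is obviously homogeneous of degree zero (cf. Lemma \ref{MorArmod_grading}). The composition therefore lands in $\mathrm{End}_{U(\mathfrak{p}_5){\text-}\rmod}(\ker(U(\pr_5)))_0$, which is exactly the statement that the left $\V^\DeRham$-action preserves the grading, completing the verification.

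There is really no ``hard part'' here: the only thing to be careful about is to make sure the Lie-algebraic setup actually yields graded maps (which is immediate from the degree assignments recalled in Sec. \ref{sec:derham_geom}), and that the ideal structure of $\ker(U(\pr_5))$ in $U(\mathfrak{p}_5)$ is indeed compatible with the grading — both facts follow from $\pr_5$ being a morphism of graded Lie algebras. Thus the proof is essentially a graded repetition of the Betti argument.
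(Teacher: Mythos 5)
Your proposal is correct and mirrors the paper's own proof: both recognize $\ker(U(\pr_5))$ as a graded two-sided ideal of $U(\mathfrak{p}_5)$, hence a graded $(U(\mathfrak{p}_5),U(\mathfrak{p}_5))$-bimodule, and then pull back the left action along the graded morphism $U(\ell)$. The extra remarks you add about why $U(\pr_5)$ and $U(\ell)$ are graded morphisms are sound but not in the paper's proof, which takes them as given from Section \ref{sec:derham_geom}.
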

\begin{proof}
    Recall that the graded $\K$-submodule $\ker(U(\pr_5))$ of $U(\mathfrak{p}_5)$ is a two-sided ideal of $U(\mathfrak{p}_5)$. This naturally equips it with a graded $(U(\mathfrak{p}_5), U(\mathfrak{p}_5))$-bimodule structure. Pulling back this structure by the graded algebra morphism $U(\ell) : \V^\DeRham \to U(\mathfrak{p}_5)$, one equips  $\ker(U(\pr_5))$ with the structure of a graded $(\V^\DeRham, U(\mathfrak{p}_5))$-bimodule. Its graded right module structure is that of $\ker(U(\pr_5))$, and its compatible graded left module structure is given by
    \[
        \V^\DeRham \xrightarrow{U(\ell)} U(\mathfrak{p}_5) \to \mathrm{End}_{U(\mathfrak{p}_5)\ast{\text-}\rmod}(\ker(U(\pr_5))),
    \]
    where the second arrow is given by the graded left $U(\mathfrak{p}_5)$-module structure of $\ker(U(\pr_5))$.
\end{proof}

\begin{lemma} \label{bimod:VDR2}
    The graded algebra morphism $U(\pr_{12}) : U(\mathfrak{p}_5) \to \V^\DeRham \otimes \V^\DeRham$ equips the graded $\K$-module $\V^\DeRham \otimes \V^\DeRham$ with a graded $(U(\mathfrak{p}_5), \V^\DeRham \otimes \V^\DeRham)$-bimodule structure.
\end{lemma}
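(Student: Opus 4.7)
The plan is to mirror the proof of Lemma \ref{bimod:VB2}, while checking compatibility with the gradings at each step. First, I would observe that $\V^\DeRham \otimes \V^\DeRham$ carries a natural structure of graded right module over itself: right multiplication by an element of $(\V^\DeRham \otimes \V^\DeRham)_n$ is a homogeneous endomorphism of degree $n$ (with respect to $\mathrm{End}_{(\V^\DeRham \otimes \V^\DeRham)\text{-}\rmod}(\V^\DeRham \otimes \V^\DeRham)$ equipped with the grading from Definition \ref{grading_Mor}), so this gives a morphism of $\K{\text-}\alg_\gr$
\[
    \V^\DeRham \otimes \V^\DeRham \to \mathrm{End}_{(\V^\DeRham \otimes \V^\DeRham)\text{-}\rmod}(\V^\DeRham \otimes \V^\DeRham),
\]
which is in fact an isomorphism, with inverse the evaluation-at-$1$ map $u \mapsto u(1)$ used in Proposition-Definition \ref{Delta} (this inverse is also graded since $1 \in (\V^\DeRham \otimes \V^\DeRham)_0$).

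Next, I would verify that $U(\pr_{12}) : U(\mathfrak{p}_5) \to \V^\DeRham \otimes \V^\DeRham$ is a morphism of $\K{\text-}\alg_\gr$. The Lie algebra $\mathfrak{p}_5$ is graded with all $t_{ij}$ in degree $1$, and inspection of the table in Lemma \ref{lem:prLie} shows that $\pr_1$ and $\pr_2$ send degree-$1$ generators to the degree-$1$ elements $e_0, e_1, e_\infty \in \V^\DeRham$. Hence $\pr_{12} : \mathfrak{p}_5 \to \mathfrak{f}_2^{\oplus 2}$ is a graded Lie algebra morphism, and by functoriality of $U$ together with the fact that $\V^\DeRham \otimes \V^\DeRham$ has the tensor-product grading $(\V^\DeRham \otimes \V^\DeRham)_n = \sum_{i+j=n} \V^\DeRham_i \otimes \V^\DeRham_j$ (as recalled just before Proposition-Definition \ref{propdef:DeRham_bimod}), the induced map $U(\pr_{12})$ is a graded $\K$-algebra morphism.

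Composing these two graded algebra morphisms yields a morphism of $\K{\text-}\alg_\gr$
\[
    U(\mathfrak{p}_5) \xrightarrow{U(\pr_{12})} \V^\DeRham \otimes \V^\DeRham \xrightarrow{\simeq} \mathrm{End}_{(\V^\DeRham \otimes \V^\DeRham)\text{-}\rmod}(\V^\DeRham \otimes \V^\DeRham),
\]
which endows $\V^\DeRham \otimes \V^\DeRham$ with a graded left $U(\mathfrak{p}_5)$-module structure compatible with its graded right $(\V^\DeRham \otimes \V^\DeRham)$-module structure; hence the tuple in question is a graded $(U(\mathfrak{p}_5), \V^\DeRham \otimes \V^\DeRham)$-bimodule, as required. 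I do not expect any real obstacle here: the only non-routine point is the verification that $\pr_{12}$ is degree-preserving, and this is immediate from the description of the generators and their images.
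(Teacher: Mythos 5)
Your proof takes essentially the same route as the paper: both equip $\V^\DeRham \otimes \V^\DeRham$ with a graded left $U(\mathfrak{p}_5)$-action by composing $U(\pr_{12})$ with the graded isomorphism $\V^\DeRham \otimes \V^\DeRham \simeq \mathrm{End}_{(\V^\DeRham \otimes \V^\DeRham)\text{-}\rmod}(\V^\DeRham \otimes \V^\DeRham)$ of Proposition-Definition \ref{Delta} (evaluation at $1$), and your explicit check that $\pr_{12}$ is degree-preserving is correct. One slip worth fixing: the inverse of evaluation at $1$ sends $a$ to \emph{left} multiplication by $a$, not right multiplication --- right multiplication fails to be a right $(\V^\DeRham \otimes \V^\DeRham)$-module endomorphism when the algebra is noncommutative (and $\V^\DeRham$ is), and $a \mapsto r_a$ would in any case be an algebra antimorphism rather than a morphism.
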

\begin{proof}
    It is immediate that the graded algebra $\V^\DeRham \otimes \V^\DeRham$ is naturally a graded right \linebreak $(\V^\DeRham \otimes \V^\DeRham)$-module. On the other hand, let us consider the composition
    \[
        U(\mathfrak{p}_5) \xrightarrow{U(\pr_{12})} \V^\DeRham \otimes \V^\DeRham \simeq \mathrm{End}_{(\V^\DeRham \otimes \V^\DeRham){\text-}\rmod}(\V^\DeRham \otimes \V^\DeRham),
    \]
    where the $\mathrm{End}_{(\V^\DeRham \otimes \V^\DeRham){\text-}\rmod}(\V^\DeRham \otimes \V^\DeRham) \simeq \V^\DeRham \otimes \V^\DeRham$ is the graded algebra isomorphism given by the evaluation at $1$ as in Proposition-Definition \ref{Delta}. This equips $\V^\DeRham \otimes \V^\DeRham$ with a compatible graded left $U(\mathfrak{p}_5)$-module structure.
\end{proof}

\begin{propdef} \label{propdef:bimod_M}
    The graded $(\V^\DeRham, U(\mathfrak{p}_5))$-bimodule $\ker(U(\pr_5))$ and the graded $(U(\mathfrak{p}_5), \V^\DeRham \otimes \V^\DeRham)$-bimodule $\V^\DeRham \otimes \V^\DeRham$ define a graded $(\V^\DeRham, \V^\DeRham \otimes \V^\DeRham)$-bimodule 
    \[
        \bM^\DeRham := \ker(U(\pr_5)) \otimes_{U(\mathfrak{p}_5)} (\V^\DeRham \otimes \V^\DeRham).
    \]
    More explicitly, the graded left $\V^\DeRham$-module structure on $\bM^\DeRham$ is given by
    \[
        \mathsf{r}\boldsymbol{\rho} : \V^\DeRham \to \mathrm{End}_{(\V^\DeRham \otimes \V^\DeRham){\text-}\rmod}(\bM^\DeRham), \quad v \mapsto \left(p \otimes w \mapsto U(\ell)(v) p \otimes w\right). 
    \]
\end{propdef}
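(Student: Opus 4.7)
The plan is to mirror the proof of Proposition-Definition \ref{propdef:bimod_uM} in the graded setting. The two essential ingredients—analogues of Lemmas \ref{bimod:kerupr5} and \ref{bimod:VB2}—are already provided by Lemmas \ref{bimod:kerpr5} and \ref{bimod:VDR2}, which equip $\ker(U(\pr_5))$ with a graded $(\V^\DeRham, U(\mathfrak{p}_5))$-bimodule structure and $\V^\DeRham \otimes \V^\DeRham$ with a graded $(U(\mathfrak{p}_5), \V^\DeRham \otimes \V^\DeRham)$-bimodule structure. The remaining task is then to combine these two structures via a tensor product over $U(\mathfrak{p}_5)$.

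I would apply Proposition \ref{prop:bimodtens} with $(\B, \bM, \A, \rho) = (\V^\DeRham, \ker(U(\pr_5)), U(\mathfrak{p}_5), U(\ell){\text-}\mathrm{action})$ and $(\A, \bM^\prime, \B^\prime, \rho^\prime) = (U(\mathfrak{p}_5), \V^\DeRham \otimes \V^\DeRham, \V^\DeRham \otimes \V^\DeRham, U(\pr_{12}){\text-}\mathrm{action})$. This yields the $(\V^\DeRham, \V^\DeRham \otimes \V^\DeRham)$-bimodule structure on $\bM^\DeRham = \ker(U(\pr_5)) \otimes_{U(\mathfrak{p}_5)} (\V^\DeRham \otimes \V^\DeRham)$ with the left action $\rho_\otimes(v) = U(\ell)(v){\text-}\mathrm{action} \otimes \mathrm{id}$, which on pure tensors reads $p \otimes w \mapsto U(\ell)(v) p \otimes w$, recovering the announced formula for $\mathsf{r}\boldsymbol{\rho}$.

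The only point beyond the ungraded case is verifying that the tensor product and the resulting bimodule structure are compatible with gradings. Since both factors are graded, and the tensor product is taken over the graded algebra $U(\mathfrak{p}_5)$ acting by graded morphisms, the tensor product inherits a natural grading by $(\bM \otimes_{U(\mathfrak{p}_5)} \bM^\prime)_n = \bigoplus_{i+j = n} \mathrm{im}(\bM_i \otimes \bM^\prime_j \to \bM \otimes_{U(\mathfrak{p}_5)} \bM^\prime)$, and the action maps, being induced by graded morphisms, preserve this grading. In particular, for $v \in \V^\DeRham_k$, $p \in \ker(U(\pr_5))_i$, $w \in (\V^\DeRham \otimes \V^\DeRham)_j$, the element $U(\ell)(v) p \otimes w$ lies in degree $k + i + j$, confirming that $\mathsf{r}\boldsymbol{\rho}$ is a morphism of $\K{\text-}\alg_\gr$.

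I expect no genuine obstacle: the proof is essentially ``immediate verification'' as in Proposition-Definition \ref{propdef:bimod_uM}, with the only added content being the routine check of compatibility with gradings, which follows formally from the graded status of $U(\ell)$, $U(\pr_{12})$, and of the bimodule structures established in Lemmas \ref{bimod:kerpr5} and \ref{bimod:VDR2}.
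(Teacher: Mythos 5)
Your proposal is correct and takes essentially the same route as the paper, which simply cites Lemmas \ref{bimod:kerpr5}, \ref{bimod:VDR2} and Proposition \ref{prop:bimodtens}; your added explicit discussion of the grading on $\bM \otimes_{U(\mathfrak{p}_5)} \bM'$ is the content the paper leaves implicit. One small slip: the degree-$n$ piece of the tensor product is $\sum_{i+j=n} \mathrm{im}(\bM_i \otimes \bM'_j)$, a (generally non-direct) sum of images, not a direct sum over $(i,j)$, though the overall grading and the rest of the argument are fine.
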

\begin{proof}
    This follows from Lemmas \ref{bimod:kerpr5} and \ref{bimod:VDR2} and Proposition \ref{prop:bimodtens}.
\end{proof}

\noindent The remainder of this paragraph is dedicated to the proof of the following result:
\begin{theorem}
    \label{thm:derham_bimod_iso}
    The graded bimodules
    \[
        (\V^\DeRham, \bM^\DeRham, \V^\DeRham \otimes \V^\DeRham, \mathsf{r}\boldsymbol{\rho}) \text{ and } (\V^\DeRham, (\V^\DeRham \otimes \V^\DeRham)^{\oplus 3}, \V^\DeRham \otimes \V^\DeRham, \mathsf{r}\rho)
    \]
    are isomorphic.
\end{theorem}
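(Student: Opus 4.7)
The plan is to mirror the proof of Proposition \ref{prop:betti_bimod_iso}, substituting the de Rham geometric material of Sec. \ref{sec:derham_geom} for its Betti counterpart. The argument is in fact cleaner than in the Betti case, because the right-action constructions arise directly from the Hopf antipode $\mathrm{S}_{\mathfrak{p}_5}$ of $U(\mathfrak{p}_5)$, with no auxiliary adjoint-by-diagonal conjugation of the kind appearing in $\mathsf{r}\underline{\rho}$ and $\mathsf{r}\underline{\varpi}$.

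First I would introduce the right-action analogues of Lemmas \ref{lem:intro_uvarpi} and \ref{lem:intro_ruvarpi}: a graded algebra morphism $\varpi : U(\mathfrak{p}_5) \to \M_3(U(\mathfrak{p}_5))$ characterized by $t_{i5} \cdot p = \sum_{j=1}^3 \varpi(p)_{ij} \cdot t_{j5}$ (see \cite[Lemma 5.4]{EF1}), and a graded algebra morphism $\mathsf{r}\varpi : U(\mathfrak{p}_5) \to \M_3(U(\mathfrak{p}_5))$ characterized by
\[
    p \cdot t_{j5} = \sum_{i=1}^3 t_{i5} \cdot \mathsf{r}\varpi(p)_{ij}, \qquad p \in U(\mathfrak{p}_5).
\]
Well-definedness and the algebra-morphism property follow from the freeness of $\ker(U(\pr_5))$ as a right $U(\mathfrak{p}_5)$-module on $\{t_{15}, t_{25}, t_{35}\}$ (Lemma \ref{iso_p5_kerUpr5}). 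Applying the antipode identity $\mathrm{S}_{\mathfrak{p}_5}(t_{i5}) = -t_{i5}$ to the defining identity of $\varpi$ then yields the clean relation
\[
    \mathsf{r}\varpi = \M_3(\mathrm{S}_{\mathfrak{p}_5}) \circ {}^t(-) \circ \varpi \circ \mathrm{S}_{\mathfrak{p}_5},
\]
the de Rham counterpart of Lemma \ref{lem:ruvarpi_uvarpi}.

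Next, I would establish the factorization $\mathsf{r}\rho = \M_3(U(\pr_{12})) \circ \mathsf{r}\varpi \circ U(\ell)$, mirroring Lemma \ref{lem:rurho_ruvarpi}. Starting from the known identity $\rho = \M_3(U(\pr_{12})) \circ \varpi \circ U(\ell)$ (see \cite[(5.2.1)]{EF1}) and the description of $\mathsf{r}\varpi$ above, this reduces to a chain of equalities using naturality of the antipode with respect to the Lie-algebra morphisms $\ell$ and $\pr_{12}$, together with the commutation of $\M_3(-)$ with transposition. Applying $- \otimes_{U(\mathfrak{p}_5)} (\V^\DeRham \otimes \V^\DeRham)$, with the left $U(\mathfrak{p}_5)$-action from Lemma \ref{bimod:VDR2}, to the right $U(\mathfrak{p}_5)$-module isomorphism $U(\mathfrak{p}_5)^{\oplus 3} \simeq \ker(U(\pr_5))$ of Lemma \ref{iso_p5_kerUpr5} then yields a right $(\V^\DeRham \otimes \V^\DeRham)$-module isomorphism
\[
    (\V^\DeRham \otimes \V^\DeRham)^{\oplus 3} \xrightarrow{\sim} \bM^\DeRham, \qquad (a_1, a_2, a_3) \longmapsto \sum_{i=1}^3 t_{i5} \otimes a_i.
\]

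By construction this is a $(U(\mathfrak{p}_5), \V^\DeRham \otimes \V^\DeRham)$-bimodule isomorphism, where the source carries the left $U(\mathfrak{p}_5)$-action $\M_3(U(\pr_{12})) \circ \mathsf{r}\varpi$. Pulling back through $U(\ell) : \V^\DeRham \to U(\mathfrak{p}_5)$ and invoking the factorization of the previous step identifies the resulting left $\V^\DeRham$-action on $(\V^\DeRham \otimes \V^\DeRham)^{\oplus 3}$ with $\mathsf{r}\rho$, giving the desired bimodule isomorphism. Gradedness holds throughout since $\varpi$, $\mathsf{r}\varpi$, $\mathrm{S}_{\mathfrak{p}_5}$, $U(\pr_{12})$ and $U(\ell)$ are all graded morphisms and each $t_{i5}$ is homogeneous. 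The main obstacle, both conceptually and computationally, is the verification of the relation $\mathsf{r}\varpi = \M_3(\mathrm{S}_{\mathfrak{p}_5}) \circ {}^t(-) \circ \varpi \circ \mathrm{S}_{\mathfrak{p}_5}$ in the first step; everything downstream is then formal and parallels the Betti argument line-by-line.
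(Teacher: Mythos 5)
Your proposal reproduces the paper's proof essentially line for line: you introduce $\varpi$ and its right-action analogue $\mathsf{r}\varpi$, establish $\mathsf{r}\varpi = \M_3(\mathrm{S}_{\mathfrak{p}_5}) \circ {}^t(-) \circ \varpi \circ \mathrm{S}_{\mathfrak{p}_5}$ and $\mathsf{r}\rho = \M_3(U(\pr_{12})) \circ \mathsf{r}\varpi \circ U(\ell)$, tensor the freeness isomorphism of Lemma~\ref{iso_p5_kerUpr5} with $\V^\DeRham \otimes \V^\DeRham$, and pull back along $U(\ell)$, exactly as the paper does via Lemmas~\ref{lem:intro_rvarpi}--\ref{lem:rrho_rvarpi} and the isomorphism~\eqref{iso:A3_M}. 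Your observation that the de~Rham case is cleaner because the antipode relation carries no diagonal conjugation is also consistent with the paper (compare Lemma~\ref{lem:rvarpi_varpi} with Lemma~\ref{lem:ruvarpi_uvarpi}), and in fact the verification you flag as the main obstacle is just applying the involution $\mathrm{S}_{\mathfrak{p}_5}$ to the defining identity of $\varpi$, as the paper shows.
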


\noindent To prove the theorem, we first establish that the graded algebra morphism
\[
    \mathsf{r}\rho : \V^\DeRham \to \M_3(\V^\DeRham \otimes \V^\DeRham)
\]
is related to the geometric material introduced in Sec. \ref{sec:derham_geom}.

\begin{lemma}[{\cite[Lemma 4.1 and Sec. 5.2.2]{EF1}}]
    \begin{enumerate}[label=(\alph*), leftmargin=*]
        \item For any $p \in U(\mathfrak{p}_5)$, there is a unique matrix $(a_{ij}(p))_{i,j \in \llbracket 1, 3\rrbracket}\in \M_3(U(\mathfrak{p}_5))$ such that
        \begin{equation} \label{matrix_a}
            t_{i5} \ p = \sum_{i=1}^3 a_{ij}(p) \ t_{j5}.
        \end{equation}
        \item The map
        \[
            \varpi : U(\mathfrak{p}_5) \to \M_3(U(\mathfrak{p}_5)), \quad p \mapsto \left(a_{ij}(p)\right)_{i,j \in \llbracket 1, 3\rrbracket}  
        \]
        is a graded algebra morphism. 
    \end{enumerate}
    \label{lem:intro_varpi}
\end{lemma}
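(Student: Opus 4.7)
The plan is to reduce both statements to the freeness assertion of Lemma \ref{iso_p5_kerUpr5}, which identifies $\ker(U(\pr_5))$ with the free right $U(\mathfrak{p}_5)$-module on $t_{15}, t_{25}, t_{35}$. For part (a), observe that since $\ker(U(\pr_5))$ is a two-sided ideal of $U(\mathfrak{p}_5)$ and $t_{i5} \in \ker(U(\pr_5))$, the product $t_{i5} p$ lies in $\ker(U(\pr_5))$ for every $p \in U(\mathfrak{p}_5)$. The freeness then yields unique elements $a_{ij}(p) \in U(\mathfrak{p}_5)$, $j \in \llbracket 1,3 \rrbracket$, satisfying \eqref{matrix_a}.

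For part (b), $\K$-linearity of $\varpi$ is an immediate consequence of the uniqueness in (a). The identity $t_{i5} \cdot 1 = \sum_{j=1}^3 \delta_{ij} t_{j5}$ gives $\varpi(1) = I_3$. For multiplicativity, given $p, q \in U(\mathfrak{p}_5)$, I would compute $t_{i5}(pq)$ in two ways: first as $\sum_k a_{ik}(pq) t_{k5}$ by definition, and second by applying \eqref{matrix_a} twice,
\[
    t_{i5}(pq) = \bigl(t_{i5} p\bigr) q = \sum_{j=1}^3 a_{ij}(p) \, t_{j5} q = \sum_{k=1}^3 \Bigl(\sum_{j=1}^3 a_{ij}(p) \, a_{jk}(q)\Bigr) t_{k5}.
\]
Comparing these two expressions via the uniqueness from (a) yields $a_{ik}(pq) = \sum_{j} a_{ij}(p) a_{jk}(q)$, which is precisely $\varpi(pq) = \varpi(p) \varpi(q)$.

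It remains to show that $\varpi$ respects the grading. Since each $t_{j5}$ is homogeneous of degree $1$, the isomorphism of Lemma \ref{iso_p5_kerUpr5} identifies $\ker(U(\pr_5))$ with $U(\mathfrak{p}_5)^{\oplus 3}$ where each summand is shifted in degree by $1$. Consequently, if $p \in U(\mathfrak{p}_5)_n$ is homogeneous, then $t_{i5} p$ lies in the degree $n+1$ part of $\ker(U(\pr_5))$, which forces each $a_{ij}(p)$ to be homogeneous of degree $n$. Hence $\varpi$ sends $U(\mathfrak{p}_5)_n$ into $\M_3(U(\mathfrak{p}_5))_n$, where the grading on the matrix algebra is the componentwise one inherited from $U(\mathfrak{p}_5)$.

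The only mildly delicate point is the verification of multiplicativity, which, though computationally light, crucially relies on the uniqueness of the decomposition in (a). Once that is in hand, the graded statement is essentially automatic from the fact that the generators $t_{i5}$ are homogeneous of degree $1$.
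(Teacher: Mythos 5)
Your overall strategy — deduce uniqueness from a freeness statement, then derive multiplicativity by computing $t_{i5}(pq)$ in two ways and invoking uniqueness, and read off grading from homogeneity of the $t_{j5}$ — is the right one; and the paper itself offers no proof for this lemma (it cites \cite[Lemma 4.1 and Sec.~5.2.2]{EF1}), so a direct comparison is not possible. However, there is a genuine mismatch in the freeness result you invoke. Lemma~\ref{iso_p5_kerUpr5} asserts that $\ker(U(\pr_5))$ is free as a \emph{right} $U(\mathfrak{p}_5)$-module on $t_{15},t_{25},t_{35}$, i.e.\ every element decomposes uniquely as $\sum_{j} t_{j5}\,q_j$ with the coefficients $q_j$ placed to the \emph{right} of $t_{j5}$. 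Equation~\eqref{matrix_a}, by contrast, asks for a decomposition of $t_{i5}\,p$ as $\sum_{j} a_{ij}(p)\,t_{j5}$, with the coefficients placed to the \emph{left} of $t_{j5}$; this is the assertion that $\ker(U(\pr_5))$ is free as a \emph{left} $U(\mathfrak{p}_5)$-module on the same generators. In the noncommutative algebra $U(\mathfrak{p}_5)$ these are distinct statements, and Lemma~\ref{iso_p5_kerUpr5} does not directly give you the one you need.

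The fix is to cite the left-action freeness, which is precisely \cite[Lemma 5.5]{EF1}: the proof of Lemma~\ref{iso_p5_kerUpr5} in this paper explicitly says it is obtained from that lemma by ``replac[ing] left actions by right actions.'' Indeed, the paper pairs each freeness statement with the matching $\varpi$: Lemma~\ref{lem:intro_rvarpi} (with $p\,t_{j5}=\sum_i t_{i5}\,b_{ij}(p)$, coefficients on the right) is the one that rests on Lemma~\ref{iso_p5_kerUpr5}, while Lemma~\ref{lem:intro_varpi} (coefficients on the left) rests on \cite[Lemma 5.5]{EF1}. Once you invoke the correct (left-module) freeness, the remainder of your argument — uniqueness, $\varpi(1)=I_3$, the two-way computation of $t_{i5}(pq)$ yielding $a_{ik}(pq)=\sum_j a_{ij}(p)a_{jk}(q)$, and the degree bookkeeping since each $t_{j5}$ is homogeneous of degree $1$ — is sound and complete.
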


\begin{lemma}
    \begin{enumerate}[label=(\alph*), leftmargin=*]
        \item \label{matrix_b} For any $p \in U(\mathfrak{p}_5)$, there is a unique matrix $(b_{ij}(p))_{i,j \in \llbracket 1, 3\rrbracket}\in \M_3(U(\mathfrak{p}_5))$ such that
        \[
            p \ t_{j5} = \sum_{i=1}^3 t_{i5} \ b_{ij}(p).
        \]
        \item \label{rvarpi} The map
        \[
            \mathsf{r}\varpi : U(\mathfrak{p}_5) \to \M_3(U(\mathfrak{p}_5)), \quad p \mapsto \left(b_{ij}(p)\right)_{i,j \in \llbracket 1, 3\rrbracket}  
        \]
        is a graded algebra morphism. 
    \end{enumerate}
    \label{lem:intro_rvarpi}  
\end{lemma}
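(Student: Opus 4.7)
The plan is to argue exactly as in the right-analogue proof of Lemma \ref{lem:intro_uvarpi} (to which Lemma \ref{lem:intro_ruvarpi} already appeals), using as the key input the freeness statement of Lemma \ref{iso_p5_kerUpr5}: the elements $t_{15}, t_{25}, t_{35}$ freely generate $\ker(U(\pr_5))$ as a right $U(\mathfrak{p}_5)$-module.

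For part \ref{matrix_b}, I would start from the fact that $\ker(U(\pr_5))$ is a two-sided ideal of $U(\mathfrak{p}_5)$. Hence, for any $p \in U(\mathfrak{p}_5)$ and any $j \in \llbracket 1, 3\rrbracket$, the element $p \cdot t_{j5}$ lies in $\ker(U(\pr_5))$. Applying the right $U(\mathfrak{p}_5)$-module isomorphism of Lemma \ref{iso_p5_kerUpr5} to this element yields a unique triple $(b_{1j}(p), b_{2j}(p), b_{3j}(p)) \in U(\mathfrak{p}_5)^{\oplus 3}$ such that $p \cdot t_{j5} = \sum_{i=1}^3 t_{i5} \cdot b_{ij}(p)$. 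Varying $j$ assembles these into the unique matrix $(b_{ij}(p))_{i,j} \in \M_3(U(\mathfrak{p}_5))$.

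For part \ref{rvarpi}, the map $\mathsf{r}\varpi$ is clearly $\K$-linear by uniqueness. The identity $\mathsf{r}\varpi(1) = I_3$ follows from the tautological decomposition $1 \cdot t_{j5} = \sum_i t_{i5} \delta_{ij}$. For multiplicativity, given $p, p^\prime \in U(\mathfrak{p}_5)$, I would compute
\[
    p p^\prime \cdot t_{j5} = p \cdot \sum_{i=1}^3 t_{i5} \, b_{ij}(p^\prime) = \sum_{i=1}^3 (p \cdot t_{i5}) \, b_{ij}(p^\prime) = \sum_{k,i=1}^3 t_{k5} \, b_{ki}(p) \, b_{ij}(p^\prime),
\]
so uniqueness in \ref{matrix_b} forces $b_{kj}(p p^\prime) = \sum_i b_{ki}(p) b_{ij}(p^\prime)$, i.e., $\mathsf{r}\varpi(p p^\prime) = \mathsf{r}\varpi(p) \mathsf{r}\varpi(p^\prime)$. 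For the graded status, I would observe that each $t_{i5}$ has degree $1$ in the grading on $U(\mathfrak{p}_5)$, so when $p$ is homogeneous of degree $n$, the element $p \cdot t_{j5}$ is homogeneous of degree $n+1$ in $\ker(U(\pr_5))$. The freeness isomorphism of Lemma \ref{iso_p5_kerUpr5} is manifestly a morphism of graded right $U(\mathfrak{p}_5)$-modules (when $U(\mathfrak{p}_5)^{\oplus 3}$ is graded so that the $i$-th basis vector has degree $1$), so uniqueness of the decomposition forces $b_{ij}(p)$ to be homogeneous of degree $n$. Hence $\mathsf{r}\varpi$ takes $U(\mathfrak{p}_5)_n$ into $\M_3(U(\mathfrak{p}_5))_n = \M_3(U(\mathfrak{p}_5)_n)$.

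There is no real obstacle here: the only subtlety is bookkeeping the grading shift coming from the fact that the $t_{i5}$ are of degree $1$, which I would handle by being explicit about the grading convention on $\ker(U(\pr_5))$ that makes the isomorphism of Lemma \ref{iso_p5_kerUpr5} graded. The argument is word-for-word the graded, right-sided analogue of \cite[Lemma 4.1]{EF1}, with the roles of left and right multiplication by $t_{j5}$ exchanged.
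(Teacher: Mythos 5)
Your proof is correct and takes essentially the same approach as the paper, which simply observes that both parts are right-analogues of Lemma \ref{lem:intro_varpi} without spelling out the details. Your expansion — using the right-module freeness of $t_{15}, t_{25}, t_{35}$ from Lemma \ref{iso_p5_kerUpr5} for existence and uniqueness, associativity plus uniqueness for multiplicativity, and the degree-one homogeneity of the $t_{i5}$ for the graded structure — is exactly the expected unfolding of that remark.
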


\begin{proof}
    Both \ref{matrix_b} and \ref{rvarpi} are right-analogues of Lemma \ref{lem:intro_varpi}.
\end{proof}

\begin{lemma} \label{lem:rvarpi_varpi}
    We have (equality of graded algebra morphisms $U(\mathfrak{p}_5) \to \M_3(U(\mathfrak{p}_5))$)
    \[
        \mathsf{r}\varpi = \M_3(S_{\mathfrak{p}_5}) \circ {^t}(-) \circ \varpi \circ S_{\mathfrak{p}_5},
    \]
    where $S_{\mathfrak{p}_5}$ and $\M_3(S_{\mathfrak{p}_5})$ are given by Definitions \ref{def:Sg} and \ref{def:Mtsf} respectively.
\end{lemma}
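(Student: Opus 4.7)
The plan is to proceed in close analogy with the proof of Lemma \ref{lem:ruvarpi_uvarpi}, but with a crucial simplification: whereas the Betti version had to invoke an adjoint conjugation by $\diag(x_{15}, x_{25}, x_{35})^{-1}$ to compensate for the mismatch between $\mathrm{op}(x_{i5} - 1) = x_{i5}^{-1} - 1$ and the distinguished generators $x_{i5} - 1$ of $\ker(\K\underline{\pr}_5)$, the De~Rham setting is linear and the antipode $S_{\mathfrak{p}_5}$ simply sends $t_{i5} \mapsto -t_{i5}$. No rescaling is therefore needed.

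First, I fix $p \in U(\mathfrak{p}_5)$ and start from the defining identity \eqref{matrix_a} of Lemma \ref{lem:intro_varpi}, i.e. $t_{i5} \, p = \sum_{j=1}^3 a_{ij}(p) \, t_{j5}$. I then apply the antipode $S_{\mathfrak{p}_5}$ of $U(\mathfrak{p}_5)$ to this identity. Since $S_{\mathfrak{p}_5}$ is an anti-algebra morphism satisfying $S_{\mathfrak{p}_5}(t_{k5}) = -t_{k5}$ for any $k \in \llbracket 1, 3 \rrbracket$, both sides transform as
\[
    S_{\mathfrak{p}_5}(t_{i5} \, p) = -S_{\mathfrak{p}_5}(p) \, t_{i5}, \qquad S_{\mathfrak{p}_5}\Bigl(\sum_{j=1}^3 a_{ij}(p) \, t_{j5}\Bigr) = -\sum_{j=1}^3 t_{j5} \, S_{\mathfrak{p}_5}(a_{ij}(p)),
\]
and the signs cancel to yield $S_{\mathfrak{p}_5}(p) \, t_{i5} = \sum_{j=1}^3 t_{j5} \, S_{\mathfrak{p}_5}(a_{ij}(p))$.

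Next, I introduce the change of variable $q := S_{\mathfrak{p}_5}(p)$, which is a bijection on $U(\mathfrak{p}_5)$ because $S_{\mathfrak{p}_5}^2 = \mathrm{id}_{U(\mathfrak{p}_5)}$ (true for any enveloping algebra of a Lie algebra). Substituting $p = S_{\mathfrak{p}_5}(q)$ gives
\[
    q \, t_{i5} = \sum_{j=1}^3 t_{j5} \, S_{\mathfrak{p}_5}\bigl(a_{ij}(S_{\mathfrak{p}_5}(q))\bigr).
\]
Confronting this with the defining identity of $\mathsf{r}\varpi$ from Lemma \ref{lem:intro_rvarpi}\ref{matrix_b}, $q \, t_{i5} = \sum_{j=1}^3 t_{j5} \, b_{ji}(q)$, the uniqueness part of that lemma forces the pointwise equality $b_{ji}(q) = S_{\mathfrak{p}_5}(a_{ij}(S_{\mathfrak{p}_5}(q)))$ for all $i,j \in \llbracket 1, 3 \rrbracket$.

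Finally, I repackage this equality in matrix form. Since the $(i,j)$ entry of $\M_3(S_{\mathfrak{p}_5})\bigl({}^t(\varpi(S_{\mathfrak{p}_5}(q)))\bigr)$ is precisely $S_{\mathfrak{p}_5}(a_{ji}(S_{\mathfrak{p}_5}(q)))$, the previous identity is exactly $\mathsf{r}\varpi(q) = \M_3(S_{\mathfrak{p}_5}) \circ {}^t(-) \circ \varpi \circ S_{\mathfrak{p}_5}(q)$, which is the announced equality. The only delicate point worth watching, and the main source of possible sign errors, is tracking the interaction of $S_{\mathfrak{p}_5}$ with the transposition, as well as the involutivity $S_{\mathfrak{p}_5}^2 = \mathrm{id}$; beyond that, the argument is essentially mechanical and purely formal.
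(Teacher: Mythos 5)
Your proof is correct and takes essentially the same approach as the paper: apply $S_{\mathfrak{p}_5}$ to the defining identity \eqref{matrix_a}, observe the signs cancel, substitute $q := S_{\mathfrak{p}_5}(p)$ using involutivity, match entries against Lemma \ref{lem:intro_rvarpi} by uniqueness, and repackage in matrix form. The extra running commentary comparing to the Betti case and the explicit sign-tracking are accurate but don't change the argument.
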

\begin{proof}
    Let $p \in U(\mathfrak{p}_5)$ and $i \in \llbracket 1, 3\rrbracket$. Applying the antimorphism $\mathrm{S}_{\mathfrak{p}_5}$ to the equality \eqref{matrix_a} enables us to obtain
    \[
        \mathrm{S}_{\mathfrak{p}_5}(p) \ t_{i5} = \sum_{j=1}^3 t_{j5} \ \mathrm{S}_{\mathfrak{p}_5}\left(a_{ij}(p)\right).
    \]
    Set $q := \mathrm{S}_{\mathfrak{p}_5}(p)$. Since $\mathrm{S}_{\mathfrak{p}_5}$ is an involution, it follows that $\mathrm{S}_{\mathfrak{p}_5}(q) = p$. We then have
    \[
        q \ t_{i5} = \sum_{j=1}^3 t_{j5} \ \mathrm{S}_{\mathfrak{p}_5}\left(a_{ij}(\mathrm{S}_{\mathfrak{p}_5}(q))\right).
    \]
    Setting $b_{ji} = \mathrm{S}_{\mathfrak{p}_5}\left(a_{ij}(\mathrm{S}_{\mathfrak{p}_5}(q))\right)$, this implies the equality 
    \[
        \mathsf{r}\varpi(q) = \M_3(\mathrm{S}_{\mathfrak{p}_5}) \left({^t}(\varpi(\mathrm{S}_{\mathfrak{p}_5}(q)))\right),
    \]
    from which one immediately deduces the equality for $p$ thanks to the bijectivity of $\mathrm{S}_{\mathfrak{p}_5}$.
\end{proof}

\begin{lemma} \label{lem:rrho_rvarpi}
    We have (equality of graded algebra morphisms $\V^\DeRham \to \M_3(\V^\DeRham \otimes \V^\DeRham)$)
    \[
        \mathsf{r}\rho = \M_3(U(\pr_{12})) \circ \mathsf{r}\varpi \circ U(\ell).
    \]
\end{lemma}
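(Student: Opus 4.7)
The plan is to mirror the proof of Lemma \ref{lem:rurho_ruvarpi}, with two simplifications compared to the Betti case: the $\mathrm{Ad}$-conjugation that appeared there is absent here, and the antimorphism $\mathrm{S}$ plays the role that $\mathrm{op}$ did. I would start from the de Rham analogue of \cite[(7.2.1)]{EF1}, namely the identity
\[
    \rho = \M_3(U(\pr_{12})) \circ \varpi \circ U(\ell),
\]
which is implicit in \cite[Sec. 5.2.2]{EF1}. Substituting into the definition of $\mathsf{r}\rho$ gives
\[
    \mathsf{r}\rho = \M_3(\mathrm{S}_{\mathfrak{f}_2^{\oplus 2}}) \circ {^t}(-) \circ \M_3(U(\pr_{12})) \circ \varpi \circ U(\ell) \circ \mathrm{S}_{\mathfrak{f}_2}.
\]

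The next step is to rearrange this composition into the desired form. First, I would commute ${^t}(-)$ past $\M_3(U(\pr_{12}))$ using identity \eqref{tMf_identity} (or the analogous naturality already invoked in the proof of Lemma \ref{lem:rurho_ruvarpi}), which yields ${^t}(-) \circ \M_3(U(\pr_{12})) = \M_3(U(\pr_{12})) \circ {^t}(-)$. Second, I would invoke the naturality of the antipode: for any Lie algebra morphism $\phi : \mathfrak{g} \to \mathfrak{h}$, one has $U(\phi) \circ \mathrm{S}_{\mathfrak{g}} = \mathrm{S}_{\mathfrak{h}} \circ U(\phi)$. Applied to $\phi = \ell$ this gives $U(\ell) \circ \mathrm{S}_{\mathfrak{f}_2} = \mathrm{S}_{\mathfrak{p}_5} \circ U(\ell)$, and applied to $\phi = \pr_{12}$ it allows us to rewrite $\M_3(\mathrm{S}_{\mathfrak{f}_2^{\oplus 2}}) \circ \M_3(U(\pr_{12})) = \M_3(U(\pr_{12})) \circ \M_3(\mathrm{S}_{\mathfrak{p}_5})$.

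Putting the rearrangements together, we obtain
\[
    \mathsf{r}\rho = \M_3(U(\pr_{12})) \circ \bigl(\M_3(\mathrm{S}_{\mathfrak{p}_5}) \circ {^t}(-) \circ \varpi \circ \mathrm{S}_{\mathfrak{p}_5}\bigr) \circ U(\ell),
\]
and the middle bracketed expression is exactly $\mathsf{r}\varpi$ by Lemma \ref{lem:rvarpi_varpi}. This yields the announced identity. The main obstacle is purely bookkeeping: verifying the naturality of $\mathrm{S}$ under Lie algebra morphisms (which boils down to the fact that $\mathrm{S}$ is the algebra antimorphism sending each generator $x$ of $\mathfrak{f}_2$, $\mathfrak{p}_5$, or $\mathfrak{f}_2^{\oplus 2}$ to $-x$, and that this description is preserved by Lie algebra morphisms) and confirming that transposition commutes with the matrix functor $\M_3$ applied to an algebra morphism. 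Both statements should already be available in the auxiliary lemmas referenced by the Betti computation, so no new input is required.
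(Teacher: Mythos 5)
Your argument is correct and follows the paper's proof essentially verbatim: start from $\rho = \M_3(U(\pr_{12})) \circ \varpi \circ U(\ell)$ (which is indeed \cite[(5.2.5)]{EF1}), push ${}^t(-)$ past $\M_3(U(\pr_{12}))$, use the naturality of $\mathrm{S}$ (Lemma \ref{lem:commut_S}) to move both antipodes inward, and invoke Lemma \ref{lem:rvarpi_varpi}. The only slip is a citation: the step commuting transposition with $\M_3(U(\pr_{12}))$ uses \eqref{tMM_MtM}, not \eqref{tMf_identity}, but your hedge pointing to the Betti lemma's proof covers this.
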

\begin{proof}
    Recall from \cite[(5.2.5)]{EF1} that we have the following equality of graded algebra morphisms $\V^\DeRham \to \M_3(\V^\DeRham \otimes \V^\DeRham)$:
    \[
        \rho = \M_3(U(\pr_{12})) \circ \varpi \circ U(\ell).
    \]
    Therefore,
        \begin{align*}
        \mathsf{r}\rho & = \M_3(\mathrm{S}_{\mathfrak{f}_2^{\oplus 2}}) \circ {^t}(-) \circ \rho \circ \mathrm{S}_{\mathfrak{f}_2} = \M_3(\mathrm{S}_{\mathfrak{f}_2^{\oplus 2}}) \circ {^t}(-) \circ \M_3(U(\pr_{12})) \circ \varpi \circ U(\ell) \circ \mathrm{S}_{\mathfrak{f}_2} \\
        & = \M_3(\mathrm{S}_{\mathfrak{f}_2^{\oplus 2}}) \circ \M_3(U(\pr_{12})) \circ {^t}(-) \circ \varpi \circ U(\ell) \circ \mathrm{S}_{\mathfrak{f}_2} \\
        & = \M_3(U(\pr_{12})) \circ \M_3(\mathrm{S}_{\mathfrak{p}_5}) \circ {^t}(-) \circ \varpi \circ \mathrm{S}_{\mathfrak{p}_5} \circ U(\ell) \\
        & = \M_3(U(\pr_{12})) \circ \mathsf{r}\varpi \circ U(\ell),
    \end{align*}
    where the third equality comes from identity \eqref{tMM_MtM}, the fourth one from Lemma \ref{lem:commut_S} applied to both Lie algebra morphisms $\pr_{12}$ and $\ell$ and the last one from Lemma \ref{lem:rvarpi_varpi}.  
\end{proof}

\begin{lemma}
    The map $(\V^\DeRham \otimes \V^\DeRham)^{\oplus 3} \to \bM^\DeRham$ given by 
    \begin{equation}
        \label{iso:A3_M}
        (a_1, a_2, a_3) \mapsto \sum_{i=1}^3 t_{i5} \otimes a_i
    \end{equation}
    is a graded right $(\V^\DeRham \otimes \V^\DeRham)$-module isomorphism.
\end{lemma}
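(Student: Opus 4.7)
The plan is to transpose the proof of the Betti analogue (the lemma preceding the proof of Proposition \ref{prop:betti_bimod_iso}) to the graded de Rham setting. I start from the right $U(\mathfrak{p}_5)$-module isomorphism
\[
    U(\mathfrak{p}_5)^{\oplus 3} \xrightarrow{\simeq} \ker(U(\pr_5)), \qquad (p_1, p_2, p_3) \mapsto \sum_{i=1}^3 t_{i5} \, p_i,
\]
provided by Lemma \ref{iso_p5_kerUpr5}. Since each $t_{i5}$ is a linear combination of the generators $t_{ij}$ of $\mathfrak{p}_5$, it lies in $U(\mathfrak{p}_5)_1$, so that left multiplication by $t_{i5}$ shifts degree by one; the above isomorphism is therefore compatible with the gradings up to this uniform degree shift on the three summands.

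Next, I invoke the graded left $U(\mathfrak{p}_5)$-module structure on $\V^\DeRham \otimes \V^\DeRham$ provided by Lemma \ref{bimod:VDR2}, and apply the functor $- \otimes_{U(\mathfrak{p}_5)} (\V^\DeRham \otimes \V^\DeRham)$ to the above isomorphism. This yields a right $(\V^\DeRham \otimes \V^\DeRham)$-module isomorphism
\[
    U(\mathfrak{p}_5)^{\oplus 3} \otimes_{U(\mathfrak{p}_5)} (\V^\DeRham \otimes \V^\DeRham) \xrightarrow{\simeq} \ker(U(\pr_5)) \otimes_{U(\mathfrak{p}_5)} (\V^\DeRham \otimes \V^\DeRham) = \bM^\DeRham.
\]
Composing with the canonical identification $U(\mathfrak{p}_5)^{\oplus 3} \otimes_{U(\mathfrak{p}_5)} (\V^\DeRham \otimes \V^\DeRham) \simeq (\V^\DeRham \otimes \V^\DeRham)^{\oplus 3}$ and tracking the image of a tuple $(a_1, a_2, a_3)$ produces precisely $\sum_{i=1}^3 t_{i5} \otimes a_i$, matching the announced formula.

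The graded compatibility is the only point to verify: the grading on $\bM^\DeRham$ is induced by those of $\ker(U(\pr_5))$ (viewed as a graded submodule of $U(\mathfrak{p}_5)$) and of $\V^\DeRham \otimes \V^\DeRham$ via the tensor product construction, and the degree-one shift produced by left multiplication by $t_{i5}$ matches exactly the shift used in placing $\mathsf{r}\col$ in $(\V^\DeRham \otimes \V^\DeRham)^{\oplus 3}_1$ for the factorization structure of $\mathcal{O}^\DeRham_\mat$ in Corollary \ref{cor:tuple_ODRmat}. I anticipate no real obstacle here: the argument is the formal graded analogue of the Betti case, and the shifts align by construction once one records the degree of each $t_{i5}$.
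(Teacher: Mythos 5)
Your proof matches the paper's own step for step: both invoke Lemma \ref{iso_p5_kerUpr5}, use the graded left $U(\mathfrak{p}_5)$-module structure on $\V^\DeRham \otimes \V^\DeRham$ from Lemma \ref{bimod:VDR2}, apply $- \otimes_{U(\mathfrak{p}_5)} (\V^\DeRham \otimes \V^\DeRham)$, and identify the resulting map with the announced formula. Your explicit acknowledgment of the degree-one shift coming from $t_{i5} \in U(\mathfrak{p}_5)_1$ makes visible something the paper leaves implicit, but it does not alter the argument.
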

\begin{proof}
    Consider the graded right $U(\mathfrak{p}_5)$-module isomorphism $U(\mathfrak{p}_5)^{\oplus 3} \to \ker(U(\pr_5))$ of Lemma \ref{iso_p5_kerUpr5}. The graded left $U(\mathfrak{p}_5)$-module structure on $\V^\DeRham \otimes \V^\DeRham$ given by Lemma \ref{bimod:VDR2} enables us to apply the functor $- \otimes_{U(\mathfrak{p}_5)} (\V^\DeRham \otimes \V^\DeRham)$ to this isomorphism. This induces a graded right $(\V^\DeRham \otimes \V^\DeRham)$-module isomorphism, which is given by the announced formula.
\end{proof}

\begin{proof}[Proof of Theorem \ref{thm:derham_bimod_iso}]
    It follows from Lemma \ref{lem:intro_rvarpi} that
    \[
        \bM^\DeRham = \ker(U(\pr_5)) \otimes_{U(\mathfrak{p}_5)} (\V^\DeRham \otimes \V^\DeRham)
    \]
    is a graded $(U(\mathfrak{p}_5), \V^\DeRham \otimes \V^\DeRham)$-bimodule for the left action given by $\M_3(U(\pr_{12})) \circ \mathsf{r}\varpi$, and that the map \eqref{iso:A3_M} is an isomorphism of graded $(U(\mathfrak{p}_5), \V^\DeRham \otimes \V^\DeRham)$-bimodules.
    Applying the pullback by the graded algebra morphism $U(\ell) : \V^\DeRham \to U(\mathfrak{p}_5)$ it follows that $\bM^\DeRham$ is a graded $(\V^\DeRham, \V^\DeRham \otimes \V^\DeRham)$-bimodule for the left action given by $\M_3(U(\pr_{12})) \circ \mathsf{r}\varpi \circ U(\ell)$, and that the map \eqref{iso:A3_M} is an isomorphism of graded $(\V^\DeRham, \V^\DeRham \otimes \V^\DeRham)$-bimodules. This proves the wanted result since $\mathsf{r}\rho = \M_3(U(\pr_{12})) \circ \mathsf{r}\varpi \circ U(\ell)$, thanks to Lemma \ref{lem:rrho_rvarpi}.
\end{proof}

\subsection{The isomorphism of bimodules \texorpdfstring{$\gr(\bM^\Betti)\simeq\bM^\DeRham$}{grbMB=bMDR}}
Recall from Proposition-Definition \ref{propdef:fil:MM} that $\bM^\Betti$ is equipped with a filtered $\K$-module structure. This defines the associated graded $\K$-module $\gr(\bM^\Betti)$. We construct a graded $\K$-module morphism $\bM^\DeRham \to \bM^\Betti$. Let us start with the following lemmas:

\begin{lemma} \label{lem:grker_kergr_pr}
    For $n \in \Z$, we have
    \begin{enumerate}[label=(\alph*), leftmargin=*]
        \item $\gr_n \ker(\K\underline{\pr}_5) \simeq \ker(\gr_n(\K\underline{\pr}_5))$;
        \item $\gr_n \ker(\K\underline{\pr}_{12}) \simeq \ker(\gr_n(\K\underline{\pr}_{12}))$. 
    \end{enumerate}
\end{lemma}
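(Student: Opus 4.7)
The plan is to derive both statements from a general principle about strict morphisms of filtered $\K$-modules. Recall that a filtered $\K$-module morphism $f : A \to B$ is called \emph{strict} if $f(\F^n A) = f(A) \cap \F^n B$ for every $n \in \Z$, and that for such $f$ the canonical map
\[
    \gr_n \ker(f) \to \ker(\gr_n f),
\]
induced by the inclusion $\F^n A \cap \ker f \hookrightarrow \F^n A$, is an isomorphism for every $n$. Here $\ker f$ is always endowed with the induced filtration $\F^n \ker f := \F^n A \cap \ker f$, as in Proposition-Definition \ref{kerp5:bimod:fil}.

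Granting this principle, the proof of the lemma reduces to verifying strict-ness of the two morphisms. Both $\K\underline{\pr}_5$ and $\K\underline{\pr}_{12}$ are filtered algebra morphisms, since they come from group morphisms and therefore map augmentation ideals into augmentation ideals. Strict-ness of $\K\underline{\pr}_{12}$ is precisely the content of Lemma \ref{lem:fil_pr12}. For $\K\underline{\pr}_5$, the argument is entirely analogous: the group morphism $\underline{\pr}_5$ is surjective since it admits the splitting $\underline{\ell}$ of Lemma \ref{lem:pr}, so the induced map $I_{P_5^\ast} \twoheadrightarrow I_{F_2}$ is surjective, and hence so is $I_{P_5^\ast}^n \twoheadrightarrow I_{F_2}^n = \F^n \V^\Betti$ for every $n \geq 0$.

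As for the general principle itself, I would establish it as follows. The map is well-defined because for $x \in \F^n A \cap \ker f$ one has $\gr_n f([x]_n) = [f(x)]_n = 0$. Injectivity is automatic: if $[x]_n$ vanishes in $\gr_n A$, then $x \in \F^{n+1} A \cap \ker f = \F^{n+1}(\ker f)$, so $[x]_n = 0$ in $\gr_n \ker f$ as well. Surjectivity is where strict-ness is used: given $y \in \F^n A$ with $\gr_n f([y]_n) = 0$, i.e. $f(y) \in \F^{n+1} B$, strict-ness applied at level $n+1$ yields some $z \in \F^{n+1} A$ with $f(z) = f(y)$; then $y - z \in \F^n A \cap \ker f$ represents $[y]_n$ in $\gr_n(\ker f)$ and lifts the given class.

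I do not anticipate any real obstacle. The only non-trivial ingredient is the fact that surjectivity of a group morphism $G \twoheadrightarrow H$ yields surjectivity of $I_G^n \twoheadrightarrow I_H^n$ on all augmentation powers, which is exactly the argument already made in the proof of Lemma \ref{lem:fil_pr12}; everything else is a formal consequence of the definition of the induced filtration on a kernel.
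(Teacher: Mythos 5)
Your proposal is correct and takes essentially the same route as the paper: the paper defers to an appendix lemma (Lemma~\ref{lem:grker_kergr}) that shows $\gr_n\ker(\K{\varphi}) = \ker(\gr_n\K{\varphi})$ whenever the group morphism $\varphi$ is surjective, with the key step being exactly the lifting argument you give (surjectivity of $\varphi$ forces surjectivity of $I_G^n \twoheadrightarrow I_H^n$, which lets one subtract off an error term in $\F^{n+1}$). Your reformulation in terms of strictness of filtered morphisms is a cleanly equivalent packaging of the same idea, not a genuinely different proof.
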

\begin{proof}
    This follows immediately from Lemma \ref{lem:grker_kergr} applied to the group morphisms $\underline{\pr}_5 : P_5^\ast \to F_2$ and $\underline{\pr}_{12} : P_5^\ast \to F_2^2$ respectively. 
\end{proof}

\begin{lemma} \label{lem:bimod_and_kerker}
    \begin{enumerate}[label=(\alph*), leftmargin=*]
        \item There exists a left $U(\mathfrak{p}_5)$-module isomorphism 
        \[
            \bM^\DeRham \simeq \ker(U(\pr_5)) \big/ \ker(U(\pr_5)) \cdot \ker(U(\pr_{12}));
        \]
        \item \label{bimod_and_kerker_Betti} There exists a left $\K{P_5^\ast}$-module isomorphism 
        \[
            \bM^\Betti \simeq \ker(\K{\underline{\pr}_5}) \big/ \ker(\K{\underline{\pr}_5}) \cdot \ker(\K{\underline{\pr}_{12}}).
        \]
    \end{enumerate}
\end{lemma}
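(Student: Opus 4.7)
The plan is to derive both isomorphisms as instances of Proposition \ref{prop:bimods_as_ker}, after verifying that $U(\pr_{12})$ and $\K\underline{\pr}_{12}$ are surjective and that $\ker(U(\pr_5))$ and $\ker(\K\underline{\pr}_5)$ carry the required two-sided bimodule structures.

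For part (a), I would take $\B = U(\mathfrak{p}_5)$, $\A = \V^\DeRham \otimes \V^\DeRham$, $\boldsymbol{\varphi} = U(\pr_{12})$, and $\bM = \ker(U(\pr_5))$, which is a two-sided ideal of $U(\mathfrak{p}_5)$, hence a $(U(\mathfrak{p}_5), U(\mathfrak{p}_5))$-bimodule. The only nontrivial hypothesis is the surjectivity of $U(\pr_{12})$, which I would reduce to the surjectivity of the Lie algebra morphism $\pr_{12} : \mathfrak{p}_5 \to \mathfrak{f}_2^{\oplus 2}$; using the table of Lemma \ref{lem:prLie} and the relation $e_\infty = -e_0 - e_1$, preimages of the four pairs $(e_0, 0), (e_1, 0), (0, e_0), (0, e_1)$ are produced from $t_{23}, t_{14}, t_{24}, t_{13}$, so $\pr_{12}$ is surjective, and hence so is $U(\pr_{12})$ by functoriality of the universal enveloping algebra. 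Proposition \ref{prop:bimods_as_ker} then yields a $(U(\mathfrak{p}_5), \V^\DeRham \otimes \V^\DeRham)$-bimodule isomorphism
\[
    \bM^\DeRham = \ker(U(\pr_5)) \otimes_{U(\mathfrak{p}_5)} (\V^\DeRham \otimes \V^\DeRham) \simeq \ker(U(\pr_5)) \big/ \ker(U(\pr_5)) \cdot \ker(U(\pr_{12})),
\]
which in particular is a left $U(\mathfrak{p}_5)$-module isomorphism.

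For part (b), the argument is entirely parallel: take $\B = \K P_5^\ast$, $\A = \V^\Betti \otimes \V^\Betti$, $\boldsymbol{\varphi} = \K\underline{\pr}_{12}$, and $\bM = \ker(\K\underline{\pr}_5)$, which is a two-sided ideal of $\K P_5^\ast$. The surjectivity of $\K\underline{\pr}_{12}$ has already been established in the proof of Lemma \ref{lem:fil_pr12}, where the preimages of $X_0, Y_0, X_1, Y_1$ were exhibited explicitly ($x_{23}, x_{14}, x_{34} x_{13} x_{14}, x_{23} x_{24} x_{34}$). Applying Proposition \ref{prop:bimods_as_ker} then delivers the desired $(\K P_5^\ast, \V^\Betti \otimes \V^\Betti)$-bimodule isomorphism, whence the left $\K P_5^\ast$-module isomorphism.

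No genuine obstacle is expected here: the lemma is essentially a bookkeeping instance of the general categorical identity $\bM \otimes_\B \A \simeq \bM / \bM \cdot \ker(\boldsymbol{\varphi})$ proved in Proposition \ref{prop:bimods_as_ker}. The only item requiring a small verification is the surjectivity of the projection $\pr_{12}$ in the de Rham case, and this is a direct table-lookup computation analogous to the Betti verification already carried out earlier in the paper.
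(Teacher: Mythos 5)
Your proof is correct and follows essentially the same route as the paper, which invokes Proposition \ref{prop:bimods_as_ker} with the identical choices of $(\B, \A, \boldsymbol{\varphi}, \bM)$ in both cases. The only difference is that you supply an explicit verification of the surjectivity of $U(\pr_{12})$ and $\K\underline{\pr}_{12}$, which the paper leaves implicit (the latter having already appeared in the proof of Lemma \ref{lem:fil_pr12}).
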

\begin{proof}
    This follows from Proposition \ref{prop:bimods_as_ker} with
    \[
        (\B, \A, \boldsymbol{\varphi}, \bM) = (U(\mathfrak{p}_5), \V^\DeRham \otimes \V^\DeRham, U(\pr_{12}), \ker(U(\pr_5)))
    \]
    (resp. $(\B, \A, \boldsymbol{\varphi}, \bM) = (\K{P_5^\ast}, \V^\Betti \otimes \V^\Betti, \K{\underline{\pr}_{12}}, \ker(\K{\underline{\pr}_5}))$).
\end{proof}

\begin{proposition}
    There exists a $\K$-module morphism $\bM^\DeRham \to \gr(\bM^\Betti)$ such that
    \[
        e_{i5} \otimes 1 \mapsto [x_{i5} - 1]_1 \otimes 1, \text{ for } i \in \{1,2,3\},
    \]
    and is compatible with the right actions and the $\K$-algebra morphism $\mbox{\small$\V^\DeRham \otimes \V^\DeRham \to \gr(\V^\Betti \otimes \V^\Betti)$}$.
\end{proposition}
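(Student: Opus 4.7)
The plan is to lift the construction to a graded $\K$-algebra morphism between the auxiliary objects $U(\mathfrak{p}_5)$ and $\gr(\K P_5^\ast)$, and then descend to the quotient presentations of $\bM^\DeRham$ and $\bM^\Betti$ provided by Lemma \ref{lem:bimod_and_kerker}. Namely, one identifies
\[
    \bM^\DeRham \simeq \ker(U(\pr_5)) \big/ \ker(U(\pr_5)) \cdot \ker(U(\pr_{12})),
\]
\[
    \bM^\Betti \simeq \ker(\K\underline{\pr}_5) \big/ \ker(\K\underline{\pr}_5) \cdot \ker(\K\underline{\pr}_{12}),
\]
so that a well-chosen morphism between numerators, compatible with the submodules being divided out, will automatically induce the desired $\K$-module morphism after applying the $\gr$ functor.

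The key ingredient I would construct is a graded $\K$-algebra morphism
\[
    \widetilde{\phi} : U(\mathfrak{p}_5) \to \gr(\K P_5^\ast), \qquad t_{ij} \mapsto [x_{ij} - 1]_1 \quad (1 \le i < j \le 4).
\]
Well-definedness rests on the commutator identity in the group algebra of any group $G$,
\[
    (g-1)(h-1) - (h-1)(g-1) = \big((g,h) - 1\big)\cdot hg,
\]
which, since $hg - 1 \in I_G$ and $(g,h) - 1 \in I_G$, yields $[[g-1]_1, [h-1]_1] = [(g,h)-1]_2$ in $\gr_2(\K G)$. Taking $G = P_5^\ast$, the defining braid relations $(x_{ij}, x_{ik}x_{jk}) = 1$ and $(x_{ij}, x_{kl}) = 1$ (for $\{i,j\} \cap \{k,l\}=\varnothing$) force the images $[x_{ij}-1]_1$ to satisfy the infinitesimal braid relations $[t_{ij}, t_{ik} + t_{jk}] = 0$ and $[t_{ij}, t_{kl}] = 0$ in $\gr_2(\K P_5^\ast)$; moreover, the identity $\omega_4 = 1$ in $P_5^\ast$ (coming from the passage to the central quotient) yields $\sum_{i<j \le 4} [x_{ij}-1]_1 = 0$ in $\gr_1(\K P_5^\ast)$, matching the quotient relation $z_4 = 0$ defining $\mathfrak{p}_5$. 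This is the standard transfer to the central quotient of Kohno's computation of $\gr(\K P_n)$ in terms of $U(\mathfrak{t}_{n-1})$.

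Next, by evaluating on generators and invoking the identification $\V^\DeRham \simeq \gr(\V^\Betti)$ recalled in the proof of Proposition \ref{prop:iso_grOB_ODR}, I would verify the two compatibilities
\[
    \gr(\K\underline{\pr}_5) \circ \widetilde{\phi} = U(\pr_5), \qquad \gr(\K\underline{\pr}_{12}) \circ \widetilde{\phi} = U(\pr_{12}).
\]
Combined with Lemma \ref{lem:grker_kergr_pr}, these yield the inclusions $\widetilde{\phi}(\ker(U(\pr_5))) \subset \gr(\ker(\K\underline{\pr}_5))$ and $\widetilde{\phi}(\ker(U(\pr_{12}))) \subset \gr(\ker(\K\underline{\pr}_{12}))$. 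Hence $\widetilde\phi$ sends $\ker(U(\pr_5)) \cdot \ker(U(\pr_{12}))$ into $\gr(\ker(\K\underline{\pr}_5)) \cdot \gr(\ker(\K\underline{\pr}_{12})) \subset \gr(\ker(\K\underline{\pr}_5) \cdot \ker(\K\underline{\pr}_{12}))$, and descends via Lemma \ref{lem:bimod_and_kerker} to a $\K$-module morphism $\bM^\DeRham \to \gr(\bM^\Betti)$. By construction this map sends $t_{i5} \otimes 1 \mapsto [x_{i5}-1]_1 \otimes 1$ (using that $t_{i5}$ and $x_{i5}$ are defined by compatible linear combinations modulo higher-order terms), and its compatibility with the right actions and with the algebra morphism $\V^\DeRham \otimes \V^\DeRham \to \gr(\V^\Betti \otimes \V^\Betti)$ follows from the compatibility of $\widetilde{\phi}$ with $U(\pr_{12})$ and $\gr(\K\underline{\pr}_{12})$, together with the fact that both right actions factor through these projections.

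The main obstacle is the construction of $\widetilde{\phi}$: while the commutator identity supplies the quadratic relations essentially for free, one still needs to verify that the passage through the central quotient $P_5^\ast = K_4/Z(K_4)$ is consistent with the passage to $\mathfrak{p}_5 = \mathfrak{t}_4/(z_4)$, i.e.\ that $\widetilde{\phi}$ really factors over the centrality relation. Once this check is made, the descent to quotients and the verification of right-action compatibility are formal consequences of the functoriality of $\gr$ and the quotient descriptions of Lemma \ref{lem:bimod_and_kerker}.
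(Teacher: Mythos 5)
Your proof is correct, and it takes a genuinely different route from the paper's. The paper introduces the auxiliary graded object
\[
\bigoplus_{n\in\Z} \frac{\F^n\ker(\K{\underline{\pr}_5})}{\F^{n+1}\ker(\K{\underline{\pr}_5})+\sum_{a+b=n} \F^a\ker(\K{\underline{\pr}_5}) \cdot \F^b\ker(\K{\underline{\pr}_{12}})},
\]
constructs in Step~1 a surjection from it to $\gr(\bM^\Betti)$ (using the quotient filtration of Proposition-Definition~\ref{propdef:fil:MM} and Lemma~\ref{lem:iso_and_cokers}) and in Step~2 an isomorphism from it to $\bM^\DeRham$ (via Lemma~\ref{lem:Fil_gr_ker_quotient}, Lemma~\ref{lem:grker_kergr_pr}, Proposition~\ref{prop:bimods_as_ker}, and the Kohno identifications $U(\mathfrak{p}_5)\simeq\gr(\K{P_5^\ast})$), then composes. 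You instead build the Kohno morphism $\widetilde\phi : U(\mathfrak{p}_5)\to\gr(\K{P_5^\ast})$ up front, verify it intertwines $U(\pr_5), U(\pr_{12})$ with $\gr(\K\underline{\pr}_5), \gr(\K\underline{\pr}_{12})$, restrict to $\ker(U(\pr_5))\to\gr(\ker(\K\underline{\pr}_5))$ using Lemma~\ref{lem:grker_kergr_pr}, and descend directly to the quotient presentations of Lemma~\ref{lem:bimod_and_kerker}. This is more streamlined and makes the role of the Kohno morphism fully explicit; the paper's route buys a factorization through an intermediate object that is reused elsewhere in the proof of Theorem~\ref{thm:grbimodB_iso_bimodDR} and cleanly separates the Betti-side difficulty (the surjection onto $\gr(\bM^\Betti)$ need not be an isomorphism a priori) from the de Rham-side identification.

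Two small points to tighten. First, the well-definedness computation needs $(g,h)-1\in I_G^2$, not merely $I_G$: this holds because $(g,h)$ lies in the commutator subgroup (the paper cites \cite[Exercise~6.1.4]{Wei} for exactly this), and only then does $\big((g,h)-1\big)\cdot hg \equiv (g,h)-1 \pmod{I_G^3}$ so that $[[g-1]_1,[h-1]_1]=[(g,h)-1]_2$. Second, in the descent step, it is worth spelling out that your target map is the composite $\ker(U(\pr_5)) \xrightarrow{\widetilde\phi} \gr(\ker(\K\underline{\pr}_5)) \twoheadrightarrow \gr(\bM^\Betti)$, where the second arrow is $\gr$ of the canonical surjection (well defined precisely because $\bM^\Betti$ carries the quotient filtration of Proposition-Definition~\ref{propdef:fil:MM}), and that elements of $\gr_a(\ker(\K\underline{\pr}_5))\cdot\gr_b(\ker(\K\underline{\pr}_{12}))$ are represented by products $xy$ with $x\in\F^a\ker(\K\underline{\pr}_5)$, $y\in\F^b\ker(\K\underline{\pr}_{12})$, hence $xy\in\F^{a+b}\ker(\K\underline{\pr}_5)\cap\big(\ker(\K\underline{\pr}_5)\cdot\ker(\K\underline{\pr}_{12})\big)$, which is exactly the kernel of the second arrow in degree $a+b$.
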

\begin{proof}
    We will follow these steps:
    \begin{steplist}
        \item \label{ker_to_grMB}\emph{Construction of a $\K$-module morphism}
        \begin{equation} \label{eq:ker_to_grMB}
            \bigoplus_{n\in\Z} \frac{\F^n\ker(\K{\underline{\pr}_5})}{\displaystyle\F^{n+1}\ker(\K{\underline{\pr}_5})+\sum_{a+b=n} \F^a\ker(\K{\underline{\pr}_5}) \cdot \F^b\ker(\K{\underline{\pr}_{12}})} \longrightarrow \gr(\bM^\Betti),
        \end{equation}
        \emph{with compatible right actions}. \newline
        Applying the functor $\gr$ to Lemma \ref{lem:bimod_and_kerker} \ref{bimod_and_kerker_Betti}, we obtain
        \begin{equation} \label{eq:grMB_grker}
            \gr\left(\bM^\Betti\right) \simeq \gr\left(\ker(\K{\underline{\pr}_5}) \big/ \ker(\K{\underline{\pr}_5}) \cdot \ker(\K{\underline{\pr}_{12}})\right).
        \end{equation}
        Moreover, we have the equality
        \[
            \mbox{\small$\displaystyle\gr\left(\ker(\K{\underline{\pr}_5}) \big/ \ker(\K{\underline{\pr}_5}) \cdot \ker(\K{\underline{\pr}_{12}})\right) = \bigoplus_{n\in\Z} \frac{\F^n\ker(\K{\underline{\pr}_5})}{\F^{n+1}\ker(\K{\underline{\pr}_5}) + \F^n\ker(\K{\underline{\pr}_5}) \cap \left(\ker(\K{\underline{\pr}_5}) \cdot \ker(\K{\underline{\pr}_{12}})\right)}$}.
        \]
        One therefore obtains a $\K$-module morphism
        \begin{equation} \label{eq:Fker_and_gr}
            \mbox{\small$\displaystyle\bigoplus_{n\in\Z} \frac{\F^n\ker(\K{\underline{\pr}_5})}{\displaystyle\F^{n+1}\ker(\K{\underline{\pr}_5})+\sum_{a+b=n} \F^a\ker(\K{\underline{\pr}_5}) \cdot \F^b\ker(\K{\underline{\pr}_{12}})} \to \gr\left(\ker(\K{\underline{\pr}_5}) \big/ \ker(\K{\underline{\pr}_5}) \cdot \ker(\K{\underline{\pr}_{12}})\right)$} 
        \end{equation}
        given by taking the class of an element of $\F^n\ker(\K{\underline{\pr}_5})$ in the source module to the class of the same element in the target module. The morphism \eqref{eq:ker_to_grMB} is then constructed by composition of \eqref{eq:Fker_and_gr} and \eqref{eq:grMB_grker}. \newline
        The target of the $\K$-module morphism \eqref{eq:ker_to_grMB} is a right module over $\gr\left(\K{P_5^\ast}/\ker(\K{\underline{\pr}_{12}})\right)$, while the source is a right module over
        \[
            \bigoplus_{m\in\Z} \frac{\F^m \K{P_5^\ast}}{\F^m\ker(\K{\underline{\pr}_{12}}) + \F^{m+1}\K{P_5^\ast}} = \gr\left(\K{P_5^\ast}\right) \Big/ \gr\left(\ker(\K{\underline{\pr}_{12}})\right).
        \]
        The $\K$-module morphism \eqref{eq:ker_to_grMB} is then compatible with these right actions and the morphism
        \begin{equation} \label{eq:grgrker_grker}
            \gr\left(\K{P_5^\ast}\right) \Big/ \gr\left(\ker(\K{\underline{\pr}_{12}})\right) \to \gr\left(\K{P_5^\ast}/\ker(\K{\underline{\pr}_{12}})\right),
        \end{equation}
        which is an isomorphism since the filtration of $\ker(\K{\underline{\pr}_{12}})$ is induced by that of $\K{P_5^\ast}$.
        \item \label{ker_iso_grMDR} \emph{Construction of a $\K$-module isomorphism}
        \begin{equation} \label{eq:ker_iso_grMDR}
            \bigoplus_{n\in\Z} \frac{\F^n\ker(\K{\underline{\pr}_5})}{\displaystyle\F^{n+1}\ker(\K{\underline{\pr}_5})+\sum_{a+b=n} \F^a\ker(\K{\underline{\pr}_5}) \cdot \F^b\ker(\K{\underline{\pr}_{12}})} \simeq \bM^\DeRham,
        \end{equation}
        \emph{with compatible right actions}. \newline
        Thanks to Lemma \ref{lem:Fil_gr_ker_quotient} applied to $\varphi = \underline{\pr}_5$ and $\psi = \underline{\pr}_{12}$, and to Lemma \ref{lem:grker_kergr_pr}, we have
        \begin{equation} \label{eq:fil_gr_ker_pr}
            \mbox{\small$\displaystyle\bigoplus_{n\in\Z} \frac{\F^n\ker(\K{\underline{\pr}_5})}{\displaystyle\F^{n+1}\ker(\K{\underline{\pr}_5})+\sum_{a+b=n} \F^a\ker(\K{\underline{\pr}_5}) \cdot \F^b\ker(\K{\underline{\pr}_{12}})} \simeq \bigoplus_{n\in\Z} \frac{\ker(\gr_n(\K{\underline{\pr}_5}))}{\displaystyle\sum_{a+b=n} \ker(\gr_a(\K{\underline{\pr}_5})) \cdot \ker(\gr_b(\K{\underline{\pr}_{12}}))}$}.
        \end{equation}
        The source (resp. target) is a right module over the $\K$-algebra $\gr\left(\K{P_5^\ast}\right) \Big/ \gr\left(\ker(\K{\underline{\pr}_{12}})\right)$ (resp. $\gr\left(\K{P_5^\ast}\right) \Big/ \ker\left(\gr(\K{\underline{\pr}_{12}})\right)$) and the isomorphism \eqref{eq:fil_gr_ker_pr} is compatible with the $\K$-algebra morphism
        \begin{equation} \label{eq:grgrker_grkergr}
            \gr\left(\K{P_5^\ast}\right) \Big/ \gr\left(\ker(\K{\underline{\pr}_{12}})\right) \to \gr\left(\K{P_5^\ast}\right) \Big/ \ker\left(\gr(\K{\underline{\pr}_{12}})\right),
        \end{equation}
        which is an isomorphism by Lemma \ref{lem:grker_kergr}. \newline
        Next, we have the equality
        \[
            \bigoplus_{n\in\Z} \frac{\ker(\gr_n(\K{\underline{\pr}_5}))}{\displaystyle\sum_{a+b=n} \ker(\gr_a(\K{\underline{\pr}_5})) \cdot \ker(\gr_b(\K{\underline{\pr}_{12}}))} = \frac{\ker(\gr(\K{\underline{\pr}_5}))}{ \ker(\gr(\K{\underline{\pr}_5})) \cdot \ker(\gr(\K{\underline{\pr}_{12}}))}.
        \]
        Thanks to Proposition \ref{prop:bimods_as_ker} applied to $\bM = \ker(\gr(\K{\underline{\pr}_5}))$, $\B = \gr(\K{P_5^\ast})$ and $\A = \gr(\K{F_2^2})$, using the surjectivity of $\boldsymbol{\varphi} = \gr(\K{\underline{\pr}_{12}}) : \gr(\K{P_5^\ast}) \to \gr(\K{F_2^2})$, which follows from surjectivity of $\pr_{12} : P_5^\ast \to F_2^2$; it follows that
        \begin{equation} \label{eq:bimod_kerker_pr5}
            \frac{\ker(\gr(\K{\underline{\pr}_5}))}{ \ker(\gr(\K{\underline{\pr}_5})) \cdot \ker(\gr(\K{\underline{\pr}_{12}}))} \simeq \ker(\gr(\K{\underline{\pr}_5})) \otimes_{\gr(\K{P_5^\ast})} \gr(\K{F_2^2}).  
        \end{equation}
        This $\K$-module isomorphism is compatible with the right actions and $\K$-algebra morphism
        \begin{equation} \label{eq:grkergr_gr}
            \gr\left(\K{P_5^\ast}\right) \Big/ \ker\left(\gr(\K{\underline{\pr}_{12}})\right) \to \gr(\K{F_2^2}),
        \end{equation}
        which is an isomorphism by the surjectivity of $\gr(\K{\underline{\pr}_{12}})$. \newline
        Finally, thanks to the graded $\K$-algebra isomorphisms $U(\mathfrak{p}_5) \simeq \gr(\K{P_5^\ast})$ and $U(\mathfrak{f}_2) \simeq \gr(\K{F_2})$, and the commutativity of the diagrams
        \[\begin{tikzcd}
            \gr(\K{P_5^\ast}) \ar[rr, "\gr(\K{\underline{\pr}_5})"] \ar[d, "\simeq"'] && \gr(\K{F_2}) \ar[d, "\simeq"] \\
            U(\mathfrak{p}_5) \ar[rr, "U(\pr_5)"] && U(\mathfrak{f}_2)
        \end{tikzcd}
        \text{ \quad and \quad}
        \begin{tikzcd}
            \gr(\K{P_5^\ast}) \ar[rr, "\gr(\K{\underline{\pr}_{12}})"] \ar[d, "\simeq"'] && \gr(\K{F_2^2}) \ar[d, "\simeq"] \\
            U(\mathfrak{p}_5) \ar[rr, "U(\pr_{12})"] && U(\mathfrak{f}_2^{\oplus 2})
        \end{tikzcd}
        \]
        we obtain the following graded $\K$-module isomorphism
        \begin{equation} \label{eq:bimodgr_bimodDR_iso}
            \ker(\gr(\K{\underline{\pr}_5})) \otimes_{\gr(\K{P_5^\ast})} \gr(\K{F_2^2}) \simeq \ker(U(\underline{\pr}_5)) \otimes_{U(\mathfrak{p}_5)} U(\mathfrak{f}_2^{\oplus 2}) = \bM^\DeRham, 
        \end{equation}
        which is compatible with the right actions and the algebra isomorphism
        \begin{equation} \label{eq:gr_U}
            \gr(\K{F_2^2}) \to U(\mathfrak{f}_2^{\oplus 2}).
        \end{equation}
        The announced isomorphism \eqref{eq:ker_iso_grMDR} is then constructed by composition of \eqref{eq:fil_gr_ker_pr}, \eqref{eq:bimod_kerker_pr5} and \eqref{eq:bimodgr_bimodDR_iso}. It is compatible with the right actions and the $\K$-algebra isomorphism
        \begin{equation} \label{eq:grgrker_U}
            \gr\left(\K{P_5^\ast}\right) \Big/ \gr\left(\ker(\K{\underline{\pr}_{12}})\right) \to U(\mathfrak{f}_2^{\oplus 2})
        \end{equation}
        obtained by the composition of \eqref{eq:grgrker_grkergr}, \eqref{eq:grkergr_gr} and \eqref{eq:gr_U}.
        \item \emph{Conclusion}.\newline
        Composing the morphisms \eqref{eq:ker_to_grMB} and \ref{eq:ker_iso_grMDR} from \ref{ker_to_grMB} and \ref{ker_iso_grMDR} respectively we obtain a $\K$-module morphism
        \begin{equation} \label{eq:morph_MDR_grMB}
            \bM^\DeRham \simeq \bigoplus_{n\in\Z} \frac{\F^n\ker(\K{\underline{\pr}_5})}{\displaystyle\F^{n+1}\ker(\K{\underline{\pr}_5})+\sum_{a+b=n} \F^a\ker(\K{\underline{\pr}_5}) \cdot \F^b\ker(\K{\underline{\pr}_{12}})} \to \gr(\bM^\Betti).
        \end{equation}
        Finally, one may check that this morphism sends the element $e_{i5} \otimes 1 \in \bM^\DeRham$ to the element $[x_{i5} - 1]_1 \otimes 1 \in \gr(\bM^\Betti)$, for $i \in \{1,2,3\}$; and that it is also compatible with the right actions and the $\K$-algebra morphism
        \begin{equation*} 
            \V^\DeRham \otimes \V^\DeRham \to \gr(\V^\Betti \otimes \V^\Betti),
        \end{equation*}
        which coincides with the composition of morphisms \eqref{eq:grgrker_grker} and \eqref{eq:grgrker_U}.
    \end{steplist}
\end{proof}

\begin{theorem} \label{thm:grbimodB_iso_bimodDR}
    The graded module morphism $\bM^\DeRham \to \gr(\bM^\Betti)$ induces a bimodule isomorphism
    \[
        (\V^\DeRham, \bM^\DeRham, \V^\DeRham \otimes \V^\DeRham, \mathsf{r}\boldsymbol{\rho}) \simeq ( \gr \V^\Betti, \gr(\bM^\Betti), \gr(\V^\Betti \otimes \V^\Betti), \gr(\mathsf{r}\underline{\boldsymbol{\rho}})).
    \]
\end{theorem}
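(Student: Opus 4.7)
The plan is to upgrade the $\K$-module morphism $\varphi\colon\bM^\DeRham\to\gr(\bM^\Betti)$ produced by the preceding proposition into a bimodule isomorphism. Compatibility with the right actions (and with the algebra map $\V^\DeRham\otimes\V^\DeRham\to\gr(\V^\Betti\otimes\V^\Betti)$) is already asserted there, so only two things remain: (a) compatibility with the left actions, and (b) bijectivity.

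For (a), recall from Proposition-Definition \ref{propdef:bimod_M} that the left $\V^\DeRham$-action on $\bM^\DeRham$ is $v\cdot(p\otimes w)=U(\ell)(v)p\otimes w$, and similarly from Proposition-Definition \ref{propdef:bimod_uM} the left $\V^\Betti$-action on $\bM^\Betti$ is induced by $\K\underline{\ell}$; passing to $\gr$ and using the standard graded algebra isomorphism $\gr\V^\Betti\simeq\V^\DeRham$ (sending $[X_i-1]_1\mapsto e_i$) identifies $\gr(\K\underline{\ell})$ with $U(\ell)$. Tracing the construction of $\varphi$ through Step 2 of the preceding proof shows that at every stage the right-module isomorphisms extend naturally to left-module morphisms (since $\ker(\K\underline{\pr}_5)$ and $\ker(U(\pr_5))$ are two-sided ideals), and the identifications used are compatible with $\K\underline{\ell}$ and $\ell$. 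Thus $\varphi$ intertwines $\mathsf{r}\boldsymbol{\rho}$ and $\gr(\mathsf{r}\underline{\boldsymbol{\rho}})$.

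For (b), I would use a commutative diagram of graded right modules
\[\begin{tikzcd}
(\V^\DeRham\otimes\V^\DeRham)^{\oplus 3}\ar[r,"\sim"]\ar[d,"\sim"']&\bM^\DeRham\ar[d,"\varphi"]\\
\gr\bigl((\V^\Betti\otimes\V^\Betti)^{\oplus 3}\bigr)\ar[r,"\sim"]&\gr(\bM^\Betti)
\end{tikzcd}\]
in which the top horizontal arrow is the isomorphism of Theorem \ref{thm:derham_bimod_iso}, the bottom horizontal arrow is the image under $\gr$ of the isomorphism of Theorem \ref{thm:betti_bimod_iso_fil}, and the left vertical arrow is the isomorphism induced by the graded algebra identification $\gr(\V^\Betti\otimes\V^\Betti)\simeq\V^\DeRham\otimes\V^\DeRham$ from Sec.~\ref{sec:ODRmat_grOBmat}. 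To verify commutativity it suffices to chase the standard basis vector $\delta_i$ of $(\V^\DeRham\otimes\V^\DeRham)^{\oplus 3}$: the top–right route yields $\varphi(t_{i5}\otimes 1)=[x_{i5}-1]_1\otimes 1\in\gr(\bM^\Betti)$ by the explicit formula from the preceding proposition, while the left–bottom route sends $\delta_i$ first to $[\delta_i]_1\in\gr_1((\V^\Betti\otimes\V^\Betti)^{\oplus 3})$ and then, via $\gr$ of \eqref{iso:uA3_uM}, to $[(x_{i5}-1)\otimes 1]_1=[x_{i5}-1]_1\otimes 1$, the same element. Since three of the four arrows are isomorphisms, $\varphi$ must be one as well, which simultaneously promotes it to a bimodule isomorphism by (a).

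The main obstacle I anticipate is not the diagram chase itself but bookkeeping the degree-shift conventions: the isomorphism of Theorem \ref{thm:betti_bimod_iso_fil} shifts filtration degree by $+1$ (an element of $\F^{a-1}(\V^\Betti\otimes\V^\Betti)^{\oplus 3}$ lands in $\F^{a}\bM^\Betti$), which upon passing to $\gr$ means the grading on the left-hand side is shifted relative to the grading on $\gr(\bM^\Betti)$; one must check that this shift is absorbed precisely by the degree-$1$ nature of the generators $[x_{i5}-1]_1$ (and correspondingly $t_{i5}=e_{i5}$) in order for the bottom arrow to be a graded isomorphism matching the top arrow. Once this bookkeeping is done carefully, the verification reduces to the single identity already checked above on basis vectors.
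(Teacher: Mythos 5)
Your proposal follows the paper's own route: the identical commutative square formed by the three known isomorphisms (Theorem \ref{thm:derham_bimod_iso}, the $\gr$ of the filtered isomorphism of Theorem \ref{thm:betti_bimod_iso_fil}, and Proposition \ref{prop:iso_grOB_ODR}), the same chase on the three canonical generators, and the same conclusion that the fourth arrow is forced to be an isomorphism of graded bimodules. One small notational slip: since $\delta_i$ has degree $0$ in $(\V^\DeRham\otimes\V^\DeRham)^{\oplus 3}$ and the left vertical arrow is degree-preserving, it sends $\delta_i$ to $[\delta_i]_0\in\gr_0\bigl((\V^\Betti\otimes\V^\Betti)^{\oplus 3}\bigr)$ rather than $[\delta_i]_1$; the degree-$1$ index arises only after the bottom arrow applies the filtration shift you correctly flagged, landing on $[(x_{i5}-1)\otimes 1]_1\in\gr_1(\bM^\Betti)$ and matching the top--right route.
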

\begin{proof}
    Recall the following isomorphisms:
    \begin{itemize}[leftmargin=*]
        \item $(\V^\DeRham \otimes \V^\DeRham)^{\oplus 3} \simeq \bM^\DeRham$ is a right $(\V^\DeRham \otimes \V^\DeRham)$-module isomorphism and is given by Theorem \ref{thm:derham_bimod_iso};
        \item $\gr(\V^\Betti \otimes \V^\Betti)^{\oplus 3} \simeq \gr(\bM^\Betti)$ is a right $\gr(\V^\Betti \otimes \V^\Betti)$-module isomorphism and is given by the filtered bimodule isomorphism of Proposition \ref{prop:betti_bimod_iso} to which one applies the functor $\gr$;
        \item $(\V^\DeRham \otimes \V^\DeRham)^{\oplus 3} \simeq \gr(\V^\Betti \otimes \V^\Betti)^{\oplus 3}$ is a right module isomorphism over the isomorphism $\gr(\V^\Betti) \otimes \gr(\V^\Betti) \simeq \V^\DeRham \otimes \V^\DeRham$ and is given by Proposition \ref{prop:iso_grOB_ODR}.
    \end{itemize}
    Let us show that the following diagram
    \begin{equation} \label{diag:MDR_grMB}
        \begin{tikzcd}
            (\V^\DeRham \otimes \V^\DeRham)^{\oplus 3} \ar[rr, "\simeq"] \ar[d, "\simeq"'] && \gr(\V^\Betti \otimes \V^\Betti)^{\oplus 3} \ar[d, "\simeq"] \\
            \bM^\DeRham \ar[rr] && \gr(\bM^\Betti)
        \end{tikzcd}
    \end{equation}
    commutes. To do so it is sufficient to show that the morphisms
    \[
        (\V^\DeRham \otimes \V^\DeRham)^{\oplus 3} \to \gr(\V^\Betti \otimes \V^\Betti)^{\oplus 3} \to \gr(\bM^\Betti) \text{ and } (\V^\DeRham \otimes \V^\DeRham)^{\oplus 3} \to \bM^\DeRham \to \gr(\bM^\Betti)
    \]
    are equal as right module morphisms over the isomorphism $\gr(\V^\Betti \otimes \V^\Betti) \simeq \V^\DeRham \otimes \V^\DeRham$. Therefore, it suffices to check this equality on the generators of $(\V^\DeRham \otimes \V^\DeRham)^{\oplus 3}$ as a right $(\V^\DeRham \otimes \V^\DeRham)$-module.
    Denote by $u_1, u_2, u_3 \in (\V^\DeRham \otimes \V^\DeRham)^{\oplus 3}$ the canonical generators. For $i \in \{1,2,3\}$, the images of these elements by the above compositions are respectively given by
    \[
        u_i \mapsto \tilde{u}_i \mapsto [x_{i5} - 1]_1 \otimes 1 \text{ and } u_i \mapsto e_{i5} \otimes 1 \mapsto [x_{i5} - 1]_1 \otimes 1,
    \]
    where $\tilde{u}_1, \tilde{u}_2, \tilde{u}_3$ are the canonical generators of the right $\gr(\V^\Betti \otimes \V^\Betti)$-module $\gr(\V^\Betti \otimes \V^\Betti)^{\oplus 3}$.
    Finally, the morphism $\bM^\DeRham \to \gr(\bM^\Betti)$ is indeed an isomorphism since all other arrows of the commutative diagram \eqref{diag:MDR_grMB} are isomorphisms.
\end{proof}

\subsection{Geometric construction of the bimodules with factorization structures \texorpdfstring{$\mathcal{O}^\Betti$}{OB}, \texorpdfstring{$\mathcal{O}^\Betti_\fil$}{OBfil} and \texorpdfstring{$\mathcal{O}^\DeRham$}{ODR}}
\begin{propdef} \label{propdef:facto_struct_MB}
    Define the compositions
    \[
        \underline{\boldsymbol{r}} : \bM^\Betti \simeq (\V^\Betti \otimes \V^\Betti)^{\oplus 3} \xrightarrow{\mathsf{r}\underline{\row}} \V^\Betti \otimes \V^\Betti \text{ and } \underline{\boldsymbol{c}} : \V^\Betti \otimes \V^\Betti \xrightarrow{\mathsf{r}\underline{\col}} (\V^\Betti \otimes \V^\Betti)^{\oplus 3} \simeq \bM^\Betti, 
    \]
    where $(\V^\Betti \otimes \V^\Betti)^{\oplus 3} \simeq \bM^\Betti$ is the right $(\V^\Betti \otimes \V^\Betti)$-module isomorphism given in \eqref{iso:uA3_uM}. Then the tuple
    \begin{equation*} 
        \mathcal{O}^\Betti := (\V^\Betti, \bM^\Betti, \V^\Betti \otimes \V^\Betti, \mathsf{r}\underline{\boldsymbol{\rho}}, X_1 - 1, \underline{\boldsymbol{r}}, \underline{\boldsymbol{c}})
    \end{equation*}
    is an object of $\K{\text-}\BFS$.
\end{propdef}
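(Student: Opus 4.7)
The plan is to obtain $\mathcal{O}^\Betti$ as the pullback of the explicit BFS object $\mathcal{O}^\Betti_\mat$ along the bimodule isomorphism already constructed between the two realizations of the bimodule. More precisely, Corollary~\ref{cor:tuple_OBmat} provides the object
\[
    \mathcal{O}^\Betti_\mat = (\V^\Betti, (\V^\Betti \otimes \V^\Betti)^{\oplus 3}, \V^\Betti \otimes \V^\Betti, \mathsf{r}\underline{\rho}, X_1 - 1, \mathsf{r}\underline{\row}, \mathsf{r}\underline{\col}) \in \K{\text-}\BFS,
\]
while Proposition~\ref{prop:betti_bimod_iso} supplies a $\K$-bimodule isomorphism
\[
    (\mathrm{id}_{\V^\Betti}, \boldsymbol{\varphi}, \mathrm{id}_{\V^\Betti \otimes \V^\Betti}) : (\V^\Betti, (\V^\Betti \otimes \V^\Betti)^{\oplus 3}, \V^\Betti \otimes \V^\Betti, \mathsf{r}\underline{\rho}) \to (\V^\Betti, \bM^\Betti, \V^\Betti \otimes \V^\Betti, \mathsf{r}\underline{\boldsymbol{\rho}}),
\]
where $\boldsymbol{\varphi}$ is the right $(\V^\Betti \otimes \V^\Betti)$-module isomorphism of \eqref{iso:uA3_uM}. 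These two ingredients are exactly the data required to invoke Proposition~\ref{prop:pullback}.

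Applying Proposition~\ref{prop:pullback} with the morphism above, one concludes that the tuple
\[
    (\V^\Betti, \bM^\Betti, \V^\Betti \otimes \V^\Betti, \mathsf{r}\underline{\boldsymbol{\rho}}, e', \boldsymbol{r}', \boldsymbol{c}')
\]
is an object of $\K{\text-}\BFS$, where by the formulas of that proposition one has $e' = \mathrm{id}_{\V^\Betti}(X_1 - 1) = X_1 - 1$, $\boldsymbol{r}' = \mathrm{id}_{\V^\Betti \otimes \V^\Betti} \circ \mathsf{r}\underline{\row} \circ \boldsymbol{\varphi}^{-1} = \mathsf{r}\underline{\row} \circ \boldsymbol{\varphi}^{-1}$ and $\boldsymbol{c}' = \boldsymbol{\varphi} \circ \mathsf{r}\underline{\col} \circ \mathrm{id}_{\V^\Betti \otimes \V^\Betti} = \boldsymbol{\varphi} \circ \mathsf{r}\underline{\col}$. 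Comparing these expressions with the definitions of $\underline{\boldsymbol{r}}$ and $\underline{\boldsymbol{c}}$ in the statement shows that $\boldsymbol{r}' = \underline{\boldsymbol{r}}$ and $\boldsymbol{c}' = \underline{\boldsymbol{c}}$ by construction, so the pulled-back factorization structure coincides with $(X_1 - 1, \underline{\boldsymbol{r}}, \underline{\boldsymbol{c}})$.

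No genuine obstacle is anticipated: the well-definedness of $\underline{\boldsymbol{r}}$ and $\underline{\boldsymbol{c}}$ as right $(\V^\Betti \otimes \V^\Betti)$-module morphisms is automatic from the isomorphism status of $\boldsymbol{\varphi}$, and the factorization identity $\mathsf{r}\underline{\boldsymbol{\rho}}(X_1 - 1) = \underline{\boldsymbol{c}} \circ \underline{\boldsymbol{r}}$ is handled uniformly by Proposition~\ref{prop:pullback}, reducing to the already-established identity $\mathsf{r}\underline{\rho}(X_1 - 1) = \mathsf{r}\underline{\col} \circ \mathsf{r}\underline{\row}$ of Proposition~\ref{prop:rurho_rucolcircrurow}. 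Thus the proof reduces to the invocation of Propositions~\ref{prop:pullback} and~\ref{prop:betti_bimod_iso} together with Corollary~\ref{cor:tuple_OBmat}, and amounts to recognizing that the compositions defining $\underline{\boldsymbol{r}}$ and $\underline{\boldsymbol{c}}$ are precisely the pullback formulas.
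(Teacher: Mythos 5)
Your proposal is correct and follows exactly the paper's own proof: both invoke Proposition~\ref{prop:pullback} applied to the object $\mathcal{O}^\Betti_\mat$ of Corollary~\ref{cor:tuple_OBmat} and the bimodule isomorphism of Proposition~\ref{prop:betti_bimod_iso}, then observe that the pulled-back data match $(X_1-1, \underline{\boldsymbol{r}}, \underline{\boldsymbol{c}})$.
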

\begin{proof}
    Thanks to Corollary \ref{cor:tuple_OBmat} and Proposition \ref{prop:betti_bimod_iso}, the result follows from Proposition \ref{prop:pullback} applied to
    \[
        (\B, \bM, \A, \rho, e, \boldsymbol{r}, \boldsymbol{c}) = (\V^\Betti, (\V^\Betti \otimes \V^\Betti)^{\oplus 3}, \V^\Betti \otimes \V^\Betti, \underline{\mathsf{r}\rho}, X_1-1, \mathsf{r}\underline{\row}, \mathsf{r}\underline{\col})
    \]
    and
    \[
        (\B^\prime, \bM^\prime, \A^\prime, \rho^\prime) = (\V^\Betti, \bM^\Betti, \V^\Betti \otimes \V^\Betti, \mathsf{r}\underline{\boldsymbol{\rho}}).
    \]
\end{proof}

\noindent Recall from \eqref{eq:filVB} and Lemma \ref{lem:filVB2} that $\V^\Betti$ and $\V^\Betti \otimes \V^\Betti$ are objects of $\K{\text-}\alg_\fil$.
\begin{corollary} \label{cor:OBfil_contruction}
    The filtration given in Proposition-Definition \ref{propdef:fil:MM} define a filtered structure on $\mathcal{O}^\Betti$, which defines an object $\mathcal{O}^\Betti_\fil$ of the category $\K{\text-}\BFS_\fil$.
\end{corollary}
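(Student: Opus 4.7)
The plan is to deduce this from the pullback principle for filtered BFS (Proposition \ref{prop:pullback_fil}), applied to the already-constructed object $\mathcal{O}^\Betti_{\mat, \fil} \in \K{\text-}\BFS_\fil$ (Corollary \ref{cor:tuple_OBmatfil}) along the filtered bimodule isomorphism provided by Theorem \ref{thm:betti_bimod_iso_fil}.

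First, I would verify that $(\V^\Betti, \bM^\Betti, \V^\Betti \otimes \V^\Betti, \mathsf{r}\underline{\boldsymbol{\rho}})$ is a filtered $\K$-bimodule. Proposition-Definition \ref{propdef:fil:MM} precisely equips $\bM^\Betti$ with the structure of a filtered bimodule over the pair $(\V^\Betti, \V^\Betti \otimes \V^\Betti)$; the compatibility of the left action with filtrations is the assertion that $\mathsf{r}\underline{\boldsymbol{\rho}}$ factors as a morphism of $\K{\text-}\alg_\fil$ to the filtered endomorphism algebra of $\bM^\Betti$ (Lemma \ref{MorArmod_filtration}).

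Next, the right $(\V^\Betti \otimes \V^\Betti)$-module isomorphism $(\V^\Betti \otimes \V^\Betti)^{\oplus 3} \to \bM^\Betti$ of \eqref{iso:uA3_uM} was shown in Proposition \ref{prop:betti_bimod_iso} to extend to a bimodule isomorphism
\[
    (\mathrm{id}_{\V^\Betti}, \eqref{iso:uA3_uM}, \mathrm{id}_{\V^\Betti \otimes \V^\Betti}) : (\V^\Betti, (\V^\Betti \otimes \V^\Betti)^{\oplus 3}, \V^\Betti \otimes \V^\Betti, \mathsf{r}\underline{\rho}) \to (\V^\Betti, \bM^\Betti, \V^\Betti \otimes \V^\Betti, \mathsf{r}\underline{\boldsymbol{\rho}}),
\]
and Theorem \ref{thm:betti_bimod_iso_fil} promotes the middle component to a filtered isomorphism. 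Since the identities on $\V^\Betti$ and $\V^\Betti \otimes \V^\Betti$ preserve their natural filtrations, this assembles into a filtered bimodule isomorphism.

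Finally, applying Proposition \ref{prop:pullback_fil} to $\mathcal{O}^\Betti_{\mat, \fil}$ and this filtered bimodule isomorphism, I would obtain an object of $\K{\text-}\BFS_\fil$ whose pulled-back factorization structure is $(X_1 - 1, \mathsf{r}\underline{\row} \circ \eqref{iso:uA3_uM}^{-1}, \eqref{iso:uA3_uM} \circ \mathsf{r}\underline{\col})$, which matches $(X_1 - 1, \underline{\boldsymbol{r}}, \underline{\boldsymbol{c}})$ of Proposition-Definition \ref{propdef:facto_struct_MB} by construction. This gives the desired $\mathcal{O}^\Betti_\fil$. There is no real obstacle here: all substantive work --- the filtered structure on $\bM^\Betti$ and the filtered nature of \eqref{iso:uA3_uM} --- was carried out in Proposition-Definition \ref{propdef:fil:MM} and Theorem \ref{thm:betti_bimod_iso_fil}, so this corollary amounts to assembling these inputs through the pullback machinery.
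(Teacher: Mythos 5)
Your proposal is correct and follows essentially the same approach as the paper: the paper's proof is a one-line citation of Corollary~\ref{cor:tuple_OBmatfil}, Theorem~\ref{thm:betti_bimod_iso_fil}, and Proposition~\ref{prop:pullback_fil}, and your argument spells out precisely how these pieces assemble via the pullback mechanism.
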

\begin{proof}
    Thanks to Corollary \ref{cor:tuple_OBmatfil} and Theorem \ref{thm:betti_bimod_iso_fil}, the result follows from Proposition \ref{prop:pullback_fil} applied to $\mathcal{O}^\Betti_{\mat, \fil}$ and $(\V^\Betti, \bM^\Betti, \V^\Betti \otimes \V^\Betti, \mathsf{r}\underline{\boldsymbol{\rho}})$.
\end{proof}

\noindent Recall from Sec \ref{sec:ODRmat_grOBmat} that $\V^\DeRham$ and $\V^\DeRham \otimes \V^\DeRham$ are objects of $\K{\text-}\alg_\gr$.
\begin{propdef} \label{propdef:facto_struct_MDR}
    Define the compositions
    \[
        \boldsymbol{r} : \bM^\DeRham \simeq (\V^\DeRham \otimes \V^\DeRham)^{\oplus 3} \xrightarrow{\mathsf{r}\row} \V^\DeRham \otimes \V^\DeRham \text{ and } \boldsymbol{c} : \V^\DeRham \otimes \V^\DeRham \xrightarrow{\mathsf{r}\col} (\V^\DeRham \otimes \V^\DeRham)^{\oplus 3} \simeq \bM^\DeRham, 
    \]
    where $(\V^\DeRham \otimes \V^\DeRham)^{\oplus 3} \simeq \bM^\DeRham$ is the right $(\V^\DeRham \otimes \V^\DeRham)$-module isomorphism given in \eqref{iso:A3_M}. The tuple
    \begin{equation*} 
        \mathcal{O}^\DeRham := (\V^\DeRham, \bM^\DeRham, \V^\DeRham \otimes \V^\DeRham, \mathsf{r}\boldsymbol{\rho}, e_1, \boldsymbol{r}, \boldsymbol{c})
    \end{equation*}
    is an object of $\K{\text-}\BFS_\gr$.
\end{propdef}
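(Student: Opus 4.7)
The plan is to deduce this result as the graded analogue of Proposition-Definition \ref{propdef:facto_struct_MB}, by invoking the graded pullback property (Proposition \ref{prop:pullback_gr}) applied to the already-established graded BFS object $\mathcal{O}^\DeRham_\mat$ from Corollary \ref{cor:tuple_ODRmat} and the graded bimodule isomorphism established in Theorem \ref{thm:derham_bimod_iso}. The proof will be entirely parallel to (and in fact simpler than) the Betti case, with no substantive new work required.

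First, I would recall that the tuple
\[
    \mathcal{O}^\DeRham_\mat = (\V^\DeRham, (\V^\DeRham \otimes \V^\DeRham)^{\oplus 3}, \V^\DeRham \otimes \V^\DeRham, \mathsf{r}\rho, e_1, \mathsf{r}\row, \mathsf{r}\col)
\]
is an object of $\K{\text-}\BFS_\gr$, thanks to Corollary \ref{cor:tuple_ODRmat}. Next, Theorem \ref{thm:derham_bimod_iso} provides a graded bimodule isomorphism
\[
    (\V^\DeRham, (\V^\DeRham \otimes \V^\DeRham)^{\oplus 3}, \V^\DeRham \otimes \V^\DeRham, \mathsf{r}\rho) \simeq (\V^\DeRham, \bM^\DeRham, \V^\DeRham \otimes \V^\DeRham, \mathsf{r}\boldsymbol{\rho})
\]
of the form $(\boldsymbol{g}, \boldsymbol{\varphi}, \boldsymbol{f}) = (\mathrm{id}_{\V^\DeRham}, \eqref{iso:A3_M}, \mathrm{id}_{\V^\DeRham \otimes \V^\DeRham})$, where $\boldsymbol{\varphi}$ is the graded right module isomorphism \eqref{iso:A3_M}.

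I would then apply Proposition \ref{prop:pullback_gr} to this data. The proposition directly yields that the tuple
\[
    (\V^\DeRham, \bM^\DeRham, \V^\DeRham \otimes \V^\DeRham, \mathsf{r}\boldsymbol{\rho}, \boldsymbol{g}(e_1), \boldsymbol{f} \circ \mathsf{r}\row \circ \boldsymbol{\varphi}^{-1}, \boldsymbol{\varphi} \circ \mathsf{r}\col \circ \boldsymbol{f}^{-1})
\]
is an object of $\K{\text-}\BFS_\gr$. Since $\boldsymbol{g} = \mathrm{id}_{\V^\DeRham}$ and $\boldsymbol{f} = \mathrm{id}_{\V^\DeRham \otimes \V^\DeRham}$, this pulled-back factorization structure is precisely $(e_1, \boldsymbol{r}, \boldsymbol{c})$ as defined in the statement.

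The main — and only — point that requires verification is the graded nature of the transported structure, which is already accounted for in Proposition \ref{prop:pullback_gr}: starting from $e_1 \in \V^\DeRham_1$, $\mathsf{r}\row \in \Mor_{(\V^\DeRham \otimes \V^\DeRham){\text-}\rmod}((\V^\DeRham \otimes \V^\DeRham)^{\oplus 3}, \V^\DeRham \otimes \V^\DeRham)_0$, and $\mathsf{r}\col \in \Mor_{(\V^\DeRham \otimes \V^\DeRham){\text-}\rmod}(\V^\DeRham \otimes \V^\DeRham, (\V^\DeRham \otimes \V^\DeRham)^{\oplus 3})_1$, and since $\boldsymbol{\varphi}$ preserves gradings, the resulting $\boldsymbol{r}$ and $\boldsymbol{c}$ automatically lie in degree $0$ and degree $1$ respectively. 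There is no real obstacle here: the entire proof reduces to citing Corollary \ref{cor:tuple_ODRmat}, Theorem \ref{thm:derham_bimod_iso}, and Proposition \ref{prop:pullback_gr}, in parallel with the argument given for $\mathcal{O}^\Betti$ in Proposition-Definition \ref{propdef:facto_struct_MB}.
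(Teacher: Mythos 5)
Your argument is exactly the paper's own proof: cite Corollary \ref{cor:tuple_ODRmat} for $\mathcal{O}^\DeRham_\mat \in \K{\text-}\BFS_\gr$, cite Theorem \ref{thm:derham_bimod_iso} for the graded bimodule isomorphism (which, as you correctly note, has the form $(\mathrm{id}, \eqref{iso:A3_M}, \mathrm{id})$), and apply Proposition \ref{prop:pullback_gr} to transport the factorization structure. The bookkeeping you do to check that the pulled-back triple is precisely $(e_1, \boldsymbol{r}, \boldsymbol{c})$ is correct and is the same computation the paper implicitly relies on.
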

\begin{proof}
    Thanks to Corollary \ref{cor:tuple_ODRmat} and Theorem \ref{thm:derham_bimod_iso}, the result follows from Proposition \ref{prop:pullback_gr} applied to
    \[
        (\B, \bM, \A, \rho, e, \boldsymbol{r}, \boldsymbol{c}) = (\V^\DeRham, (\V^\DeRham \otimes \V^\DeRham)^{\oplus 3}, \V^\DeRham \otimes \V^\DeRham, \mathsf{r}\rho, e_1, \mathsf{r}\row, \mathsf{r}\col)
    \]
    and
    \[
        (\B^\prime, \bM^\prime, \A^\prime, \rho^\prime) = (\V^\DeRham, \bM^\DeRham, \V^\DeRham \otimes \V^\DeRham, \mathsf{r}\boldsymbol{\rho}).
    \]
\end{proof}
    \appendix
\section{The morphisms \texorpdfstring{$\mathrm{op}_G$}{opG}, \texorpdfstring{$\mathrm{S}_\mathfrak{g}$}{Sg} and the functor \texorpdfstring{$\M_{t,s}$}{Mts}}
\begin{defn} \label{def:opG}
    For a group $G$, define $\mathrm{op}_G$ to be the group algebra antiautomorphism of $\K{G}$ given by $g \mapsto g^{-1}$ for any $g \in G$.    
\end{defn}

\begin{lem} \label{lem:commut_op}
    If $\varphi : G \to H$ is a group morphism, then the algebra morphisms $\K\varphi : \K{G} \to \K{H}$ satisfies (equality of algebra antimorphisms $\K{G} \to \K{H}$)
    \[
        \K\varphi \circ \mathrm{op}_G = \mathrm{op}_H \circ \K\varphi.
    \]
\end{lem}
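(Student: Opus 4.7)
The plan is to exploit the $\K$-linearity of both maps in order to reduce the verification to the generating set $G \subset \K G$, where the identity becomes an immediate consequence of the fact that group morphisms preserve inverses.

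More precisely, I would first observe that both $\K\varphi \circ \mathrm{op}_G$ and $\mathrm{op}_H \circ \K\varphi$ are $\K$-linear maps $\K G \to \K H$ (the first is a composition of an algebra antimorphism with an algebra morphism, the second is similar; in either case $\K$-linearity is automatic). Since $G$ is a $\K$-basis of $\K G$, it is enough to check the equality on an arbitrary $g \in G$.

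Next, I would perform the direct evaluation: for $g \in G$,
\[
    (\K\varphi \circ \mathrm{op}_G)(g) = \K\varphi(g^{-1}) = \varphi(g^{-1}) = \varphi(g)^{-1},
\]
where the last equality uses that $\varphi$ is a group morphism, and on the other hand
\[
    (\mathrm{op}_H \circ \K\varphi)(g) = \mathrm{op}_H(\varphi(g)) = \varphi(g)^{-1}
\]
by Definition \ref{def:opG} applied to $H$. Comparing the two yields the desired equality on generators, and hence on all of $\K G$ by $\K$-linearity.

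There is essentially no obstacle here: the content of the lemma is precisely that group morphisms commute with taking inverses, lifted $\K$-linearly to the group algebras. The only point worth being careful about is simply to record that both compositions are $\K$-linear, so that testing on $G$ suffices; no other verification is required, and in particular no compatibility with products needs to be checked, since we are asserting equality of maps, not of algebra (anti)morphisms in the structural sense.
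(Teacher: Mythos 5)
Your proof is correct and follows essentially the same route as the paper: reduce by $\K$-linearity to checking on $g \in G$, then compute both sides to $\varphi(g)^{-1}$ using that $\varphi$ preserves inverses. The extra remarks about why testing on generators suffices are fine but add nothing beyond the paper's one-line justification.
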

\begin{proof}
    It suffices to check the equality for any $g \in G$. We have
    \[
        \K\varphi \circ \mathrm{op}_G(g) = \K\varphi(g^{-1}) = \varphi(g)^{-1} = \mathrm{op}_H(\varphi(g))= \mathrm{op}_H \circ \K\varphi(g),
    \]
    where the middle equality follows from the group morphism status of $\varphi$.
\end{proof}

\begin{defn} \label{def:Sg}
    For a $\K$-Lie algebra $\mathfrak{g}$, define $\mathrm{S}_\mathfrak{g}$ to be the $\K$-algebra antiautomorphism of the universal enveloping algebra $U(\mathfrak{g})$ given by $x \mapsto -x$ for any $x \in \mathfrak{g}$.    
\end{defn}

\begin{lem} \label{lem:commut_S}
    If $\phi : \mathfrak{g} \to \mathfrak{h}$ is a $\K$-Lie algebra morphism, then the $\K$-algebra morphisms $U(\phi) : U(\mathfrak{g}) \to U(\mathfrak{h})$ satisfies (equality of algebra antimorphisms $U(\mathfrak{g}) \to U(\mathfrak{h})$)
    \[
        U(\phi) \circ \mathrm{S}_\mathfrak{g} = \mathrm{S}_\mathfrak{h} \circ U(\phi).
    \]
\end{lem}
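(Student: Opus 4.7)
The plan is to follow the same strategy as the proof of Lemma \ref{lem:commut_op}, transferring the group-theoretic argument to its Lie-algebra analogue. First I would observe that both composites $U(\phi) \circ \mathrm{S}_\mathfrak{g}$ and $\mathrm{S}_\mathfrak{h} \circ U(\phi)$ are $\K$-algebra antimorphisms $U(\mathfrak{g}) \to U(\mathfrak{h})$, since in each case one composes a $\K$-algebra morphism with a $\K$-algebra antimorphism.

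By the universal property of the universal enveloping algebra, a $\K$-algebra antimorphism out of $U(\mathfrak{g})$ is uniquely determined by its restriction to the generating subspace $\mathfrak{g} \subset U(\mathfrak{g})$ (more formally, such a map factors through a morphism out of $U(\mathfrak{g}^{\mathrm{op}})$, where $\mathfrak{g}^{\mathrm{op}}$ denotes the opposite Lie algebra). Hence it suffices to check the equality on $x \in \mathfrak{g}$, where I would compute
\[
    U(\phi) \circ \mathrm{S}_\mathfrak{g}(x) = U(\phi)(-x) = -\phi(x) = \mathrm{S}_\mathfrak{h}(\phi(x)) = \mathrm{S}_\mathfrak{h} \circ U(\phi)(x),
\]
using the $\K$-linearity of $\phi$ together with the fact that $U(\phi)$ extends $\phi$ along the canonical inclusions $\mathfrak{g} \hookrightarrow U(\mathfrak{g})$ and $\mathfrak{h} \hookrightarrow U(\mathfrak{h})$.

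I do not expect any real obstacle: the argument is a routine verification, in complete parallel with the group case, and the equality of the two antimorphisms on the generating subspace $\mathfrak{g}$ propagates to all of $U(\mathfrak{g})$ by antimultiplicativity.
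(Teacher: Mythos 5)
Your proof is correct and follows the same approach as the paper's: reduce to checking the equality on $x \in \mathfrak{g}$ and perform the same one-line computation. The extra justification you give for why it suffices to check on generators (both sides are algebra antimorphisms determined by their restriction to $\mathfrak{g}$) is precisely what the paper leaves implicit.
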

\begin{proof}
    It suffices to check the equality for any $x \in \mathfrak{g}$. We have
    \[
        U(\phi) \circ \mathrm{S}_\mathfrak{g}(x) = U(\phi)(-x) = -\phi(x) = \mathrm{S}_\mathfrak{h}(\phi(x))= \mathrm{S}_\mathfrak{h} \circ U(\phi)(x).
    \]
\end{proof}

\begin{defn} \label{def:Mtsf}
    Let $f : E \to F$ be a $\K$-module morphism. For $s, t \in \Z_{> 0}$, define
    \begin{equation*}
        \M_{t,s}(f) : \M_{t,s}(E) \to \M_{t,s}(F)
    \end{equation*}
    to be the $\K$-module morphism that transforms matrices over $E$ to matrices over $F$ by applying $f$ element-wise.
\end{defn}
\noindent Let $f : E \to F$ be a $\K$-module morphism. For $s, t \in \Z_{> 0}$ and $M \in \M_{t,s}(E)$ it is immediate that
\begin{equation}
    \label{tMM_MtM}
    ^t\M_{t,s}(f)(M) = \M_{s,t}(f)({}^tM), 
\end{equation}
where ${}^t(-)$ denotes the transposition of matrices. Assuming that the map $f$ is an algebra antimorphism, one checks that for $s, n, t \in \Z_{> 0}$ and $M \in \M_{t,n}(E)$, $M^\prime \in \M_{n,s}(E)$ we have
\begin{equation}
    \label{tMf_identity}
     ^t\left(\M_{t,s}(f)(M M^\prime)\right) = {}^t\left(\M_{n,s}(f)(M^\prime)\right) {}^t\left(\M_{t,n}(f)(M)\right),
\end{equation}

\section{A lemma on cokernels}
\begin{lem} \label{lem:iso_and_cokers}
    Let $\alpha : A \twoheadrightarrow C$ and $\beta : B \twoheadrightarrow D$ be two surjective $\K$-module morphisms such that there exists a pair of $\K$-module morphisms $\mathrm{f} : A \to B$ and $\mathrm{g} : C \to D$ such that the following diagram
    \begin{equation} \label{diag:iso_and_cokers}
    \begin{tikzcd}
        A \ar[rr, "\alpha", two heads] \ar[d, "\mathrm{f}"'] && C \ar[d, "\mathrm{g}"] \\
        B \ar[rr, "\beta", two heads] && D
    \end{tikzcd}\end{equation}
    commutes. Then, the morphism $\beta$ induces a $\K$-module isomorphism
    \[
        \coker(\mathrm{f}) / \ker(\beta) \simeq \coker(\mathrm{g}).
    \]
\end{lem}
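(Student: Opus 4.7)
The plan is to first unwind the notation: by $\coker(\mathrm{f})/\ker(\beta)$ one means the quotient of $B$ by the submodule $\mathrm{im}(\mathrm{f}) + \ker(\beta)$, since the inclusion $\ker(\beta) \hookrightarrow B$ followed by the canonical projection $B \twoheadrightarrow B/\mathrm{im}(\mathrm{f}) = \coker(\mathrm{f})$ has image $(\ker(\beta) + \mathrm{im}(\mathrm{f}))/\mathrm{im}(\mathrm{f})$, and quotienting $\coker(\mathrm{f})$ by this image yields exactly $B/(\mathrm{im}(\mathrm{f}) + \ker(\beta))$. On the other side, $\coker(\mathrm{g}) = D/\mathrm{im}(\mathrm{g})$.

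Next I would show that $\beta$ descends to a well-defined $\K$-module morphism
\[
    \bar{\beta} : B/(\mathrm{im}(\mathrm{f}) + \ker(\beta)) \to D/\mathrm{im}(\mathrm{g}).
\]
For this, note that $\beta(\ker(\beta)) = \{0\} \subset \mathrm{im}(\mathrm{g})$ trivially, and that $\beta(\mathrm{im}(\mathrm{f})) = \beta \circ \mathrm{f}(A) = \mathrm{g} \circ \alpha(A) = \mathrm{g}(C) = \mathrm{im}(\mathrm{g})$, where the crucial second equality uses the commutativity of diagram \eqref{diag:iso_and_cokers} and the third equality uses the surjectivity of $\alpha$.

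Surjectivity of $\bar{\beta}$ is immediate from the surjectivity of $\beta$. For injectivity, suppose $b \in B$ satisfies $\bar\beta([b]) = 0$, i.e., $\beta(b) \in \mathrm{im}(\mathrm{g})$. Write $\beta(b) = \mathrm{g}(c)$ for some $c \in C$; by surjectivity of $\alpha$, pick $a \in A$ with $\alpha(a) = c$. The commutativity of \eqref{diag:iso_and_cokers} then yields $\beta(b) = \mathrm{g}(\alpha(a)) = \beta(\mathrm{f}(a))$, so $b - \mathrm{f}(a) \in \ker(\beta)$, and hence $b \in \mathrm{im}(\mathrm{f}) + \ker(\beta)$, showing $[b] = 0$ in the quotient.

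There is no genuine obstacle in this argument; it is pure diagram-chasing. The only subtlety to highlight carefully is the interpretation of the notation $\coker(\mathrm{f})/\ker(\beta)$ and the essential use of the surjectivity of $\alpha$ in identifying $\beta(\mathrm{im}(\mathrm{f}))$ with $\mathrm{im}(\mathrm{g})$; without surjectivity of $\alpha$ one would only obtain $\beta(\mathrm{im}(\mathrm{f})) \subset \mathrm{im}(\mathrm{g})$, and the induced map would not in general be surjective onto $\coker(\mathrm{g})$.
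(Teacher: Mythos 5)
Your proof is correct and is essentially the same diagram chase the paper performs; the organizational difference is that the paper phrases the conclusion as exactness of the sequence $\ker(\beta)\to\coker(\mathrm{f})\to\coker(\mathrm{g})\to 0$, while you directly construct the induced map $\bar\beta$ on the double quotient $B/(\mathrm{im}(\mathrm{f})+\ker(\beta))$ and verify bijectivity, which is arguably cleaner. Both proofs hinge on the same two observations: surjectivity of $\beta$ gives surjectivity of the induced map, and surjectivity of $\alpha$ (combined with the commutativity of the square) gives injectivity. One small slip in your closing remark: you say that without surjectivity of $\alpha$ the induced map ``would not in general be surjective onto $\coker(\mathrm{g})$.'' That is backwards — surjectivity of $\bar\beta$ already follows from surjectivity of $\beta$ alone; what fails without surjectivity of $\alpha$ is the \emph{injectivity} of $\bar\beta$, exactly the step in your argument where you lift $c\in C$ to $a\in A$. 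The proof itself uses surjectivity of $\alpha$ in the right place, so this is just a misstatement in the commentary, not a gap.
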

\begin{proof}
    Taking the vertical cokernels of diagram \eqref{diag:iso_and_cokers}, it follows that there exists a unique $\K$-module morphism $\coker(\mathrm{f}) \to \coker(\mathrm{g})$ such that the following diagram
    \begin{equation}\label{diag:vertical_coker}
        \begin{tikzcd}
            B \ar[rr, "\beta", two heads] \ar[d, two heads] && D \ar[d, two heads] \\
            \coker(\mathrm{f}) \ar[rr]  && \coker(\mathrm{g}) 
        \end{tikzcd}
    \end{equation}
    commutes, where the vertical maps are the canonical projections. On the other hand, the map $\ker(\beta) \to \coker(\mathrm{f})$ obtained by the composition
    \[
        \ker(\beta) \hookrightarrow B \twoheadrightarrow \coker(\mathrm{f})
    \]
    is such that the composition
    \[
        \ker(\beta) \to \coker(\mathrm{f}) \to \coker(\mathrm{g})
    \]
    is zero. Indeed, thanks to diagram \eqref{diag:vertical_coker}, this map is equal to the composition
    \[
        \ker(\beta) \hookrightarrow B \xrightarrow{\beta} D \twoheadrightarrow \coker(\mathrm{g}),
    \]
    which is in fact zero. The wanted statement is equivalent to the exactness of the the sequence
    \[
        \ker(\beta) \to \coker(\mathrm{f}) \to \coker(\mathrm{g}) \to \{0\},
    \]
    which we now prove. The surjectivity of $\coker(\mathrm{f}) \to \coker(\mathrm{g})$ immediately follows from the surjectivity of $\beta$ and the commutativity of diagram \eqref{diag:vertical_coker}. Let us now prove that
    \[
        \im(\ker(\beta) \to \coker(\mathrm{f})) = \ker(\coker(\mathrm{f}) \to \coker(\mathrm{g})).
    \]
    We have
    \begin{equation} \label{eq:im_of_exact}
        \im(\ker(\beta) \to \coker(\mathrm{f})) = \frac{\ker(\beta) + \im(\mathrm{f})}{\im(\mathrm{f})} \simeq \frac{\ker(\beta)}{\ker(\beta) \cap \im(\mathrm{f})} 
    \end{equation}
    and
    \begin{equation} \label{eq:ker_of_exact}
        \ker(\coker(\mathrm{f}) \to \coker(\mathrm{g})) = \frac{\{b \in B \mid \beta(b) \in \im(\mathrm{g})\}}{\im(\mathrm{f})}.
    \end{equation}
    The commutative square of inclusions
    \[\begin{tikzcd}
        \ker(\beta) \cap \im(\mathrm{f}) \ar[rr, hook] \ar[d, hook] && \im(\mathrm{f}) \ar[d, hook'] \\
        \ker(\beta) \ar[rr, hook] && \{b \in B \mid \beta(b) \in \im(\mathrm{g})\}
    \end{tikzcd}\]
    gives rise to an injection
    \begin{equation} \label{eq:injection_of_exact}
        \frac{\ker(\beta)}{\ker(\beta) \cap \im(\mathrm{f})} \hookrightarrow \frac{\{b \in B \mid \beta(b) \in \im(\mathrm{g})\}}{\im(\mathrm{f})}
    \end{equation}
    Let us prove its surjectivity. Let $b \in B$ such that $\beta(b) \in \im(\mathrm{g})$. Let then $c \in C$ be such that $\beta(b) = \mathrm{g}(c)$. Since $\alpha : A \to C$ is surjective, there exists $a \in A$ such that $c=\alpha(a)$. It follows that
    \[
        \beta(b) = \mathrm{g} \circ \alpha(a) = \beta \circ \mathrm{f} (a),
    \]
    where the last equality follows from the commutativity of diagram \eqref{diag:iso_and_cokers}. We then have
    \[
        b - \mathrm{f}(a) \in \ker(\beta).
    \]
    The image of the class of $b - \mathrm{f}(a)$ under \eqref{eq:injection_of_exact} is the class of $b - \mathrm{f}(a)$ in the target of \eqref{eq:injection_of_exact}, which is equal to the class of $b$, thus proving the surjectivity of \eqref{eq:injection_of_exact}. Finally, it follows from \eqref{eq:im_of_exact} and \eqref{eq:ker_of_exact} that the map $\im(\ker(\beta) \to \coker(\mathrm{f})) = \ker(\coker(\mathrm{f}) \to \coker(\mathrm{g}))$ is an isomorphism.   
\end{proof}
\section{Some results on group algebras}
\begin{prop} \label{prop:semidirect_augmentation}
    Let
    \[
        \{1\} \to F \xrightarrow{\iota} G \xrightarrow{\pi} H \to \{1\}
    \]
    be a split exact sequence of groups with splitting $\sigma : H \to G$. Denote by $\Theta : H \to \Aut(F)$ the group morphism given by $H\ni h \mapsto \Theta_h \in \Aut(F)$ where
    \[
        \Theta_h : x \mapsto \iota^{-1}\left(\sigma(h) \iota(x) \sigma(h)^{-1}\right)
    \]
    Then
    \begin{enumerate}[label=(\alph*), leftmargin=*]
        \item \label{isoPhi} The composition $\Phi : \K{F} \otimes \K{H} \xrightarrow{\K{\iota} \otimes \K{\sigma}} \K{G} \otimes \K{G} \to \K{G}$ is an isomorphism of left $\K{F}$-modules, where $\K{G} \otimes \K{G} \to \K{G}$ is the product of the group algebra $\K{G}$.
        \item \label{isosumIFaxIHbIGn} Assume that for any $h \in H$, $\Theta_h^\mathrm{ab}=\mathrm{id}_{F^\mathrm{ab}}$ (where $(-)^\mathrm{ab}$ is the abelianization functor). Then, for any $n \geq 0$, the morphism $\Phi : \K{F} \otimes \K{H} \to \K{G}$ induces an isomorphism of left $\K{F}$-modules
        \[
            \sum_{a+b=n} I_F^a \otimes I_H^b \simeq I_G^n.
        \]
    \end{enumerate}
\end{prop}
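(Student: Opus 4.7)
For part (a), the splitting exhibits a bijection $F \times H \to G$, $(x, h) \mapsto \iota(x)\sigma(h)$: given $g \in G$ one sets $h := \pi(g) \in H$, and then $g\sigma(h)^{-1} \in \ker \pi = \iota(F)$, so it equals $\iota(x)$ for a unique $x \in F$. Linearising yields the $\K$-module isomorphism $\Phi$, whose compatibility with the left $\K F$-action is immediate since $\Phi(\lambda \cdot (x \otimes h)) = \iota(\lambda x)\sigma(h) = \iota(\lambda)\iota(x)\sigma(h) = \iota(\lambda) \cdot \Phi(x \otimes h)$ for $\lambda \in \K F$. For part (b), my plan is to transport the multiplication of $\K G$ back to $\K F \otimes \K H$ by endowing the latter with the semidirect product algebra structure $A := \K F \rtimes_\Theta \K H$, whose multiplication $(a \otimes h)(a' \otimes h') := a\,\Theta_h(a') \otimes hh'$ is defined on basis elements $a, a' \in F$, $h, h' \in H$ and extended $\K$-bilinearly. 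The relation $\sigma(h)\iota(x') = \iota(\Theta_h(x'))\sigma(h)$ in $G$ makes $\Phi : A \to \K G$ an algebra isomorphism, so $\Phi(I_A^n) = I_G^n$ where $I_A = \Phi^{-1}(I_G) = I_F \otimes \K H + \K F \otimes I_H$. Statement (b) then reduces to the identity $I_A^n = \sum_{a+b=n} I_F^a \otimes I_H^b$ inside $A$.

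The inclusion $\supset$ does not use the hypothesis: in $A$ one has $(I_F \otimes 1)^a = I_F^a \otimes 1 \subset I_A^a$, $(1 \otimes I_H)^b = 1 \otimes I_H^b \subset I_A^b$, and $(I_F^a \otimes 1)(1 \otimes I_H^b) = I_F^a \otimes I_H^b$, so each summand lies in $I_A^{a+b}$. I would prove the reverse inclusion by induction on $n$: assuming $I_A^{n-1} \subset \sum_{a'+b'=n-1} I_F^{a'} \otimes I_H^{b'}$, it suffices to show that $I_A \cdot (I_F^{a'} \otimes I_H^{b'}) \subset \sum_{a+b=n} I_F^a \otimes I_H^b$ for each $a'+b' = n-1$. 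The product with $(I_F \otimes \K H)$ lands in $I_F^{a'+1} \otimes I_H^{b'}$ because each $\Theta_g$ preserves $I_F^{a'}$ (being induced by a group automorphism of $F$). The delicate case is the product with $(\K F \otimes I_H)$: reducing to the generator $1 \otimes (h-1)$ and expanding $\K$-bilinearly, one obtains
\[
(1 \otimes (h-1)) \cdot (\alpha' \otimes \delta) = (\Theta_h(\alpha') - \alpha') \otimes h\delta + \alpha' \otimes (h-1)\delta,
\]
whose second summand lies in $I_F^{a'} \otimes I_H^{b'+1}$ and whose first requires the nontrivial fact $\Theta_h(\alpha') - \alpha' \in I_F^{a'+1}$.

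This last assertion will be the main obstacle, and is precisely where the hypothesis $\Theta_h^{\mathrm{ab}} = \mathrm{id}_{F^{\mathrm{ab}}}$ enters. The hypothesis is equivalent to $\Theta_h(x)\,x^{-1} \in [F, F]$ for every $x \in F$; combined with the standard inclusion $y - 1 \in I_F^2$ valid for $y \in [F, F]$, it gives $\Theta_h(x) - x = (\Theta_h(x)\,x^{-1} - 1)\,x \in I_F^2$ for each $x \in F$. It then follows that the induced endomorphism $\gr(\Theta_h)$ of the $I_F$-adic associated graded algebra $\gr(\K F)$ acts as the identity on the degree-one piece $I_F/I_F^2$; since $\gr(\K F)$ is generated as a graded $\K$-algebra by this piece and $\gr(\Theta_h)$ is a graded algebra morphism, $\gr(\Theta_h) = \mathrm{id}_{\gr(\K F)}$, which is exactly the fact that $\Theta_h - \mathrm{id}$ raises $I_F$-adic filtration degree by at least one. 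Feeding this back into the inductive step closes the argument and yields $I_A^n = \sum_{a+b=n} I_F^a \otimes I_H^b$; applying $\Phi$ then delivers the desired isomorphism $\sum_{a+b=n} I_F^a \otimes I_H^b \simeq I_G^n$ of left $\K F$-modules.
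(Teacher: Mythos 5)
Your proof is correct and follows essentially the same route as the paper's. Both parts hinge on the same two facts: the bijection $(x,h)\mapsto\iota(x)\sigma(h)$ for (a), and, for (b), the observation that the hypothesis $\Theta_h^{\mathrm{ab}}=\mathrm{id}$ combined with $[F,F]-1\subset I_F^2$ forces $\gr(\K\Theta_h)=\mathrm{id}$, hence $\Theta_h-\mathrm{id}$ raises $I_F$-adic degree; this is then fed into an induction on $n$ for the inclusion $I_G^n\subset\Phi\bigl(\sum_{a+b=n}I_F^a\otimes I_H^b\bigr)$ via the same add-and-subtract decomposition $(1\otimes(h-1))(\alpha'\otimes\delta)=(\Theta_h(\alpha')-\alpha')\otimes h\delta+\alpha'\otimes(h-1)\delta$. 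The only genuine difference is presentational: you transport the multiplication once and for all to the crossed-product algebra $A=\K F\rtimes_\Theta\K H$ and work entirely inside $A$, identifying $I_A=I_F\otimes\K H+\K F\otimes I_H$, which replaces the paper's pair of auxiliary maps $\nu^g_{a,b}$, $\upsilon^g_{a,b}$ and the associated identity $(g-1)\Phi_{a,b}(t)=\Phi_{a+1,b}(\nu^g_{a,b}(t))+\Phi_{a,b+1}(\upsilon^g_{a,b}(t))$ with a direct computation in $A$. This is a cleaner bookkeeping device and makes the case split over the two generators of $I_A$ transparent, but it does not introduce a new idea; the paper's computation of $(g-1)\Phi_{a,b}(u\otimes v)$ for $g=\iota(x)\sigma(h)$ is literally the same calculation carried out across $\Phi$.
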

\begin{proof}
    \begin{enumerate}[label=(\alph*), leftmargin=*]
        \item This follows from the fact that the map $F \times H \to G$ given by $(x,h) \mapsto \iota(x) \sigma(h)$ is a bijection with reciprocal given by $g \mapsto \left(\iota^{-1}\left(g \ \sigma(\pi(g))^{-1}\right), \pi(g)\right)$.
        \item Let $n \geq 0$. Let us show that $\displaystyle \Phi\left(\sum_{a+b=n} I_F^a \otimes I_H^b\right) = I_G^n$ by following these steps:
        \begin{steplist}
            \item \label{step:inclusion} \emph{Let us show that}
            \[
                \Phi\left(\sum_{a+b=n} I_F^a \otimes I_H^b\right) \subset I_G^n.
            \]
            Indeed, for any $a, b \geq 0$ such that $a + b = n$, we have
            \[
                \Phi(I_F^a \otimes I_H^b) \subset \K{\iota}(I_F^a) \K{\sigma}(I_H^b) = \K{\iota}(I_F)^a \K{\sigma}(I_H)^b \subset I_G^a I_G^b = I_G^n,  
            \]
            where the last inclusion follows from the fact group algebra morphisms preserve augmentation ideals. Therefore, we obtain the announced inclusion.
            \item \label{step:grakTheta} \emph{Let $h \in H$. Let us show that for any $a \geq 0$ we have}
            \begin{equation*}
                \gr_a(\K{\Theta_h}) = \mathrm{id}_{I_F^a/I_F^{a+1}}.
            \end{equation*}
            By assumption on $\Theta_h$, for $x \in F$, we have that $\pi_F^\mathrm{ab}(\Theta_h(x)) = \pi_F^\mathrm{ab}(x)$, where $\pi_F^\mathrm{ab} : F \twoheadrightarrow F^\mathrm{ab}$ is the canonical projection. Therefore, there exists $u \in (F,F)$ such that $\Theta_h(x) = x u$, then (equality in $I_F$)
            \[
                \K\Theta_h(x-1) = (x-1) + (u-1) + (x-1)(u-1)
            \]
            It is immediate that $(x-1)(u-1) \in I_F^2$ and thanks to \cite[Exercise 6.1.4]{Wei} we also have $u-1 \in I_F^2$ since $u \in (F,F)$. Hence, (equality in $\gr_1(\K{F}) = I_F / I_F^2$)
            \begin{equation} \label{gr1kThetah_is_id}
                \gr_1(\K{\Theta_h})([x-1]_1) = [x-1]_1.
            \end{equation}
            Recall that the $\K$-module morphism $\K{\Theta_h}$ is a filtered algebra automorphism of $\K{F}$ which implies that $\gr(\K{\Theta_h})$ is a graded algebra automorphism of $\gr(\K{F})$.
            Since the algebra $\gr(\K{F})$ is generated by $\gr_1(\K{F})$, equality \ref{gr1kThetah_is_id} implies that
            \[
                \gr(\K{\Theta_h}) = \mathrm{id}_{\gr(\K{F})},
            \]
            thus proving the wanted identity.
            \item \label{step:g1Phi} Let $a,b \geq 0$ such that $a + b = n$. Let $g \in G$. 
            \begin{enumerate}[label=(\roman*)]
                \item Thanks to the proof of \ref{isoPhi}, there exists a unique $(x, h) \in F \times H$ such that $g = \iota(x) \sigma(h)$.
                \item Thanks to the proof of \ref{step:inclusion}, we define the $\K$-module morphism $\Phi_{a,b} : I_F^a \otimes I_H^b \to I_G^n$ to be the restriction of $\Phi$ to $I_F^a \otimes I_H^b$.
                \item Thanks to \ref{step:grakTheta}, for $u \in I_F^a$, we have that $\K{\Theta_h}(u)-u \in I_F^{a+1}$. Therefore, we may define the $\K$-module morphism
                \[\begin{array}{ccccc}
                    \nu_{a,b}^g & : & I_F^a \otimes I_H^b & \to & I_F^{a+1} \otimes I_H^b \\
                    & & u \otimes v & \mapsto & (x-1) \K{\Theta_h}(u) \otimes v +  (\K{\Theta_h}(u)-u) \otimes v  
                \end{array}\]
                \item Define the $\K$-module morphism
                \[\begin{array}{ccccc}
                    \upsilon_{a,b}^g & : & I_F^a \otimes I_H^b & \to & I_F^a \otimes I_H^{b+1} \\
                    & & u \otimes v & \mapsto & x \ \K{\Theta_h}(u) \otimes (h-1) v  
                \end{array}\]
            \end{enumerate}
            \emph{Let us show that for any $t_{a,b} \in I_F^a \otimes I_H^b$ we have (equality in $I_G^{n+1}$)}
            \begin{equation*}
                (g-1) \Phi_{a,b}(t_{a,b}) = \Phi_{a+1, b} \circ \nu_{a,b}^g(t_{a,b}) + \Phi_{a, b+1} \circ \upsilon_{a,b}^g(t_{a,b}).
            \end{equation*}
            Indeed, by linearity, it suffices to show this equality for $u \otimes v \in I_F^a \otimes I_H^b$. We have
            \begin{align*}
                & \Phi_{a+1, b} \circ \nu_{a,b}^g(u \otimes v) + \Phi_{a, b+1} \circ \upsilon_{a,b}^g(u \otimes v) \\
                & = \K\iota\big((x - 1) \K\Theta_h(u)\big) \K\sigma(v) + \K\iota\big(\K\Theta_h(u) - u\big) \K\sigma(v) + \K\iota\big(x \ \K\Theta_h(u)\big) \K\sigma((h-1) v) \\
                & = \K\iota\big(x \ \K\Theta_h(u)\big) \K\sigma(h v) - \K\iota(u) \K\sigma(v) = \K\iota(x) (\K\iota \circ \K\Theta_h)(u) \K\sigma(h) \K\sigma(v) - \K\iota(u) \K\sigma(v) \\
                & = \K\iota(x) \K\sigma(h) \K\iota(u) \K\sigma(v) - \K\iota(u) \K\sigma(v) = \Big(\K\iota(x) \K\sigma(h) - 1\Big) \K\iota(u) \K\sigma(v) \\
                & = (g-1) \Phi_{a,b}(u \otimes v), 
            \end{align*}
            where the third equality from the fact that both $\K{\iota}$ and $\K{\sigma}$ are algebra morphisms, the fourth one from the definition of $\Theta_h$ and the last one from the definition of $\Phi$ and from $g = \iota(x) \sigma(h)$.
            \item \emph{Let us show by induction on $n$ that}
            \[
                \Phi\left(\sum_{a+b=n} I_F^a \otimes I_H^b\right) \supset I_G^n.
            \]
            First, for $n=0$, this follows from the surjectivity of $\Phi$, thanks to \ref{isoPhi}. Next, since
            \[
                I_G^{n+1} = \sum_{g \in G} (g-1) I_G^n,
            \]
            it suffices to show that for any $g \in G$ and any $z \in I_G^n$ we have
            \[
                (g - 1) z \in \Phi\left(\sum_{a+b=n+1} I_F^a \otimes I_H^b\right).
            \]
            By induction hypothesis, there exists $\displaystyle (t_{a,b})_{a+b=n} \in \bigoplus_{a+b=n} I_F^a \otimes I_H^b$ such that
            \[
                z = \Phi\left(\sum_{a+b=n} t_{a,b}\right) = \sum_{a+b=n} \Phi_{a,b}(t_{a,b}).
            \]
            It follows that
            \begin{align*}
                (g-1) z & = \sum_{a+b=n} (g-1) \Phi_{a,b}(t_{a,b}) = \sum_{a+b=n} \left(\Phi_{a+1, b} \circ \nu_{a,b}^g(t_{a,b}) + \Phi_{a, b+1} \circ \upsilon_{a,b}^g(t_{a,b})\right) \\
                & = \sum_{a+b=n+1} \Phi_{a,b}\left(\nu_{a-1,b}^g(t_{a-1,b}) + \upsilon_{a,b-1}^g(t_{a,b-1})\right) \in \sum_{a+b=n+1} \Phi_{a,b}\left(I_F^a \otimes I_H^b\right), 
            \end{align*}
            where the second equality follows from \ref{step:g1Phi}.
        \end{steplist}
    \end{enumerate}
\end{proof}

\begin{lem} \label{lem:grker_kergr}
    Let $\varphi : G \to H$ be a group morphism and $n \in \Z$. We have
    \begin{enumerate}[label=(\alph*), leftmargin=*]
        \item $\gr_n\ker(\K{\varphi}) \subset \ker(\gr_n \K{\varphi})$;
        \item if, moreover, $\varphi$ is surjective, then $\gr_n\ker(\K{\varphi}) = \ker(\gr_n \K{\varphi})$. 
    \end{enumerate}
\end{lem}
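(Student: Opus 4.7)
The plan is to unravel the definitions of the objects involved and then reduce each statement to a standard lifting argument for the augmentation ideals. Throughout, $\ker(\K\varphi)$ is filtered by $\F^n\ker(\K\varphi) := \F^n\K{G} \cap \ker(\K\varphi) = I_G^n \cap \ker(\K\varphi)$ (for $n\geq 1$), so that $\gr_n\ker(\K\varphi)$ is the associated graded piece with respect to this filtration. The morphism $\gr_n(\K\varphi) : \gr_n \K G \to \gr_n \K H$ is the one induced on the quotients by $\F^n\K\varphi$.

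For part (a), I would first check that the canonical map $\gr_n\ker(\K\varphi) \to \gr_n\K G$ (induced by the inclusion $\F^n\ker(\K\varphi) \hookrightarrow \F^n\K G$) is injective. This reduces to the identity
\[
    \F^n\ker(\K\varphi)\cap \F^{n+1}\K G = \F^{n+1}\ker(\K\varphi),
\]
which is immediate from $\F^n\ker(\K\varphi) = \F^n\K G \cap \ker(\K\varphi)$. Once this is in place, for $x \in \F^n\ker(\K\varphi)$ the equality $\gr_n(\K\varphi)([x]_n) = [\K\varphi(x)]_n = 0$ is immediate, yielding $\gr_n\ker(\K\varphi) \subset \ker(\gr_n\K\varphi)$.

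For part (b), the main step is the auxiliary observation that if $\varphi$ is surjective then $\K\varphi(I_G^m) = I_H^m$ for every $m \geq 0$. Indeed, $I_G$ is generated as a $\K$-module by the elements $g-1$ with $g \in G$, which map to $\varphi(g)-1$; surjectivity of $\varphi$ then gives $\K\varphi(I_G) = I_H$, and iterating (since $I_H^m$ is spanned by $m$-fold products of elements of $I_H$) yields $\K\varphi(I_G^m) = I_H^m$. Given this, pick $y \in \F^n\K G = I_G^n$ whose class $[y]_n$ lies in $\ker(\gr_n\K\varphi)$: this means $\K\varphi(y) \in I_H^{n+1}$, so by surjectivity one can lift it to an element $y' \in I_G^{n+1}$ with $\K\varphi(y') = \K\varphi(y)$. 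Then $y - y' \in I_G^n \cap \ker(\K\varphi) = \F^n\ker(\K\varphi)$, and since $y' \in \F^{n+1}\K G$, one has $[y - y']_n = [y]_n$, proving that $[y]_n$ lies in the image of $\gr_n\ker(\K\varphi) \to \gr_n\K G$. Combined with (a), this gives the equality.

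The only point requiring any real argument is the auxiliary fact $\K\varphi(I_G^m) = I_H^m$ for surjective $\varphi$; everything else is formal. I do not foresee any genuine obstacle.
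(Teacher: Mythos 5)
The proposal is correct and follows essentially the same line of argument as the paper: part (a) is a formal consequence of the definition of the filtration $\F^n\ker(\K\varphi) = \F^n\K G \cap \ker(\K\varphi)$, and part (b) uses the same lifting argument (lift $\K\varphi(x) \in \F^{n+1}\K H$ to some $y' \in \F^{n+1}\K G$, then $x-y' \in \F^n\ker(\K\varphi)$). The only difference is that you explicitly spell out two facts the paper leaves implicit — the injectivity of $\gr_n\ker(\K\varphi) \to \gr_n\K G$ and the surjectivity of $\F^m\K\varphi$ — which is a reasonable elaboration rather than a different route.
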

\begin{proof}
    \begin{enumerate}[label=(\alph*), leftmargin=*]
        \item We have
        \begin{align}
            \gr_n\ker(\K{\varphi}) & = \frac{\ker(\K{\varphi}) \cap \F^{n}\K{G}}{\ker(\K{\varphi}) \cap \F^{n+1}\K{G}} \notag \\
            & \subset \frac{\left\{ x \in \F^{n}\K{G} \mid \K{\varphi}(x) \in \F^{n+1}\K{H} \right\}}{\F^{n+1}\K{G}}  \label{grnker_subset_kergrn} \\ 
            & = \ker\left( \frac{\F^{n} \K{G}}{\F^{n+1} \K{G}} \to \frac{\F^{n} \K{H}}{\F^{n+1} \K{H}}\right) = \ker(\gr_n \K{\varphi}). \notag
        \end{align}
        \item Let us now assume that $\varphi : G \to H$ is surjective. It suffices to prove that the inclusion in \eqref{grnker_subset_kergrn} is in fact an equality. Indeed, let $x \in \F^n \K{G}$ such that $\K{\varphi}(x) \in \F^{n+1} \K{H}$. 
        The surjectivity of $\varphi : G \to H$ implies the surjectivity of $\F^{n+1} \K{\varphi} : \F^{n+1} \K{G} \to \F^{n+1} \K{H}$. Therefore, there exists $y \in \F^{n+1} \K{G}$ such that $\K{\varphi}(y) = \K{\varphi}(x) $. It follows that
        \[
            x-y \in \ker(\K{\varphi}) \cap \F^n \K{G},
        \]
        and the image by the inclusion of the class of $x-y$ in the source of the inclusion is the class of $x$ in the target of the inclusion, which implies that the inclusion in \eqref{grnker_subset_kergrn} is an equality.
    \end{enumerate}
\end{proof}

\begin{lem} \label{lem:Fil_gr_ker_quotient}
    Let $\varphi : G \to H$ and $\psi : G \to K$ be two group morphisms. We have (isomorphism of $\K$-modules)
    \[
        \frac{\F^n\ker(\K{\varphi})}{\displaystyle\F^{n+1}\ker(\K{\varphi}) + \sum_{a+b=n} \F^a \ker(\K{\varphi}) \cdot \F^b \ker(\K{\psi})} \simeq \frac{\gr_n(\ker(\K{\varphi}))}{\displaystyle\sum_{a+b=n} \gr_a(\ker(\K{\varphi})) \cdot \gr_b(\ker(\K{\psi}))}.
    \]
\end{lem}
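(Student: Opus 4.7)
The plan is to derive the claimed isomorphism from a single application of the third isomorphism theorem, applied to the canonical surjection $\pi : \F^n \ker(\K{\varphi}) \twoheadrightarrow \gr_n(\ker(\K{\varphi}))$ whose kernel is $\F^{n+1}\ker(\K{\varphi})$. The right-hand side of the statement equals $\F^n\ker(\K{\varphi})$ modulo $\pi^{-1}\big(\sum_{a+b=n} \gr_a(\ker(\K{\varphi})) \cdot \gr_b(\ker(\K{\psi}))\big)$, so everything reduces to identifying this preimage with $\F^{n+1}\ker(\K{\varphi}) + \sum_{a+b=n} \F^a\ker(\K{\varphi}) \cdot \F^b\ker(\K{\psi})$, which is precisely the denominator of the left-hand side.

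First I would check that the products are well-defined. For any $a, b \geq 0$ with $a+b = n$, since $\ker(\K{\varphi})$ is a two-sided ideal of $\K{G}$ and the filtration $(\F^k\K{G})_{k \in \Z}$ is multiplicative, one has the inclusion $\F^a\ker(\K{\varphi}) \cdot \F^b\ker(\K{\psi}) \subset \F^{a+b}\K{G} \cap \ker(\K{\varphi}) = \F^n\ker(\K{\varphi})$. By the same token, the product in the graded algebra $\gr(\K{G})$ restricts to a map $\gr_a(\ker(\K{\varphi})) \otimes \gr_b(\ker(\K{\psi})) \to \gr_n(\ker(\K{\varphi}))$, making $\gr_a(\ker(\K{\varphi})) \cdot \gr_b(\ker(\K{\psi}))$ a well-defined $\K$-submodule of $\gr_n(\ker(\K{\varphi}))$.

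Next, for fixed $a, b$ with $a+b = n$, I would use the identity $[x]_a \cdot [y]_b = [xy]_{a+b}$ in $\gr(\K{G})$, valid for $x \in \F^a\K{G}$ and $y \in \F^b\K{G}$, to conclude that $\pi\big(\F^a\ker(\K{\varphi}) \cdot \F^b\ker(\K{\psi})\big) = \gr_a(\ker(\K{\varphi})) \cdot \gr_b(\ker(\K{\psi}))$. Taking preimages under $\pi$ then yields
\[
    \pi^{-1}\big(\gr_a(\ker(\K{\varphi})) \cdot \gr_b(\ker(\K{\psi}))\big) = \F^a\ker(\K{\varphi}) \cdot \F^b\ker(\K{\psi}) + \F^{n+1}\ker(\K{\varphi}),
\]
since $\ker(\pi) = \F^{n+1}\ker(\K{\varphi})$. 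Summing these identities over all pairs $(a,b)$ with $a+b=n$ and using that preimages commute with sums of submodules, I would obtain
\[
    \pi^{-1}\Big(\sum_{a+b=n} \gr_a(\ker(\K{\varphi})) \cdot \gr_b(\ker(\K{\psi}))\Big) = \F^{n+1}\ker(\K{\varphi}) + \sum_{a+b=n} \F^a\ker(\K{\varphi}) \cdot \F^b\ker(\K{\psi}).
\]

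Finally, applying the third isomorphism theorem to $\pi$ and the above preimage gives exactly the announced isomorphism. No step is truly hard; the only point requiring care is keeping track of the two filtered ideal structures and verifying that the graded products literally compute the images of the filtered products under $\pi$, but this is immediate from the multiplicativity of the associated graded construction.
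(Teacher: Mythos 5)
Your proof is correct and uses exactly the same key observations as the paper: that $\F^a\ker(\K\varphi)\cdot\F^b\ker(\K\psi)\subset\F^n\ker(\K\varphi)$ since $\ker(\K\varphi)$ is an ideal, and that $[x]_a\cdot[y]_b=[xy]_n$ so the projection $\pi$ carries the filtered products onto the graded products. The paper packages the final step as a standalone lemma on cokernels of commutative squares (Lemma \ref{lem:iso_and_cokers}), whereas you inline the same content by computing $\pi^{-1}$ of the denominator and invoking the third isomorphism theorem directly, which is a perfectly equivalent and perhaps slightly more transparent phrasing.
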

\begin{proof}
    Let $a,b \geq 0$ such that $a+b=n$. We have
    \[
        \F^a\ker(\K{\varphi}) \cdot \F^b\ker(\K{\psi}) \subset \F^a\K{G} \cdot \F^b\K{G} \subset \F^n\K{G},
    \]
    where the second inclusion follows from the fact that $(\F^m\K{G})_{m\in\Z}$ is an algebra filtration. On the other hand, recall that $\ker(\K{\varphi})$ is an ideal of $\K{G}$, therefore, $\ker(\K{\varphi}) \cdot \ker(\K{\psi}) \subset \ker(\K{\varphi})$. This implies that
    \[
        \F^a\ker(\K{\varphi}) \cdot \F^b\ker(\K{\psi}) \subset \F^n\ker(\K{\varphi}).
    \]
    Therefore the product on $\K{G}$ induces a $\K$-module morphism
    \[
        \bigoplus_{a+b=n} \F^a \ker(\K{\varphi}) \otimes \F^b \ker(\K{\psi}) \to \F^n \ker(\varphi). 
    \]
    On the other hand, the fact that $\gr \ker(\K{\varphi})$ is a graded ideal of $\gr \K{G}$ enables us to define a $\K$-module morphism
    \[
        \bigoplus_{a+b=n} \gr_a \ker(\K{\varphi}) \otimes \gr_b \ker(\K{\psi}) \to \gr_n \ker(\K{\varphi}). 
    \]
    Both maps fit in the following diagram
    \begin{equation} \label{diag:Fil_gr_ker}
        \begin{tikzcd}
            \displaystyle \bigoplus_{a+b=n} \F^a \ker(\K{\varphi}) \otimes \F^b \ker(\K{\psi}) \ar[rr, two heads] \ar[d] && \displaystyle \bigoplus_{a+b=n} \gr_a \ker(\K{\varphi}) \otimes \gr_b \ker(\K{\psi}) \ar[d] \\
            \F^n \ker(\K{\varphi}) \ar[rr, two heads] && \gr_n \ker(\K{\varphi})
        \end{tikzcd}
    \end{equation}
    where the horizontal maps are the canonical projection. This diagram is commutative since for any $a,b \geq 0$ such that $a+b=n$ we have for $g \in \F^a \ker(\K{\varphi})$ and $h \in \F^b \ker(\K{\psi})$ that $[g]_a \cdot [h]_b = [gh]_n$. \newline
    Finally, recall that $\ker(\F^n \ker(\K{\varphi}) \to \gr_n \ker(\K{\varphi})) = \F^{n+1} \ker(\K{\varphi})$. The result then follows by applying Lemma \ref{lem:iso_and_cokers} to the commutative diagram \eqref{diag:Fil_gr_ker}.
\end{proof}
    \bibliographystyle{abstract}
    \bibliography{main}
\end{document}